\newtheorem{theorem}{Theorem}[section]
\newtheorem{lemma}[theorem]{Lemma}
\newtheorem{corollary}[theorem]{Corollary}
\newtheorem{remark}[theorem]{Remark}
\newtheorem{construction}[theorem]{Construction}
\newtheorem{conjecture}[theorem]{Conjecture}
\newcommand{\ignore}[1]{}
\begin{document}
\begin{CJK}{UTF8}{<font>}

\title{Optimal data placements for triple replication\footnote{Supported by NSFC Grant 11971053.}}

\author{\small Ruijing Liu, Junling Zhou\\
\small Department of Mathematics \\
\small Beijing Jiaotong University\\
\small Beijing 100044, P. R. China\\
\small 18118006@bjtu.edu.cn\\
\small jlzhou@bjtu.edu.cn}

\date{ }
\maketitle

\begin{abstract}
Given a set $V$ of $v$ servers along with $b$ files (data), each file is replicated (placed) on exactly $k$ servers and thus a file can be represented by a set of $k$ servers. Then we produce a data placement consisting of $b$ subsets of $V$ called blocks, each of size $k$. Each server has some probability to fail and we want to find a placement that minimizes the variance of the number of
 available files. It was conjectured that there always exists an optimal data placement (with variance better than any other placement for any value of the probability of failure).
  An optimal data placement for triple replication  with $b$ blocks (of size three) on a $v$-set was proved to exist by Wei et al. if $v$ and $b$ are not excluded by two conditions.
   This article concentrates on the parameters $v,  b$ satisfying the two conditions and characterizes  the combinatorial properties of the corresponding optimal data placements.
    Nearly well-balanced triple systems (NWBTSs) are defined to produce optimal data placements. Many constructions for NWBTSs are developed, mainly by constructing
    candelabra systems with various desirable partitions. The main result of this article is that there always exist optimal data placements for triple replication with $b$ blocks on a $v$-set
     possibly except when $v\equiv 4$ (mod 24) or $v=50,74$, and $\frac{\lambda v(v-1)}{6}-\frac{v}{6} < b < \frac{\lambda v(v-1)}{6}+\frac{v}{6}$ for an odd integer $\lambda$.


\medskip\noindent {\bf Keywords}: data placement, optimal, triple system, $s$-fan design, candelabra system, nearly well-balanced, partitionable
\end{abstract}



\section{Introduction}

Large scale distributed storage systems play a vital role in maintaining data across storage locations globally. 
These systems are built using cheap commodity
hardware, that can be unreliable. Hence, it is essential for the
system to store data in replication to ensure reliability and fault-tolerance.
There is growing interest of
using distributed systems for massive video distribution. Documents replication and repartition can also
solve some performance problems for video distribution,
and is an effective way to improve the performance of video
download time. Those problems have been studied extensively and their connections to combinatorial objects have been displayed \cite{AJ,2,3,13}.

In this article we consider data placements for triple replication, which is the
default replication storage in Hadoop's Distributed File
System. Here, we
use the terminology of combinatorial design  theory.
 We are given a set $V$ of $v$ servers along with $b$ files (data, documents). Each file is replicated (placed) on exactly $k$ servers (nodes). The set of servers containing file $i$ is therefore a subset of size $k$, which will be called a \emph{block} and denoted by $B_{i}$. Then the set system $(V,\mathcal{F})$ or the family $\mathcal{F}$ of the $b$ blocks $B_{i}$, $1 \leq i \leq b$, is called a \emph{data placement}.  A {\em data placement for triple replication} is a triple system $(V,{\cal F})$, also denoted by  TS$(v;b)$, where every block of ${\cal F}$ has cardinality three.

It is interesting to study the situation where some of the servers might not be available when asked to provide a file. We assume that a server is available (online) with some probability $\delta$, and so unavailable (off-line, failed) with the probability $1-\delta$. The file $i$ is said to be available if one of the servers containing it is available, or equivalently the file is unavailable if all the servers containing it are unavailable.
In \cite{2,3,13}, the authors studied the random variable $\Lambda$, the number of available files, and they proved that the expectation $E(\Lambda)$ is independent of the data placement. However, they proved that the variance of $\Lambda$ depends on the data placement $(V,\mathcal{F})$. Jean-Marie et al. \cite{13} showed that minimizing the variance of $\Lambda$ corresponds to
minimizing the polynomial $$P(\mathcal{F}, x) =\sum^{k}_{j=0} v_{j}x^{j},$$ where $x = \frac{1}{1-\delta}$ (so $x \geq 1$) and $v_{j}$ denotes the number of ordered pairs of blocks intersecting in exactly $j$ elements of $V$.

A basic problem is to find an \emph{optimal} data placement $(V,\mathcal{F})$, which minimizes $P(\mathcal{F}, x)$ for all the values of $x\geq1$. In \cite{13}, the following conjecture is proposed.

\begin{conjecture}\rm{\cite{13}}\label{CC}
For any nonnegative integers $v, k$ and $b$ there exists an optimal data placement consisting of $b$ blocks of size $k$ on a $v$-set.
\end{conjecture}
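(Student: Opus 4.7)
My plan begins with a reformulation of the optimality condition. Writing $y = x - 1 \ge 0$ and expanding $x^j = \sum_{m=0}^{j}\binom{j}{m}y^m$, one obtains
\[
P(\mathcal{F}, x) \;=\; \sum_{m=0}^{k} y^m \sum_{j=m}^{k} \binom{j}{m}\, v_j \;=\; \sum_{m=0}^{k} y^m \sum_{T\in\binom{V}{m}} \lambda_T(\lambda_T-1),
\]
where $\lambda_T$ is the number of blocks of $\mathcal{F}$ containing the $m$-subset $T$. Since every coefficient is nonnegative and $y$ ranges over $[0,\infty)$, minimizing $P(\mathcal{F},x)$ for all $x \ge 1$ is equivalent to minimizing $\sum_T \lambda_T^2$ simultaneously for every rank $m=0,1,\ldots,k$. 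Because $\sum_T \lambda_T = b\binom{k}{m}$ is fixed, this occurs exactly when every $\lambda_T$ takes one of the two values $\lfloor \mu_m \rfloor, \lceil \mu_m \rceil$ with $\mu_m := b\binom{k}{m}/\binom{v}{m}$. Call such a family \emph{uniformly near-balanced} (UNB). Conjecture \ref{CC} thus reduces to: for every $(v,k,b)$ there is a UNB family of $b$ $k$-subsets of a $v$-set.

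A first reduction is complementation. Replacing $\mathcal{F}$ by its multiset complement in $\binom{V}{k}$ preserves UNB (since $\lambda_T + \bar\lambda_T = \binom{v-m}{k-m}$ at each rank), and repeating the full family $\binom{V}{k}$ shifts every $\lambda_T$ by a constant. Hence it suffices to handle $0 \le b \le \binom{v}{k}/2$. Anchors for the argument are Steiner $t$-designs, which are automatically UNB at ranks $m \le t$; by Keevash's existence theorem, such designs exist whenever the usual divisibility conditions on $(v,k,\lambda,t)$ are met and $v$ is sufficiently large, supplying UNB families at a dense grid of admissible $b$-values.

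The core task is interpolation between admissible $b$-values. Given a UNB family $\mathcal{F}^\ast$ of size $b^\ast$ close to the target $b$, I would append or delete $r = |b - b^\ast|$ blocks while preserving UNB at every rank. I phrase this as a multi-rank hypergraph matching: choose $r$ elements of $\binom{V}{k}$ whose $m$-subset incidence vectors sum, for every $m \le k$, to a vector whose entries are concentrated on two consecutive integers. When $r$ is small relative to $v^{k}$, an iterative absorption / R\"odl-nibble argument should handle the generic instances; for residue classes where random approaches fail by divisibility, I would extend the $s$-fan and candelabra machinery used in this paper for $k=3$ to higher ranks, combining small UNB ingredients (stems) with recursive PBD/GDD closure to hit the obstructed congruence classes.

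The main obstacle, and the reason the present paper restricts to $k=3$, is achieving simultaneous balance at every intermediate rank $m \in \{1,\ldots,k-1\}$: each rank imposes its own congruence on $b$, so the set of ``perfect'' parameters is thin and each transition between admissible $b$ carries genuine combinatorial content. Sporadic obstructions generalizing $v \equiv 4 \pmod{24}$ and $v\in\{50,74\}$ for $k=3$ are to be expected in each $k$. A full proof would combine (i) an asymptotic-in-$v$ existence result via Keevash-type randomized absorption adapted to multi-rank balance, (ii) a generalization of the candelabra construction of this paper to $k \ge 4$ handling the obstructed residue classes, and (iii) ad hoc or computer-assisted verification for the finitely many sporadic orders that remain.
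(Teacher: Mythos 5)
You are addressing Conjecture \ref{CC}, which this paper does not prove and which remains open: the paper only establishes the $k=3$ case away from the exceptional parameters of Theorem \ref{odpTS}. Your proposal is a research program rather than a proof, and its very first reduction is incorrect. Minimizing $P(\mathcal{F},x)$ for all $x\ge 1$ is \emph{not} equivalent to minimizing $\sum_{T\in\binom{V}{m}}\lambda_T^2$ simultaneously at every rank $m$; simultaneous minimization is only a sufficient condition, and your further claim that the rank-$m$ sum is minimized ``exactly when every $\lambda_T\in\{\lfloor\mu_m\rfloor,\lceil\mu_m\rceil\}$'' ignores realizability constraints linking the ranks. Concretely, for $k=3$ and $(v,b)$ satisfying (C1) or (C2) a $2$-balanced triple system does not exist (Remark \ref{R}): when $v$ is even and $\lambda$ is odd, integrality of $\lambda_x=\tfrac12\sum_y\lambda_{x,y}$ forces every vertex to lie in at least one defected pair, so at least $v/2$ pairs deviate from $\lambda$ although $|\varepsilon|<v/2$. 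Thus your ``uniformly near-balanced'' families simply do not exist for these parameters, and your reduction would wrongly suggest the conjecture fails exactly in the cases this paper resolves. The correct target, as the paper shows, is subtler: one must determine the \emph{constrained} minimum of the rank-$2$ sum over all realizable triple systems (Lemmas \ref{c1balanced} and \ref{c2balanced}, leading to the defect graphs $G_\varepsilon$ and $H^i_{v,\varepsilon}$), and then construct systems (NWBTSs) attaining that minimum together with $1$- and $3$-balance (Theorem \ref{kk}); for general $k$ one would face an analogous, and harder, multi-rank trade-off analysis before any construction can begin.

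Beyond this, the remaining steps of your plan are announced but not executed: the Keevash-type absorption adapted to simultaneous multi-rank balance, the nibble-based interpolation in $b$, and the extension of the candelabra/$s$-fan machinery to $k\ge 4$ are all nontrivial open problems, and even for $k=3$ the paper leaves gaps ($v\equiv 4\pmod{24}$, $v\in\{50,74\}$ in case (C2)). Also note that your complementation and repetition reductions need the kind of defect-graph bookkeeping done in Lemma \ref{bb}; preserving two-valuedness of each $\lambda_T$ is not enough once the optimum is characterized by specific defect structures rather than by plain near-balance. As it stands, the proposal neither proves the conjecture nor recovers the partial result actually established in the paper.
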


Bermond et al. \cite{wbd} gave an equivalent formulation of the polynomial $P(\mathcal{F}, x)$.
For a set system $(V,\mathcal{F})$, let $\lambda^{\mathcal{F}} _{x_{1},\ldots,x_{j}}$ (or $\lambda _{x_{1},\ldots,x_{j}}$) denote the number of blocks of $\mathcal{F}$ containing the $j$-subset $\{x_{1},\ldots,x_{j}\}$ of $V$.

\begin{lemma}\rm{\cite{wbd}}\label{jian}
For a data placement $(V,\mathcal{F})$ consisting of $b$ blocks of size $k$, $$P(\mathcal{F}, x) = \sum^{k}_{j=1}\sum _{\{x_{1},\ldots,x_{j}\}\in {V\choose j}}\lambda_{x_{1},\ldots,x_{j}}^{2}(x-1)^{j} - bx^{k} + b^{2},~x \geq 1,$$ where  ${V\choose j}$ denotes the collection of all $j$-subsets of $V$.
\end{lemma}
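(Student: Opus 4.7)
The plan is to prove the identity by a standard double-counting argument, reindexed via the binomial theorem. The key quantity to analyze is $\sum_{T\in\binom{V}{j}}\lambda_{T}^{2}$ for each fixed $j\in\{1,\ldots,k\}$, since $\lambda_{T}^{2}$ literally counts the ordered pairs of blocks $(B,B')$ (indexed by labels in $\{1,\ldots,b\}$) that both contain the $j$-set $T$. Reordering that count by the size of $B\cap B'$, I would first establish
\[
\sum_{T\in\binom{V}{j}}\lambda_{T}^{2}=b\binom{k}{j}+\sum_{i=j}^{k}v_{i}\binom{i}{j},
\]
where the first term isolates the contribution of the $b$ diagonal pairs $(B,B)$ (each with $|B\cap B|=k$ giving $\binom{k}{j}$ choices for $T$) and the second term collects the contributions of the off-diagonal ordered pairs counted by $v_{i}$.

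Next I would multiply this identity by $(x-1)^{j}$ and sum from $j=1$ to $k$. For the diagonal part I would apply the binomial theorem $\sum_{j=0}^{k}\binom{k}{j}(x-1)^{j}=x^{k}$ to obtain $b(x^{k}-1)$. For the off-diagonal part I would swap the order of summation and use the analogous identity $\sum_{j=1}^{i}\binom{i}{j}(x-1)^{j}=x^{i}-1$ to arrive at $\sum_{i=1}^{k}v_{i}(x^{i}-1)$. Recalling that $\sum_{i=0}^{k}v_{i}=b(b-1)$ (the total number of ordered pairs of \emph{distinct} block-labels), and that $P(\mathcal{F},x)=\sum_{i=0}^{k}v_{i}x^{i}$, the off-diagonal contribution simplifies to $P(\mathcal{F},x)-b(b-1)$.

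Combining the two parts gives
\[
\sum_{j=1}^{k}\sum_{T\in\binom{V}{j}}\lambda_{T}^{2}(x-1)^{j}=b(x^{k}-1)+P(\mathcal{F},x)-b(b-1)=P(\mathcal{F},x)+bx^{k}-b^{2},
\]
and rearranging yields the claimed formula. The proof is essentially a computation, so the only genuine obstacle is a bookkeeping one: making sure that the $v_{j}$ in $P(\mathcal{F},x)$ is interpreted as counting ordered pairs of \emph{distinct} labels (so that $v_{k}=0$ when all blocks are distinct and $\sum v_{j}=b(b-1)$), which is precisely what forces the diagonal correction $-bx^{k}$ to appear on the right-hand side. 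Once this convention is pinned down, the rest is a transparent application of the binomial theorem.
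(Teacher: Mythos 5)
Your argument is correct: the double count of triples $(B_s,B_t,T)$ giving $\sum_{T\in\binom{V}{j}}\lambda_T^{2}=b\binom{k}{j}+\sum_{i\ge j}v_i\binom{i}{j}$, followed by the binomial identity $\sum_{j=1}^{i}\binom{i}{j}(x-1)^{j}=x^{i}-1$ and the normalization $\sum_i v_i=b(b-1)$, yields exactly the stated formula, and you correctly flag the one delicate point, namely that $v_j$ must count ordered pairs of \emph{distinct} blocks so that the diagonal pairs are what produce the $-bx^{k}$ correction. The paper states this lemma without proof (citing Bermond et al.), and your computation is essentially the standard argument from that source, so there is nothing further to reconcile.
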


A set system $(V,\mathcal{F})$ is \emph{$j$-balanced} if the $\lambda _{x_{1},\ldots,x_{j}}$'s are all equal or almost equal, that is, if $|\lambda _{x_{1},\ldots,x_{j}}-\lambda _{y_{1},\ldots,y_{j}}| \leq 1$ for any two $j$-subsets $\{x_{1},\ldots,x_{j}\}$ and $\{y_{1},\ldots,y_{j}\}$ of $V$. Furthermore, $(V,\mathcal{F})$ is \emph{well-balanced} if it is $j$-balanced for all $1 \leq j \leq k$, where $k$ is the size of each block.

\begin{lemma}\rm{\cite{wbd}}\label{balanced}
Let $\mathcal{F}$ consist of $b$ blocks of size $k$ on a $v$-set $V$. For $1\leq j \leq k$, if $(V,\mathcal{F})$ is $j$ -balanced, then $\sum_{\{x_{1},\ldots,x_{j}\}\in {V\choose j}}\lambda^{2}_{x_{1},\ldots,x_{j}}$ is minimized.
\end{lemma}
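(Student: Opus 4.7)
The plan is to reduce the claim to a standard discrete convexity fact about non-negative integer sequences with a prescribed sum. First I would observe that the total $\sum_{\{x_{1},\ldots,x_{j}\}\in \binom{V}{j}} \lambda_{x_{1},\ldots,x_{j}}$ is determined purely by the parameters $v,k,b,j$. Indeed, double counting incidences of $j$-subsets of $V$ with blocks of $\mathcal{F}$: every block of size $k$ contains exactly $\binom{k}{j}$ many $j$-subsets, and each $j$-subset $\{x_{1},\ldots,x_{j}\}$ is counted $\lambda_{x_{1},\ldots,x_{j}}$ times. Hence
\begin{equation*}
\sum_{\{x_{1},\ldots,x_{j}\}\in \binom{V}{j}} \lambda_{x_{1},\ldots,x_{j}} \;=\; b\binom{k}{j},
\end{equation*}
regardless of the particular choice of $\mathcal{F}$.

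Next, I would invoke the following elementary fact: if $a_{1},\ldots,a_{n}$ are non-negative integers with a fixed sum $S$, then $\sum_{i} a_{i}^{2}$ is minimized precisely when the $a_{i}$'s pairwise differ by at most $1$. The standard argument is a swap: suppose some $a_{p}-a_{q}\geq 2$, and replace $(a_{p},a_{q})$ by $(a_{p}-1,a_{q}+1)$. The total sum is preserved, while the sum of squares changes by
\begin{equation*}
(a_{p}-1)^{2}+(a_{q}+1)^{2}-a_{p}^{2}-a_{q}^{2} \;=\; 2(a_{q}-a_{p})+2 \;\leq\; -2,
\end{equation*}
a strict decrease. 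Since the sum of squares is a non-negative integer, iterating this swap terminates at a configuration in which every two values differ by at most $1$, and any such configuration achieves the same minimum sum of squares (because the multiset of values is then determined by $S$ and $n$, namely $S=qn+r$ gives $r$ copies of $q+1$ and $n-r$ copies of $q$).

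Finally, I would apply this fact with $n=\binom{v}{j}$, $S=b\binom{k}{j}$, and $a_{i}$ running over the values $\lambda_{x_{1},\ldots,x_{j}}$: the $j$-balanced condition is exactly the statement that these values pairwise differ by at most $1$, so it matches the minimizing configuration, yielding the claim. I do not expect any real obstacle here; the only point of care is to verify that $j$-balancedness as defined depends only on the multiset of $\lambda$-values, which is immediate from the definition.
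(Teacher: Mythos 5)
Your proof is correct: the double count giving $\sum \lambda_{x_{1},\ldots,x_{j}} = b\binom{k}{j}$ independent of $\mathcal{F}$, combined with the swap argument showing that for a fixed sum the sum of squares of non-negative integers is minimized exactly by configurations whose values pairwise differ by at most one, immediately yields that any $j$-balanced system attains the (unconstrained, hence constrained) minimum. The paper itself offers no proof of this lemma — it is quoted from Bermond et al.\ \cite{wbd} — and your argument is precisely the standard one underlying that cited result, so there is nothing further to reconcile.
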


Bermond et al. \cite{wbd} used  Lemmas  \ref{jian} and \ref{balanced} to prove that a well-balanced set system gives rise to an optimal data placement. They proved the truth of Conjecture \ref{CC} for $k=2$ and provided some positive instances for  $k=3$. Afterwards, Wei et al. \cite{wbt} determined completely the spectrum of pairs $(v,b)$ for which there exists a well-balanced triple system, simply denoted  WBTS$(v;b)$.

\begin{theorem}\rm{\cite[Theorem 8.1]{wbt}}\label{NF}
A WBTS$(v; b)$  exists for any positive integers $v, b$ and $v \geq 3$, with definite exceptions that

(i) $b\in \{\lfloor \frac{\lambda v(v-1)}{6} \rfloor, \lceil \frac{\lambda v(v-1)}{6} \rceil\}$ for an integer $\lambda$ such that $\lambda v(v-1) \not\equiv0 \pmod {6}$; and

(ii) $v$ is even and $\frac{\lambda v(v-1)}{6}-\frac{v}{6} < b < \frac{\lambda v(v-1)}{6}+\frac{v}{6}$ for an odd integer $\lambda$.
\end{theorem}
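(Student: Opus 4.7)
The plan is to prove Theorem \ref{NF} in two directions. For necessity, I would show that the two conditions are arithmetic obstructions forced by the $j$-balance requirements. Being $2$-balanced forces every pair-degree to lie in $\{\lambda,\lambda+1\}$ for some integer $\lambda$, and counting incidences gives that the number of pairs of degree $\lambda+1$ equals $3b-\lambda\binom{v}{2}$. Coupling this with the $1$-balance identity $\sum_y \lambda_{x,y}=2r_x$ at each point and the requirement that the $r_x$'s themselves differ by at most one, a short case analysis on the residue of $\lambda v(v-1) \pmod 6$ and the parity of $v$ shows that (i) and (ii) are exactly the forbidden configurations.

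For sufficiency, my approach is a case analysis on $v \bmod 6$ combined with a doubling-and-perturbing scheme. The ``anchor'' values of $b$, namely integer multiples of $v(v-1)/6$ (or its ceiling when this is not an integer), are realized by Steiner triple systems when $v\equiv 1,3\pmod 6$, near-triple systems when $v\equiv 5\pmod 6$, and minimum pair-coverings or maximum pair-packings when $v\equiv 0,2,4\pmod 6$; these classical designs are automatically well-balanced thanks to Lemma \ref{balanced}. I would then build intermediate values of $b$ by adding or deleting a tightly controlled set of blocks whose combined effect on pair- and point-degrees is as uniform as possible---this role is played by short parallel classes, $1$-factorizations of auxiliary graphs, and frame-type configurations. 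For larger $b$ the index is increased by taking disjoint unions of base designs glued via a suitable transversal design, preserving both 1- and 2-balance.

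The main obstacle will be the ``boundary zones'' where $b$ lies within $v/6$ of the near-integer $\lambda v(v-1)/6$ but is not excluded by (i) or (ii). There, neither a $\lambda$-fold nor a $(\lambda+1)$-fold triple system sits exactly at $b$, so one must splice together blocks from two consecutive indices while keeping all pair-degrees in two consecutive values. This calls for delicate constructions via candelabra systems and $s$-fan designs (as foreshadowed in the abstract), together with a partitionability property that guarantees the extra or missing blocks can be balanced across both points and pairs simultaneously. I would expect the recursion to dispose of all but finitely many orders uniformly, leaving a short list of small $v$ (e.g.\ $v\in\{4,6,8,10,14\}$) to handle by explicit hand constructions; arranging that short list so the direct cases actually exist is where I anticipate the bulk of the technical effort.
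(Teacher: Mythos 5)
This statement is not proved in the paper at all: Theorem \ref{NF} is quoted verbatim from Wei, Ge and Colbourn \cite{wbt} (their Theorem 8.1), and the present article only uses it (together with Remark \ref{R}, which cites \cite{wbd} for the non-existence of $2$-balanced systems under (i) and (ii)). So there is no internal proof to compare against; what you have written has to stand on its own as a proof of the full spectrum result, and it does not. Your necessity sketch is essentially sound in spirit --- the real mechanism is the integrality/parity constraint $\sum_{y\neq x}\lambda_{x,y}\equiv 0\pmod 2$ at every point, which for (i) rules out having only $|\varepsilon|\le 2$ defective pairs, and for (ii) (with $v$ even, $\lambda$ odd, so $\lambda(v-1)$ odd) forces every point to meet a defective pair and hence at least $v/2$ of them, contradicting $|\varepsilon|<v/2$ --- but note this uses integrality of the point degrees, not $1$-balance, and you would still need to run it for both choices of the base value $\mu\in\{\lambda-1,\lambda\}$.

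The genuine gap is the sufficiency direction, which is the overwhelming bulk of the theorem and in your write-up is a research program rather than an argument. You defer exactly the hard part --- all admissible $b$ in the ``boundary zones'' near $\lambda v(v-1)/6$, and more generally every $b$ not realized by a classical design --- to unspecified ``delicate constructions via candelabra systems and $s$-fan designs'' with a ``partitionability property,'' without defining those partition properties, proving any recursive construction, or exhibiting any of the required ingredient designs. Several of the concrete claims you do make are also unjustified or false as stated: maximum packings and minimum coverings are not ``automatically well-balanced'' by Lemma \ref{balanced} (that lemma only says $j$-balance minimizes $\sum\lambda^2$; whether a packing/covering with the right $b$ is simultaneously $1$-, $2$- and $3$-balanced depends on the structure of its leave/excess), adding blocks of a parallel class or splicing two indices does not obviously preserve $2$-balance, and the assertion that only $v\in\{4,6,8,10,14\}$ would need hand constructions is wishful: the actual proof in \cite{wbt} requires a substantial list of direct constructions (partitionable candelabra systems of several types, PGDDs, and many small triple systems) precisely because the recursion alone does not close the boundary cases. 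As it stands, the proposal identifies the right kind of machinery but proves none of it, so the theorem is not established.
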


\begin{corollary}\label{opt}
 An optimal data placement TS$(v; b)$  exists for any positive integers $v, b$ and $v \geq 3$,  possibly except that they satisfy (i) or (ii) of Theorem \ref{NF}.
\end{corollary}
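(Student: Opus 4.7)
The plan is to obtain Corollary \ref{opt} essentially for free by combining Theorem \ref{NF} with the already-stated fact that every well-balanced set system is an optimal data placement. First I would rewrite the polynomial from Lemma \ref{jian} as
$$P(\mathcal{F}, x) = \sum^{k}_{j=1}(x-1)^{j}\!\!\sum _{\{x_{1},\ldots,x_{j}\}\in {V\choose j}}\!\!\lambda_{x_{1},\ldots,x_{j}}^{2} \;-\; bx^{k} + b^{2},$$
and observe two things: the tail $-bx^{k}+b^{2}$ depends only on $v,k,b$ (not on which blocks are chosen), and each weight $(x-1)^{j}$ is nonnegative for $x\geq 1$. Consequently, any placement that simultaneously minimizes each of the inner sums $\sum_{S\in\binom{V}{j}}\lambda_{S}^{2}$ minimizes $P(\mathcal{F},x)$ for every admissible $x$ at once.

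Second, I would apply Lemma \ref{balanced}: for each fixed $j$, a $j$-balanced system attains the minimum of $\sum_{S\in\binom{V}{j}}\lambda_{S}^{2}$. Since a well-balanced triple system is $j$-balanced for all $1\leq j\leq k$ by definition, it produces a simultaneous minimizer of every inner sum, and hence of $P(\mathcal{F},x)$ on $[1,\infty)$. In other words, a WBTS$(v;b)$, viewed as a TS$(v;b)$, is automatically an optimal data placement; this is exactly the implication the authors attribute to Bermond et al.\ in the paragraph preceding Theorem \ref{NF}.

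Third, I would invoke Theorem \ref{NF} directly: for every $v\geq 3$ and every positive integer $b$ not belonging to the families (i) or (ii), a WBTS$(v;b)$ exists, and by the previous step such a system furnishes the required optimal TS$(v;b)$. This gives exactly the conclusion of Corollary \ref{opt}.

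There is no substantive obstacle here; the corollary is a direct translation of Theorem \ref{NF} into the language of optimal placements. The only point worth emphasising in the write-up is why the word \emph{possibly} appears in the exceptions: Theorem \ref{NF} rules out the well-balanced \emph{route} to optimality in cases (i) and (ii), but does not itself preclude the existence of optimal placements there. Closing that gap via nearly well-balanced triple systems is precisely the subject of the remainder of the paper, so the corollary should be presented as the baseline result that delimits where additional work is required.
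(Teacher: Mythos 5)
Your argument is correct and is essentially the paper's own (implicit) one: the paper derives Corollary \ref{opt} precisely by combining Lemmas \ref{jian} and \ref{balanced} (the Bermond et al.\ fact that a well-balanced system minimizes $P(\mathcal{F},x)$ for all $x\geq 1$) with the existence result of Theorem \ref{NF}. Your remark about why the exceptions are only ``possible'' also matches the paper's framing, so nothing is missing.
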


\begin{remark}\label{R} \rm\cite[Propositions 6, 7]{wbd}
When $(v,  b)$ satisfies $(i)$ or $(ii)$ of Theorem \ref{NF}, a $2$-balanced TS$(v;b)$ does not exist.
\end{remark}

 By Theorem \ref{NF} and Corollary \ref{opt}, the existence problem of well-balanced triple systems has been resolved, while the existence of optimal data placements for triple replication remains open. In this article we  concentrate on optimal data placements TS$(v;b)$s with $v,b$ meeting (i) or (ii) of Theorem \ref{NF}. For convenience, we redistribute these parameters by two exclusive conditions as follows:

(C1) $v \equiv 2$ (mod 3) and $b\in \{\lfloor \frac{\lambda v(v-1)}{6}\rfloor, \lceil \frac{\lambda v(v-1)}{6}\rceil\}$, where $\lambda \equiv 1,2$ (mod 3)  if $v \equiv 5$ (mod 6) and $\lambda \equiv 2,4$ (mod 6) if  $v \equiv 2$ (mod 6).

(C2)  $v$ is even  and $\frac{\lambda v(v-1)}{6}-\frac{v}{6} < b < \frac{\lambda v(v-1)}{6}+\frac{v}{6}$ for an odd $\lambda$.

 The rest of this article is organized as follows.
In Section 2, we characterize the combinatorial properties of an optimal data placement TS$(v; b)$ by defining a defect graph for a triple system and introducing the concept of a nearly well-balanced triple system (NWBTS). We show that an NWBTS$(v;b)$ gives rise to an optimal data placement. In Section 3 we introduce a vital design named candelabra system and define its desirable partitions to produce NWBTSs when $(v,  b)$ satisfies (C1) or (C2). Then in Section 4 we study the case of (C1) and completely prove the existence of  NWBTSs in this case. In Sections 5 and 6 we turn to  (C2) and generalize the concept of partitionable candelabra systems to treat this case. Effective recursive constructions are developed; necessary small  input designs are constructed directly; a large portion of NWBTSs with parameters satisfying (C2) is proved to exist. In Section 7 we summarize our main results and present a promising approach to resolving entirely the existence problem of NWBTSs and optimal data placements.

\section{Combinatorial descriptions}

In this section we study the combinatorial properties of a TS$(v;b)$ corresponding to
an optimal data placement for triple replication when $(v,  b)$ satisfies (C1) or (C2). We will define nearly well-balanced triple systems to produce optimal data placements.

For a TS$(v;b)$, where $\frac{\lambda v(v-1)}{6}-\frac{v}{6} < b < \frac{\lambda v(v-1)}{6}+\frac{v}{6}$ for some nonnegative integer $\lambda$, we may write
\begin{align}\label{associate}
3b = \lambda{v\choose 2}+\varepsilon,\ {\rm where\ } -\frac{v}{2}<\varepsilon<\frac{v}{2}\ {\rm and\ }  \varepsilon\ {\rm is~an\ integer}.
\end{align}
In this article we always use the notion of $(\lambda,\varepsilon)$, which is associated with the given parameters $v$ and $b$ by (\ref{associate}). In particular, for (C1) we have $\varepsilon\in\{-1,2\}$ if $\lambda \equiv 1$ (mod 3) and $\varepsilon\in\{-2,1\}$ if $\lambda \equiv 2$ (mod 3).
 We will study the combinatorial properties of a TS$(v;b)$ which produces an optimal data placement for (C1) or (C2). By Remark \ref{R}, the optimal data placement  TS$(v;b)$ cannot be 2-balanced. However, we will show that a little weaker balance should hold for pairs of elements.

Let $(V,\mathcal{F})$ be a TS$(v;b)$ where $(v,b)$ associates with $(\lambda,\varepsilon)$ as in (\ref{associate}).
For any nonzero integer $i$, define the \emph{defect graph of $(V,\mathcal{F})$ at level $i$} by $$D_{i}=\{\{x,y\}\in \tbinom{V}{2}: \lambda_{x,y}=\lambda+i\}$$ and define the \emph{defect} of $(V,\mathcal{F})$ by the largest $\beta$ such that $D_{\beta}\neq\emptyset$ or $D_{-\beta}\neq\emptyset$. Then the \emph{defect graph} of $(V,\mathcal{F})$ is defined to be $D=\cup_{i=1}^{\beta}(D_{i}\cup D_{-i})$.
A 2-balanced triple system has a defect graph $D=D_{1}\cup D_{-1}$ with $D_{1}=\emptyset$ or $D_{-1}=\emptyset$.

\begin{lemma}\label{c1balanced}
Let $(v,  b)$ satisfy (C1) and they associate with  $(\lambda,\varepsilon)$ by (\ref{associate}). 
 Then the minimum of $\big\{\sum_{\{x,y\}\in \binom{V}{2}}(\lambda_{x,y}^{\cal F})^{2}: (V,\mathcal{F})$ is a TS$(v;b)\big\}$ is achieved when $(V,\mathcal{F})$ has a defect  graph $D$ meeting one of the followings:

 (i)  $D=D_{1}\cup D_{-1}$ with $(|D_{1}|, |D_{-1}|)=\begin{cases}
(1+\varepsilon,1), & if\ \varepsilon\in\{1,2\},\\
&\\
(1,1-\varepsilon), & if\ \varepsilon\in\{-1,-2\};
\end{cases}$

(ii) if $\varepsilon\in\{\pm2\}$,  we may also have   $D=D_{\varepsilon}$ and $|D|=1$.

\end{lemma}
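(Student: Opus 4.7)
\medskip

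\noindent\textbf{Proof proposal.}
The plan is to recentre the variables $\lambda_{x,y}$ about $\lambda$, extract a vertex-parity constraint that every TS$(v;b)$ must obey, and then reduce the lemma to a tiny integer optimization that is solved by running through the four possible values of $\varepsilon$. To begin, since $\sum_{\{x,y\}}\lambda_{x,y}=3b=\lambda\binom{v}{2}+\varepsilon$ by (\ref{associate}), setting $\delta_{x,y}:=\lambda_{x,y}-\lambda$ and expanding gives
\[
\sum_{\{x,y\}\in\binom{V}{2}}\lambda_{x,y}^{2}=\binom{v}{2}\lambda^{2}+2\lambda\varepsilon+\sum_{\{x,y\}}\delta_{x,y}^{2},
\]
so minimizing the left-hand side over TS$(v;b)$ is equivalent to minimizing $T:=\sum\delta_{x,y}^{2}$ subject to $\sum\delta_{x,y}=\varepsilon$, $\delta_{x,y}\geq -\lambda$, and whatever additional integrality conditions are forced by the existence of a triple system realizing those pair-frequencies.

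The crucial extra condition I would extract is a vertex-parity constraint. For every $x\in V$, each block through $x$ contains two other points, so $\sum_{y\neq x}\lambda_{x,y}=2r_{x}$ is even, where $r_{x}$ is the number of blocks of $\mathcal{F}$ through $x$. Under (C1) the quantity $\lambda(v-1)$ is always even: if $v\equiv 5\pmod 6$ then $v-1$ is even, and if $v\equiv 2\pmod 6$ the restriction $\lambda\equiv 2,4\pmod 6$ forces $\lambda$ to be even. Consequently $\sum_{y\neq x}\delta_{x,y}\equiv 0\pmod 2$ at every $x\in V$. In graph-theoretic terms, weighting the edge $\{x,y\}$ of the defect graph by $\delta_{x,y}$, the weighted vertex degrees are all even; in particular, whenever every nonzero $\delta_{x,y}$ lies in $\{\pm 1\}$, the simple graph $D_{1}\cup D_{-1}$ has only even degrees, and its smallest nonempty realization contains at least three edges.

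With these reductions in hand, the rest is a short case analysis on $\varepsilon\in\{-2,-1,1,2\}$. For each value I would enumerate the few multisets of nonzero integers summing to $\varepsilon$ with small square-sum, compute $T$, and discard those incompatible with the vertex parity. For example, when $\varepsilon=2$ the multiset $\{+1,+1\}$ gives $T=2$ but is rejected because two simple edges cannot form an even-degree graph; the multisets $\{+2\}$ and $\{+1,+1,+1,-1\}$ both achieve $T=4$ and are parity-realizable (the latter, for instance, as a $4$-cycle); every other multiset summing to $2$ forces $T\geq 6$. Identical bookkeeping for $\varepsilon\in\{-2,-1,1\}$ reproduces exactly the two families (i) and (ii); note in particular that the option ``$D=D_{\varepsilon}$ with $|D|=1$'' survives precisely for $\varepsilon=\pm 2$, since a single edge with $\delta=\pm 1$ has odd weighted endpoints and violates the parity constraint.

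The main obstacle I anticipate is making the multiset enumeration genuinely exhaustive, in particular ruling out exotic configurations that mix $|\delta_{x,y}|\geq 3$ with a collection of $\pm 1$s. A clean way to control this is to observe that, because $n\mapsto n^{2}$ is strictly convex on $\mathbb{Z}$, any multiset containing a component of magnitude $\geq 2$ can be sum-preservingly split into smaller integer pieces with strictly smaller $T$; hence such a multiset can be optimal only when every such split would violate the vertex-parity condition. This narrows the optimal multisets to the two families already listed, after which the parity check on the resulting graph (with at most four edges) is immediate.
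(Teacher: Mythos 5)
Your proposal is correct and arrives at the same optimum via the same recentred identity (the paper's (\ref{cap2})), but it justifies the one nontrivial exclusion differently. The paper splits into defect $\beta\ge 2$ versus $\beta=1$, and in the latter case forces $a_{1},a_{-1}\ge 1$ by citing Remark \ref{R} (no $2$-balanced TS$(v;b)$ exists under (C1)); comparing the resulting bounds $4$ and $2+|\varepsilon|$ in (\ref{cap3})--(\ref{cap4}) gives (i) and (ii). You instead derive the vertex condition $\sum_{y\ne x}\delta_{x,y}\equiv 0\pmod 2$ from $2\lambda_{x}^{\mathcal F}=\sum_{y\ne x}\lambda_{x,y}$ together with the evenness of $\lambda(v-1)$ under (C1), and use it to kill the $2$-balanced candidates $\{+1\}$ and $\{+1,+1\}$ (and their negatives) directly: a nonempty simple graph of at most two $\pm1$-weighted edges cannot have all even weighted degrees. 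In effect you re-prove exactly the instances of Remark \ref{R} that are needed, using the same parity computation the paper only deploys \emph{after} the lemma to pin down the shapes $G_{\varepsilon}$; this makes your argument self-contained where the paper leans on a citation. The cost is the multiset enumeration plus the convexity/splitting remark to dispose of entries of magnitude at least $2$; that part can be streamlined by the paper's one-line bound that any configuration with defect at least $2$ already has $\sum_{i}i^{2}a_{i}\ge 4$, which combined with your parity-based bound $a_{1}+a_{-1}\ge 2+|\varepsilon|$ in the defect-one case finishes the comparison with no case list. One shared caveat, not a gap relative to the paper: both arguments establish the lower bound and identify which defect graphs would attain it; actual attainment of the minimum rests on the existence of such triple systems, which the paper supplies only through its later constructions.
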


\proof Suppose that $(V,\mathcal{F})$ is a TS$(v;b)$ with defect $\beta$ and its defect graph $D=\cup_{i=1}^{\beta}(D_{i}\cup D_{-i})$. Denote $|D_{i}|=a_{i}$, $-\beta\leq i\leq\beta~(i\neq0)$. Further denote $a_{0}=|\{\{x,y\}\in {V\choose 2}:\lambda_{x,y}^{\cal F}=\lambda\}|$.
It is immediate that
$$\sum_{i=-\beta}^{\beta}a_{i}(\lambda+i)=3b~\text{and}~\sum_{i=-\beta}^{\beta}a_{i}={v\choose 2}.$$

\noindent Since $3b = \lambda {v\choose 2}+\varepsilon$ where $\varepsilon\in \{\pm1, \pm2\}$, we have
\begin{align}\label{cap1}
\sum_{i=-\beta}^{\beta} ia_{i}=\varepsilon,
\end{align}
\begin{align}\label{cap2}
\sum_{\{x,y\}\in {V\choose 2}}(\lambda_{x,y}^{\cal F})^{2}=\sum_{i=-\beta}^{\beta} a_{i}(\lambda+i)^{2}=\binom{v}{2}\lambda^{2}+2\varepsilon\lambda+\sum_{i=-\beta}^{\beta} i^{2}a_{i}.
\end{align}
If the defect $\beta\geq 2$, then (\ref{cap2}) gives
\begin{align}\label{cap3}
\sum_{\{x,y\}\in {V\choose 2}}(\lambda_{x,y}^{\cal F})^{2}\geq\binom{v}{2}\lambda^{2}+2\varepsilon\lambda+4~~(\text{if}~\beta\geq2).
\end{align}
Next we let $\beta=1$. By (\ref{cap2}) we have $\sum_{\{x,y\}\in {V\choose 2}}(\lambda_{x,y}^{\cal F})^{2}=\binom{v}{2}\lambda^{2}+2\varepsilon\lambda+a_{1}+a_{-1}$.
It follows from (\ref{cap1}) that $a_{1}-a_{-1}=\varepsilon$. Notice that $a_{1}, a_{-1}\geq1$; otherwise $(V,\mathcal{F})$ is a 2-balanced triple system, which contradicts Remark \ref{R}. Hence we have
\begin{align}\label{cap4}
\sum_{\{x,y\}\in {V\choose 2}}(\lambda_{x,y}^{\mathcal{F}})^{2}\geq\binom{v}{2}\lambda^{2}+2\varepsilon\lambda+2+|\varepsilon| ~~({\rm if}~\beta=1).
\end{align}

Comparing (\ref{cap3}) and (\ref{cap4}) yields that, if  $\varepsilon\in\{\pm1\}$, then $\sum_{\{x,y\}\in {V\choose 2}}(\lambda_{x,y}^{\cal F})^{2}$ attains the minimum in  (\ref{cap4}) with $(a_{1}, a_{-1})$ as assumed in the lemma; and if $\varepsilon\in\{\pm2\}$, then it  attains the minimum both in (\ref{cap3}) and (\ref{cap4})  when the defect graph meets  (i) or (ii) of the lemma.
\qed

Now we continue considering the case that $(V,\mathcal{F})$ is a TS$(v;b)$ with $(v,  b)$ satisfying (C1).  Assume that  $(V,\mathcal{F})$ has a defect graph $D$ satisfying (i) of Lemma \ref{c1balanced}. It is easy to depict the graph $D$ in detail.
Denote by $d_{D}(x)$ the degree of a vertex $x$ in the graph $D$.
For any $x\in V$, we have
\begin{align}
\lambda_{x}^{\mathcal{F}}&=\frac{1}{2}((\lambda+1)d_{D_{1}}(x)+(\lambda-1)d_{D_{-1}}(x)+\lambda(v-1-d_{D_{1}}(x)-d_{D_{-1}}(x))\notag\\
&=\frac{1}{2}(d_{D_{1}}(x)-d_{D_{-1}}(x)+\lambda(v-1)).\notag
\end{align}
Then we get that $d_{D_{1}}(x)\equiv d_{D_{-1}}(x) \pmod{2}$ as $\lambda(v-1)$ is even. It follows from Lemma \ref{c1balanced} that the  defect graph $D=D_{1}\cup D_{-1}$ must be isomorphic to $G_{\varepsilon}$ in Figure \ref{f1}, where an edge of $D_{1}$ or $D_{-1}$ is displayed in a solid line or dotted line, respectively.

\begin{figure}[htbp]
  \centering
  \includegraphics[width=0.31\textwidth]{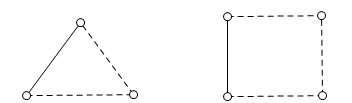}~~~~~
  \includegraphics[width=0.31\textwidth]{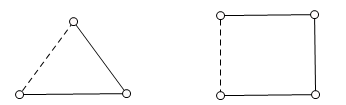}\\
  \small\small$G_{-1}$~~~~~~~~~~~~~~~~~~$G_{-2}$~~~~~~~~~~~~~~~~~~~~$G_{1}$~~~~~~~~~~~~~~~~~~~~$G_{2}$\medskip\\

  \caption{Defect graphs with defect 1 for (C1)}\label{f1}
\end{figure}

\begin{lemma}\label{c2balanced}
Let $(v,  b)$ satisfy (C2) and they associate with  $(\lambda,\varepsilon)$ by (\ref{associate}). Then the minimum of $\big\{\sum_{\{x,y\}\in \binom{V}{2}}(\lambda_{x,y}^{\cal F})^{2}: (V,\mathcal{F})$ is a TS$(v;b)\big\}$ is attained when $(V,\mathcal{F})$ has a defect  graph $D$ isomorphic to $H_{v,\varepsilon}^0$  if $\varepsilon\equiv\frac{v}{2}\pmod{2}$ or isomorphic to one of $H_{v,\varepsilon}^i$ with $1\le i\le 4$ if $\varepsilon\not\equiv\frac{v}{2}\pmod{2}$ (see Figure \ref{f2}, also a solid line stands for an edge of $D_1$ and a dotted line for an edge of $D_{-1}$).
\end{lemma}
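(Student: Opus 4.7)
The plan is to adapt the strategy of Lemma~\ref{c1balanced} to the parity setting of (C2). The identities $\sum_{i=-\beta}^{\beta} ia_{i}=\varepsilon$ and $\sum_{\{x,y\}\in\binom{V}{2}}(\lambda_{xy}^{\cal F})^{2}=\binom{v}{2}\lambda^{2}+2\varepsilon\lambda+\sum_{i=-\beta}^{\beta} i^{2}a_{i}$ carry over verbatim, so the task reduces to bounding $\sum i^{2}a_{i}$. The new ingredient is a parity constraint: since $v$ is even and $\lambda$ is odd, $\lambda(v-1)$ is odd, and the identity $2\lambda_{x}=\lambda(v-1)+\sum_{y\ne x}(\lambda_{xy}-\lambda)$ forces $\sum_{y\ne x}(\lambda_{xy}-\lambda)$ to be odd for every $x\in V$. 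Hence at every vertex the number of incident \emph{odd-defect} edges (those with $\lambda_{xy}-\lambda$ odd) is odd, so the odd-defect edges form a spanning subgraph of $V$ in which every vertex has odd degree; in particular at least $v/2$ such edges exist.

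Next I would use this parity fact to force $\beta=1$ at the minimum. Decomposing $\sum i^{2}a_{i}$ by parity of $i$, every odd-defect edge contributes at least $1$ with equality iff its defect is $\pm1$, every even-defect edge contributes at least $4$, and every odd-defect edge with $|i|\ge3$ contributes at least $9$. Crucially, the parity condition concerns only odd-defect edges and is independent of even-defect edges, so every even-defect edge is pure overhead. Hence any configuration with $\beta\ge2$ satisfies $\sum i^{2}a_{i}\ge v/2+4$, strictly worse than the $\beta=1$ minimum derived next. At the optimum, therefore, $D=D_{1}\cup D_{-1}$, every vertex of $V$ has odd degree in $D$, and $|D|\ge v/2$ with $|D|\equiv|D_{1}|-|D_{-1}|=\varepsilon\pmod 2$.

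Finally I would analyze the two subcases. When $\varepsilon\equiv v/2\pmod 2$, the minimum $|D|=v/2$ forces every vertex to have degree exactly $1$, so $D$ is a perfect matching with $|D_{1}|=\frac{v/2+\varepsilon}{2}$ and $|D_{-1}|=\frac{v/2-\varepsilon}{2}$, giving the unique isomorphism type $H^{0}_{v,\varepsilon}$. When $\varepsilon\not\equiv v/2\pmod 2$, the minimum is $|D|=v/2+1$ with degree sum $v+2$; since $v$ positive odd integers summing to $v+2$ can only be $(3,1,\ldots,1)$ up to order, there is exactly one vertex of degree $3$ and $v-1$ of degree $1$. Classifying by the number $a\in\{0,1,2,3\}$ of the three edges at the special vertex that lie in $D_{1}$, and splitting the matching on the remaining $v-4$ vertices to satisfy $|D_{1}|-|D_{-1}|=\varepsilon$, produces exactly the four types $H^{1}_{v,\varepsilon},\ldots,H^{4}_{v,\varepsilon}$.

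The hard part is ruling out $\beta\ge2$: one must confirm that no trade-off between odd and even defects, exploiting the linear constraint $\sum ia_{i}=\varepsilon$, can beat the $\beta=1$ bound. The key insight is that the parity condition at each vertex is an affine condition on odd-defect edges alone, decoupled from even-defect edges; since $v/2>|\varepsilon|$ by (C2), the odd-degree parity floor $|D_{1}|+|D_{-1}|\ge v/2$ dominates the $\varepsilon$-constraint, and even-defect edges can only add to the cost without relieving the parity obligation. Once this is established, the classification of degree sequences and the enumeration of the four colorings at the special vertex are short combinatorial checks.
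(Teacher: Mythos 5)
Your proposal is correct and follows essentially the same route as the paper: the same integrality of $\lambda_x^{\mathcal F}$ forces an odd number of odd-defect edges at every vertex, the same reduction via $\sum i^2a_i$ rules out defect $\beta\ge 2$, and the same counting ($|D|=v/2$ perfect matching, or $|D|=v/2+1$ with one degree-$3$ vertex, classified by the colors of its three edges) yields $H^0_{v,\varepsilon}$ or $H^1_{v,\varepsilon},\dots,H^4_{v,\varepsilon}$. Your explicit bound $\sum i^2a_i\ge v/2+4$ for $\beta\ge 2$ is a slightly more careful version of the step the paper treats tersely, but it is not a different argument.
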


\begin{figure}[htbp]
  \centering
  \includegraphics[width=0.18\textwidth]{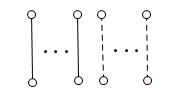}~~~~~~~~
  \includegraphics[width=0.23\textwidth]{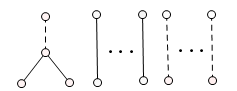}~~~~~~~~
  \includegraphics[width=0.23\textwidth]{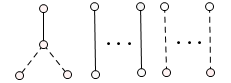}\\
  \small~~$\underbrace{}$~~~~$\underbrace{}$~~~~~~~~~~~~~~~~~~~~~~~~~~~~$\underbrace{}$~~~~$\underbrace{}$~~~~~~~~~~~~~~~~~~~~~~~~~~~$\underbrace{}$~~~~$\underbrace{}$\medskip\\
  \small$~~~~~~\frac{v+2\varepsilon}{4}$~~~$\frac{v-2\varepsilon}{4}$~~~~~~~~~~~~~~~~~~~~~~~~~~$\frac{v-6+2\varepsilon}{4}$~$\frac{v-2-2\varepsilon}{4}$
  ~~~~~~~~~~~~~~~~~~~~~~$\frac{v-2+2\varepsilon}{4}$~$\frac{v-6-2\varepsilon}{4}$~~\medskip\\
  \small~~~~~$H_{v,\varepsilon}^0$~~~~~~~~~~~~~~~~~~~~~~~~~~~~~~~~~~$H_{v,\varepsilon}^1$~~~~~~~~~~~~~~~~~~~~~~~~~~~~~~~~~~~~~$H_{v,\varepsilon}^2$~~~~~~\medskip\\

  \centering
  \includegraphics[width=0.234\textwidth]{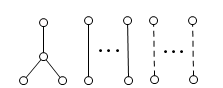}~~~~~
  \includegraphics[width=0.234\textwidth]{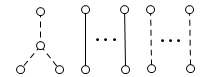}\\
  \small~~~~~~~~~$\underbrace{}$~~~~$\underbrace{}$~~~~~~~~~~~~~~~~~~~~~~~$\underbrace{}$~~~~$\underbrace{}$\medskip\\
  \small~~~~~~~~~~~~~~~$\frac{v-10+2\varepsilon}{4}$~~$\frac{v+2-2\varepsilon}{4}$~~~~~~~~~~~~~~~~~~$\frac{v+2+2\varepsilon}{4}$~$\frac{v-10-2\varepsilon}{4}$~~~~~\medskip\\
  \small~~~~~~~~~$H_{v,\varepsilon}^{3}$~~~~~~~~~~~~~~~~~~~~~~~~~~~~~~~~~$H_{v,\varepsilon}^{4}$~~~~~~~~~\medskip\\

  \caption{Defect graphs for (C2)}\label{f2}

\end{figure}

\proof Suppose that $(V,\mathcal{F})$ is a TS$(v;b)$ with defect $\beta$ and its defect graph  $D=\cup_{i=1}^{\beta}(D_{i}\cup D_{-i})$. Let $D_{0}=\{\{x,y\}\in \binom{V}{2}: \lambda_{x,y}^{\cal F}=\lambda\}$. Denote $|D_{i}|=a_{i}$, $-\beta\leq i\leq\beta$.
For any $x\in V$, we have
\begin{align}
\lambda_{x}^{\cal F}&=\frac{1}{2}(\sum _{i=-\beta}^{\beta}(\lambda+i)d_{D_{i}}(x))=\frac{1}{2}(\lambda\sum _{i=-\beta}^{\beta}d_{D_{i}}(x)+\sum _{i=-\beta}^{\beta}id_{D_{i}}(x))\notag\\
&=\frac{1}{2}(\lambda(v-1)+\sum _{i=-\beta}^{\beta}id_{D_{i}}(x)).\notag
\end{align}
Since $v$ is even and $\lambda$ is odd,  we have  $\sum _{i=-\beta}^{\beta}id_{D_{i}}(x)\equiv1 \pmod{2}$ to ensure that $\lambda_{x}^{\cal F}$ is an integer. Hence we have $d_D(x)\ge 1$. It follows from the arbitrariness of $x$ that  $|D|\ge {v\over 2}$.

Similarly to the proof of Lemma \ref{c1balanced}, we have (\ref{cap1}) and (\ref{cap2}). It is obvious that
\begin{align}\label{cap6}
\sum_{i=-\beta}^{\beta} i^{2}a_{i}&= \sum_{i=1}^{\beta}(a_{i}+ a_{-i})+\sum_{i=2}^{\beta}(i^{2}-1)(a_{i}+ a_{-i})\notag\\
&\geq \sum_{i=1}^{\beta}(a_{i}+ a_{-i})=|D|,\notag
\end{align}
where the  equality  occurs if and only if $\beta=1$.

So we let $\beta=1$ next. Note that $|D|\geq\frac{v}{2}$ and $a_{1}-a_{-1}=\varepsilon$ from (\ref{cap1}).
We have that $|D|$ is minimized to $|D|=\frac{v}{2}$ if $\frac{v}{2}\equiv\varepsilon\pmod{2}$, $(a_{1}, a_{-1})=(\frac{v+2\varepsilon}{4}, \frac{v-2\varepsilon}{4})$ and $D$ is a perfect matching isomorphic to the graph $H_{v,\varepsilon}^0$ in Figure \ref{f2}; and that $|D|$ is minimized to $|D|=\frac{v}{2}+1$ if $\frac{v}{2}\not\equiv\varepsilon\pmod{2}$, $(a_{1}, a_{-1})=(\frac{v+2+2\varepsilon}{4}, \frac{v+2-2\varepsilon}{4})$ so that $D$ is isomorphic to one of the graphs $H_{v,\varepsilon}^i$, $1\le i\le 4$, in Figure \ref{f2}. From (\ref{cap2}), when $|D|$ attains the minimum, $\sum_{\{x,y\}\in {V\choose 2}}(\lambda_{x,y}^{\cal F})^{2}$ also does. This proves the lemma.
\qed

Recall that an optimal data placement TS$(v;b)$ minimizes the polynomial $$P(\mathcal{F}, x) = \sum^{3}_{j=1}\sum _{\{x_{1},\ldots,x_{j}\}\in{V\choose j}}(\lambda^{\cal F}_{x_{1},\ldots,x_{j}})^2(x-1)^{j} - bx^{3} + b^{2}$$ for all $x \geq 1.$  From Lemma \ref{balanced}, the values $\sum_{x\in V}(\lambda_{x}^{\cal F})^{2}$ and $\sum_{\{x,y,z\}\in {V\choose 3}}(\lambda_{x,y,z}^{\cal F})^{2}$ are minimized when $(V,{\cal F})$ is 1-balanced and 3-balanced, respectively.  Lemmas \ref{c1balanced} and \ref{c2balanced} describe the  combinatorial property of an optimal TS$(v;b)$ in term of its defect graph when the property of 2-balance cannot hold. For (C1)  its defect graph $D$ with defect one must be isomorphic to  $G_{\varepsilon}$  and $D$ must be isomorphic to a graph in Figure \ref{f2} for (C2).  It is not difficult to see that if $D$ is isomorphic to $H_{v,\varepsilon}^3$ or $H_{v,\varepsilon}^4$,  the family $\mathcal{F}$ is not 1-balanced. For convenience, we then define  $\mathcal{F}$ to be \emph{nearly $2$-balanced} if for (C1) its defect graph $D$ is isomorphic to  $G_{\varepsilon}$, or for (C2)  $D$ is isomorphic to $H_{v,\varepsilon}^0$ if $\varepsilon\equiv\frac{v}{2}\pmod{2}$ or to one of $H_{v,\varepsilon}^1$ and $H_{v,\varepsilon}^2$ if $\varepsilon\not\equiv\frac{v}{2}\pmod{2}$. Furthermore, a TS$(v;b)$ is said to be \emph{nearly well-balanced}, denoted by NWBTS$(v; b)$, if it is  nearly $2$-balanced (hence 1-balanced) and  $3$-balanced. With these preparations, we have the main result of this section recorded in the following theorem.


\begin{theorem}\label{kk} An NWBTS$(v; b)$ yields an optimal data placement for triple replication when $(v,  b)$ satisfies (C1) or (C2).
\end{theorem}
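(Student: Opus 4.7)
The plan is to apply Lemma \ref{jian} with $k=3$ to rewrite
\[
P(\mathcal{F},x)=(x-1)S_1(\mathcal F)+(x-1)^2 S_2(\mathcal F)+(x-1)^3 S_3(\mathcal F)-bx^3+b^2,
\]
where $S_j(\mathcal F):=\sum_{\{x_1,\ldots,x_j\}\in\binom{V}{j}}(\lambda^{\mathcal F}_{x_1,\ldots,x_j})^2$. Since $-bx^3+b^2$ is the same for every TS$(v;b)$ and the prefactors $(x-1)^j$ are nonnegative on $x\ge 1$, any TS$(v;b)$ that simultaneously minimizes $S_1$, $S_2$, and $S_3$ over all TS$(v;b)$'s must minimize $P(\mathcal F,x)$ for every $x\ge 1$ and therefore yield an optimal data placement. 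The task reduces to checking that an NWBTS$(v;b)$ achieves these three minima at once.

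The minima of $S_3$ and $S_2$ are the easy half. By Lemma \ref{balanced}, $S_3$ is minimized exactly when $\mathcal F$ is $3$-balanced, which is built into the definition of an NWBTS. For $S_2$ the corresponding clause of Lemma \ref{balanced} would ask for $2$-balance, which Remark \ref{R} rules out under (C1) and (C2); instead Lemmas \ref{c1balanced} and \ref{c2balanced} provide the exact replacement, namely that $S_2$ is minimized iff the defect graph $D$ is isomorphic to $G_\varepsilon$ in Figure \ref{f1} (case (C1)) or to one of the graphs $H^i_{v,\varepsilon}$, $0\le i\le 4$, in Figure \ref{f2} (case (C2)). The definition of nearly $2$-balanced selects precisely the shapes $G_\varepsilon$, $H^0_{v,\varepsilon}$, $H^1_{v,\varepsilon}$, $H^2_{v,\varepsilon}$ among these, so $S_2$ attains its minimum for every NWBTS.

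The only remaining point, and the one I regard as the sole genuine obstacle, is to confirm that nearly $2$-balanced forces $1$-balance, so that $S_1$ is also minimized via Lemma \ref{balanced}. Using the identity $\lambda_x^{\mathcal F}=\tfrac12\bigl(\lambda(v-1)+d_{D_1}(x)-d_{D_{-1}}(x)\bigr)$ derived in the text surrounding Lemmas \ref{c1balanced}--\ref{c2balanced}, I would go through each admissible defect graph in Figures \ref{f1} and \ref{f2} and verify by inspection that the difference $d_{D_1}(x)-d_{D_{-1}}(x)$ takes at most two consecutive integer values as $x$ ranges over $V$; the $\lambda_x^{\mathcal F}$'s then differ pairwise by at most one, so $\mathcal F$ is $1$-balanced. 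This is precisely why the graphs $H^3_{v,\varepsilon}$ and $H^4_{v,\varepsilon}$, which break $1$-balance as observed in the paragraph preceding the theorem, are deliberately excluded from the definition of nearly $2$-balanced. Assembling the three simultaneous minimizations yields the theorem; apart from this case-by-case degree check, the argument is a direct combination of Lemmas \ref{jian}, \ref{balanced}, \ref{c1balanced}, and \ref{c2balanced}.
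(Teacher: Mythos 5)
Your proposal is correct and follows essentially the same route as the paper, which treats Theorem \ref{kk} as a direct assembly of Lemma \ref{jian}, Lemma \ref{balanced}, Lemmas \ref{c1balanced}--\ref{c2balanced}, and the definition of nearly $2$-balanced (including the observation that the retained defect graphs force $1$-balance while $H^3_{v,\varepsilon}$, $H^4_{v,\varepsilon}$ are excluded for exactly that reason). The only nitpick is your "iff" for the $S_2$ minimum: by Lemma \ref{c1balanced}(ii) a defect-$2$ graph can also attain the minimum when $\varepsilon\in\{\pm2\}$, but since you only need that an NWBTS attains the minimum, this does not affect the argument.
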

 In the following of the article when referring to a nearly well-balanced or a nearly 2-balanced TS$(v; b)$ we always imply that the parameter pair $(v,  b)$ satisfies (C1) or (C2)  and  the pair $(\lambda,\varepsilon)$ is associated with $(v,  b)$ by (\ref{associate}). By the definition of a nearly 2-balanced triple system, the defect graph depends only on $\varepsilon$ for (C1) and on $v,\varepsilon$ for (C2), which will be often used in later constructions.

A triple system $(V,\mathcal{F})$ is called \emph{simple} if it contains no repeated blocks. Obviously a simple TS$(v; b)$ is 3-balanced and we have the following lemma.

\begin{lemma}\label{NBTS}
A nearly $2$-balanced simple TS$(v; b)$ is an NWBTS$(v; b)$.
\end{lemma}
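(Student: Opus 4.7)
The plan is to verify the two conditions (1-balance and 3-balance) that together with nearly 2-balance constitute the definition of NWBTS$(v;b)$.

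First I would handle 3-balance, which is essentially free. In a simple TS$(v;b)$ no triple is repeated, so for every $\{x,y,z\}\in\binom{V}{3}$ we have $\lambda^{\mathcal F}_{x,y,z}\in\{0,1\}$. Therefore $|\lambda^{\mathcal F}_{x,y,z}-\lambda^{\mathcal F}_{x',y',z'}|\le 1$ automatically, so $(V,\mathcal F)$ is 3-balanced.

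Next I would show 1-balance. Using the identity computed in the proof of Lemma \ref{c1balanced} (and analogously for (C2)),
\[
\lambda^{\mathcal F}_{x}=\tfrac{1}{2}\bigl(\lambda(v-1)+d_{D_{1}}(x)-d_{D_{-1}}(x)\bigr)
\]
for every $x\in V$, so 1-balance is equivalent to the values $d_{D_{1}}(x)-d_{D_{-1}}(x)$ lying within $\{c,c+2\}$ (or $\{c\}$) for a common constant $c$, across all $x\in V$. Now I would go through each defect graph permitted by the definition of nearly 2-balanced. For (C1), the graph $G_{\varepsilon}$ in Figure \ref{f1} is a disjoint union of a few isolated edges/paths, and a direct inspection shows that $d_{D_{1}}(x)-d_{D_{-1}}(x)\in\{-1,0,1\}$ with at most two distinct values modulo the parity forced by $\lambda(v-1)$ being even; this gives $|\lambda^{\mathcal F}_{x}-\lambda^{\mathcal F}_{y}|\le 1$. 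For (C2), the graphs $H_{v,\varepsilon}^{0}$ (a perfect matching of appropriate signs) and $H_{v,\varepsilon}^{1}$, $H_{v,\varepsilon}^{2}$ (a perfect matching plus one small additional gadget) again have $d_{D_{1}}(x)-d_{D_{-1}}(x)\in\{-1,1\}$, so the values $\lambda^{\mathcal F}_{x}$ differ by at most one. (Note that precisely the graphs $H_{v,\varepsilon}^{3}$, $H_{v,\varepsilon}^{4}$, in which some vertex acquires degree difference exceeding $1$, are the ones excluded from the definition of nearly 2-balanced, as remarked just before the definition.)

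Combining the two facts, a simple nearly 2-balanced TS$(v;b)$ is simultaneously 1-balanced, nearly 2-balanced, and 3-balanced, i.e.\ an NWBTS$(v;b)$. The only step that requires any actual work is the case analysis of the defect graphs verifying 1-balance; everything else is a direct consequence of the definitions already set up in Section 2. I do not expect a real obstacle, since the graphs in Figures \ref{f1} and \ref{f2} have very few edges and a small number of vertex types, so the degree counts can be read off by inspection.
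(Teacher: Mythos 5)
Your proposal is correct and follows essentially the same route as the paper, which states the lemma without further argument precisely because a simple system is automatically $3$-balanced and the defect graphs admitted in the definition of nearly $2$-balance (with $H_{v,\varepsilon}^{3},H_{v,\varepsilon}^{4}$ excluded) were chosen so that $\lambda_{x}^{\mathcal F}=\tfrac12\big(\lambda(v-1)+d_{D_1}(x)-d_{D_{-1}}(x)\big)$ takes only two consecutive values, i.e.\ $1$-balance holds. One small correction to your inspection: for (C1) the parity constraint $d_{D_1}(x)\equiv d_{D_{-1}}(x)\pmod 2$ forces the differences to lie in $\{0,2\}$ or $\{0,-2\}$ (a single sign within each $G_{\varepsilon}$), not in $\{-1,0,1\}$, but this still yields $|\lambda_{x}^{\mathcal F}-\lambda_{y}^{\mathcal F}|\le 1$, so the conclusion stands.
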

%

\begin{lemma}\label{bb} Let $v,h,b$ be positive integers and $b<\binom{v}{3}$.
\begin{enumerate}
\item[(i)] If there exists an NWBTS$(v; b)$, then so does an NWBTS$(v; b_1)$ where $b_1 = h\binom{v}{3}+ b$.

\item[(ii)]  If there exists an NWBTS$(v; b)$, then so does an NWBTS$(v; b_2)$ where  $b_2={v\choose 3}-b$.
\end{enumerate}
\end{lemma}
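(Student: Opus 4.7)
The plan for part (i) is to take an NWBTS $(V, \mathcal{F})$ on parameters $(v;b)$ and adjoin $h$ copies of the full triple set $\binom{V}{3}$, forming $\mathcal{F}_1 = \mathcal{F} \cup h\cdot\binom{V}{3}$, whose block count is exactly $b_1$. Writing $3b = \lambda\binom{v}{2}+\varepsilon$, the associated pair for $(v; b_1)$ becomes $(\lambda+h(v-2),\varepsilon)$; the first task is to verify that this new $\lambda$ still satisfies (C1)---using that $v-2$ is divisible by $3$ when $v\equiv 5\pmod 6$ and by $6$ when $v\equiv 2\pmod 6$---or (C2)---using that $v-2$ is even so the odd parity of $\lambda$ is preserved. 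Adding $h$ copies of $\binom{V}{3}$ shifts $\lambda_{x,y}^{\mathcal{F}}$, $\lambda_x^{\mathcal{F}}$ and $\lambda_{x,y,z}^{\mathcal{F}}$ uniformly by $h(v-2)$, $h\binom{v-1}{2}$ and $h$ respectively, so the defect graph of $\mathcal{F}_1$ is identical to that of $\mathcal{F}$, and both $1$- and $3$-balance are inherited; this delivers the required NWBTS.

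For part (ii) I would use the complement $\mathcal{F}_2 = \binom{V}{3}\setminus\mathcal{F}$. The key preliminary observation is that any NWBTS$(v;b)$ with $b < \binom{v}{3}$ is automatically simple: $3$-balance forces the multiplicities $\lambda_{x,y,z}^{\mathcal{F}}$ to differ pairwise by at most one while their average is $b/\binom{v}{3} < 1$, so every value lies in $\{0,1\}$. Hence $\mathcal{F}_2$ is a well-defined simple triple system with $b_2 = \binom{v}{3} - b$ blocks. The identity $\lambda_{x,y}^{\mathcal{F}_2} = (v-2) - \lambda_{x,y}^{\mathcal{F}}$ yields the associated pair $(v-2-\lambda,-\varepsilon)$, and a short parity check (analogous to part (i)) confirms that (C1) or (C2) is maintained. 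The $1$- and $3$-balance of $\mathcal{F}_2$ follow at once from the uniform shifts $\lambda_x^{\mathcal{F}_2} = \binom{v-1}{2} - \lambda_x^{\mathcal{F}}$ and $\lambda_{x,y,z}^{\mathcal{F}_2} = 1 - \lambda_{x,y,z}^{\mathcal{F}}$.

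The only mildly subtle step, and thus the main obstacle, is to confirm that the defect graph of $\mathcal{F}_2$ still appears in the allowable list drawn in Figures \ref{f1} and \ref{f2}. The identity $\lambda_{x,y}^{\mathcal{F}_2} - (v-2-\lambda) = -(\lambda_{x,y}^{\mathcal{F}} - \lambda)$ shows that $D_i^{\mathcal{F}_2} = D_{-i}^{\mathcal{F}}$, so complementation merely interchanges solid and dotted edges. Under this swap $G_{\varepsilon}$ is sent to $G_{-\varepsilon}$, and for (C2) a direct inspection of the vertex counts (which flip sign as $\varepsilon\to-\varepsilon$) shows $H_{v,\varepsilon}^0 \mapsto H_{v,-\varepsilon}^0$, $H_{v,\varepsilon}^1 \mapsto H_{v,-\varepsilon}^2$ and $H_{v,\varepsilon}^3 \mapsto H_{v,-\varepsilon}^4$---precisely the matching required once $\varepsilon$ is replaced by $\varepsilon_2 = -\varepsilon$. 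No deeper combinatorial difficulty intervenes beyond this careful bookkeeping, and the lemma follows.
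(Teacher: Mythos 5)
Your proposal is correct and follows essentially the same route as the paper: adjoin $h$ copies of $\binom{V}{3}$ for (i) and take the set complement for (ii), observing that the defect graph is unchanged in the first case and has its solid and dotted edges interchanged (so $\varepsilon$ flips sign and $\lambda$ becomes $v-2-\lambda$) in the second, with simplicity of $\mathcal{F}$ following from $3$-balance and $b<\binom{v}{3}$. Your extra congruence checks and the explicit matching $H_{v,\varepsilon}^1\mapsto H_{v,-\varepsilon}^2$ just make explicit what the paper leaves as "we can check" (the remark about $H^3\mapsto H^4$ is superfluous, since a nearly $2$-balanced system never has defect graph $H^3$, but harmless).
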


\proof 
Suppose that $(V,\mathcal{F})$ is an NWBTS$(v; b)$. Then $\mathcal{F}$ is simple as $b <\binom{v}{3}$ and it is nearly 2-balanced. Let $\mathcal{F}$ have a defect graph $D=D_1\cup D_{-1}$. Assume  $3b = \lambda\binom{v}{2} +\varepsilon$  as in (\ref{associate}).

(i) Let $\mathcal{F}_1$ consist of all blocks of $\mathcal{F}$ and all triples of $V$ with each triple taken $h$ times. Obviously every triple of $V$ occurs $h$ or $h+1$ times in $\mathcal{F}_1$ and thus $\mathcal{F}_1$ is 3-balanced. We also learn that every pair of $D_1$ and $D_{-1}$ is respectively contained in $\lambda_1+1$ and $\lambda_1-1$ blocks of ${\cal F}_1$ and all other pairs in $\lambda_1$ blocks, where $\lambda_1=\lambda+h(v-2)$. It follows that $D$ is also the defect graph of $\mathcal{F}_1$, which fulfils  the definition of nearly 2-balance as  $3b_1=3b+3h\binom{v}{3}=\lambda_1 {v\choose2}+\varepsilon$. Now that $(V,\mathcal{F}_1)$ is nearly 2-balanced and 3-balanced, it is nearly well-balanced with $b_1$ blocks.

(2) Let $\mathcal{F}_2={V\choose 3}\setminus {\cal F}$. Clearly ${\cal F}_2$ is 3-balanced with $b_2$ blocks. Denote the defect graph of ${\cal F}_2$ by $D'$. It is easy to see that $D'$ has defect 1 and $D'=D_{-1}\cup D_1$, which means every pair of $D_{-1}$ and $D_1$ is respectively contained in $\lambda_2+1$ and $\lambda_2-1$ blocks of ${\cal F}_2$ and all other pairs in $\lambda_2$ blocks where $\lambda_2=v-2-\lambda$. Noticing  $3b_2=3\binom{v}{3}-3b=\lambda_2{v\choose 2}-\varepsilon$, we can check that ${\cal F}_2$ is also nearly 2-balanced in either case of $\varepsilon\equiv {v\over 2}$ (mod 2) or not. Thus we have an NWBTS$(v;b_2)$. \qed



\section{Candelabra systems with desirable partitions}

In this section we introduce the definition of candelabra systems and then define two kinds of partitions. The main effect is to produce NWBTSs with a large range of parameters. Analogous constructions are widely used for large sets, see for instance \cite{PCS,Lsts}.

A \emph{candelabra $t$-system} (or $t$-CS as in \cite{CS}) of order $v$ and block sizes from $K$, denoted by CS$(t,K,v)$, is a
quadruple $(X, S, \mathcal{G}, \mathcal{A})$ that satisfies the following properties:

(1) $X$ is a set of $v$ elements (\emph{points});

(2) $S$ is a subset of $X$ (the \emph{stem}) of size $s$;

(3) $\mathcal{G} = \{G_{1}, G_{2},\ldots\}$ is a set of nonempty subsets (\emph{groups}) of $X\backslash S$ that partition $X\backslash S$;

(4) $\mathcal{A}$ is a family of subsets (\emph{blocks}) of $X$, each having cardinality from $K$; and

(5) each $t$-subset $T$ of $X$ with $|T \cap(S \cup G_{i})| < t$ for all $i$ is contained in a unique block and no $t$-subset of $S \cup G_{i}$ for any $i$ is contained in any block.

\noindent The  \emph{type} of a $t$-CS is the multiset list ($\{|G|: G \in \mathcal{G}\}: |S|$), listing the group sizes and stem size. When a $t$-CS has $n_{i}$ groups of size
$g_{i}$, $1 \leq i \leq r$, and stem size $s$, we use the notation $(g^{n_{1}}_{1} g^{n_{2}}_{2} \cdots g^{n_{r}}_{r} : s)$.
We primarily employ candelabra systems with $t = 3$
and $K = \{3\}$, which will be denoted by CS$(g^{n_{1}}_{1} g^{n_{2}}_{2} \cdots g^{n_{r}}_{r} : s)$. 

A \emph{$t$-wise balanced design} ($t$BD) with parameters $t$-$(v, K, \lambda)$ is a pair $(X, \mathcal{B})$ where $X$ is a set of $v$ points and $\mathcal{B}$ is a collection of subsets (blocks) of $X$ with the property that the size of every block is in the set $K$ and every $t$-subset of $X$ is contained in exactly $\lambda$ blocks. A $t$-$(v, K, \lambda)$ design is also denoted by S$_{\lambda}(t,K,v)$. When $K = \{k\}$, we simply write as S$_{\lambda}(t,k,v)$.

A \emph{group divisible $t$-design} (or $t$-GDD) of order $v$, index $\lambda$, and block sizes from $K$, denoted by GDD$_{\lambda}(t,K,v)$, is a triple $(X, \mathcal{G}, \mathcal{B})$ such that

(1) $X$ is a set of $v$ elements (\emph{points});

(2) $\mathcal{G}$ is a set of nonempty subsets (\emph{groups}) of $X$ that partition $X$;

(3) $\mathcal{B}$ is a family of subsets (\emph{blocks}) of $X$, each of cardinality from $K$, such that each block intersects any given group in at most one point; and

(4) each $t$-subset of points from $t$ distinct groups is contained in exactly $\lambda$ blocks.

\noindent  
A GDD of type $g^{t_{1}}_{1}\cdots g^{t_{s}}_{s}$ is a GDD in which there are exactly $t_{i}$ groups of cardinality $g_{i}$ for $1 \leq i \leq s$.
When $\lambda$ is omitted in the notation, we mean $\lambda=1$.

A candelabra system CS$(g^{n}$~: $s)~(X,S,\mathcal{G},\mathcal{A})$ with $s\geq 2$, is called {\em partitionable} and denoted by PCS$(g^{n}: s)$ (as in \cite{PCS}), if the block set $\mathcal{A}$ can be partitioned into $\mathcal{A}_{x}$, $x\in G, G \in \mathcal{G},$ and $\mathcal{A}_{1}, \mathcal{A}_{2},\ldots, \mathcal{A}_{s-2}$ so that

(i) for each $x\in G$ and $G\in \mathcal{G}$, $\mathcal{A}_{x}$ is the block set of a GDD$(2, 3, gn + s)$ of type $1^{gn-g}(g + s)^{1}$ with $G\cup S$ as the long group; and

(ii) for $1\leq i\leq s-2$, $(X\setminus S, \mathcal{G}, \mathcal{A}_{i})$ is a GDD$(2, 3, gn)$ of type $g^{n}$.

 Let $a,g$ be  positive integers and let $(X, S, \mathcal{G}, \mathcal{A})$ be a CS$((2ag)^{n}:s)$. It is said to be {\em $g$-partitionable with index two} and denoted by $g$-PCS$_{2}((2ag)^{n} : s)$ if its block set $\mathcal{A}$ contains a subset $\mathcal{A}'$ and $\mathcal{A}'$ can be partitioned into $gn$ pairwise disjoint parts $\mathcal{A}_{i}$, $0 \leq i \leq gn-1$, such that, for each group $G\in \mathcal{G}$, there are exactly $g$ $\mathcal{A}_{i}$'s such that each $\mathcal{A}_{i}$ is the block set of a GDD$_{2}(2, 3, 2agn + s)$ of type $1^{2ag(n-1)}(2ag + s)^{1}$ with $G\cup S$ as its long group.

Making use of a PCS$(g^{n}: s)$ or a $g$-PCS$_{2}((2ag)^{n} : s)$ as a master design will enable us to produce NWBTSs. By this in Section 4 we determine the existence of NWBTSs for (C1) and in Sections 5 and 6 we will define candelabra systems with another several types of partitions to treat (C2). Before presenting the constructions, we prove two auxiliary lemmas.

\begin{lemma}\label{tee}
 Suppose that $(V,\mathcal{F})$ is an NWBTS$(v; b)$ where $b<\binom{v}{3}$. Further suppose that $(V,\mathcal{F}')$ is a triple system comprising of $q$ pairwise disjoint simple S$_{p}(2,3,v)$s. If $\mathcal{F}$ and $\mathcal{F}'$ are disjoint, then the triple system $(V,\mathcal{F}\cup\mathcal{F}')$ forms an NWBTS$(v; c)$ where $c=b+\frac{pqv(v-1)}{6}$.
\end{lemma}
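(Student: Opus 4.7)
The plan is straightforward: combining $\mathcal{F}$ with $\mathcal{F}'$ should shift the pair replication by a uniform additive constant, so the defect graph (and hence nearly 2-balance) is inherited from $\mathcal{F}$, while simplicity of the union forces 3-balance.

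First I would count blocks and pair replications. Each simple $S_{p}(2,3,v)$ has $\frac{pv(v-1)}{6}$ blocks, so $|\mathcal{F}'|=q\cdot\frac{pv(v-1)}{6}$ and $|\mathcal{F}\cup\mathcal{F}'|=c$. In each $S_{p}(2,3,v)$ every pair $\{x,y\}$ lies in exactly $p$ blocks, and the $q$ copies have disjoint block sets, so $\lambda_{x,y}^{\mathcal{F}'}=pq$ uniformly over all pairs. Hence $\lambda_{x,y}^{\mathcal{F}\cup\mathcal{F}'}=\lambda_{x,y}^{\mathcal{F}}+pq$ for every pair $\{x,y\}$. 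Writing $3b=\lambda\binom{v}{2}+\varepsilon$ as in (\ref{associate}), I obtain $3c=(\lambda+pq)\binom{v}{2}+\varepsilon$, so the associated pair for $(v,c)$ is $(\lambda',\varepsilon')=(\lambda+pq,\varepsilon)$.

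Next I would verify nearly 2-balance is preserved. Since the shift $pq$ is the same at every pair, the level sets of the defect graph coincide:
\[
D_{i}^{\mathcal{F}\cup\mathcal{F}'}=\{\{x,y\}:\lambda_{x,y}^{\mathcal{F}\cup\mathcal{F}'}=\lambda'+i\}=\{\{x,y\}:\lambda_{x,y}^{\mathcal{F}}=\lambda+i\}=D_{i}^{\mathcal{F}}
\]
for every $i$. Because $\mathcal{F}$ is nearly 2-balanced, its defect graph is $G_{\varepsilon}$ in case (C1) or one of $H_{v,\varepsilon}^{0},H_{v,\varepsilon}^{1},H_{v,\varepsilon}^{2}$ in case (C2); as $v$ and $\varepsilon$ are unchanged, the same graph certifies $\mathcal{F}\cup\mathcal{F}'$ as nearly 2-balanced, provided $(v,c)$ still lies in the regime (C1) or (C2)—i.e., the shifted index $\lambda+pq$ falls in the residue class prescribed for $\varepsilon$, which is the implicit parameter restriction for applications of the lemma.

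Finally, 3-balance follows from simplicity. Since $b<\binom{v}{3}$ and $\mathcal{F}$ is an NWBTS, $\mathcal{F}$ is simple; each $S_{p}(2,3,v)$ is simple and the $q$ copies are pairwise disjoint, so $\mathcal{F}'$ is simple; and by hypothesis $\mathcal{F}\cap\mathcal{F}'=\emptyset$. The union $\mathcal{F}\cup\mathcal{F}'$ is therefore a simple triple system, so every triple appears $0$ or $1$ times, which is automatically 3-balance. Combined with nearly 2-balance, Lemma \ref{NBTS} then delivers $\mathcal{F}\cup\mathcal{F}'$ as an NWBTS$(v;c)$. There is no substantial technical obstacle here; the whole argument is the bookkeeping observation that adding a uniform $pq$ to every pair count leaves the defect graph untouched, and the only care needed is to confirm the inherited parameter pair still satisfies (C1) or (C2).
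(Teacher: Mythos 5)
Your argument is correct and follows essentially the same route as the paper's proof: a uniform shift of every pair count by $pq$ leaves the defect graph unchanged while $3c=(\lambda+pq)\binom{v}{2}+\varepsilon$, and simplicity of the disjoint union gives 3-balance, hence an NWBTS$(v;c)$. Your explicit remark that the shifted parameters must still fall under (C1) or (C2) is a point the paper leaves implicit (and which holds in all of its applications), but otherwise the two proofs coincide.
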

\proof Clearly $(V,\mathcal{F})$ is a simple NWBTS$(v; b)$ and $(V,\mathcal{F}\cup\mathcal{F}')$ is a 3-balanced TS$(v; c)$ with $c=b+\frac{pqv(v-1)}{6}$. Let $(v,b)$ associate  with $(\lambda,\varepsilon)$. It follows from the construction that the pairs of $V$ appear in $pq+\lambda-1,pq+\lambda$, or $pq+\lambda+1$ blocks of $\mathcal{F}\cup\mathcal{F}'$ and that the defect graph of $\mathcal{F}\cup\mathcal{F}'$ is the same as that of $\mathcal{F}$. Noticing $3c=\frac{(pq+\lambda)v(v-1)}{2}+\varepsilon$ yields that $\mathcal{F}\cup\mathcal{F}'$ is  nearly 2-balanced. As a result, $\mathcal{F}\cup\mathcal{F}'$ is  an NWBTS$(v; c)$.
\qed

\begin{lemma}\label{tee2}

Let $v\equiv u\equiv 2,5\pmod{6}$, $c<\binom{u}{3}$, and $(u,c)$ satisfy (C1) associating with  $(\lambda,\varepsilon)$.
  Suppose that $(U,{\cal F})$ is an NWBTS$(u; c)$ and  $(V,\mathcal{F}')$ is a simple GDD$_{\lambda}(2,3,v)$ of type $1^{v-u}u^{1}$
with $U\subset V$ as its long group. Then $(V,{\cal F}\cup{\cal F}')$ forms an NWBTS$(v; b)$ where $3b=\lambda {v\choose 2}+\varepsilon$.
\end{lemma}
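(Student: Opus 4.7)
The plan is to show that $\mathcal{F}\cup\mathcal{F}'$ satisfies the two defining conditions of an NWBTS$(v;b)$: 3-balance and nearly 2-balance with the correct defect graph $G_\varepsilon$. First I would verify that the union is simple and has the right block count, and then compute the pair-replication function to identify its defect graph.

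For simplicity, the key observation is that every block of $\mathcal{F}$ lies inside $U$, while every block of the GDD $\mathcal{F}'$ meets the long group $U$ in at most one point, so the two block families are disjoint. Combined with the simplicity of each ($\mathcal{F}$ being simple follows from 3-balance together with $c<\binom{u}{3}$, exactly as in the opening of the proof of Lemma \ref{bb}; $\mathcal{F}'$ is simple by hypothesis), the union is simple and therefore 3-balanced. The block count is immediate from $3c=\lambda\binom{u}{2}+\varepsilon$ together with the standard count $\lambda(\binom{v}{2}-\binom{u}{2})/3$ for a GDD$_\lambda(2,3,v)$ of type $1^{v-u}u^1$, which add to $b$.

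The heart of the argument is the pair-replication computation for $\mathcal{F}\cup\mathcal{F}'$. A pair inside $U$ receives no contribution from $\mathcal{F}'$ (otherwise some block of $\mathcal{F}'$ would meet $U$ in two points), so its multiplicity equals $\lambda_{x,y}^{\mathcal{F}}$. A pair with at least one endpoint in $V\setminus U$ receives no contribution from $\mathcal{F}$ and, since the groups of $\mathcal{F}'$ are $U$ and the singletons of $V\setminus U$, sits in two distinct groups of $\mathcal{F}'$, so by the GDD property it appears exactly $\lambda$ times. Consequently, the defect graph of $\mathcal{F}\cup\mathcal{F}'$ coincides as an edge set with the defect graph of $\mathcal{F}$, which by hypothesis is $G_\varepsilon$.

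The step I expect to need the most care is arguing that $G_\varepsilon$ is the prescribed defect graph for an NWBTS$(v;b)$, not merely for NWBTS$(u;c)$. This amounts to checking that $(v,b)$ also falls under (C1) with the same $(\lambda,\varepsilon)$, which follows from $v\equiv 2,5\pmod 6$, the inherited value of $\lambda$, and the defining identity $3b=\lambda\binom{v}{2}+\varepsilon$, together with the observation recorded after Theorem \ref{kk} that in case (C1) the defect graph of a nearly 2-balanced triple system depends only on $\varepsilon$. Once this is in place, nearly 2-balance plus 3-balance deliver the NWBTS$(v;b)$.
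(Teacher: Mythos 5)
Your proposal is correct and follows essentially the same route as the paper's proof: compute the block count $3b=\lambda\binom{v}{2}+\varepsilon$, observe that pairs inside $U$ keep their $\mathcal{F}$-multiplicities while all other pairs occur exactly $\lambda$ times, so the union inherits the defect graph $G_{\varepsilon}$ and is nearly 2-balanced, and conclude 3-balance from simplicity via $c<\binom{u}{3}$ and the simplicity of $\mathcal{F}'$. You merely spell out the ``readily checked'' details (disjointness of the two families, and that $(v,b)$ again satisfies (C1) with the same $(\lambda,\varepsilon)$), which the paper leaves implicit.
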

\proof Clearly $(V,{\cal F}\cup{\cal F}')$  has $b$ blocks where $$3b=3c+\frac{\lambda (v(v-1)-u(u-1))}{2}= \frac{\lambda u(u-1)}{2}+\varepsilon+\frac{\lambda (v(v-1)-u(u-1))}{2}=\lambda {v\choose 2}+\varepsilon.$$ It is readily checked that $\lambda_{x,y}^{{\cal F}\cup{\cal F}'}\in\{\lambda-1,\lambda,\lambda+1\}$ for any pair $\{x,y\}$ of $V$ and ${\cal F}\cup{\cal F}'$ has the same defect graph  as that of ${\cal F}$, which is isomorphic to $G_{\varepsilon}$ in Figure \ref{f1}. As a result, $\mathcal{F}\cup\mathcal{F}'$ is nearly 2-balanced. It is also simple and thus 3-balanced as $c<\binom{u}{3}$ and $\mathcal{F}'$ is simple. This proves that $(V,{\cal F}\cup{\cal F}')$ forms an NWBTS$(v; b)$.
\qed

\begin{construction}\label{tb}
Suppose that there exists a PCS$(g^{n}:s)$ where $g\equiv 0 \pmod{3}$, $s\equiv2\pmod{3}$, and  $g,n\ge 3$. Let $\lambda=1,2$.
Further suppose that there exist
\begin{enumerate}
\item[(i)] $l$ disjoint simple S$_{3}(2,3,g+s)$s defined over a $(g+s)$-set $X$ with each  block not strictly contained in an $s$-subset $Y\subset X$, where $l=1$ if $g=3$ and $l=2$ otherwise; and

\item[(ii)] an NWBTS$(g+s;c)$ where $(g+s,c)$ satisfies (C1) associating with $(\lambda,\varepsilon)$. 
\end{enumerate}
\noindent Then there exists an NWBTS$(gn+s;b)$ for any $b=q\binom{gn+s}{2}+b'$ where $0\leq q\leq l(n-1)$ and $3b'=\lambda\binom{gn+s}{2}+\varepsilon$.
\end{construction}

\proof
Let $V = (Z_{n} \times Z_{g})\cup S$, $|S|=s$, and $\mathcal{G} = \{G_{0}, G_{1},\ldots,G_{n-1}\}$ where $G_{i} = \{i\}\times Z_{g}$.
Let $(V, S, \mathcal{G}, \mathcal{A})$ be a PCS$(g^{n} : s)$. Then the block set $\mathcal{A}$ can be partitioned into $gn$ pairwise disjoint parts
 $\mathcal{A}(i, j)$, $0\leq i\leq n-1$ and $0\leq j\leq g-1$, such that each $\mathcal{A}(i, j)$ is the block set of a GDD$(2, 3, gn+s)$ of type $1^{g(n-1)}(g+s)^{1}$
 with long group $G_{i}\cup S$. For $\lambda=1,2$, construct an NWBTS$(g+s;c)$ on $G_{0}\cup S$ with block set ${\cal C}_{\lambda}$ where
 $3c=\lambda\binom{g+s}{2}+\varepsilon$, which exists by assumption. 
Let $\mathcal{F}=\mathcal{A}(0,0)\cup{\cal C}_1$ if $\lambda=1$ and $\mathcal{F}=\mathcal{A}(0,0)\cup\mathcal{A}(0,1)\cup{\cal C}_2$ if $\lambda=2$. Then by Lemma \ref{tee2}, $\mathcal{F}$ is an NWBTS$(gn+s;b')$ where $3b'=\lambda\binom{gn+s}{2}+\varepsilon$. This proves the case of $q=0$.

For $1\leq i\leq n-1$, by assumption we may construct $l$ disjoint simple S$_{3}(2,3,g+s)$s over $G_{i}\cup S$ with block sets ${\cal C}(i,j),1\le j\le l,$ and each block is not contained in $S$.  Let $$\mathcal{F}(i,j) = \mathcal{A}(i, 3(j-1))\cup \mathcal{A}(i, 3(j-1)+1)\cup \mathcal{A}(i,3(j-1)+ 2)\cup \mathcal{C}(i,j),\ 1\le j\le l.$$ It is clear that $\mathcal{F}(i,j)$  is the block set of a simple S$_{3}(2,3,gn + s)$ and all $\mathcal{F}(i,j)$s are mutually disjoint. For $1\le q\le l(n-1)$, construct  $\mathcal{F}'$ by taking the union of  $\mathcal{F}$ and $q$ block sets in $\{\mathcal{F}(i,j): 1\leq i\leq n-1,1\le j\le l\}$. Then we get an NWBTS$(v; b)$ by Lemma \ref{tee} for any $b=q\binom{gn+s}{2}+b'$ where  $0\leq q\leq l(n-1)$ and $3b'=\lambda\binom{gn+s}{2}+\varepsilon$.\qed

\begin{construction}\label{tbb} Suppose that there exists  a $g$-PCS$_{2}((2ag)^{n}:2)$ where $g\equiv 0 \pmod{3}$ and $g, n\ge 3$.
Denote $v=2agn+2$ and let $\lambda=2,4$. Suppose that there exists a simple S$_{6}(2,3,2ag+2)$ and
an NWBTS$(2ag+2;c)$ where $(2ag+2,c)$ satisfies (C1) associating with $(\lambda,\varepsilon)$. 
\noindent Then there exists an NWBTS$(v;b)$ for any $b=qv(v-1)+ b'$ where $0\leq q\leq n-1$  and
$3b'=\lambda{v\choose 2}+\varepsilon$.
\end{construction}

\proof Let $V = (Z_{n} \times Z_{g}\times Z_{2a})\cup S$, $|S|=2$, and $\mathcal{G} = \{G_{0}, G_{1},\ldots,G_{n-1}\}$ where $G_{i} = \{i\}\times Z_{g}\times Z_{2a}$. Let $(V, S, \mathcal{G}, \mathcal{A})$ be a $g$-PCS$_{2}((2ag)^{n} : 2)$. Then the block set $\mathcal{A}$ contains a subset $\mathcal{A}'$ and $\mathcal{A}'$ can be partitioned into $gn$ pairwise disjoint parts  $\mathcal{A}(i, j)$, $0\leq i\leq n-1$ and $0\leq j\leq g-1$, such that each $\mathcal{A}(i, j)$ is the block set of a GDD$_{2}(2, 3, v)$ of type $1^{2ag(n-1)}(2ag+2)^{1}$ with long group $G_{i}\cup S$. By assumption, for  $\lambda=2,4$, there exists an NWBTS$(2ag+2;c)$ on $G_{0}\cup S$ with block set ${\cal C}_{\lambda}$ where $3c=\lambda\binom{2ag+2}{2}+\varepsilon$.
Construct a family $\mathcal{F}$ containing all blocks of ${\cal C}_{\lambda}$ and all blocks of $\lambda/2$ sets in $\{\mathcal{A}(0,j):j=0,1\}$. Then by Lemma \ref{tee2}, $\mathcal{F}$ is an NWBTS$(v;b')$  where $3b'=\lambda{v\choose 2}+\varepsilon$. This proves the case of $q=0$.

For each $1\leq i \leq n- 1$, construct a simple S$_{6}(2,3,2ag+2)$  on $G_{i} \cup S$ with block set $\mathcal{B}(i)$, which exists by assumption.
Let $\mathcal{F}(i) = (\cup^{2}_{j=0}\mathcal{A}(i, j )) \cup \mathcal{B}(i)$. Then each $\mathcal{F}(i)$ is the block set of a simple
S$_{6}(2,3,v)$. Construct a family $\mathcal{F}'$ containing $q$ $\mathcal{F}(i)$s.
Then $\mathcal{F} \cup \mathcal{F}'$ is an NWBTS$(v;b)$ by Lemma \ref{tee} for any $b=qv(v-1)+ b'$ where $0\leq q\leq n-1$ and $3b'=\lambda{v\choose 2}+\varepsilon$.
\qed

\section{NWBTSs for (C1)}

This section shows the existence of an NWBTS$(v;b)$ if $(v,  b)$ satisfies (C1).


\begin{lemma}\label{5}
There exists an NWBTS$(5; b)$ where $b\equiv3,4,6,7 \pmod{10}$ and an NWBTS$(11; b)$ where $b\equiv18,19,36,37 \pmod{55}$.
\end{lemma}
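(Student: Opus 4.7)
The plan is to reduce the lemma to a handful of small direct constructions and then propagate via Lemma \ref{bb} and Construction \ref{tb}. For $v=5$ the four residues modulo $10$ are handled by hand plus Lemma \ref{bb}; for $v=11$ the small $v=5$ instances are fed into Construction \ref{tb} with $(g,n,s)=(3,3,2)$, producing twelve values of $b$ in one shot, and again Lemma \ref{bb} fills in everything else.

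For $v=5$ I would exhibit explicit block families on $V=\{1,2,3,4,5\}$. The values $3b=9,12$ give $(\lambda,\varepsilon)=(1,-1)$ and $(1,2)$, so an NWBTS$(5;3)$ must realize the defect graph $G_{-1}$ and an NWBTS$(5;4)$ the defect graph $G_{2}$. A concrete choice for $b=3$ is $\{\{1,2,3\},\{1,2,4\},\{3,4,5\}\}$, which gives $D_{1}=\{\{1,2\}\}$ and $D_{-1}=\{\{1,5\},\{2,5\}\}$; a similar four-block family works for $b=4$. Lemma \ref{bb}(ii) turns these into NWBTS$(5;7)$ and NWBTS$(5;6)$ by complementation inside $\binom{V}{3}$, and Lemma \ref{bb}(i) with $\binom{5}{3}=10$ then sweeps out every $b\equiv 3,4,6,7\pmod{10}$.

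For $v=11$ the strategy is to apply Construction \ref{tb} with $(g,n,s)=(3,3,2)$: the hypotheses $g\equiv 0$, $s\equiv 2\pmod{3}$, $g+s=5$, $gn+s=11$ are satisfied, and $l=1$ because $g=3$. The inputs required are one simple S$_{3}(2,3,5)$ whose blocks are not contained in a prescribed $2$-subset (trivially met, since every block has size $3$), an NWBTS$(5;c)$ for each $c\in\{3,4,6,7\}$ (just produced), and a PCS$(3^{3}:2)$. The output is an NWBTS$(11;b)$ for every $b=q\binom{11}{2}+b'$ with $0\le q\le 2$ and $3b'=\lambda\binom{11}{2}+\varepsilon$, namely the twelve values $b\in\{18,19,36,37,73,74,91,92,128,129,146,147\}$. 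A final application of Lemma \ref{bb}(i), using $\binom{11}{3}=165=3\cdot 55$, shifts each of these by multiples of $165$ and thereby exhausts all $b\equiv 18,19,36,37\pmod{55}$.

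The main obstacle, and the only step that is not essentially mechanical, is to build a PCS$(3^{3}:2)$: a candelabra triple system on $11$ points with three groups $G_{0},G_{1},G_{2}$ of size $3$ and stem $S$ of size $2$ whose block set splits into nine parts $\mathcal{A}(i,j)$, each forming the block set of a GDD$(2,3,11)$ of type $1^{6}5^{1}$ with $G_{i}\cup S$ as the long group. I expect this to be handled either by a direct ad-hoc construction on $\Z_{9}\cup\{\infty_{1},\infty_{2}\}$ via a small set of base blocks under a cyclic group action, or by quoting existing results on partitionable candelabra systems from \cite{PCS}. Once the PCS is in hand everything else reduces to plugging into Construction \ref{tb} and inspection.
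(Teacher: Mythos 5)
Your $v=5$ half is fine and is essentially what the paper does: reduce via Lemma \ref{bb} to $b\in\{3,4\}$ (the paper simply cites \cite{wbd} for these, while you exhibit the $b=3$ family and sketch $b=4$; your $b=3$ example does realize $G_{-1}$, and a $4$-block family realizing $G_{2}$ indeed exists, e.g.\ $\{1,2,3\},\{2,3,4\},\{3,4,5\},\{1,4,5\}$). Your arithmetic for $v=11$ is also correct: Construction \ref{tb} with $(g,n,s)=(3,3,2)$, $l=1$, would output exactly the twelve base values $b\in\{18,19,36,37,73,74,91,92,128,129,146,147\}$, and Lemma \ref{bb}(i) with $\binom{11}{3}=165$ then covers all $b\equiv18,19,36,37\pmod{55}$; the auxiliary inputs (the complete design as a simple S$_{3}(2,3,5)$ and the NWBTS$(5;c)$'s) are available.

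The genuine gap is the master design: your whole $v=11$ argument stands or falls with the existence of a PCS$(3^{3}:2)$, and you neither construct it nor point to a result that supplies it — you only ``expect'' it can be built or quoted from \cite{PCS}. Nothing in the paper provides it: the smallest partitionable candelabra system used there is the PCS$(3^{5}:2)$ of \cite[Example 2.3]{Lsts} (used in Lemma \ref{17}), and for $v=11$ the authors avoid the issue entirely by writing down the six base systems NWBTS$(11;b)$, $b\in\{18,19,36,37,73,74\}$, explicitly in Appendix Lemma \ref{11} (with $73,74$ obtained by adjoining a disjoint simple S$_{3}(2,3,11)$). So ``quoting existing results'' does not discharge the obligation, and the object is not a triviality: it amounts to partitioning all $135$ admissible triples of the candelabra into nine GDD$(2,3,11)$s of type $1^{6}5^{1}$, i.e.\ nine compatible one-factorizations of $K_{6}$ with prescribed cross/pure structure. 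In fact such a design does exist — indexing the three groups by $Z_{3}$ and using the cross matchings $P_{d}=\{(b_{i},c_{i+d}):i\in Z_{3}\}$ for the stem factors, with the residual matching distributed to the group points by a shifted linear rule and the three shift constants chosen distinct modulo $3$ so that every transversal triple is covered exactly once — but that construction (or an equivalent explicit one) is precisely the nontrivial content of your route and is missing from the proposal. As written, the $v=11$ case of the lemma is therefore not proved; either supply the PCS$(3^{3}:2)$ explicitly or fall back on direct block-set constructions as the paper does.
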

\proof
From Lemma \ref{bb}, we only need to consider the existence of an NWBTS$(v; b)$ where $b\in\{3,4\}$ if $v=5$ and   $b\in\{18,19,36,37,73,74\}$ if $v=11$.  The case of $v=5$ is solved by  \cite{wbd} and the needed small examples of $v=11$  exist by Lemma \ref{11} in Appendix.
\qed
%

\begin{lemma}\label{17}
There exists an NWBTS$(17; b)$ whenever $b \equiv 45, 46,90, 91 \pmod{136}$.
\end{lemma}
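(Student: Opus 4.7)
\medskip\noindent\textbf{Proof proposal.} The plan is to obtain the four residue classes simultaneously by a single application of Construction \ref{tb} with the parameter choice $(g,n,s)=(3,5,2)$, and then invoke Lemma \ref{bb}(i) to propagate upwards. Note $gn+s=17$, $g\equiv 0\pmod 3$, $s\equiv 2\pmod 3$, and $g,n\ge 3$, so the hypotheses of Construction \ref{tb} on $g,n,s$ are satisfied. Since $\binom{17}{2}=136$, the four values $b'\in\{45,46,90,91\}$ are exactly the base cases coming from $3b'=\lambda\binom{17}{2}+\varepsilon$ with $\lambda\in\{1,2\}$ and $\varepsilon\in\{\pm 1,\pm 2\}$, so these are precisely the $b'$ that Construction \ref{tb} targets.

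First I would verify the three ingredients needed by Construction \ref{tb}. For (i), since $g=3$ we only need one simple S$_3(2,3,5)$, and we may take all $\binom{5}{3}=10$ triples on a $5$-set: every pair lies in exactly $3$ triples, and the requirement that no block be strictly contained in the $2$-element stem is vacuous because blocks have size $3>2$. For (ii), an NWBTS$(5;c)$ for each $c\in\{3,4,6,7\}$ (corresponding to $(\lambda,\varepsilon)\in\{(1,-1),(1,2),(2,-2),(2,1)\}$) is supplied by Lemma \ref{5}. The remaining ingredient is a PCS$(3^5:2)$, which serves as the master candelabra system.

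Assuming PCS$(3^5:2)$ exists, Construction \ref{tb} outputs an NWBTS$(17;b)$ for every
\[
b=136q+b',\qquad 0\le q\le l(n-1)=4,\ b'\in\{45,46,90,91\}.
\]
To extend, I would apply Lemma \ref{bb}(i) with $\binom{17}{3}=680=5\cdot 136$. Writing any sufficiently large target as $b'+136(q+5h)$ with $q\in\{0,1,2,3,4\}$ and $h\ge 0$ and noting that $\{q+5h\}$ exhausts $\mathbb{Z}_{\ge 0}$, one obtains an NWBTS$(17;b)$ for every $b\equiv b'\pmod{136}$ with $b\ge b'$. Since $b'\in\{45,46,90,91\}$ is already the smallest nonnegative representative in each residue class, this covers the full statement. (As a sanity check, one could alternatively obtain, say, the $90,91$ classes from the $45,46$ classes via the complementation step Lemma \ref{bb}(ii), since $680-45\equiv 91$ and $680-46\equiv 90\pmod{136}$, but Construction \ref{tb} already delivers all four classes directly.)

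The main obstacle is the existence of a PCS$(3^5:2)$. The parameter count is internally consistent: the candelabra has $680-5\binom{5}{3}=630$ blocks and must decompose into $gn=15$ GDDs of type $1^{12}5^{1}$ each having $\frac{126}{3}=42$ blocks, and one checks that each pair's replication number in the CS equals the number of constituent GDDs covering it. Nevertheless, actually exhibiting such a partitioned design is the delicate combinatorial step, and I would either cite an existence result from \cite{PCS} or construct it directly from a resolvable structure on $15$ points (e.g., by adjoining a $2$-point stem to a Kirkman-type partition of the triples on $\mathbb{Z}_{15}$ and grouping the blocks by the point of $\mathbb{Z}_{15}$ that ``carries'' each GDD). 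Once PCS$(3^5:2)$ is in hand, the rest of the argument is mechanical.
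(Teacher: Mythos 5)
Your argument follows the paper's proof essentially verbatim: apply Construction \ref{tb} with $(g,n,s)=(3,5,2)$ (so $l=1$ and the single simple S$_3(2,3,5)$ is just all ten triples of a $5$-set, with the stem condition vacuous for $s=2$), feed in the NWBTS$(5;c)$'s of Lemma \ref{5} for $(\lambda,\varepsilon)\in\{(1,-1),(1,2),(2,-2),(2,1)\}$, obtain NWBTS$(17;136q+b')$ for $0\le q\le 4$ and $b'\in\{45,46,90,91\}$, and lift by Lemma \ref{bb}(i) using $\binom{17}{3}=680=5\cdot 136$; your bookkeeping for the residue classes and the $b<\binom{17}{3}$ hypothesis of Lemma \ref{bb} is correct, and so is your block count $630=15\cdot 42$ for the candelabra system.

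The one place where your write-up falls short of a proof is exactly the ingredient you flag: the existence of a PCS$(3^{5}:2)$ is assumed, not established. The paper closes this by citation, namely \cite[Example 2.3]{Lsts}, and with that reference inserted your argument is complete. Your proposed fallback, however, should not be presented as if it were routine: a PCS$(3^{5}:2)$ is not just a CS$(3^{5}:2)$ with consistent parameters, but a CS whose $630$ blocks admit a partition into $15$ block sets each forming a GDD$(2,3,17)$ of type $1^{12}5^{1}$ with a prescribed long group, and grouping the triples of a Kirkman-type structure on $15$ points by a ``carrying'' point does not automatically produce such a partition (each part must cover every pair off the long group exactly once, which is a genuine design-theoretic constraint). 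So the correct repair is the citation to \cite{Lsts} (or an explicit construction of that specific partitioned object), after which the remainder of your proof stands as written.
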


\proof  By Lemma \ref{bb}, we can assume that $b \leq 680$. 
There exists a PCS$(3^{5} : 2)$ from \cite[Example 2.3]{Lsts}.  An NWBTS$(5; c)$ exists for any $c$ if $(5,c)$ satisfies (C1) from Lemma \ref{5}. Let  $(5,c)$ associate with $(\lambda,\varepsilon)$ where $\lambda=1,2$. A simple S$_{3}(2,3,5)$ exists trivially. Apply Construction \ref{tb}  to get an NWBTS$(17;b)$ for  $3b=136q+{136\lambda+\varepsilon\over 3}$, where $0\le q\le 4$, $\varepsilon\in\{-1,2\}$ if $\lambda=1$  and $\varepsilon\in\{-2,1\}$ if $\lambda= 2$. This covers all admissible block numbers $b \equiv 45, 46,90, 91 \pmod{136}$ and  $b \leq 680$. \qed

%


\begin{lemma}\label{v56}
Let $v \equiv 5 \pmod{6}$ and  $(v,  b)$ satisfy (C1). An NWBTS$(v; b)$ exists.
\end{lemma}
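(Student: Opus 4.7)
\proof \emph{Plan.} The strategy is: dispose of the small values $v\in\{5,11,17\}$ via Lemmas \ref{5} and \ref{17}, reduce every remaining case $v\equiv 5\pmod 6$, $v\ge 23$, to an application of Construction \ref{tb} with $g=3$, $s=2$, and then extend to the remaining (large) block counts via Lemma \ref{bb}(i).

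First I would write $v=3n+2$ with $n\ge 7$ odd, and assemble the three ingredients required by Construction \ref{tb}. A PCS$(3^n:2)$ exists for every odd $n\ge 3$ by the partitionable candelabra construction from \cite{Lsts} (the same source that supplied the PCS$(3^5:2)$ already used in Lemma \ref{17}). A simple S$_3(2,3,5)$ none of whose blocks is contained in the $2$-element stem is just the family of all $\binom{5}{3}=10$ triples of a $5$-set. An NWBTS$(5;c)$ realising each of the four admissible pairs $(\lambda,\varepsilon)$ with $\lambda\in\{1,2\}$ is provided by Lemma \ref{5}. Construction \ref{tb} then outputs an NWBTS$(v;b)$ for every $b=q\binom{v}{2}+b'$ with $0\le q\le n-1$ and $3b'=\lambda\binom{v}{2}+\varepsilon$, $\lambda\in\{1,2\}$.

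Next I would re-encode this family via $3b=\lambda'\binom{v}{2}+\varepsilon$ with $\lambda'=3q+\lambda$. As $(q,\lambda)$ ranges over $\{0,\dots,n-1\}\times\{1,2\}$, the value $\lambda'$ sweeps out $\{1,2,4,5,\dots,v-4,v-3\}$, which is exactly the set of admissible $\lambda'\in[1,v-3]$, paired with each admissible $\varepsilon$. All these NWBTSs have $b<\binom{v}{3}$ (because $3b\le(v-3)\binom{v}{2}+2<(v-2)\binom{v}{2}=3\binom{v}{3}$), so Lemma \ref{bb}(i) applies and yields an NWBTS$(v;b+h\binom{v}{3})$ for every $h\ge 1$. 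Adding $h\binom{v}{3}$ increments $\lambda'$ by $h(v-2)$, and since $v\equiv 2\pmod 3$ we have $v-2\equiv 0\pmod 3$, so the residue $\lambda'\pmod 3$ is preserved. A short arithmetic check shows that the starting set $\{1,2,4,5,\dots,v-4,v-3\}$ together with its translates by positive multiples of $v-2$ exhausts all admissible $\lambda'\ge 1$: at each step the skipped integers are $\equiv 0\pmod 3$ and hence inadmissible for (C1) when $v\equiv 5\pmod 6$.

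The main potential obstacle is the uniform availability of the master designs PCS$(3^n:2)$ for all odd $n\ge 7$; this is the standard LSTS-type partitionable candelabra machinery, and I would just quote the relevant existence theorem from \cite{Lsts} rather than reprove it here. With that single black-box result in hand, no further small-case analysis is needed for $v\equiv 5\pmod 6$. \qed
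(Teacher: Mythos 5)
Your overall strategy is the same as the paper's: dispose of $v\in\{5,11,17\}$ by Lemmas \ref{5} and \ref{17}, feed Construction \ref{tb} with a partitionable candelabra system, and reach the remaining block counts with Lemma \ref{bb}. Your covering arithmetic is also sound: with $g=3$, $s=2$ the construction produces $\lambda'=3q+\lambda$ running over all non-multiples of $3$ in $[1,v-3]$, and since $v-2\equiv 0\pmod 3$ the translates by $h\binom{v}{3}$ (Lemma \ref{bb}(i)) fill in every admissible $\lambda$ for (C1). The difference from the paper is the master design: you write $v=3n+2$ and call on a PCS$(3^{n}:2)$, whereas the paper writes $v=6n+5$ and uses a PCS$(6^{n}:5)$, which is available for all $n\ge 3$ by \cite[Lemma 11]{JI}, together with the NWBTS$(11;c)$'s of Lemma \ref{5} and two disjoint simple S$_{3}(2,3,11)$s with no block inside a fixed $5$-subset from \cite[Lemma 5.4]{wbt}.

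The genuine gap is exactly the ingredient you declare a black box. There is no quotable theorem giving a PCS$(3^{n}:2)$ for every odd $n\ge 7$: the source you cite, \cite{Lsts}, supplies only the single sporadic example PCS$(3^{5}:2)$ (its Example 2.3), which this paper uses once, for $v=17$ in Lemma \ref{17}. No general family PCS$(3^{n}:2)$ appears there or in any of the paper's references, and this is evidently why the authors abandon the $(g,s)=(3,2)$ route for general $v$ and instead pay the price of $11$-point input designs so that they can use the known PCS$(6^{n}:5)$ family. Producing PCS$(3^{n}:2)$ for all odd $n$ would itself be a large-set-type existence theorem (a partition of the block set of a CS$(3^{n}:2)$ into $3n$ GDD$(2,3,3n+2)$s of type $1^{3n-3}5^{1}$), of comparable depth to the results of \cite{PCS,JI}; it cannot be waved in by citation. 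As written, your argument is complete only for the small cases and $v=17$, so the proof of the lemma is not established.
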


\proof Lemmas \ref{5} and \ref{17} proved the cases $v=5,11,17$. Next we let $v\ge 23$. By Lemma \ref{bb}, we can assume that $b\leq \frac{v(v-1)(v-2)}{12}$. Write $v = 6n + 5$ and $b = q\frac{v(v-1)}{2} + b'$ where $0 \leq q \leq \left \lfloor\frac{v-2}{6}\right \rfloor=n$ and $b' \in\{ \lfloor \frac{\lambda v(v-1)}{6}\rfloor,
\lceil \frac{\lambda v(v-1)}{6} \rceil:\lambda=1,2\}$.
Thus $(v,b')$ associates with $(\lambda,\varepsilon)$ where $\lambda=1,2$.
Apply Construction \ref{tb} from
a PCS$(6^{n}:5)$, which exists by \cite[Lemma 11]{JI}. An NWBTS$(11; c)$ exists for any $c$ if $(11,c)$ satisfies (C1)  from Lemma \ref{5}. There exist two disjoint simple S$_{3}(2,3,11)$s defined over a 11-set with each block not contained in a fixed 5-subset  from \cite[Lemma 5.4]{wbt}.
Then there exists an NWBTS$(v; b)$, where $b=q\frac{v(v-1)}{2}+b'$ with $0\le q\le 2(n-1)$; this completes the proof.\qed



\begin{lemma}\label{1v24}
Let $v \equiv 2,8,20 \pmod{24}$, $v \not\in\{50,74\}$, and   $(v,  b)$ satisfy (C1). There exists an NWBTS$(v; b)$.
\end{lemma}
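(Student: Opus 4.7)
The plan is to use Construction \ref{tbb} with $g=3$ as the principal mechanism, exploiting that condition (C1) for $v \equiv 2 \pmod 6$ forces the associated index to lie in $\lambda \in \{2, 4\}$, which matches exactly the range handled by Construction \ref{tbb}. First I would invoke Lemma \ref{bb}(ii) to restrict the range of $b$ that must be built directly to roughly the lower half of $[0, \binom{v}{3}]$; combined with the built-in parameter $q \in [0, n-1]$ in Construction \ref{tbb}, it then suffices to realize NWBTS$(v; b')$ for the base values of $b'$ satisfying $3b' = \lambda\binom{v}{2} + \varepsilon$.

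Next, for each residue class I would decompose $v = 6an + 2$ with $a = 1$ and $n = (v-2)/6 \ge 3$: this yields $n = 3, 7, 11, \ldots$ for $v \equiv 20 \pmod{24}$, $n = 5, 9, 13, \ldots$ for $v \equiv 8 \pmod{24}$ with $v \ge 32$, and $n = 4, 8, 12, \ldots$ for $v \equiv 2 \pmod{24}$ with $v \ge 26$. Feeding this into Construction \ref{tbb} requires three ingredients: a $3$-PCS$_2(6^n : 2)$, which I would cite from the existing partitionable-candelabra literature; a simple S$_6(2,3,8)$, which is a standard small design; and an NWBTS$(8; c)$ for each admissible $c$, which would be built by direct finite construction in the appendix. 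The small order $v = 8$ (where the decomposition forces the illegal value $n = 1$) and the exclusions $v \in \{50, 74\}$ (corresponding to $n = 8, 12$, where the required $3$-PCS$_2(6^n : 2)$ is apparently unavailable) are the only orders not reached.

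The main obstacle I anticipate is certifying the existence of $3$-PCS$_2(6^n : 2)$ for all required $n$; the exclusions $v = 50, 74$ in the statement reflect precisely the absence of such a PCS for $n = 8$ and $n = 12$ in the current literature. Once the PCS inputs are secured, the remaining work is bookkeeping inside Construction \ref{tbb}: Lemma \ref{tee2} welds the NWBTS$(8; c)$ to the $\mathcal{A}(0,j)$ blocks to handle the $q = 0$ base case, and Lemma \ref{tee} assembles the simple $S_6(2,3,8)$'s with the remaining $\mathcal{A}(i,j)$'s to cover $1 \le q \le n-1$. For the case $v = 8$, I would simply invoke (or construct in the appendix) NWBTS$(8; c)$'s directly for each of the few admissible $c$, appealing to Lemma \ref{bb} to extend to the full set of $b$ values.
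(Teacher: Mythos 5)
Your proposal is correct and follows essentially the same route as the paper: reduce $b$ via Lemma \ref{bb}, then apply Construction \ref{tbb} with $a=1$, $g=3$ to a $3$-PCS$_2(6^{n}:2)$ (available from Wei et al.\ exactly for $n\equiv 0,1,3\pmod 4$, $n\ge 3$, $n\notin\{8,12\}$, which matches your residue analysis and explains the exclusions $v=50,74$), feeding in the directly constructed NWBTS$(8;c)$'s and a simple S$_6(2,3,8)$, with $v=8$ handled separately. The only cosmetic difference is that the paper uses the complement in Lemma \ref{bb} to normalize the base index to $\lambda=2$, whereas you keep both $\lambda\in\{2,4\}$ inside Construction \ref{tbb}; both bookkeeping choices cover the same block counts.
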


\proof For $v = 8$, see Lemma \ref{81} in Appendix. For $v \geq 20$,
we can assume that $b\leq \frac{v(v-1)(v-2)}{6}$ by Lemma \ref{bb}. Write $v = 6n +2$ and $b = qv(v-1) + b'$ where $0\leq q\leq \frac{v-2}{6}-1 = n- 1$, $0\leq b' \leq v(v-1)$. According to Lemma \ref{bb}, we can assume that $0 \leq b' \leq \frac{v(v-1)}{2}$, i.e., $b'\in\{\lfloor \frac{v(v-1)}{3}\rfloor$, $\lceil \frac{v(v-1)}{3} \rceil\}$.
Thus $(v,b')$ associates with $(2,\varepsilon)$.
There exists from \cite[Lemma 6.8]{wbt}  a $3$-PCS$_{2}(6^{n}:2)$ for every $n \equiv 0, 1, 3 \pmod{4}$, $n\geq 3$,  and $n\not\in \{8, 12\}$. An NWBTS$(8; c)$ exists for any $c$ if $(8,c)$ satisfies (C1)  from Lemma \ref{81} in Appendix; and a simple S$_{6}(2,3,8)$ trivially exists.
Then there exists an NWBTS$(v;b)$ by Construction \ref{tbb}, where $b=qv(v-1)+b'$ and $0\le q\le n-1$.
\qed


\begin{lemma}\label{1v241}
Let $v \equiv 14 \pmod{24}$ and   $(v,  b)$ satisfy (C1). There exists an NWBTS$(v; b)$.
\end{lemma}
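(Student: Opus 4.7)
Since $v\equiv 14\pmod{24}$ forces $v\equiv 2\pmod 6$, we are in case (C1) with $\lambda\equiv 2,4\pmod 6$. The direct translation of the argument of Lemma \ref{1v24} would write $v=6n+2$ and invoke a $3$-PCS$_2(6^n:2)$, but $v\equiv 14\pmod{24}$ forces $n\equiv 2\pmod 4$, precisely the residue class missing from \cite[Lemma 6.8]{wbt}. The plan is therefore to treat $v=14$ as a base case and to replace the $g=3$ master design by one with $g=6$ for the inductive step.

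First, I would dispose of $v=14$ by constructing an NWBTS$(14;b)$ directly for every admissible $b$. Using Lemma \ref{bb}(i) and (ii), it suffices to exhibit one witness for each pair $(\lambda,\varepsilon)\in\{(2,-2),(2,1),(4,-1),(4,2)\}$ with $b\le\binom{14}{3}/2$, a small finite list that would be tabulated in the Appendix alongside the NWBTS$(8;c)$ and NWBTS$(11;c)$ constructions. For $v\ge 38$, reparameterize as $v=12n+2$ with $n$ odd and $n\ge 3$, and apply Construction \ref{tbb} with $(a,g)=(1,6)$: the group size is $2ag=12$, the small input is the NWBTS$(14;c)$ just constructed, and the simple S$_6(2,3,14)$ demanded by Construction \ref{tbb} is a $2$-$(14,3,6)$ design with only $182<\binom{14}{3}$ blocks whose existence as a simple design is standard. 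This produces an NWBTS$(v;b)$ for every $b=qv(v-1)+b'$ with $0\le q\le n-1$ and $3b'=\lambda\binom{v}{2}+\varepsilon$; Lemma \ref{bb} then extends the conclusion to all admissible $b$.

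The crux of the argument is the existence of a $6$-PCS$_2(12^n:2)$ for every odd $n\ge 3$. I would construct this following the blueprint of \cite[Lemma 6.8]{wbt}: start from a resolvable $\{3\}$-GDD of type $12^n$ on $Z_n\times Z_{12}$ (or equivalently a $2$-frame of this type), regroup its partial parallel classes into the $12n$ pieces $\mathcal{A}(i,j)$ required by the definition of $6$-partitionability, and adjoin a two-point stem so that each $\mathcal{A}(i,j)$ becomes a GDD$_{2}(2,3,12n+2)$ of type $1^{12(n-1)}14^{1}$ with long group $G_i\cup S$. The bookkeeping is parallel to the $g=3$ case but must be carried out for the larger group size, and a few initial odd values of $n$ (most likely $n=3,5$) will probably resist the generic recipe and require explicit ad hoc constructions. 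Any residual order that proves stubborn can be reached as a safety net by applying Construction \ref{tb} with stem size $s=14$ and cell size $g=6$, since a PCS$(6^m:14)$ combined with the base NWBTS$(14;c)$ supplies an NWBTS$(6m+14;b)$ covering the same parameters from a second direction.
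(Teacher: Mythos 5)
Your skeleton coincides with the paper's: handle $v=14$ by explicit constructions (the paper's Lemma~\ref{14} does exactly the four cases $(\lambda,\varepsilon)\in\{(2,-2),(2,1),(4,-1),(4,2)\}$ you list, and Lemma~\ref{bb} reduces to $b\le\binom{v}{3}/2$), then for $v=12n+2\ge 38$ feed a partitionable candelabra system of group type $12^n$ with stem $2$, an NWBTS$(14;c)$ and a simple S$_6(2,3,14)$ into Construction~\ref{tbb}. The difference, and the gap, is the master design. The paper takes $(a,g)=(2,3)$, i.e.\ a $3$-PCS$_2(12^{n}:2)$, which exists for every odd $n\ge 3$ by \cite[Lemma 6.14]{wbt} (the paper also rebuilds it in Lemma~\ref{H12n2}); note that Construction~\ref{tbb} only ever consumes at most three of the parts attached to each group ($\lambda/2\le 2$ for the base class, three per group for each S$_6(2,3,v)$ layer), so $g=3$ is all that is needed. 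You instead take $(a,g)=(1,6)$ and therefore require a $6$-PCS$_2(12^{n}:2)$, which you never construct: you only express the hope that the method behind \cite[Lemma 6.8]{wbt} adapts and concede that the first odd values of $n$ would need ad hoc work. That is precisely the hard content of the lemma, so the proof is not complete.

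Moreover, your choice makes the missing object strictly harder than necessary. For $a=1$, $g=6$ a pair count shows the $6n$ required GDD$_2(2,3,12n+2)$'s of type $1^{12(n-1)}14^{1}$ (there are $gn=6n$ parts, not $12n$ as you write) must exhaust \emph{every} block of the CS$(12^{n}:2)$: a cross pair lies in exactly $12n$ candelabra blocks and would be used $2\cdot 6\cdot n=12n$ times, and similarly for pairs inside a group or meeting the stem. So a $6$-PCS$_2(12^{n}:2)$ is a large-set-type partition of the entire candelabra system, whereas the $3$-PCS$_2(12^{n}:2)$ the paper uses only commits half of the blocks — which is why it is available in the literature. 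Your starting point (a resolvable GDD of type $12^{n}$, whose classes cover cross pairs once and no within-group pairs) does not obviously reassemble into index-two GDDs with a long group of size $14$, so the "blueprint" claim is unsupported. The fallback via Construction~\ref{tb} with a PCS$(6^{m}:14)$ suffers the same defect: no existence result for such candelabra systems with stem $14$ (for $m\equiv 0\pmod 4$, which is what $v\equiv 14\pmod{24}$ forces), nor for the two disjoint simple S$_3(2,3,20)$'s avoiding a fixed $14$-subset required by its hypothesis, is given or cited. To close the argument, either supply these constructions or simply switch to $(a,g)=(2,3)$ and invoke \cite[Lemma 6.14]{wbt}, as the paper does.
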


\proof For $v = 14$, see Lemma \ref{14} in Appendix.
For $v \geq 38$, we only need to consider $b\leq \frac{v(v-1)(v-2)}{12}$ by Lemma \ref{bb}. Write $v = 12n +2$ and $b = qv(v-1) + b'$ where $0\leq q\leq \frac{v-2}{12}-1 = n- 1$ and $b'\in\{\lfloor \frac{\lambda v(v-1)}{6} \rfloor$, $\lceil \frac{\lambda v(v-1)}{6}\rceil:\lambda=2,4\}$. Thus $(v,b')$ associates with $(\lambda,\varepsilon)$ where $\lambda=2,4$.
There exists a $3$-PCS$_{2}(12^{n} : 2)$ for $n\equiv1 \pmod{2}$ and $n\geq3$ from \cite[Lemma 6.14]{wbt}. An NWBTS$(14; c)$ exists  for any $c$ if $(14,c)$ satisfies (C1)  from Lemma \ref{14} in Appendix.  A simple S$_{6}(2,3,14)$ exists as a WBTS$(14;182)$ exists by Theorem \ref{NF}.
Then there exists an NWBTS$(v;b)$ by Construction \ref{tbb}, where $b=qv(v-1)+b'$ and $0\le q\le n-1$.\qed

\begin{lemma}\label{1v242}
Let $v \in\{50,74\}$ and   $(v,  b)$ satisfy (C1). There exists an NWBTS$(v; b)$.
\end{lemma}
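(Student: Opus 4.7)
The plan is to treat $v=50$ and $v=74$ as exceptional orders, handled by either an alternative candelabra construction or a direct (ad-hoc) construction, since the $3$-PCS$_2(6^n:2)$ used in Lemma \ref{1v24} is unavailable precisely at $n=8$ and $n=12$.

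First, I would apply parts (i) and (ii) of Lemma \ref{bb} to reduce the problem, for each $v\in\{50,74\}$, to a finite list of base values of $b$. Since $v\equiv 2\pmod 6$, condition (C1) forces $\lambda\equiv 2,4\pmod 6$ and $\varepsilon\in\{\pm 1,\pm 2\}$, so after the reduction $b\le \binom{v}{3}/2$ only finitely many admissible pairs $(\lambda,\varepsilon)$ remain.

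Next, I would try to replace the missing master designs by alternative $g$-PCS$_2((2ag)^n:2)$'s with $2agn+2=v$, $g\equiv 0\pmod 3$, and $g,n\ge 3$. For $v=50$ the natural candidate is the triple $(g,a,n)=(3,2,4)$, yielding a $3$-PCS$_2(12^4:2)$; for $v=74$ candidates are $(3,2,6)$ or $(3,3,4)$, yielding a $3$-PCS$_2(12^6:2)$ or a $3$-PCS$_2(18^4:2)$. If any such design can be produced (by a known existence theorem or a standard difference/automorphism construction over $\mathbb Z_{12}$, $\mathbb Z_{18}$, etc.), then Construction \ref{tbb} produces an NWBTS$(v;b)$ for the entire admissible range once we supply, as ingredients, a simple $S_6(2,3,2ag+2)$ and an NWBTS$(2ag+2;c)$ satisfying (C1). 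The latter is available: for $2ag+2\in\{14,20,38\}$ it follows from Lemmas \ref{1v24} and \ref{1v241} (and the small case Lemma \ref{14} in the Appendix), and a simple $S_6(2,3,2ag+2)$ exists since a WBTS of the same parameters exists by Theorem \ref{NF}.

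The main obstacle is ensuring the existence of the alternative $g$-PCS$_2$ designs for these two specific orders. If the required $3$-PCS$_2(12^4:2)$, $3$-PCS$_2(12^6:2)$, or $3$-PCS$_2(18^4:2)$ cannot be obtained from the literature, the fallback is to drop the recursive approach for $v\in\{50,74\}$ and instead construct the finitely many required NWBTSs directly, following the template of Lemmas \ref{81}, \ref{11} and \ref{14}: fix a nearly $2$-balanced defect graph of the type prescribed by Lemma \ref{c1balanced} and fill in a simple $3$-balanced triple system by a short computer search or by a base-block construction under a cyclic/dihedral action on $\mathbb Z_{50}$ or $\mathbb Z_{74}$. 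Because $b$ is reduced to a small finite set and $v$ is small, this direct route is feasible and would, as in the earlier exceptional lemmas, most naturally be recorded as an Appendix entry.
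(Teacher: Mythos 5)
Your reduction via Lemma \ref{bb} and your diagnosis of why $v=50,74$ are exceptional (the missing $3$-PCS$_2(6^n:2)$ at $n=8,12$) are correct, but neither branch of your plan actually proves the lemma. The primary branch rests on master designs --- a $3$-PCS$_2(12^4:2)$, a $3$-PCS$_2(12^6:2)$ or a $3$-PCS$_2(18^4:2)$ --- that are not available: the partitionable candelabra systems of type $12^n$ with stem $2$ used in this paper (from Wei et al.) require $n$ odd, and producing such a design with an even number of groups is essentially the same difficulty that created the exception in the first place, so invoking them is not a proof. Moreover, the $3$-PCS$_2(18^4:2)$ candidate could not suffice even if it existed: Construction \ref{tbb} only permits $0\le q\le n-1=3$, i.e.\ effective indices $\lambda\in\{2,4\}+6q\le 22$, whereas for $v=74$ the reduction by Lemma \ref{bb} still requires $\lambda$ up to $34$; only the $12^6$ decomposition has the right coverage (and your mention of a subdesign of order $38$ would force $n=2$, violating $n\ge 3$).

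The fallback branch is where the paper's actual proof lives, but you leave it as an assertion of feasibility rather than a construction. A raw search at $v=50,74$ is not routine: after the reduction there remain roughly $8$ (resp.\ $12$) admissible values of $\lambda$, hence $16$--$24$ base systems with up to several thousand blocks each, all required to be simultaneously simple and nearly $2$-balanced with a prescribed defect graph. The paper avoids this by a structural idea your plan does not supply: set $w=v/2\in\{25,37\}$, partition the simple difference triples modulo $w$ into suitable classes, and run a doubling construction on $Z_w\times\{0,1\}$ that produces banks of pairwise disjoint S$_6(2,3,v)$s ($P_\alpha$, $Q_\alpha$), together with four hand-built base configurations (using a parallel class $C$ and its translate $C'$) whose defect graphs realize exactly $G_{-2}$, $G_{1}$, $G_{-1}$, $G_{2}$; all admissible $b$ are then obtained via Lemma \ref{tee} (see Lemma \ref{t1v242} in the Appendix). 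Without either the missing candelabra systems or this doubling construction, your proposal identifies the obstacle but does not close it.
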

\proof See Lemma \ref{t1v242} in Appendix.

Combing Lemmas \ref{v56}-\ref{1v242} we have established the main result of this section.

\begin{theorem}\label{v26}
 An NWBTS$(v; b)$ exists if  $(v,  b)$ satisfies (C1).
\end{theorem}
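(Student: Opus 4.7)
The plan is straightforward: Theorem \ref{v26} follows by concatenating the four preceding existence lemmas, once one verifies that they jointly cover every admissible pair $(v,b)$ satisfying (C1). There is no new construction to perform; the work lies in a clean mod-24 case split.

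First I would observe that (C1) forces $v\equiv 2\pmod 3$, so $v\equiv 5\pmod 6$ or $v\equiv 2\pmod 6$. The first branch is disposed of in one line by quoting Lemma \ref{v56}, whose proof absorbs the small cases $v=5,11,17$ via Lemmas \ref{5} and \ref{17} and handles larger $v$ by applying Construction \ref{tb} to a PCS$(6^n:5)$. The second branch requires a finer periodicity because the partitionable candelabra ingredients available at this stage only have period $24$ in $v$. I would therefore partition $\{v:v\equiv 2\pmod 6\}$ into the four residue classes $v\equiv 2,8,14,20\pmod{24}$.

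Next I would dispatch these subcases to the remaining lemmas. For $v\equiv 2,8,20\pmod{24}$ with $v\notin\{50,74\}$, Lemma \ref{1v24} applies, the key ingredient being a $3$-PCS$_{2}(6^n:2)$ fed into Construction \ref{tbb}. For $v\equiv 14\pmod{24}$, Lemma \ref{1v241} applies, using a $3$-PCS$_{2}(12^n:2)$ in the same construction. Finally, the two orphan orders $v=50$ and $v=74$, where the required PCSs are not known to exist, are handled separately by Lemma \ref{1v242} via the ad-hoc constructions in the Appendix. Since these four subcases exhaust $\{v:v\equiv 2\pmod 6\}$, together with the $v\equiv 5\pmod 6$ branch they cover all of (C1), which completes the proof.

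The only real obstacle at this stage is arithmetic bookkeeping, not combinatorics: one must check that $\{2,8,14,20\}\pmod{24}$ does partition $\{v:v\equiv 2\pmod 6\}$, and that each lemma's allowed range of $b$ actually covers \emph{every} $b$ permitted by (C1) for that $v$. The latter is already built into the lemmas by Lemma \ref{bb}, which lets one reduce to $b\le \binom{v}{3}/2$ (or $\le\binom{v}{3}$ depending on the branch) and then recover the full spectrum by complementation and $h$-fold repetition. All the genuinely hard work---constructing the candelabra systems with the appropriate partition properties, verifying the small direct examples, and setting up Constructions \ref{tb} and \ref{tbb}---has already been carried out in earlier lemmas, so the final theorem requires no additional design-theoretic input.
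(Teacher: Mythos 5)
Your proposal is correct and matches the paper's proof, which simply combines Lemmas \ref{v56}, \ref{1v24}, \ref{1v241}, and \ref{1v242}: the split into $v\equiv 5\pmod 6$ versus $v\equiv 2,8,14,20\pmod{24}$ (with $v=50,74$ handled separately) is exactly the case decomposition those lemmas implement. No further comment is needed.
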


\section{Case $v \equiv 0\pmod 6$ for (C2)}

In this section we define a variant of partitionable candelabra systems with an empty stem, present a product-type construction  and prove the existence of an NWBTS$(v;b)$ for any $v \equiv 0$ (mod 6) and $(v,  b)$ satisfying (C2).

Let $m,n$, and $u$ be nonnegative integers. Suppose that $V$ is a set of $v=2(m+n)+u$ points and ${\cal G}=\{G_{1},\ldots, G_{m+n}, U\}$ is a partition of $V$ into groups where $|G_{i}|=2~(1\leq i \leq m+n)$ and $|U|=u$. Let $\mathcal{B}$ be a set of triples (blocks) on $V$. Then $(V,{\cal G},\mathcal{B})$ is a \emph{nearly} GDD$(2,3,v)$, simply by NGDD$(2,3,v)$, of type $2^{(m,n)}u^{1}$ if

(1) any pair of $G_{i}$ with $1\leq i\leq m$ is contained in exactly two blocks of $\mathcal{B}$,

(2) any pair in $\binom{U}{2}\cup \{G_{i}:m+1\leq i\leq m+n\}$ is contained in no blocks, and

(3) any of the other pairs of $V$ is contained in exactly one block.

Let $0\leq r\leq 3g$ be an integer. A CS$((2g)^{3}:0)$ $(X, \emptyset, \mathcal{G}, \mathcal{A})$ is partitionable and denoted by PICS$_{2}^{r}((2g)^{3}:0)$
 if its block set $\mathcal{A}$ can be partitioned into $4g-2$ subsets $\mathcal{A}_{1}, \mathcal{A}_{2},\ldots, \mathcal{A}_{4g-2}$ with the properties: (1) for $1\leq i \leq 2g$, each $\mathcal{A}_{i}$ is the block set of an S$_{2}(2,3,6g)$, and at least one of $\mathcal{A}_{i}$'s has a subset $\mathcal{A}'$ such that $\mathcal{A}'$ is the block set of an NGDD$(2, 3, 6g)$ of type $2^{(r,3g-r)}$; (2) for $2g + 1 \leq i \leq 4g - 2$, each $\mathcal{A}_{i}$ is the block set of a GDD$(2, 3, 6g)$ of type $(2g)^{3}$ with the group set $\mathcal{G}$. It is obvious that
a PICS$_{2}^{r}((2g)^{3}:0)$ is also a PICS$_{2}^{3g-r}((2g)^{3}:0)$.
%

\begin{lemma}\label{430r}
There exists a PICS$_{2}^{r}(4^{3}:0)$ for any $r=0, 3, 6$.
\end{lemma}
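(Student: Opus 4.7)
Work on $X=\Z_3\times\Z_4$ with groups $\mathcal{G}=\{G_0,G_1,G_2\}$, $G_i=\{i\}\times\Z_4$. By the duality $r\leftrightarrow 6-r$ recorded immediately before the statement, it suffices to construct instances for $r=0$ and $r=3$. In each case I will exhibit the six block families $\mathcal{A}_1,\dots,\mathcal{A}_6$ directly; with only $12$ points, the construction can be made completely explicit.

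Counting pins down the structure. The CS$(4^3:0)$ on $X$ has $208$ blocks, consisting of $144$ triples of type $(2,1,0)$ and $64$ transversals of $\mathcal{G}$. The two GDDs of type $4^3$ that form $\mathcal{A}_5,\mathcal{A}_6$ contribute $16$ transversals apiece and can be taken as a pair of disjoint TD$(3,4)$s (for instance, from any two disjoint Latin squares of order $4$). The remaining $176$ blocks must partition into four S$_2(2,3,12)$s $\mathcal{A}_1,\dots,\mathcal{A}_4$, and a further edge-count forces each of them to consist of exactly $36$ blocks of type $(2,1,0)$ and $8$ transversals. I would assemble these four twofold triple systems by developing a carefully chosen short list of base blocks under the $\Z_4$-action on $X$ fixing each group setwise, arranging that the $144$ mixed triples split evenly into four families of $36$ and that the $32$ residual transversals distribute eight per family.

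To meet the NGDD requirement, choose a perfect matching $M=\{M_1,\dots,M_6\}$ of $X$ adapted to $r$ and carve out of $\mathcal{A}_1$ a subset of $20+2r/3$ blocks that covers $M_i$ twice for $i\le r$, misses $M_i$ for $i>r$, and covers each cross-pair exactly once. Since the complement inside the S$_2(2,3,12)$ $\mathcal{A}_1$ is automatically an NGDD of type $2^{(6-r,r)}$ with respect to the same matching, this reduces the task to a single compatible decomposition of $\mathcal{A}_1$. For $r=0$ I would take $M$ whose pairs avoid a distinguished $20$-block GDD$(2,3,12)$ of type $2^6$ sitting inside $\mathcal{A}_1$; for $r=3$ I would take $M$ with three within-group pairs and three cross-group pairs matched to a base-block splitting of $\mathcal{A}_1$ into two $22$-block halves. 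The main obstacle is not any individual ingredient -- GDDs of types $4^3$ and $2^6$ and S$_2(2,3,12)$s are all classical -- but the simultaneous compatibility of the four $\mathcal{A}_i$'s with the two GDDs and with the matching-adapted NGDD sub-family in $\mathcal{A}_1$. Because the order is only $12$, I expect to settle this by presenting explicit block lists in the appendix, in the same style as the other small input designs used elsewhere in the paper, together with a short verification that the six families partition the CS correctly and that $\mathcal{A}_1$ contains the prescribed NGDD.
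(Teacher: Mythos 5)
Your structural analysis is sound: the duality $r\leftrightarrow 6-r$, the count of $208$ blocks ($144$ of type $(2,1,0)$ and $64$ transversals), the forced composition of each S$_2(2,3,12)$ into $36$ mixed blocks plus $8$ transversals, the block count $20+2r/3$ for the required NGDD, and the observation that its complement inside $\mathcal{A}_1$ is an NGDD of type $2^{(6-r,r)}$ are all correct. But the argument stops exactly where the lemma begins. You never exhibit the four S$_2(2,3,12)$s, the two GDD$(2,3,12)$s of type $4^3$, or the NGDD-bearing subset of $\mathcal{A}_1$; every essential step is phrased as ``I would assemble'', ``I would take'', ``I expect to settle this by presenting explicit block lists''. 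Consistency of the counting does not imply that the simultaneous partition exists --- that simultaneous compatibility (which you yourself identify as the main obstacle) is precisely the nontrivial content of the lemma, and an existence claim for a specific small design is only proved by producing the object or citing a source for it. As written, nothing is constructed, so the lemma is not established.

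For comparison, the paper settles $r=0,6$ by citing Lemma 7.2 of Wei, Ge and Colbourn, and for $r=3$ writes down an explicit PICS$_{2}^{3}(4^{3}:0)$ on $Z_2\times Z_2\times Z_3$: base blocks for two GDD$(2,3,12)$s of type $4^3$ and one initial S$_2(2,3,12)$ (whose underlined blocks form the NGDD$(2,3,12)$ of type $2^{(3,3)}$, with the doubled and missed pairs lying inside the groups) are developed under the action of $Z_2\times Z_2$ to yield the four pairwise disjoint twofold systems. If you pursue your plan on $Z_3\times Z_4$ with a $Z_4$-action, you must likewise present and verify concrete block lists; note also that your remark that the two type-$4^3$ GDDs ``can be taken as a pair of disjoint TD$(3,4)$s'' is not automatically compatible with the rest of the decomposition --- whether the remaining $32$ transversals and $144$ mixed triples split into four S$_2(2,3,12)$s, one containing the prescribed NGDD, has to be checked on the actual blocks. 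A quicker route, matching the paper, is to invoke the known PICS$_{2}^{r}(4^{3}:0)$ for $r=0,6$ and construct only the $r=3$ instance explicitly.
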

\proof For $r=0, 6$, there exists a PICS$_{2}^{r}(4^{3}:0)$ by \cite[Lemma 7.2]{wbt}. Next, a PICS$_{2}^{3}(4^{3}:0)$ can be constructed on $Z_{2} \times Z_{2} \times Z_{3}$ with group set $\mathcal{G} = \{Z_{2} \times Z_{2} \times \{j\} :j = 0, 1, 2\}$. Here we write a point $(x,y,i)$ as $(x,y)_i$.

We first construct two GDD$(2, 3, 12)$s of type $4^{3}$ with the group set $\mathcal{G}$. Their blocks
are generated by two sets of base blocks (each set in a line) under the action of the group $Z_{2} \times Z_{2}$.
{\small\small\begin{longtable}{llll}
$\{(0, 0)_{0}, (0, 0)_{1}, (0, 0)_{2}\}$ & $\{(0, 0)_{0}, (0, 1)_{1}, (1, 0)_{2}\}$ & $\{(0, 0)_{0}, (1, 0)_{1}, (1, 1)_{2}\}$ & $\{(0, 0)_{0}, (1, 1)_{1}, (0, 1)_{2}\}$\\
$\{(0, 0)_{0}, (0, 0)_{1}, (0, 1)_{2}\}$ & $\{(0, 0)_{0}, (0, 1)_{1}, (1, 1)_{2}\}$ & $\{(0, 0)_{0}, (1, 0)_{1}, (1, 0)_{2}\}$ & $\{(0, 0)_{0}, (1, 1)1_{}, (0, 0)_{2}\}$\\
\end{longtable}}

\noindent Then, we construct an initial S$_{2}(2,3,12)$ on $Z_{2} \times Z_{2} \times Z_{3}$ with the following blocks; the underlined blocks form the blocks of an NGDD$(2, 3, 12)$ of type $2^{(3,3)}$, where $\lambda_{p}=2$ for $p\in\{\{(0,0)_{0},(0,1)_{0}\}$, $\{(0,0)_{1},(0,1)_{1}\}, \{(0,0)_{2},(0,1)_{2}\}\}$ and $\lambda_{q}=0$ for $q\in\{\{(1,0)_{0},(1,1)_{0}\}$, $\{(1,0)_{1}$, $(1,1)_{1}\}$, $\{(1,0)_{2}$, $(1,1)_{2}\}\}$.
{\small\begin{longtable}{llll}
$\underline{\{(0, 0)_{0}, (0, 1)_{0}, (1, 0)_{1}\}}$ & $\underline{\{(0, 0)_{0}, (0, 1)_{0}, (0, 1)_{2}\}}$ & $\underline{\{(0, 0)_{0}, (1, 0)_{0}, (0, 1)_{1}\}}$ & $\underline{\{(0, 0)_{0}, (1, 1)_{0}, (1, 1)_{1}\}}$\\
$\underline{\{(0, 0)_{0}, (0, 0)_{2}, (1, 1)_{2}\}}$ & $\underline{\{(0, 0)_{0}, (0, 0)_{1}, (1, 0)_{2}\}}$ & $\underline{\{(0, 1)_{0}, (1, 1)_{1}, (0, 0)_{2}\}}$ & $\underline{\{(0, 1)_{0}, (1, 0)_{0}, (1, 0)_{2}\}}$\\
$\underline{\{(0, 1)_{0}, (0, 0)_{1}, (0, 1)_{1}\}}$ & $\underline{\{(0, 0)_{1}, (0, 1)_{1}, (1, 1)_{2}\}}$ & $\underline{\{(0, 1)_{0}, (1, 1)_{0}, (1, 1)_{2}\}}$ & $\underline{\{(1, 0)_{0}, (0, 0)_{1}, (1, 0)_{1}\}}$\\
$\underline{\{(1, 0)_{0}, (1, 1)_{1}, (1, 1)_{2}\}}$ & $\underline{\{(1, 0)_{0}, (0, 0)_{2}, (0, 1)_{2}\}}$ & $\underline{\{(0, 1)_{1}, (0, 0)_{2}, (0, 1)_{2}\}}$ & $\underline{\{(1, 1)_{0}, (0, 0)_{1}, (0, 0)_{2}\}}$\\
$\underline{\{(1, 1)_{0}, (0, 1)_{1}, (1, 0)_{1}\}}$ & $\underline{\{(1, 1)_{0}, (0, 1)_{2}, (1, 0)_{2}\}}$ & $\underline{\{(0, 0)_{1}, (1, 1)_{1}, (0, 1)_{2}\}}$ & $\underline{\{(0, 1)_{1}, (1, 1)_{1}, (1, 0)_{2}\}}$\\
$\underline{\{(1, 0)_{1}, (0, 0)_{2}, (1, 0)_{2}\}}$ & $\underline{\{(1, 0)_{1}, (0, 1)_{2}, (1, 1)_{2}\}}$ &
$\{(0, 0)_{0}, (0, 0)_{1}, (1, 1)_{2}\}$ & $\{(1, 0)_{0}, (1, 1)_{0}, (1, 0)_{1}\}$\\
$\{(0, 1)_{0}, (1, 1)_{0}, (0, 1)_{1}\}$ & $\{(1, 1)_{0}, (0, 0)_{1}, (1, 1)_{1}\}$ & $\{(0, 0)_{0}, (0, 1)_{1}, (0, 0)_{2}\}$ & $\{(0, 1)_{0}, (1, 0)_{0}, (0, 0)_{1}\}$\\
$\{(1, 0)_{0}, (0, 1)_{1}, (1, 1)_{1}\}$ & $\{(0, 1)_{0}, (1, 1)_{1}, (0, 1)_{2}\}$ & $\{(0, 0)_{0}, (1, 0)_{1}, (1, 1)_{1}\}$ & $\{(0, 1)_{0}, (1, 0)_{1}, (1, 1)_{2}\}$\\
$\{(1, 0)_{0}, (1, 1)_{0}, (0, 0)_{2}\}$ & $\{(0, 0)_{0}, (1, 0)_{0}, (0, 1)_{2}\}$ & $\{(0, 0)_{0}, (1, 1)_{0}, (1, 0)_{2}\}$ & $\{(0, 1)_{0}, (0, 0)_{2}, (1, 0)_{2}\}$\\
$\{(1, 0)_{0}, (1, 0)_{2}, (1, 1)_{2}\}$ & $\{(1, 1)_{0}, (0, 1)_{2}, (1, 1)_{2}\}$ & $\{(1, 0)_{1}, (1, 1)_{1}, (1, 0)_{2}\}$ & $\{(0, 0)_{1}, (1, 0)_{1}, (0, 0)_{2}\}$\\
$\{(0, 0)_{1}, (0, 1)_{2}, (1, 0)_{2}\}$ & $\{(1, 1)_{1}, (0, 0)_{2}, (1, 1)_{2}\}$ & $\{(0, 1)_{1}, (1, 0)_{1}, (0, 1)_{2}\}$ & $\{(0, 1)_{1}, (1, 0)_{2}, (1, 1) _{2}\}$\\
\end{longtable}}

\noindent We obtain four S$_{2}(2,3,13)$s under the action of the group $Z_{2} \times Z_{2}$. All block sets of
these GDD$(2, 3, 12)$s and S$_{2}(2,3,12)$s are pairwise disjoint and they form a CS$(4^{3}:0)$.
\qed


Let $X$ be a set of cardinality $tu$, and $\mathcal{H}$ be a partition of $X$ into $u$ subsets of size
$t$ (elements of $\mathcal{H}$ are \emph{holes}). Let $L$ be a square array of size $tu$, indexed by $X$, which
satisfies:
(1) if $x, y \in H \in \mathcal{H}$, then $L(x, y)$ is empty, otherwise $L(x, y)$ contains a symbol of $X$; and
(2) row or column $x$ of $L$ contains the symbols in $X\backslash H$, where $x \in H \in \mathcal{H}$.
$L$ is called a \emph{partitioned incomplete Latin square} (or PILS) of type $t^{u}$. $L$ is \emph{symmetric} if
$L(x, y) = L(y, x)$ for all $x, y$ not in the same hole.

%

Now we give a recursive construction for PICS$^{r}_{2}$, which is similar to the constructions in \cite{Chen, wbt, Lsts}.

\begin{construction}\label{C1}
Suppose that there exists a PICS$^{r}_{2}((2m)^{3}:0)$ for any $0\leq r\leq 3m$ with $r\equiv 0\pmod{3}$. Then for each integer $u > 2$ and $u \neq6$, there exists a PICS$^{e}_{2}((2mu)^{3}:0)$ for any $0\leq e\leq 3mu$ and $e\equiv 0\pmod{3}$.
\end{construction}

\proof We construct a PICS$^{e}_{2}((2mu)^{3}:0)$ on $X = Z_{2m} \times Z_{u} \times Z_{3}$, with the group set $\mathcal{G} = \{Z_{2m} \times Z_{u} \times \{i\} : 0 \leq i \leq 2\}$ for $0\leq e\leq 3mu$ and $e\equiv 0\pmod{3}$. The construction proceeds in three steps.

Step 1: Let $(Z_{u}\times Z_{3}, \mathcal{G}', \mathcal{A}_{0})$ be a resolvable GDD$(2, 3, 3u)$ of type $u^{3}$, which exists from \cite{1}, where $\mathcal{G}' = \{Z_{u} \times \{i\} : 0 \leq i \leq 2\}$. For each $h \in Z_{u}$, define $\mathcal{A}_{h} = \{\{(a, 0),(b, 1),(c + h, 2)\} : \{(a, 0),(b, 1),(c, 2)\} \in \mathcal{A}_{0}\}$. Then each $(Z_{u} \times Z_{3}, \mathcal{G}', \mathcal{A}_{h})$ is a GDD$(2, 3, 3u)$, and $\{\mathcal{A}_{h} : h \in Z_{u}\}$ is a partition of all triples from three distinct groups.

For $A = \{(a, 0),(b, 1),(c, 2)\} \in \mathcal{A}_{h}$ with $1 \leq h \leq u - 1$, construct $2m$ pairwise disjoint
GDD$(2, 3, 6m)$s on $Z_{2m} \times A$ with groups $Z_{2m}\times \{x\}, ~x \in A$. The required
$2m$ GDD$(2, 3, 6m)$s have block set $\mathcal{B}^{i}_{A}, i \in Z_{2m}$, which contains the blocks
$\{(i_{0}, a, 0),(i_{1}, b, 1),(i_{2}, c, 2)\},$ $ i_{0} + i_{1} + i_{2} \equiv i$ (mod $2m),i_{0}, i_{1}, i_{2} \in Z_{2m}$.

For $1 \leq h \leq u - 1$ and $i \in Z_{2m}$, define $\mathcal{F}_{(u-1)i+h} = \bigcup _{A\in \mathcal{A}_{h}} \mathcal{B}^{i}_{A}$. Then for $1 \leq l \leq 2m(u - 1)$, each $(X, \mathcal{G}, \mathcal{F}_{l})$ is a GDD$(2, 3, 6mu)$ of type $(2mu)^{3}$.

Step 2: Because $\mathcal{A}_{0}$ is resolvable, we can partition it into $u$ parallel classes, $P_{0}, P_{1},\ldots,P_{u-1}$, each of which partitions $Z_{u} \times Z_{3}$. For each $A \in P_{h}$ with $h \in Z_{u}$, we may take an $e_{A}$ with $0\leq e_{A}\leq 3m$ and $e_{A}\equiv 0\pmod{3}$ such that $e=\sum_{A\in P_{h}}e_{A}$. We construct a PICS$^{e_{A}}_{2}((2m)^{3}:0)$ on $Z_{2m} \times A$ with groups $Z_{2m} \times \{x\}$, $x\in A$, which exists by assumption. Denote its block set by $\mathcal{B}_{A}$. Then $\mathcal{B}_{A}$ can be partitioned into $4m - 2$ parts $\mathcal{B}^{i}_{A}$, $1 \leq i \leq 4m - 2$, so that each $\mathcal{B}^{i}_{A}$ $(1 \leq i \leq 2m)$ is the block set of an S$_{2}(2,3,6m)$, each $\mathcal{B}^{i}_{A}$ $(2m + 1\leq i \leq 4m -2)$ is the block set of a GDD$(2, 3, 6m)$ of type $(2m)^{3}$, and there is a subset $\mathcal{B}'_{A}\subset \mathcal{B}^{1}_{A}$ that is the block set of an NGDD$(2, 3, 6m)$ of type $2^{(e_{A},3m-e_{A})}$.

For $1 \leq i\leq 2m -2$, define $\mathcal{F}_{2m(u-1)+i} = \bigcup_{A\in \mathcal{A}_{0}}\mathcal{B}^{2m+i}_{A}$. Then for $2m(u- 1) + 1 \leq l \leq 2mu- 2$, each $(X, \mathcal{G}, \mathcal{F}_{l})$ is a GDD$(2, 3, 6mu)$ of type $(2mu)^{3}$.

Step 3: For each $P_{h} \in \{P_{0}, P_{1},\ldots,P_{u-1}\}$ and $i \in Z_{3}$, define a permutation $\alpha^{i}_{h}$ of $Z_{u}$ by
$\alpha^{i}_{h}(x) = y$ if the pair $\{(x, i),(y, i + 1)\}$ is contained in some block of $P_{h}$.

Next let $L$ be a symmetric PILS of type $(2m)^{u}$, having holes $Z_{2m} \times \{h\}, h \in Z_{u}$. Such a square exists by \cite{Fu}. Now for $h \in Z_{u}, i \in Z_{2m}$, define two GDD$(2, 3, 6mu)$s of type $(6m)^{u}$ on $X$, with group set $\{Z_{2m} \times A : A \in P_{h}\}$, having block sets

$\mathcal{C}^{0}_{i,h} = \big\{\{(a, b, i),(a', b', i),(a'' + i, \alpha^{i}_{h}(b''), i + 1)\}: a, a' \in Z_{2m},~b, b' \in Z_{u},~b < b',~i \in Z_{3}$, and $(a'', b'') = L((a, b),(a', b'))\big\}$,
and

$\mathcal{C}^{1}_{i,h} = \big\{\{(a, b, i),(a', b', i),(a'' + i, (\alpha^{i-1}_{h})^{-1}(b''), i - 1)\}: a, a' \in Z_{2m},~b, b' \in Z_{u},~b < b', ~i \in Z_{3}$, and $(a'', b'') = L((a, b),(a', b'))\big\}$.

For $h \in Z_{u}$, $i \in Z_{2m}$, define $\mathcal{F}_{i,h} = (\bigcup _{A\in P_{h}} \mathcal{B}^{i+1}_{A}) \cup \mathcal{C}^{0}_{i,h} \cup \mathcal{C}^{1}_{i,h}$ and $\mathcal{F}' =(\bigcup _{A\in P_{0}} \mathcal{B}'_{A}) \cup \mathcal{C}^{0}_{00}$. Then each $(X, \mathcal{F}_{i,h})$ is an S$_{2}(2,3,6mu)$ and $\mathcal{F}'\subset \mathcal{F}_{00}$ is the block set of an NGDD$(2, 3, 6mu)$ of type $2^{(e,3mu-e)}$.

All these block sets $\mathcal{F}_{l}$ and $\mathcal{F}_{i,h}$ are pairwise disjoint, so we obtain a PICS$^{e}_{2}((2mu)^{3}:0)$ for $0\leq e \leq mu$ and $e\equiv 0\pmod{3}$.\qed

\begin{lemma}\label{2n3}
There exists a PICS$^{e}_{2}((2n)^{3}:0)$ whenever $n$ is a positive integer, $0\leq e \leq 3n$, and $e\equiv 0\pmod{3}$.
\end{lemma}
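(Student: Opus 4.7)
The plan is to combine the base case from Lemma \ref{430r} with the recursive Construction \ref{C1}, after handling $n=1$ by a direct construction. Lemma \ref{430r} furnishes PICS$^{r}_{2}(4^{3}:0)$ for every admissible $r\in\{0,3,6\}$, i.e.\ the case $n=2$. Construction \ref{C1} converts a PICS$^{r}_{2}((2m)^{3}:0)$ (available for \emph{all} admissible $r$) into a PICS$^{e}_{2}((2mu)^{3}:0)$ for all admissible $e$, provided $u>2$ and $u\neq 6$. Hence, once an $n=1$ base case is also established, the choice $(m,u)=(1,n)$ covers every $n\geq 3$ with $n\neq 6$, while $(m,u)=(2,3)$ fills in $n=6$.

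For the base case $n=1$, I would construct PICS$^{e}_{2}(2^{3}:0)$ for $e\in\{0,3\}$ explicitly on $V=\{1,\dots,6\}$ with groups $\{1,2\},\{3,4\},\{5,6\}$. Since $g=1$ makes $4g-2=2$ and the GDD-index range $2g+1\le i\le 4g-2$ empty, the task reduces to splitting $\binom{V}{3}$ into two $S_{2}(2,3,6)$s, one of which contains the required NGDD subset. I would take $\mathcal{A}_{1}$ to be the union of the four transversal triples
$\{1,3,5\},\{1,4,6\},\{2,3,6\},\{2,4,5\}$ (an NGDD$(2,3,6)$ of type $2^{(0,3)}$) and the six triples $\{1,2,3\},\{1,2,4\},\{3,4,5\},\{3,4,6\},\{1,5,6\},\{2,5,6\}$ (an NGDD$(2,3,6)$ of type $2^{(3,0)}$). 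A pair-by-pair count shows $\mathcal{A}_{1}$ is an $S_{2}(2,3,6)$, and then $\mathcal{A}_{2}=\binom{V}{3}\setminus\mathcal{A}_{1}$ is automatically another $S_{2}(2,3,6)$ since each pair of $V$ lies in four triples of $\binom{V}{3}$. Because $\mathcal{A}_{1}$ contains NGDDs of both needed types simultaneously, this single structure realizes PICS$^{0}_{2}(2^{3}:0)$ and PICS$^{3}_{2}(2^{3}:0)$.

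With the inputs for $n=1$ and $n=2$ in hand, the recursion is immediate. For $n\geq 3$ with $n\neq 6$, apply Construction \ref{C1} with $(m,u)=(1,n)$, whose hypothesis is exactly the PICS$^{r}_{2}(2^{3}:0)$ for all $r\in\{0,3\}$ just built; the output is PICS$^{e}_{2}((2n)^{3}:0)$ for every admissible $e$. For $n=6$, apply Construction \ref{C1} with $(m,u)=(2,3)$, drawing its input from Lemma \ref{430r}; this yields PICS$^{e}_{2}(12^{3}:0)$ for every admissible $e$.

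The principal obstacle is isolating the $n=1$ base case, because Construction \ref{C1} never outputs $n=1$ (its output has an even factor $u\geq 3$) and cannot by itself reach odd $n\geq 3$, nor $n=6$, starting from $n=2$ alone. Once PICS$^{r}_{2}(2^{3}:0)$ is produced by hand as above, the remainder of the proof is mechanical, and the case analysis $n\in\{1,2,6\}$ versus $n\in\{3,4,5,7,8,\dots\}$ exhausts all positive integers $n$.
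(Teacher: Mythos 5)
Your proposal is correct and takes essentially the same route as the paper: base cases $n=1,2$ followed by Construction \ref{C1} with $(m,u)=(1,n)$ for $n\ge 3$, $n\ne 6$, and $(m,u)=(2,3)$ for $n=6$, the only difference being that you construct PICS$^{e}_{2}(2^{3}:0)$ ($e=0,3$) explicitly where the paper cites \cite[Lemma 7.1]{wbt}, and your partition of $\binom{V}{3}$ into two S$_{2}(2,3,6)$s with both required NGDD subsets inside $\mathcal{A}_{1}$ does check out. Only your side remark that $n=6$ cannot be reached from the $n=2$ base alone is inaccurate---$(m,u)=(2,3)$ uses precisely that base---but this does not affect the argument.
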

\proof There exists a PICS$_{2}^{e}(2^{3}:0)$ by \cite[Lemma 7.1]{wbt} where $e=0, 3$. For $n=2$, see Lemma \ref{430r}.
For $n \geq 3$ and $n \neq 6$, apply Construction \ref{C1} with $m = 1$ and $u = n$. For $n = 6$, apply Construction \ref{C1} with $m = 2$ and $u = 3$.\qed

\begin{theorem}\label{v06}
Let $v \equiv 0 \pmod{6}$ and $(v,  b)$ satisfy (C2). There exists an NWBTS$(v;b)$.
\end{theorem}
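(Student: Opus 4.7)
The plan is to use Lemma~\ref{2n3} as the master design. Write $v=6n$ and associate $(\lambda,\varepsilon)$ with $(v,b)$ via~(\ref{associate}); integrality of $b$ combined with $\binom{v}{2}=3n(6n-1)\equiv 0\pmod 3$ forces $\varepsilon\equiv 0\pmod 3$, while (C2) forces $\lambda$ odd. Lemma~\ref{bb}(i) shifts $\lambda$ by any multiple of the even number $v-2$, and Lemma~\ref{bb}(ii) sends $\lambda\mapsto v-2-\lambda$; together they reach every odd $\lambda\ge 1$ starting from $\lambda\in\{1,3,\ldots,4n-1\}$. So it suffices to realise each admissible $\varepsilon\in(-3n,3n)\cap 3\mathbb{Z}$ by an NWBTS at some small odd $\lambda$ with the defect graph prescribed by Lemma~\ref{c2balanced}.

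Given a PICS$_2^e((2n)^3:0)$ with block decomposition $\mathcal{A}_1\cup\cdots\cup\mathcal{A}_{4n-2}$ (so that $\mathcal{A}_1,\ldots,\mathcal{A}_{2n}$ are the S$_{2}(2,3,6n)$ block sets and $\mathcal{A}_1$ contains the NGDD subset $\mathcal{A}'$ of type $2^{(e,3n-e)}$), I would set $\mathcal{F}=\mathcal{A}'\cup\bigcup_{i=2}^{p+1}\mathcal{A}_i$ for $0\le p\le 2n-1$. A direct pair-count shows each ``double'' NGDD pair is in $2p+2$ triples, each ``empty'' pair in $2p$, and every remaining pair in $2p+1$. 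Thus $(V,\mathcal{F})$ is a simple 3-balanced TS with $\lambda=2p+1$ whose defect graph is a perfect matching of $e$ solid and $3n-e$ dotted edges, i.e.\ a copy of $H_{v,\varepsilon}^0$ with $\varepsilon=2e-3n$. As $e$ ranges over $\{0,3,\ldots,3n\}$ this realises exactly the residue class $\varepsilon\equiv -3n\pmod 6$, which a parity check identifies with $\varepsilon\equiv v/2\pmod 2$: precisely the $\varepsilon$ demanding $H_{v,\varepsilon}^0$.

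For the complementary residue $\varepsilon\equiv -3n+3\pmod 6$ (target $H_{v,\varepsilon}^1$ or $H_{v,\varepsilon}^2$), the plan is to perform a single-triple surgery on the base construction. To jump from $\varepsilon$ to $\varepsilon+3$ with target $H^1$, adjoin one triple $\{x,y,z\}\notin\mathcal{F}$ such that $\{x,y\}$ is an ``empty'' NGDD pair while $\{y,z\}$ and $\{x,z\}$ are ``single'' pairs: then $\{x,y\}$ becomes balanced, the two singles enter $D_1$, and together with the unchanged NGDD partner edge at $z$ they form precisely the 3-edge star (two solid, one dotted) plus residual matching that defines $H_{v,\varepsilon+3}^1$. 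Symmetrically, to reach $\varepsilon-3$ with shape $H^2$, delete from $\mathcal{A}'$ a triple containing a double NGDD pair and arrange the NGDD colour of $z$'s partner so that the liberated star has one solid and two dotted edges.

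The main obstacle is confirming feasibility of the surgery: the triple $\{x,y,z\}$ must actually be found outside (respectively inside) $\mathcal{F}$ with the prescribed NGDD-colour pattern at $z$'s partner, so that the resulting defect graph matches the exact topology of $H_{v,\varepsilon}^1$ or $H_{v,\varepsilon}^2$ in Figure~\ref{f2} and simplicity is preserved. For $n\ge 3$ the NGDD has enough freedom to select such a configuration whenever the target residue is admissible, but the degenerate cases $n=1,2$ with very restrictive block counts may require direct construction in the appendix style of Section~4. Once every admissible $\varepsilon$ is realised at some small odd $\lambda$, Lemma~\ref{bb} propagates existence to all odd $\lambda$ and delivers an NWBTS$(v;b)$ for every admissible $b$.
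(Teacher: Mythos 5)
Your overall route is the same as the paper's: take the PICS$_{2}^{e}((2n)^{3}:0)$ of Lemma \ref{2n3} as the master design, observe that the NGDD part together with $p$ of the S$_{2}(2,3,6n)$ classes is a simple $3$-balanced triple system whose defect graph is the perfect matching $H_{v,\varepsilon}^{0}$ (this is exactly the paper's even-$b'$ case, which it then extends by Lemma \ref{tee} and Lemma \ref{bb} instead of your direct pair count and $\lambda\mapsto v-2-\lambda$ bookkeeping), and handle the other residue class of $\varepsilon$ by adjoining a single extra triple to create the star of $H_{v,\varepsilon}^{1}$. So the architecture is fine; the problem is that the one step you yourself flag as ``the main obstacle'' is precisely the step that carries the weight of the proof, and your justification for it (``for $n\ge 3$ the NGDD has enough freedom'') is an assertion, not an argument. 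One needs a block $\{x,y,z\}\in\mathcal{A}_{1}\setminus\mathcal{A}'$ with $\{x,y\}$ an empty matching edge and the matching partner of $z$ also empty. A generic counting argument does not deliver this: in the odd case $e$ can be as large as $n-1$, so there may be only three empty edges, hence only six candidate blocks through them, and nothing a priori prevents all of their third points from being matched by solid edges. The paper closes this gap structurally rather than by counting: it notes that $e\le n-1$ forces at least one input design PICS$_{2}^{r}((2m)^{3}:0)$ with $r\le 3(m-1)$ in the recursion of Construction \ref{C1}, and then verifies the required block exists in the input designs actually used by Lemma \ref{2n3} (trivially for $r=0$, and by inspection of the explicit PICS$_{2}^{3}(4^{3}:0)$ of Lemma \ref{430r}), a property that survives the recursive construction because the NGDD groups of the output are inherited from an input design placed on $Z_{2m}\times A$. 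Without an argument of this kind your surgery step is unproven.

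Two smaller points. First, your worry that ``$n=1,2$ may require direct construction'' is misplaced: these cases are already covered, since PICS$_{2}^{e}(2^{3}:0)$ ($e=0,3$) comes from \cite{wbt} and PICS$_{2}^{e}(4^{3}:0)$ ($e=0,3,6$) is Lemma \ref{430r}; for $n=1$ the odd case only needs $e=0$, where every matching edge is empty and the block trivially exists, and for $n=2$, $e=3$ the paper exhibits it explicitly. Second, if you do the surgery after already adjoining $\mathcal{A}_{2},\ldots,\mathcal{A}_{p+1}$, you must take the extra triple from $\mathcal{A}_{1}\setminus\mathcal{A}'$ (as the paper does) to keep the system simple; ``any triple not in $\mathcal{F}$'' with the right pair pattern is not automatically compatible with simplicity once you later want to compare against the block sets you have used. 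The deletion variant you sketch for $H_{v,\varepsilon}^{2}$ is unnecessary, since the definition of nearly $2$-balanced only asks for one of $H_{v,\varepsilon}^{1}$, $H_{v,\varepsilon}^{2}$.
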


\proof According to Lemma \ref{bb}, we can assume that $b \leq \frac{v(v-1)(v-2)}{12}$.
Write $v = 6n$ and $b = q \frac{v(v-1)}{3} + b'$ where $0 \leq q \leq  \left \lfloor\frac{v-2}{4}\right \rfloor\leq 2n - 1$ and $\frac{v(v-1)}{6}-\frac{v}{6} < b' < \frac{v(v-1)}{6}+\frac{v}{6}$.
We first deal with the case $q=0$ by considering the parity of $b=b'$.
 For any even integer $b'$ in the range $\frac{v(v-1)}{6}-\frac{v}{6} < b' < \frac{v(v-1)}{6}+\frac{v}{6}$, we may write $b'=\frac{v(v-1)}{6}-\frac{v}{6}+2e=6n^{2}-2n+2e$ where $1\leq e \leq n-1$. For any odd integer $b'$ in this range, we  write $b'=\frac{v(v-1)}{6}-\frac{v}{6}+2e+1=6n^{2}-2n+2e+1$ where $0\leq e \leq n-1$.

Start with a PICS$^{3e}_{2}((2n)^{3}:0)$ $(V,\emptyset,\mathcal{G},\mathcal{A})$ where $0\leq e \leq n$, which exists from Lemma \ref{2n3}.
Its block set contains $2n$ pairwise disjoint subsets $\mathcal{A}_{l}$, $1 \leq l \leq 2n$, and each $\mathcal{A}_{l}$ is the block set of an S$_{2}(2,3,6n)$ and $\mathcal{A}_{1}$ has a subset $\mathcal{B}_{1}$ which is the block set of an NGDD$(2, 3, 6n)$ of type $2^{(3e,3n-3e)}$. Clearly $|\mathcal{B}_{1}|=6n^{2}-2n+2e$.

For the even case  $b'=6n^{2}-2n+2e$ where $1\leq e \leq n-1$,  we now show that $\mathcal{B}_{1}$ actually forms an NWBTS$(v;b')$. Since $\mathcal{B}_{1}$ is a simple NGDD$(2, 3, 6n)$ of type $2^{(3e,3n-3e)}$ and $3b' = \binom{v}{2}+ \varepsilon$  where $\varepsilon=-3n+6e\equiv {v\over 2}\pmod 2$, we only need to check that a nearly 2-balanced TS$(v;b')$ should have a defect graph $D=D_1\cup D_{-1}$ isomorphic to $H_{v,\varepsilon}^0$ with $|D_1|=3e$ and $|D_{-1}|=3n-3e$.  Noticing $({v+2\varepsilon\over 4},  {v-2\varepsilon\over 4})=(3e,3n-3e)$ yields that $\mathcal{B}_{1}$ is nearly 2-balanced and hence nearly well-balanced. Let $\mathcal{F}=\mathcal{B}_{1}$ in this case.

We then treat the case of $b'=6n^{2}-2n+2e+1$ where $0\leq e \leq n-1$ based on the previous block set $\mathcal{B}_{1}$, whose defect graph is $D=D_1\cup D_{-1}$ (noting ${\cal B}_{1}$ is a GDD and $D_1=\emptyset$ when $e=0$). If there  exists a block $\{x,y,z\}\in \mathcal{A}_{1}\backslash \mathcal{B}_{1}$ where $\{x,y\},\{z,w\}\in D_{-1}$ for some element $w\notin\{x,y,z\}$, then we let $\mathcal{F}=\mathcal{B}_{1}\cup\{\{x,y,z\}\}$ and it is readily checked  that $\mathcal{F}$ is an NWBTS$(v;b')$ by showing its defect graph isomorphic to $H_{v,\varepsilon}^1$ where $\varepsilon=-3n+6e+3$  and $({v+2+2\varepsilon\over 4},  {v+2-2\varepsilon\over 4})=(3e+2,3n-3e-1)$.   This solves the case of odd $b'$. Now we show such a block $\{x,y,z\}$ exists. We trace back to the proof of Construction \ref{C1}. Observe that $e\le n-1$ and we need to input at least one PICS$^{r}_{2}((2m)^{3}:0)$ with $r\le 3(m-1)$ and $r\equiv0\pmod{3}$ for  Construction \ref{C1}. By the construction method we only need to check that such an input design PICS$^{r}_{2}((2m)^{3}:0)$  with $r\le 3(m-1)$ and $r\equiv0\pmod{3}$ has such a desirable property. Checking the proof of  Lemma \ref{2n3}, we only need the input designs for $m=1,2$. A PICS$^{0}_{2}((2m)^{3}:0)$  certainly satisfies this and it also holds for the example of PICS$^{3}_{2}(4^{3}:0)$ in Lemma \ref{430r}.

Finally when $1\leq q \leq 2n-1$, construct a family $\mathcal{F}'$ consisting of $q$ $\mathcal{A}_{l}$'s for $2 \leq l \leq 2n$. Then by Lemma \ref{tee}, $\mathcal{F} \cup \mathcal{F}'$ is an NWBTS$(v;b)$, where $b = q \frac{v(v-1)}{3} + b'$.\qed

\section{Cases $v \equiv 2,4\pmod  6$ for (C2)}

In this section, we define candelabra systems with another three types of partitions and give three corresponding recursive constructions via $s$-fan designs. By constructing candelabra systems with special partitions, we establish the existence of a large portion of NWBTSs for the parameters satisfying (C2) and $v \equiv 2,4\pmod 6$.

\subsection{Recursive constructions}

Let $(X, S, \mathcal{G}, \mathcal{A})$ be a CS$((2ag)^{n}:s)$ with $a,g$ being positive integers. Further let $r$ be an integer with $0\leq r \leq ag(n-1)$. Then $(X, S, \mathcal{G}, \mathcal{A})$ is called {\em $(g,r)$-partitionable with index two} and denoted by $g$-PCS$^{r}_{2}((2ag)^{n} : s)$
if its block set $\mathcal{A}$ contains a subset $\mathcal{A}'$ and $\mathcal{A}'$ can be partitioned into $gn$ pairwise disjoint parts $\mathcal{A}_{i}$, $0 \leq i \leq gn-1$, fulfilling that

 (i) for each group $G\in \mathcal{G}$, there are exactly $g$ $\mathcal{A}_{i}$'s such that each $\mathcal{A}_{i}$ is the block set of a GDD$_{2}(2, 3, 2agn + s)$ of type $1^{2ag(n-1)}(2ag + s)^{1}$ with $G\cup S$ as its long group, and

  (ii) at least one of the $gn$ GDDs contains a sub-NGDD$(2, 3, 2agn + s)$ of type $2^{(r,ag(n-1)-r)}(2ag + s)^{1}$.


We strengthen a little on the conditions of $g$-PCS$^{r}_{2}((2ag)^{n} : s)$ to define $g^{+}$-PCS$^{r}_{2}((2ag)^{n} : s)$.
Let $(X, S, \mathcal{G}, \mathcal{A})$  be a CS$((2ag)^{n}:s)$  with $a,g$ being positive integers. It is said to be {\em $(g^+,r)$-partitionable with index two} and denoted by $g^{+}$-PCS$^{r}_{2}((2ag)^{n} : s)$ if its block set $\mathcal{A}$ contains a subset $\mathcal{A}'$ and $\mathcal{A}'$ can be partitioned into  $gn+1$ pairwise disjoint parts $\mathcal{A}_{i}$, $0 \leq i \leq gn$, when it satisfies that

(i) for a fixed group $G_{0}\in \mathcal{G}$, there are exactly $g+1$ $\mathcal{A}_{i}$'s such that each $\mathcal{A}_{i}$ is the block set of a GDD$_{2}(2, 3, 2agn + s)$ of type $1^{2ag(n-1)}(2ag + s)^{1}$ with long group $G_0\cup S$ and at least one of $\mathcal{A}_{i}$'s contains a subset $\mathcal{A}'$ such that $\mathcal{A}'$ is the block set of an NGDD$(2, 3, 2agn + s)$ of type $2^{(r,ag(n-1)-r)}(2ag + s)^{1}$, and

(ii) for each group $G\in \mathcal{G}\backslash\{G_{0}\}$, there are exactly $g$ $\mathcal{A}_{i}$'s such that each $\mathcal{A}_{i}$ is the block set of a GDD$_{2}(2, 3, 2agn + s)$ of type $1^{2ag(n-1)}(2ag + s)^{1}$ with long group $G\cup S$.

Let $(X, \mathcal{G}, \mathcal{A})$ be a GDD$(3, 3, 2g(n + 1))$ of type $(2g)^{n+1}$ and $G_{0}$ be one of its groups. Such a GDD is \emph{partitionable with index two}, denoted by PGDD$_{2}((2g)^{n+1})$ (as in \cite{wbt}), if the block set $\mathcal{A}$ can be partitioned into $\mathcal{A}_{i}$, $1 \leq i \leq gn + 2g$, so that

(i) for each $G \in \mathcal{G}$ with $G \neq G_{0}$, there are exactly $g$ $\mathcal{A}_{i}$'s such that each $\mathcal{A}_{i}$ is the block set of a GDD$_{2}(2, 3, 2gn)$ of type $(2g)^{n}$ with group set $\mathcal{G}\backslash \{G\}$, and

(ii) each $(X\backslash G_{0}, \mathcal{G}\backslash \{G_{0}\}, \mathcal{A}_{i})$, $gn + 1 \leq i \leq gn + 2g$, is a GDD$(2, 3, 2gn)$ of type $(2g)^{n}$.

\begin{lemma}\label{PGDD}
A PGDD$_{2}((2g)^{n+1})$ exists if and only if $n \geq 3$ and $gn(n-1)\equiv 0 \pmod 3$.
\end{lemma}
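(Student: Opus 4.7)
The plan is to split the argument into necessity, handled by elementary counting, and sufficiency, obtained via a direct construction in the spirit of Construction \ref{C1}.

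For necessity, I would extract both numerical conditions from condition (ii) of the definition of a PGDD$_{2}((2g)^{n+1})$. Each part $\mathcal{A}_{i}$ with $gn+1\le i\le gn+2g$ is required to be the block set of an ordinary GDD$(2,3,2gn)$ of type $(2g)^{n}$. For such a GDD to exist one needs at least three groups so that a block of size three on distinct groups is available, i.e.\ $n\ge 3$, and its number of blocks $\frac{2g^{2}n(n-1)}{3}$ must be an integer, which forces $gn(n-1)\equiv 0\pmod 3$. These are precisely the two conditions in the statement.

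For sufficiency, I would construct a PGDD$_{2}((2g)^{n+1})$ on the point set $X=\mathbb{Z}_{2g}\times\{0,1,\dots,n\}$ with groups $G_{i}=\mathbb{Z}_{2g}\times\{i\}$ and distinguished group $G_{0}$. The main inputs are a resolvable GDD$(2,3,2gn)$ of type $(2g)^{n}$ on $X\setminus G_{0}$, available from \cite{1} under the stated arithmetic conditions, and a symmetric PILS of type $(2g)^{n}$ on $\mathbb{Z}_{2g}\times\{1,\dots,n\}$. Using the PILS as a scheduling device, exactly as in the definition of the families $\mathcal{C}^{0}_{i,h}$ and $\mathcal{C}^{1}_{i,h}$ in the proof of Construction \ref{C1}, one adjoins points of $G_{0}$ to the parallel classes of the resolvable GDD to form the $G_{0}$-touching blocks of the required $3$-GDD. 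Grouping these blocks according to derived designs at pairs of points of $G_{0}$ then yields, for each group $G\ne G_{0}$, exactly $g$ index-$2$ sub-GDDs of type $(2g)^{n}$ with group set $\mathcal{G}\setminus\{G\}$, supplying the parts of condition (i). The remaining $2g$ parallel classes serve as the index-$1$ parts of condition (ii).

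The principal obstacle is the scheduling step itself: the pairing of derived designs must simultaneously produce $g$ sub-GDDs missing each $G\ne G_{0}$. The symmetry $L(x,y)=L(y,x)$ of the PILS is exactly what makes all $n$ simultaneous partitions realisable, because the two cyclic families $\mathcal{C}^{0}$ and $\mathcal{C}^{1}$ combine to cover each cross-group pair precisely twice, matching the index-$2$ requirement. Once this scheduling is fixed, verifying conditions (i) and (ii) reduces to a routine pair-count already performed in the block-count match $\binom{n+1}{3}(2g)^{3}=\frac{4g^{3}n(n-1)(n+1)}{3}$. Any small exceptional parameter pairs for which the resolvable GDD input is unavailable (for instance certain $g=1$ and small-$n$ cases) would be filled in by explicit direct constructions of the same flavour as the PICS$_{2}^{r}(4^{3}:0)$ built in Lemma \ref{430r}.
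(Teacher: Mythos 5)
Your necessity argument is fine (it is the counting the paper leaves implicit), but the sufficiency construction has a genuine gap: nothing in it produces, let alone verifies, the defining property of the underlying design. A PGDD$_{2}((2g)^{n+1})$ is first of all a GDD$(3,3,2g(n+1))$ of type $(2g)^{n+1}$, so every triple meeting three distinct groups must lie in exactly one block. Your assembly from a resolvable GDD$(2,3,2gn)$ and a symmetric PILS, modelled on the families $\mathcal{C}^{0}_{i,h},\mathcal{C}^{1}_{i,h}$ of Construction \ref{C1}, only controls pair coverage; in Construction \ref{C1} those families are used to build $2$-designs (S$_{2}(2,3,6mu)$'s), and no step of your sketch forces each cross-group triple to be covered exactly once. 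Achieving this $3$-wise condition simultaneously with the prescribed partition is exactly the content of a generalized frame F$(3,3,(n+1)\{2g\})$, which is the deep external result the paper invokes (\cite[Corollary 3.3]{PGDD1}), supplemented by a directly constructed PGDD$_{2}(2^{5})$ from \cite[Lemma 6.4]{wbt} for the one admissible pair $(n,g)=(4,1)$ that the frame result does not cover.

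Two further concrete failures. First, condition (ii) requires $2g$ pairwise disjoint GDD$(2,3,2gn)$'s of type $(2g)^{n}$ on $X\setminus G_{0}$, each covering every cross pair of $X\setminus G_{0}$ once and hence having $2g^{2}n(n-1)/3$ blocks; a parallel class of your resolvable input has only $2gn/3$ blocks and covers each point once, so ``the remaining $2g$ parallel classes'' cannot serve as these parts (indeed the resolvable GDD has only $g(n-1)$ parallel classes in total, and each part of condition (ii) is as large as that entire input design). Moreover, the blocks of the target $3$-GDD that avoid $G_{0}$ automatically form a GDD$(3,3,2gn)$ of type $(2g)^{n}$, so what you actually need is a $3$-GDD admitting such a partition, which your inputs do not supply. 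Second, your main ingredient need not exist under the hypotheses of Lemma \ref{PGDD}: a resolvable GDD$(2,3,2gn)$ of type $(2g)^{n}$ requires $3\mid 2gn$, which fails for instance at $(g,n)=(1,4)$, $(1,7)$, $(2,4)$, all of which satisfy $n\ge 3$ and $gn(n-1)\equiv 0\pmod 3$; in particular $(g,n)=(1,4)$ is precisely the case the paper must settle by a separate citation, and the closing promise to ``fill in small exceptional pairs by explicit constructions'' is not a proof.
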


\begin{proof}
The necessary conditions for the existence of a PGDD$_{2}((2g)^{n+1})$ are $n \geq 3$ and $gn(n-1)\equiv 0 \pmod 3$. For $n \geq 3$ and $gn(n-1)\equiv 0 \pmod 3$ with $(n,g)\neq(4,1)$, there exists a PGDD$_{2}((2g)^{n+1})$, which can be produced by the so-called  generalized frame F$(3,3,(n+1)\{2g\})$ in \cite[Corollary 3.3]{PGDD1}. A PGDD$_{2}(2^{5})$ exists by \cite[Lemma 6.4]{wbt}. Then we obtain the desired result.
\end{proof}

Let $(X, S, \mathcal{G}, \mathcal{A})$ be a CS$(3,K,v)$ of type $(g^{n_{1}}_{1} g^{n_{2}}_{2} \cdots g^{n_{u}}_{u} : s)$ with  $S =\{\infty_{1},\ldots,\infty_{s}\}$. For $1 \leq i \leq s$, let $\mathcal{A}_{i} = \{A\backslash\{\infty _{i}\} : A\in \mathcal{A},~\infty_{i} \in A\}$ and $\mathcal{A}_{T} = \{A \in\mathcal{A} : A \cap S = \emptyset\}$. Then the $(s + 3)$-tuple $(X\backslash S, \mathcal{G}, \mathcal{A}_{1}, \mathcal{A}_{2},\ldots, \mathcal{A}_{s}, \mathcal{A}_{T} )$ is called an $s$-\emph{fan design} (as in \cite{fd}). If the block sizes of $\mathcal{A}_{i}$ and $\mathcal{A}_{T}$ are from $K_{i}~(1 \leq i \leq s)$ and $K_{T}$, respectively, then the $s$-fan design is denoted by $s$-FG$(3, (K_{1},\ldots,K_{s},K_{T})$, $\sum ^{u}_{i=1} n_{i}g_{i})$ of type $g^{n_{1}}_{1} g^{n_{2}}_{2} \cdots g^{n_{u}}_{u}$.

\begin{construction}\label{constrution}
Suppose that there exists an $h$-FG$(3,(K_{1},\ldots$, $K_{h}$,$ K_{T}), gn)$ of type $g^{n}$, where  $mg(n-1) \equiv 0 \pmod{3}$
and $m(k-1) \equiv 0 \pmod{3}$ for each $k \in K_{1}$.
Further suppose that there exists
\begin{enumerate}
\item[(i)] an $m$-PCS$^{r}_{2}((2am)^{k} : l)$ for each $k \in K_{1}$ and each $0\leq r \leq am(k-1)$ with $r\equiv 0 \pmod{3a}$;

\item[(ii)] a PGDD$_{2}((2am)^{k+1})$ for each $k\in K_{j}~(2\leq j\leq h)$; and

\item[(iii)] a PGDD$_{2}((2am)^{k})$ for each $k \in K_{T}$.
\end{enumerate}
Then there exists an $mg$-PCS$^{e}_{2}((2amg)^{n}: 2(h-1)am+l)$ for any $0\leq e \leq amg(n-1)$ and $e \equiv 0 \pmod{3a}$.
\end{construction}

\proof Let $(X, \mathcal{G}, \mathcal{A}_{1},\mathcal{A}_{2},\ldots,\mathcal{A}_{h}, \mathcal{T})$ be the given $h$-FG$(3,(K_{1},K_{2},\ldots,K_{h}, K_{T}), gn)$ of type $g^{n}$. Let $S = \{\infty\}\times Z_{s}$, where $s = 2(h-1)am + l$. We shall construct the desired
design on $X'= (X \times Z_{m}\times Z_{2a})\cup S$ with the group set $\mathcal{G}'=\{G'= G \times Z_{m}\times Z_{2a} : G\in\mathcal{G}\}$ and the stem $S$, where $(X\times Z_{m}\times Z_{2a})\cap S=\emptyset$. We shall describe its block set $\mathcal{F}$ below.

Denote $G_{x}=\{x\}\times Z_{m}\times Z_{2a}$ for $x\in X$ and $S = S_{1}\cup S_{2}\cup \cdots\cup S_{h}$, where $S_{1}=\{\infty\}\times Z_{l}, ~S_{j}=\{(\infty,l +2(j-2)am),\ldots,(\infty, l+2(j-1)am-1)\}$ for $2\leq j\leq h$. Fix a point $x_{0}\in X$ where $x_{0}\in G_{0}\in \mathcal{G}$. Let $\mathcal{A}_{1,0} = \{A\in \mathcal{A}_{1} : x_{0}\in A\}$. We know that $\{A\backslash\{x_{0}\} : A\in\mathcal{A}_{1,0}\}$ is a partition of $X\backslash G_{0}$.
For any $e$ with $0\leq e \leq amg(n-1)$ and $e \equiv 0 \pmod{3a}$, we may take $e_{A}$ with $0\leq e_{A} \leq am(|A|-1)$ and $e_{A} \equiv 0 \pmod{3a}$ for $A\in \mathcal{A}_{1}$ such that $e=\sum_{A\in \mathcal{A}_{1,0}}e_{A}$.

For each block $A \in \mathcal{A}_{1}$, construct a PCS$^{e_{A}}_{2}((2am)^{|A|}: l)$ on $(A\times Z_{m} \times Z_{2a}) \cup S_{1}$ having $\{G_{x}:x\in A\}$ as its group set, $S_{1}$ as its stem. Such a design exists by assumption. Denote its block set by $\mathcal{D}^{*}_{A}$ which contains a subset $\mathcal{D}_{A}$ and $\mathcal{D}_{A}$ can be partitioned into $m|A|$ pairwise disjoint block sets $\mathcal{D}_{A}(x,i)$, $(x, i) \in A \times Z_{m}$, satisfying that

(D1) each $\mathcal{D}_{A}(x, i)$ is the block set of a GDD$_{2}(2, 3$, $2am|A| + l)$ of type $1^{2am(|A|-1)}$ $(2am +l)^{1}$ and with long group $G_{x} \cup S_{1}$;

(D2) if $x_{0}\in A$, then there is a subset $\mathcal{D}'_{A}(x_{0}, 0)$ of $\mathcal{D}_{A}(x_{0}, 0)$ such that $\mathcal{D}'_{A}(x_{0}, 0)$ is the block set of an NGDD$(2, 3, 2am|A| + l)$ of type $2^{(e_{A},am(|A|-1)-e_{A})}(2am + l)^{1}$.

For any $2\leq j\leq h$ and any block $A\in \mathcal{A}_{j}$, construct a PGDD$_{2}((2am)^{|A|+1})$ on $(A\times Z_{m}\times Z_{2a})\cup S_{j}$ having $\Gamma_{A} =\{G_{x} : x\in A\}\cup\{S_{j}\}$ as its group set. Such a design exists by assumption. Denote its block set by $\mathcal{C}^{*j}_{A}$ which contains a subset $\mathcal{C}^{j}_{A}$. Two disjoint GDD$(2, 3, 2am|A|)$s with the same group set can be seen as a GDD$_{2}(2, 3, 2am|A|)$ containing a sub-GDD$(2, 3, 2am|A|)$.
So $\mathcal{C}^{j}_{A}$ can be partitioned into $m|A|$ pairwise disjoint block sets $\mathcal{C}^{j}_{A}(x, i)$, $(x, i) \in A \times Z_{m}$, satisfying that

(N1) for any $(x, i)\in A \times Z_{m}$, each $\mathcal{C}^{j}_{A}(x, i)$ is the block set of a GDD$_{2}(2, 3, 2am|A|)$ of type $(2am)^{|A|}$ with the group set $\Gamma _{A}\backslash \{G_{x}\}$;

(N2) if $x_{0} \in A$, then there is a subset $\mathcal{C}'^{j}_{A}(x_{0}, 0)$ of $\mathcal{C}^{j}_{A}(x_{0}, 0)$ so that $\mathcal{C}'^{j}_{A}(x_{0}, 0)$ is the block set of a GDD$(2, 3, 2am|A|)$ with the group set $\Gamma _{A}\backslash \{G_{x_{0}}\}$.

For each $A \in \mathcal{T}$, construct a PGDD$_{2}((2am)^{|A|})$ on $A \times Z_{m} \times Z_{2a}$ having $\Gamma_{A}' =\{G_{x} : x\in A\}$ as its group set. Such a design exists by assumption. Denote its block set by $\mathcal{B}^{*}_{A}$ which contains a subset $\mathcal{B}_{A}$;
the block set $\mathcal{B}_{A}$ can be partitioned into $m|A|$ pairwise disjoint block sets $\mathcal{B}_{A}(x, i)$, $(x, i) \in A \times Z_{m}$, satisfying that

(B1) each $\mathcal{B}_{A}(x, i)$ is the block set of a GDD$_{2}(2, 3, 2am(|A|- 1))$ with group set $\Gamma _{A}'\backslash \{G_{x}\}$;

(B2) if $x_{0} \in A$, then there is a subset $\mathcal{B}'_{A}(x_{0}, 0)$ of $\mathcal{B}_{A}(x_{0}, 0)$ so that $\mathcal{B}'_{A}(x_{0}, 0)$ is the block set of a GDD$(2, 3, 2am(|A|- 1))$ with the group set $\Gamma _{A}'\backslash \{G_{x_{0}}\}$.

For each $(x, i)\in X \times Z_{m}$, define
$$\mathcal{F}(x, i) =\big(\bigcup _{x\in A,A\in \mathcal{A}_{1}}\mathcal{D}_{A}(x, i)\big)\bigcup \big(\bigcup_{2\leq j \leq h}~\bigcup _{x\in A,A\in \mathcal{A}_{j}}\mathcal{C}^{j}_{A}(x, i)\big)\bigcup\big(\bigcup _{x\in A,A\in \mathcal{T}}\mathcal{B}_{A}(x, i)\big).$$

Define $$\mathcal{F}'(x_{0}, 0) =\big(\bigcup _{x_{0}\in A, A\in \mathcal{A}_{1}}\mathcal{D}'_{A}(x_{0}, 0)\big)\bigcup \big(\bigcup_{2\leq j \leq h}~\bigcup _{x_{0}\in A,A\in \mathcal{A}_{j}}\mathcal{C}'^{j}_{A}(x_{0}, 0)\big)\bigcup\big(\bigcup _{x_{0}\in A,A\in \mathcal{T}}\mathcal{B}'_{A}(x_{0},0)\big),$$

Finally, define $$\mathcal{F} = \big(\bigcup _{A\in \mathcal{A}_{1}}\mathcal{D}^{*}_{A}\big)\bigcup \big(\bigcup_{2\leq j \leq h}~\bigcup_{A\in \mathcal{A}_{j}}\mathcal{C}^{*j}_{A}\big)\bigcup\big(\bigcup _{A\in \mathcal{T}}\mathcal{B}^{*}_{A}\big).$$

It is readily checked that $(X', S, \mathcal{G}', \mathcal{F})$ is a $mg$-PCS$^{e}_{2}((2amg)^{n} : 2(h-1)am+l)$; each $\mathcal{F}(x, i)$ is the block set of a GDD$_{2}(2, 3, 2amgn + s)$ of type $1^{2amg(n-1)}(2amg + s)^{1}$ with long group $(G \times Z_{m} \times Z_{2a}) \cup S$; $\mathcal{F}'(x_{0}, 0)\subset \mathcal{F}(x_{0}, 0)$ is the block set of an NGDD$(2, 3, 2amgn + s)$ of type $2^{(e,amg(n-1)-e)}(2amg + s)^{1}$.
\qed

We also employ a variant of Construction \ref{constrution}.

\begin{construction}\label{constrution1}
Suppose that there exists a $1$-FG$(3,(K_{1}, K_{T}), gn)$ of type $g^{n}$
where $mg(n-1) \equiv 0 \pmod{3}$ and $m(k-1) \equiv 0 \pmod{3}$ for each $k \in K_{1}$.
Further suppose that there exists an $m^{+}$-PCS$^{r}_{2}((4m)^{k} : l)$ for each $k \in K_{1}$ and each $0\leq r \leq 2m(k-1)$ with $r\equiv 0 \pmod{6}$, and
a PGDD$_{2}((4m)^{k})$ for each $k \in K_{T}$,
then there exists an $(mg)^{+}$-PCS$^{e}_{2}((4mg)^{n}:l)$ for any $0\leq e \leq 2mg(n-1)$ and $e \equiv 0 \pmod{6}$.
\end{construction}

\begin{theorem}\label{s46}
A $1$-FG$(3, (\{3, 5\},\{4, 6\}), n)$ of type $1^{n}$ exists for any $n \equiv 1 \pmod{2}$.
\end{theorem}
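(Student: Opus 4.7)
The plan is to reduce Theorem~\ref{s46} to the classical existence of a $3$-$(n+1,\{4,6\},1)$ design, a well-known result in design theory.

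Unpacking the definition of an $s$-fan design, a $1$-FG$(3,(\{3,5\},\{4,6\}),n)$ of type $1^n$ is the same data as a candelabra system CS$(3,\{4,6\},n+1)$ on $n+1$ points whose stem $\{\infty_1\}$ and all $n$ groups are singletons. Since $|S\cup G_i|=2<3$ for every group $G_i$, the defining property (5) of a CS forces every $3$-subset of the $n+1$ points to lie in a unique block. Hence such a CS is precisely a $3$-$(n+1,\{4,6\},1)$ design, and conversely any such design together with a choice of distinguished point $\infty_1$ reproduces the fan design: blocks through $\infty_1$ (of sizes $4$ or $6$) contribute $\mathcal{A}_1$ after removing $\infty_1$ (of sizes $3$ or $5$), and blocks avoiding $\infty_1$ form $\mathcal{A}_T$. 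Thus it suffices to prove that a $3$-$(v,\{4,6\},1)$ design exists for every even $v\ge 2$.

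For $v=2$ the statement is vacuous (no $3$-subsets). For $v\equiv 2,4\pmod 6$, Hanani's theorem on Steiner quadruple systems furnishes an $S_{1}(3,4,v)$, which is already a $3$-$(v,\{4,6\},1)$ design with only size-$4$ blocks. For $v\equiv 0\pmod 6$ no SQS is available and one must genuinely mix block sizes; here the existence of a $3$-$(v,\{4,6\},1)$ design for all such $v$ is a classical result, established via small direct constructions (for instance, on $v=6$ the single block consisting of the whole point set suffices, and explicit designs on $v=12$ are known) combined with product-type recursive constructions within the class of designs with block sizes in $\{4,6\}$.

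The principal obstacle is the case $v\equiv 0\pmod 6$, where one cannot simply invoke a Steiner quadruple system and must instead track the exact replication count $\lambda=1$ across triples while combining blocks of two different sizes; the remaining residue classes are handled effortlessly by Hanani's SQS existence theorem. Once the existence of $3$-$(v,\{4,6\},1)$ designs for all even $v\ge 2$ is in hand, the desired fan design is produced by choosing any point as $\infty_1$ and partitioning the blocks accordingly, completing the proof.
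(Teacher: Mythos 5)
Your proposal is correct and follows essentially the same route as the paper: both reduce the statement to the classical existence of an S$(3,\{4,6\},n+1)$ for $n$ odd (Hanani's theorem, which covers the $v\equiv 0\pmod 6$ case you only sketch) and then obtain the $1$-fan design by deleting/distinguishing a point and partitioning the blocks accordingly.
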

\proof An S$(3,\{4, 6\}, n+1)$ exists if and only if $n \equiv 1$ (mod 2) \cite{46}. Deleting a point yields the desired 1-fan design.\qed

\begin{lemma}\label{tee11}
Let $m\equiv n\equiv0\pmod{3}$,  $u\equiv2,4\pmod{6}$, and $v=2m+2n+u$. Further let $c<{u \choose 3}$ and $(u,c)$ satisfy (C2) associating with $(\lambda,\delta)$.  Suppose that
 \begin{enumerate}
 \item[(i)] $(V,{\cal A})$ is  a simple NGDD$(2,3,v)$ of type $2^{(m,n)}u^{1}$  with $G_i,1\le i\le m+n,$ being  groups of size $2$  and $U$ the group of size $u$;
  \item[(ii)]   $(V,{\cal B})$ is
a simple GDD$_{\lambda-1}(2,3,v)$ of type $1^{2m+2n}u^1$ with long group $U$; and
 \item[(iii)] $(U,\mathcal{C})$ is an NWBTS$(u; c)$.
   \end{enumerate}
   If the block sets ${\cal A},{\cal B}$ and ${\cal C}$ are mutually disjoint, then  ${\cal F}:={\cal A}\cup{\cal B}\cup{\cal C}$ forms the block set of an NWBTS$(v; |{\cal F}|)$, where $3|{\cal F}|=\lambda{v\choose 2}+m-n+\delta$.
\end{lemma}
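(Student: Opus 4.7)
The plan is to verify that $\mathcal{F}:=\mathcal{A}\cup\mathcal{B}\cup\mathcal{C}$ satisfies the three defining properties of an NWBTS$(v;|\mathcal{F}|)$ for (C2): (a) the block-count identity $3|\mathcal{F}|=\lambda\binom{v}{2}+(m-n)+\delta$; (b) simplicity, hence 3-balance, which is immediate since each of $\mathcal{A}$, $\mathcal{B}$, $\mathcal{C}$ is simple (using $c<\binom{u}{3}$ for $\mathcal{C}$) and the three are pairwise disjoint by assumption; and (c) nearly 2-balance, obtained by identifying the defect graph $D^{\mathcal{F}}$ with one of $H_{v,\varepsilon}^0$, $H_{v,\varepsilon}^1$, $H_{v,\varepsilon}^2$ in Figure \ref{f2}.

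First I would compute $\lambda_{x,y}^{\mathcal{F}}$ by partitioning $\binom{V}{2}$ into five classes: pairs inside $G_i$ with $1\le i\le m$ give $2+(\lambda-1)=\lambda+1$; pairs inside $G_i$ with $m+1\le i\le m+n$ give $0+(\lambda-1)=\lambda-1$; pairs in $V\setminus U$ lying in distinct short groups give $1+(\lambda-1)=\lambda$; pairs straddling $U$ and $V\setminus U$ give $1+(\lambda-1)=\lambda$; and pairs inside $U$ inherit $\lambda_{x,y}^{\mathcal{C}}$. Summing $3|\mathcal{F}|=\sum_{\{x,y\}}\lambda_{x,y}^{\mathcal{F}}$ and using the identity $(v-u)(v+u-1)+u(u-1)=v(v-1)$ together with $3c=\lambda\binom{u}{2}+\delta$ telescopes the bracket to $\binom{v}{2}$, giving $3|\mathcal{F}|=\lambda\binom{v}{2}+(m-n)+\delta$. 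Setting $\varepsilon:=m-n+\delta$, the bound $|\varepsilon|\le(m+n)+|\delta|<(v-u)/2+u/2=v/2$, together with the oddness of $\lambda$ inherited from (C2) for $(u,c)$, confirms that $(v,|\mathcal{F}|)$ satisfies (C2) associated to $(\lambda,\varepsilon)$.

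This pair analysis also shows that $D^{\mathcal{F}}$ is the vertex-disjoint union of the defect graph $D^{\mathcal{C}}$ on $U$ with an $(m+n)$-edge graph on $V\setminus U$ consisting of $m$ solid edges $G_1,\ldots,G_m$ and $n$ dotted edges $G_{m+1},\ldots,G_{m+n}$ — a perfect matching on $V\setminus U$ with the prescribed label split. When $\delta\equiv u/2\pmod{2}$, $D^{\mathcal{C}}\cong H_{u,\delta}^0$ is a perfect matching on $U$, and the union is a perfect matching on $V$ whose solid/dotted counts $m+(u+2\delta)/4$ and $n+(u-2\delta)/4$ coincide with the counts $(v+2\varepsilon)/4$ and $(v-2\varepsilon)/4$ required for $H_{v,\varepsilon}^0$; an easy parity check $v/2-\varepsilon=2n+(u/2-\delta)$ also confirms $\varepsilon\equiv v/2\pmod{2}$. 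When $\delta\not\equiv u/2\pmod{2}$, $D^{\mathcal{C}}\cong H_{u,\delta}^1$ or $H_{u,\delta}^2$, and combining with the new perfect matching on $V\setminus U$ scales the matching portion up by exactly $m$ solid and $n$ dotted edges while leaving the ``extra'' three-edge component of $D^{\mathcal{C}}$ untouched, so the composite is isomorphic to $H_{v,\varepsilon}^1$ or $H_{v,\varepsilon}^2$, respectively.

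The chief subtlety is the last isomorphism verification in the non-matching parity case: one must rule out that the composite lands in the forbidden shapes $H_{v,\varepsilon}^3$ or $H_{v,\varepsilon}^4$, which (by the paragraph preceding Theorem \ref{kk}) would destroy 1-balance. Since the new matching edges are vertex-disjoint from $D^{\mathcal{C}}$ and contribute degree exactly one in $D_1\cup D_{-1}$ to each vertex of $V\setminus U$, this reduces to checking that the three extra edges inherited from $H_{u,\delta}^1$ (resp.\ $H_{u,\delta}^2$) have exactly the solid/dotted label pattern characterising $H_{v,\varepsilon}^1$ (resp.\ $H_{v,\varepsilon}^2$); a direct comparison of the edge-count formulas $(u\pm 6\pm 2\delta)/4$ with $(v\pm 6\pm 2\varepsilon)/4$ in Figure \ref{f2} (using $\varepsilon-\delta=m-n$) closes this gap, completing the proof that $\mathcal{F}$ is nearly 2-balanced, hence an NWBTS$(v;|\mathcal{F}|)$.
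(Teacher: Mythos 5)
Your proposal is correct and follows essentially the same route as the paper: both determine the pair multiplicities $\lambda_{x,y}^{\mathcal F}$ classwise, identify the defect graph of $\mathcal F$ as the disjoint union of the solid/dotted matching $\{G_1,\dots,G_m\}\cup\{G_{m+1},\dots,G_{m+n}\}$ on $V\setminus U$ with the defect graph of $\mathcal C$ on $U$, and then match the edge counts $(v\pm2\varepsilon)/4$ (resp.\ the $H^1,H^2$ counts) using $\varepsilon=m-n+\delta$, with simplicity giving 3-balance. The only cosmetic difference is that you obtain the block-count identity by summing pair degrees rather than counting $|\mathcal A|$ and $|\mathcal B|$ directly, and you spell out the parity transfer and the $H^3/H^4$ exclusion a bit more explicitly than the paper's ``similar arguments'' remark.
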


\proof 
It is obvious that $3|{\cal A}|={v\choose 2}-{u\choose 2}+m-n$ and $3|{\cal B}|=\big(\lambda-1)({v\choose 2}-{u\choose 2}\big)$. And using  $3c=\lambda {u\choose 2}+\delta$ yields
$3|{\cal F}|=3(|{\cal A}|+|{\cal B}|+c)=\lambda{v\choose 2}+\varepsilon$ where $\varepsilon=m-n+\delta$. Clearly $-\frac{v}{2}<\varepsilon<\frac{v}{2}$ 
  as $v=2m+2n+u$ and $-\frac{u}{2}<\delta<\frac{u}{2}$.  Let  $(U,\mathcal{C})$ have a defect graph $D=D_1\cup D_{-1}$, isomorphic to $H_{u,\delta}^0$ or one of $H_{u,\delta}^1$ and $H_{u,\delta}^2$, corresponding to the case ${u\over 2}-\delta$ being even or odd.  By the construction we have that every pair of $V$ is contained in $\lambda$ blocks of ${\cal F}$ except that each pair in $D_1^{\cal F}:=P_1\cup D_1$ is contained in $\lambda+1$ blocks and each  pair in $D_{-1}^{\cal F}:=P_{-1}\cup D_{-1}$ is contained in $\lambda-1$ blocks, where $P_1=\{G_i:1\le i\le m\}$ and  $P_{-1}=\{G_i:m+1\le i\le m+n\}$ represent  the groups  contained in two blocks and no blocks of ${\cal A}$, respectively. Thus we determined the defect graph $D^{\cal F}=D_1^{\cal F}\cup D_{-1}^{\cal F}$ of ${\cal F}$.
In the case of ${u\over 2}\equiv\delta$ (mod 2), we have ${v\over 2}\equiv\varepsilon$ (mod 2), $({v+2\varepsilon\over 4},{v-2\varepsilon\over 4})=(m+|D_1|,n+|D_{-1}|)$,  $D^{\cal F}$ is isomorphic to $H_{v,\varepsilon}^0$, and hence ${\cal F}$ is nearly 2-balanced.  Similar arguments yield that $D^{\cal F}$ is isomorphic to $H_{v,\varepsilon}^i$ $(i=1,2)$ if $D$ is isomorphic to $H_{u,\delta}^i$ in the case ${u\over 2}\not\equiv\delta$ (mod 2). Finally noticing ${\cal F}$ is a simple family yields that it is nearly well-balanced. \qed

\begin{construction}\label{tbbb}
Let $g\equiv 0 \pmod{3}$, $s\equiv2,4\pmod 6$, and $\lambda\in\{1,3,5\}$. Suppose that there exists a $g$-PCS$^{e}_{2}((2ag)^{n}:s)$ for any $0\leq e \leq ag(n-1)$ and $e\equiv 0 \pmod{3a}$. Further let $u=2ag+s$ and suppose that there exists
\begin{enumerate}
\item[(i)] a simple S$_{6}(2,3,u)$ defined over a $u$-set $X$, each of whose blocks is not strictly contained in an $s$-subset $Y\subset X$; and

\item[(ii)] an NWBTS$(u;c)$  for any $c=(\lambda{u\choose 2}+\delta)/3$ where  $-{u\over 2} <\delta<{u\over 2}$. 
\end{enumerate}
 Let $v=2agn+s$.  Then there exists an NWBTS$(v;b)$ for any $b=qv(v-1)+b'$, where 
$0\leq q\leq n-1$ and $3b'=\lambda\binom{v}{2}+\varepsilon$ with  $-{v\over 2} <\varepsilon<{v\over 2}$.
\end{construction}

\proof Let $V = (Z_{n} \times Z_{g}\times Z_{2a})\cup S$, $|S|=s$,  and $\mathcal{G} = \{G_{0}, G_{1},\ldots,G_{n-1}\}$ where $G_{i} = \{i\}\times Z_{g}\times Z_{2a}$. For any fixed $e$ with $0\leq e \leq ag(n-1)$ and $e\equiv 0 \pmod{3a}$, let $(V, S, \mathcal{G}, \mathcal{A})$ be an assumed $g$-PCS$_{2}^{e}((2ag)^{n}:s)$. Then the block set $\mathcal{A}$ contains a subset $\mathcal{A}'$ and $\mathcal{A}'$ can be partitioned into $gn$ pairwise disjoint parts $\mathcal{A}(i, j)$, $0 \leq i \leq n- 1$, $0 \leq j \leq g-1$, so that each $\mathcal{A}(i, j)$ is the block set of a GDD$_{2}(2, 3, v)$ of type $1^{2ag(n-1)}u^{1}$ with long group $G_{i}\cup S$, and $\mathcal{A}(0, 0)$ contains a subset $\mathcal{A}'(0, 0)$ that is the block set of an NGDD$(2,3,v)$ of type $2^{(e,ag(n-1)-e)}u^{1}$. By assumption, there exists an NWBTS$(u; c)$ on $G_{0}\cup S$ with block set $\mathcal{C}_{\lambda}$ where $3c=\lambda\binom{u}{2}+\delta$.
Construct a family $\mathcal{F}$ containing all blocks of $\mathcal{A}'(0,0)$, $(\lambda-1)/2$ sets in $\{\mathcal{A}(0,j):j=1,2\}$, and all blocks of $\mathcal{C}_{\lambda}$. Then by Lemma \ref{tee11}, $\mathcal{F}$ is an NWBTS$(v;b')$, where $3b'=\lambda\binom{v}{2}-ag(n-1)+2e+\delta$.

For each $1\leq i \leq n- 1$, construct a simple S$_{6}(2,3,u)$ on $G_i\cup S$ with block set $\mathcal{B}(i)\subset\binom{G_{i}\cup S}{3}\backslash \binom{S}{3}$ which exists by assumption.
Let $\mathcal{F}(i) = (\cup^{2}_{j=0}\mathcal{A}(i, j )) \cup \mathcal{B}(i)$. Then each $\mathcal{F}(i)$ is the block set of a simple S$_{6}(2,3,v)$ and all $\mathcal{F}(i)$s are pairwise disjoint. Construct a family $\mathcal{F}'$ containing $q$ $\mathcal{F}(i)$s where $0\leq q\leq n-1$. Thus $\mathcal{F} \cup \mathcal{F}'$ is an NWBTS$(v; qv(v-1)+b')$ by Lemma \ref{tee}, where $3b'=\lambda\binom{v}{2}+\varepsilon$ with
$\varepsilon=-{v-u\over 2}+2e+\delta$.

Now let $s\equiv  2\pmod 6$. We know that $(u,c)$ associates with $(\lambda,\delta)$. 
When $e,\delta$ take all admissible integers, it is readily  checked that the value of $\varepsilon$ covers all admissible integers in the intervals $I_e:=[-{v\over 2}+2e+\tau_1,-{v\over 2}+u+2e+\tau_2]$ with $0\leq e \leq ag(n-1)$ and $e\equiv 0 \pmod{3a}$, where
 $$(\tau_1,\tau_2)=\left\{\begin{array}{ll}
                          { (3,-2)}, &                  {\rm if\ } \lambda=1,\\
                          { (1,-1)}, &                   {\rm if\ } \lambda=3,\\
                          { (2,-3)}, &                   {\rm if\ } \lambda=5.\\
 \end{array}\right.$$
It is not difficult to compare  the right endpoint $y_j$ of $I_j$ with the left endpoint $x_{j+1}$ of $I_{j+1}$ to have  that $y_j+3\ge x_{j+1}$. Thus these intervals cover all admissible integers $\varepsilon$ with $-{v\over 2}<\varepsilon<{v\over 2}$. As a consequence, an NWBTS$(v;qv(v-1)+b')$ exists for any $0\leq q\leq n-1$ and $3b'=\lambda\binom{v}{2}+\varepsilon$ with  $-{v\over 2} <\varepsilon<{v\over 2}$. Similar arguments also apply for the case $s\equiv  4\pmod 6$; in this case we always have $(\tau_1,\tau_2)=(2,-3)$. The details are omitted.
\qed

\subsection{$v \equiv 2\pmod  6$}
In this subsection, we construct $3$-PCS$^{e}_{2}((6a)^{n} : 2)$s for $a=1,2$ and $0\leq e\leq 3a(n-1)$ with $e\equiv 0\pmod{3a}$, and then show the existence of an NWBTS$(v;b)$ if $v \equiv 2 \pmod{6}$, $v\neq50,74$, and $(v,  b)$ satisfies (C2).

\begin{lemma}\label{3632}
There is a $3$-PCS$^{r}_{2}(6^{3}:2)$ for any $r=0, 3, 6$.
\end{lemma}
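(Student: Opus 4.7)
The plan is to follow the template of Lemma \ref{430r} for PICS$_2^r(4^3:0)$: first handle the extremal cases $r=0$ and $r=6$ by upgrading a known design, then construct the self-symmetric case $r=3$ explicitly.

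For $r=0$ and $r=6$, I would start from an ordinary $3$-PCS$_2(6^3:2)$, whose existence follows from \cite[Lemma 6.8]{wbt} with $n=3$. This already supplies a partition of (a subset of) the block set into nine GDD$_2(2,3,20)$s of type $1^{12}8^1$, three with long group $G_i\cup S$ for each group $G_i$. To upgrade it to a $3$-PCS$_2^0(6^3:2)$ one must locate, inside one of these nine GDD$_2$s, a sub-NGDD of type $2^{(0,6)}8^1$; since the ``doubled'' part is empty, this is the same as a sub-GDD$(2,3,20)$ of type $2^68^1$, obtained by pairing up the twelve singleton points appropriately. This amounts to halving the ambient GDD$_2(2,3,20)$ of type $1^{12}8^1$ into two disjoint GDD$(2,3,20)$s of type $2^68^1$, a standard decomposition that can be arranged within the base-block construction of \cite{wbt}. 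The case $r=6$ then follows from the $r\leftrightarrow 6-r$ symmetry of the definition (swapping the roles of the ``doubled'' and ``zeroed'' pairs).

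For $r=3$, I would construct the design directly on $V=(Z_3\times Z_6)\cup\{\infty_1,\infty_2\}$ with groups $G_i=\{i\}\times Z_6$ and stem $S=\{\infty_1,\infty_2\}$, developing base blocks under a cyclic automorphism (most naturally $Z_6$ acting on the second coordinate, possibly combined with the $Z_3$-rotation of groups). The base blocks must be chosen so that their orbits partition into nine GDD$_2(2,3,20)$s with the correct long-group distribution (three per group), and so that within one designated GDD$_2$ a subset of blocks forms an NGDD$(2,3,20)$ of type $2^{(3,3)}8^1$: three specified pairs of singletons are doubled (covered twice) while the remaining three pairs are zeroed (not covered). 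The pairing of singletons can be aligned with the $Z_6$-orbits, with the ``doubled/zeroed'' split chosen parity-consistently.

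The main obstacle is the explicit $r=3$ construction: one has to hand-pick base blocks whose orbit decomposition simultaneously realizes the partition into nine GDD$_2$s and the balanced sub-NGDD of type $2^{(3,3)}8^1$. This is analogous to, but substantially heavier than, the tabular construction of PICS$_2^3(4^3:0)$ in Lemma \ref{430r}, and in practice would be produced with computer assistance and verified by counting pair-block incidences across the orbits and checking pairwise disjointness of the nine partition parts.
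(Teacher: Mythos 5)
There is a genuine gap on both halves. For $r=0,6$ the paper does not ``upgrade'' anything: it invokes \cite[Lemma 6.6]{wbt}, which already supplies the candelabra system together with the required sub-structure. Your route instead starts from the plain $3$-PCS$_{2}(6^{3}:2)$ of \cite[Lemma 6.8]{wbt} and asserts that inside one of its nine GDD$_{2}(2,3,20)$s of type $1^{12}8^{1}$ one can ``locate'' a spanning sub-GDD$(2,3,20)$ of type $2^{6}8^{1}$ (equivalently an NGDD of type $2^{(0,6)}8^{1}$). That is exactly the extra property that distinguishes a $3$-PCS$^{0}_{2}$ from a $3$-PCS$_{2}$, and it is not automatic: a GDD$_{2}$ of type $1^{12}8^{1}$ need not contain such a sub-design, and you give no argument that the particular base-block construction of \cite[Lemma 6.8]{wbt} does. (Your complementation remark is sound: once a GDD$_{2}$ part contains a sub-NGDD of type $2^{(r,6-r)}8^{1}$, its complement inside that same part is an NGDD of type $2^{(6-r,r)}8^{1}$, so $r=0$ does yield $r=6$. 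But the $r=0$ case itself is what remains unproved in your outline.)

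For $r=3$ the lemma has no content beyond an explicit design, and your proposal only describes how one might search for it. The paper actually exhibits the object: $54$ base blocks on $Z_{18}\cup\{a,b\}$ with groups the residue classes mod $3$, developed by $+9 \pmod{18}$, give an initial GDD$_{2}(2,3,20)$ of type $1^{12}8^{1}$ whose underlined blocks form the NGDD$(2,3,20)$ of type $2^{(3,3)}8^{1}$ (with $\lambda_{1,10}=\lambda_{4,13}=\lambda_{7,16}=2$ and $\lambda_{2,11}=\lambda_{5,14}=\lambda_{8,17}=0$); developing modulo $18$ then produces the nine pairwise disjoint GDD$_{2}$ parts, three per long group. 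Without producing such base blocks (and checking the pair counts, the disjointness of the nine parts, and that the whole extends to a CS$(6^{3}:2)$), the $r=3$ case is a plan, not a proof. So the proposal's overall strategy mirrors the paper's, but as written it establishes none of the three values of $r$.
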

\proof For $r=0, 6$, a $3$-PCS$^{r}_{2}(6^{3}:2)$ exists by \cite[Lemma 6.6]{wbt}. Next, we construct a $3$-PCS$^{3}_{2}(6^{3}:2)$ on $X = Z_{18} \cup \{a, b\}$ with stem $S = \{a, b\}$ and groups $G_{i} = \{i, i + 3, i + 6,\ldots,i + 15\}, i = 0, 1, 2$. We first construct a GDD$_{2}(2, 3, 20)$ of
type $1^{12}8^{1}$ with long group $G_{0} \cup S$, whose blocks are generated from the following
blocks by adding 9 (mod 18); the underlined base blocks and their developments form the
blocks of an NGDD$(2,3,20)$ of type $2^{(3,3)}8^{1}$, where $\lambda_{1,10}=\lambda_{4,13}=\lambda_{7,16}=2$ and $\lambda_{2,11}=\lambda_{5,14}=\lambda_{8,17}=0$.

{\small\begin{tabular}{llllllll}
$\underline{\{a, 1, 2\}}$ & $\underline{\{a, 4, 8\}}$ & $\underline{\{a, 7, 14\}}$ & $\underline{\{b, 4, 5\}}$ & $\underline{\{b, 7, 11\}}$ & $\underline{\{b, 1, 8\}}$ & $\underline{\{2, 6, 17\}}$ & $\underline{\{5, 6, 11\}}$\\
$\underline{\{1, 3, 10\}}$ & $\underline{\{3, 4, 13\}}$ & $\underline{\{3, 7, 16\}}$ & $\underline{\{0, 1, 7\}}$ & $\underline{\{6, 7, 8\}}$ & $\underline{\{5, 7, 10\}}$ & $\underline{\{8, 13, 16\}}$ & $\underline{\{3, 11, 17\}}$\\
$\underline{\{2, 10, 13\}}$ & $\underline{\{1, 6, 13\}}$ & $\underline{\{6, 10, 14\}}$ & $\underline{\{0, 8, 10\}}$ & $\underline{\{0, 2, 4\}}$ & $\underline{\{4, 6, 16\}}$ & $\underline{\{2, 3, 5\}}$ & $\underline{\{3, 8, 14\}}$\\
$\underline{\{4, 9, 14\}}$ & $\underline{\{2, 7, 9\}}$ & $\underline{\{0, 14, 17\}}$ & $\{2, 3, 11\}$ & $\{3, 5, 14\}$ & $\{3, 8, 17\}$ & $\{b, 2, 4\}$ & $\{b, 7, 17\}$\\
$\{3, 10, 13\}$ & $\{1, 3, 7\}$ & $\{3, 4, 16\}$ & $\{0, 2, 14\}$ & $\{0, 7, 8\}$ & $\{1, 6, 16\}$ & $\{a, 4, 17\}$ & $\{b, 1, 14\}$\\
$\{0, 1, 5\}$ & $\{0, 13, 16\}$ & $\{0, 11, 17\}$ & $\{0, 4, 10\}$ & $\{6, 10, 17\}$ & $\{4, 6, 11\}$ & $\{a, 1, 11\}$ & $\{a, 5, 7\}$\\
$\{1, 2, 17\}$ & $\{7, 11, 14\}$ & $\{2, 6, 7\}$ & $\{4, 5, 8\}$ & $\{5, 6, 13\}$ & $\{6, 8, 14\}$\\
\end{tabular}}

\noindent Nine GDD$_{2}(2, 3, 20)$s are obtained by developing modulo 18, forming a $3$-PCS$^{3}_{2}(6^{3}:2)$.\qed

\begin{lemma}\label{6n21}
Let $n \equiv 1 \pmod{2}$ and $n\geq 3$. Then there is a $3$-PCS$_{2}^{e}(6^{n}:2)$ for any $0\leq e \leq 3(n-1)$ and $e\equiv0\pmod{3}$.
\end{lemma}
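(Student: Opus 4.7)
The plan is to apply Construction \ref{constrution} with parameters $m=3$, $a=1$, $g=1$, $h=1$, $l=2$. With these choices $2amg=6$ and $2(h-1)am+l=2$, so the output specializes to a $3$-PCS$_2^e(6^n:2)$ whose admissible range $0\le e\le amg(n-1)=3(n-1)$ with $e\equiv 0\pmod{3a}=\pmod 3$ matches the statement exactly. The required input $1$-fan design of type $g^n=1^n$ is supplied by Theorem \ref{s46}, which gives a $1$-FG$(3, (\{3,5\}, \{4,6\}), n)$ of type $1^n$ for every odd $n\ge 3$.

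I would then verify the hypotheses of the construction. The divisibility conditions $mg(n-1)\equiv 0\pmod 3$ and $m(k-1)\equiv 0\pmod 3$ are trivial since $m=3$. The auxiliary PGDD$_2(6^k)$ for each $k\in K_T\subseteq\{4,6\}$ is furnished by Lemma \ref{PGDD}: for $k=4$ take $n'=3$, $g=3$ so that $gn'(n'-1)=18\equiv 0\pmod 3$; for $k=6$ take $n'=5$, $g=3$ so that $gn'(n'-1)=60\equiv 0\pmod 3$. The auxiliary $3$-PCS$_2^r(6^3:2)$ for each $r\in\{0,3,6\}$ is furnished by Lemma \ref{3632}.

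The remaining ingredient, and main obstacle, is a $3$-PCS$_2^r(6^5:2)$ for each $r\in\{0,3,6,9,12\}$, which is required whenever the fan design has blocks of size five through the stem point (i.e.\ $5\in K_1$). Since Construction \ref{constrution} preserves the number of groups, the case $n=5$ cannot be reduced to $n=3$ and must be treated as a base case by direct construction, in the spirit of Lemma \ref{3632}: work on $Z_{30}\cup\{a,b\}$ with stem $\{a,b\}$ and groups $G_i=\{i, i+5, i+10, i+15, i+20, i+25\}$ for $0\le i\le 4$, exhibit an explicit family of base blocks whose orbits under addition modulo $30$ produce the required GDD$_2(2,3,32)$s of type $1^{24}8^1$, and single out within one of them the sub-family realising the NGDD$(2,3,32)$ of type $2^{(r,15-r)}8^1$. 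The symmetry $r\leftrightarrow 3(n-1)-r$ obtained by swapping the roles of $D_1$ and $D_{-1}$ reduces the five values of $r$ to three independent cases. Once this base case is in hand, a short bookkeeping check confirms that the local parameters $e_A$ chosen in the proof of Construction \ref{constrution} (with $A\in\mathcal{A}_{1,0}$ of size $3$ or $5$) can be set independently so that $\sum_A e_A=e$ attains every value in $\{0,3,\ldots,3(n-1)\}$, completing the argument.
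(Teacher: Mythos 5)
Your treatment of $n\ge 7$ is exactly the paper's: apply Construction \ref{constrution} with $h=a=1$, $m=3$, $l=2$ to the $1$-FG$(3,(\{3,5\},\{4,6\}),n)$ of type $1^{n}$ from Theorem \ref{s46}, feeding in PGDD$_{2}(6^{k})$ for $k\in\{4,6\}$ (Lemma \ref{PGDD}) and $3$-PCS$^{r}_{2}(6^{k}:2)$ for $k\in\{3,5\}$, with the $k=3$ input supplied by Lemma \ref{3632}; your parameter and divisibility checks there are correct. The gap is the size-$5$ input: a $3$-PCS$^{r}_{2}(6^{5}:2)$ for $r\in\{0,3,6,9,12\}$ is needed both as the lemma's own case $n=5$ and as an ingredient for every $n\ge 7$, and you do not actually produce it. You only announce a plan to build it directly on $Z_{30}\cup\{a,b\}$ by exhibiting base blocks modulo $30$, which is precisely the nontrivial content being claimed; without the explicit blocks (or a citation/derivation establishing existence), the whole proof rests on an unproven assertion. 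The symmetry $r\leftrightarrow 3(n-1)-r$ you invoke is fine (complementing the sub-NGDD inside its ambient GDD$_{2}$ swaps the two group classes), but it only shrinks the unproved task, it does not discharge it.

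Moreover, your justification for treating $n=5$ as an irreducible base case is mistaken. It is true that Construction \ref{constrution} preserves the number of groups of the fan design, but nothing forces the master fan to have group size $1$: the paper obtains the $3$-PCS$^{r}_{2}(6^{5}:2)$ recursively by applying Construction \ref{constrution} with $h=a=m=1$ and $l=2$ to a $1$-FG$(3,(4,\{4,5\}),15)$ of type $3^{5}$ (delete two points from an S$(3,5,17)$, see \cite{652}), using as inputs the known $1$-PCS$^{i}_{2}(2^{4}:2)$s ($i=0,3$) from \cite[Lemma 6.1]{wbt} and PGDD$_{2}(2^{j})$s ($j=4,5$) from Lemma \ref{PGDD}; this yields all $r\in\{0,3,6,9,12\}$ with no new direct construction. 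So either supply this recursive derivation (or an equivalent one) or actually exhibit the base blocks you promise; as written, the argument is incomplete at its key step.
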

\proof For $n=3$, see Lemma \ref{3632}. For $n=5$, there exists a 1-FG$(3, (4,\{4, 5\}), 15)$ of type $3^{5}$, which is obtained by deleting two points from the known S$(3, 5, 17)$ (see \cite{652}).  Apply Construction \ref{constrution} with $h= a=m= 1$ and $l = 2$. The input designs are $1$-PCS$^{i}_{2}(2^{4}: 2)$s $(i=0, 3)$ in \cite[Lemma 6.1]{wbt} and PGDD$_{2}(2^{j})$s $(j=4,5)$ in Lemma \ref{PGDD}. Then we get a $3$-PCS$^{r}_{2}(6^{5}: 2)$ for any $r\in\{0,3,6,9,12\}$.

For $n\equiv 1$ (mod 2) and $n\geq7$, start with a 1-FG$(3, (\{3, 5\},\{4, 6\}), n)$ of type $1^{n}$ which exists by Lemma \ref{s46}. A $3$-PCS$^{r}_{2}(6^{k}: 2)$ exists by  the above proofs for $k\in\{3,5\}$ and $r\le 3(k-1)$ with $r\equiv 0\pmod{3}$.   A PGDD$_{2}(6^{k})$ for $k\in\{4,6\}$ exists by Lemma \ref{PGDD}. Then we obtain the desired design by applying Construction \ref{constrution} with $h=a = 1$, $l = 2$, and $m = 3$.\qed

\begin{lemma}\label{v812}
Let $v \equiv 8 \pmod{12}$ and $(v,  b)$ satisfy (C2). There exists an NWBTS$(v;b)$.
\end{lemma}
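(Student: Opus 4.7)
The plan is to mirror the recursive strategy that handled the (C1) analogue in Lemma \ref{1v24}, but to invoke the newer Construction \ref{tbbb} tailored to (C2) rather than Construction \ref{tbb}. Since $v \equiv 8 \pmod{12}$, I would write $v = 6n + 2$ with $n = (v-2)/6$; this $n$ is automatically odd, and satisfies $n \geq 3$ precisely when $v \geq 20$. Accordingly the lemma splits into two parts: the base case $v = 8$ (where $n = 1$), which falls outside the recursive setup and must be dispatched by a direct construction in the appendix, and the general case $v \geq 20$, handled by the machinery of this section.

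For $v \geq 20$, after reducing to $b \leq \binom{v}{3}/2 = nv(v-1)/2$ via Lemma \ref{bb}, I would decompose
\[
b = qv(v-1) + b', \qquad 0 \leq q \leq n-1, \qquad 3b' = \lambda \binom{v}{2} + \varepsilon,
\]
with $\lambda \in \{1,3,5\}$ accounting for the residue of the total index modulo $6$ and $-v/2 < \varepsilon < v/2$. This decomposition matches precisely the output of Construction \ref{tbbb} run with $a = 1$, $g = 3$, $s = 2$, for which $u = 2ag+s = 8$ and $2agn+s = 6n+2 = v$. The three ingredients demanded by the construction are: a $3$-PCS$^{e}_{2}(6^{n}:2)$ for every $0 \leq e \leq 3(n-1)$ with $e \equiv 0 \pmod{3}$, supplied by Lemma \ref{6n21} since $n$ is odd and at least $3$; a simple S$_{6}(2,3,8)$ with no block contained in a fixed $2$-subset, which is trivial because $\binom{8}{3} = 56$ forces any simple S$_{6}(2,3,8)$ to be the complete $3$-uniform hypergraph on $8$ vertices, and no triple can lie inside a pair; and the NWBTS$(8;c)$s provided by the base case.

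The main obstacle is the $v = 8$ base case. Unlike the (C1) setting, where Lemma \ref{81} in the appendix already covers $\lambda \in \{2,4\}$, here one needs NWBTS$(8;c)$ for every $(8,c)$ satisfying (C2), that is, for each odd $\lambda \in \{1,3,5\}$ and each admissible $\varepsilon$ with $-4 < \varepsilon < 4$, with defect graph isomorphic to the correct $H_{8,\varepsilon}^{i}$ prescribed by Lemma \ref{c2balanced}. I expect this to require a separate appendix lemma built from explicit base blocks, with a case split driven by the parity of $4-\varepsilon$ so that the defect graph is either a perfect matching ($H_{8,\varepsilon}^{0}$) or one of $H_{8,\varepsilon}^{1}, H_{8,\varepsilon}^{2}$. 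Once this is in hand, the recursive step is essentially routine, because the $\varepsilon$-coverage analysis at the end of the proof of Construction \ref{tbbb} already guarantees that as $(e,\delta)$ range over their admissible values, every admissible $\varepsilon$ for $v$ is attained.
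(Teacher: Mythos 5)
Your proposal is correct and follows essentially the same route as the paper: base case $v=8$ from the appendix, then for $v\ge 20$ apply Construction \ref{tbbb} with $a=1$, $(g,s)=(3,2)$, using the $3$-PCS$^{e}_{2}(6^{n}:2)$ of Lemma \ref{6n21}, the trivial simple S$_{6}(2,3,8)$, and the small NWBTS$(8;c)$s. The only cosmetic differences are that the paper's single appendix Lemma \ref{81} already supplies the (C2) values $c\equiv 9,10,27,28,29,46,47\pmod{56}$ for $v=8$ (no separate lemma is needed), and the paper uses the complementation in Lemma \ref{bb} to restrict to $\lambda\in\{1,3\}$ rather than carrying $\lambda=5$ through the recursion, which your variant handles directly and equally validly.
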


\proof For $v = 8$, see Lemma \ref{81} in Appendix. For $v \geq 20$, we can assume that $b\leq \frac{v(v-1)(v-2)}{6}$ by Lemma \ref{bb}. Write $v = 6n +2$ where $n$ is odd; write $b = qv(v-1) + b'$ where $0\leq q\leq \frac{v-2}{6}-1 = n- 1$ and $0\leq b' \leq v(v-1)$. According to Lemma \ref{bb}, we can assume that $0 \leq b' \leq \frac{v(v-1)}{2}$ and $\frac{\lambda v(v-1)}{6}-\frac{v}{6} < b' < \frac{\lambda v(v-1)}{6}+\frac{v}{6}$ with $\lambda=1,3$. Thus $(v,b')$ associates with $(\lambda,\varepsilon)$ where $\lambda=1,3$ and $-\frac{v}{2}< \varepsilon < \frac{v}{2}$.  Apply Construction \ref{tbbb} with $a=1$ and $(g,s)=(3,2)$.
There exists a $3$-PCS$_{2}^{e}(6^{n}:2)$ for any $0\leq e \leq 3(n-1)$ and $e\equiv0\pmod{3}$ from Lemma \ref{6n21}. An NWBTS$(8; c)$ exists for all $c$ with $(8,c)$ satisfying (C2)
 from Lemma \ref{81} in Appendix.  A simple S$_{6}(2,3,8)$ exists trivially. Then there exists an NWBTS$(v;b)$. 
\qed

\begin{lemma}\label{H1232}\label{H1232}
There is a $3$-PCS$^{r}_{2}(12^{3} : 2)$ for $r = 0, 6, 12$.
\end{lemma}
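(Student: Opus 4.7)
The plan is to handle the three values of $r$ separately, parallel to Lemma \ref{3632} in which the analogous statement for $3$-PCS$^{r}_{2}(6^{3}:2)$ is established. For the extreme cases $r = 0$ and $r = 12$, I would invoke the companion article \cite{wbt}; these degenerate cases correspond to NGDDs in which every in-group pair of the short groups is uniformly either uncovered (when $r = 0$) or doubly covered (when $r = 12$), so the required structure is essentially an ordinary $3$-PCS$_{2}(12^{3}:2)$ with the long-group pairs controlled in one uniform way, and the two cases are exchanged by the symmetry $r \leftrightarrow ag(n-1) - r = 12 - r$.

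For the middle case $r = 6$, I would mimic the explicit construction of Lemma \ref{3632} on the larger ground set $X = Z_{36} \cup \{a,b\}$, with stem $S = \{a,b\}$ and groups $G_{i} = \{j \in Z_{36} : j \equiv i \pmod{3}\}$ ($i = 0, 1, 2$) of size $12$. The task is to write down a list of base blocks whose orbit under the half-translation $j \mapsto j + 18 \pmod{36}$ forms a GDD$_{2}(2, 3, 38)$ of type $1^{24}14^{1}$ with long group $G_{0} \cup S$, and inside that list to earmark (as in Lemma \ref{3632}, by underlining) a sub-list whose orbit is an NGDD$(2,3,38)$ of type $2^{(6,6)}14^{1}$. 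Developing the whole construction modulo $36$ would then produce the nine pairwise-disjoint GDD$_{2}(2,3,38)$s required by the definition of a $3$-PCS$^{6}_{2}(12^{3}:2)$, three with long group $G_{i} \cup S$ for each $i \in \{0,1,2\}$.

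The principal obstacle is exhibiting the base-block list for $r = 6$ so that four conditions hold simultaneously: every pair crossing distinct groups is covered with multiplicity exactly $2$; the pairs involving the stem receive the multiplicities required by the type $1^{24}14^{1}$ GDD$_{2}$; the underlined sub-list yields, after development, exactly six doubly-covered and six uncovered in-group pairs in order to realise the type $2^{(6,6)}14^{1}$ pattern of the NGDD; and the nine GDD$_{2}$s produced under the full cyclic action are pairwise block-disjoint so that the partition requirement of a $3$-PCS$^{r}_{2}$ holds. Verification of these conditions is a purely mechanical pair-count once the list is in hand, but discovering a base-block list meeting all four is the genuine combinatorial content of the lemma; I would approach it by a hand-crafted search closely modelled on the successful list in Lemma \ref{3632}, with a modest computer search as a backup if the hand search stalls.
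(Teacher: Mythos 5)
Your outline follows the same route as the paper: for $r=0,12$ you cite \cite{wbt} (the paper uses \cite[Lemma 6.12]{wbt}), and for $r=6$ you propose an explicit cyclic construction of one initial GDD$_{2}(2,3,38)$ of type $1^{24}14^{1}$ containing a sub-NGDD of type $2^{(6,6)}14^{1}$, to be developed into nine pairwise disjoint GDD$_{2}$s, with the rest of the CS$(12^{3}:2)$ filled in by the remaining admissible triples. However, as written this is a plan rather than a proof: the entire content of the $r=6$ case is the existence of a base-block list satisfying your four conditions, and you neither exhibit such a list nor establish its existence by any other argument — you explicitly defer it to a future hand or computer search. The paper's proof of this lemma \emph{is} that list (given on $(Z_{12}\times Z_{3})\cup\{a,b\}$, with the underlined base blocks realising the NGDD of type $2^{(6,6)}14^{1}$), together with the routine verification; without it, nothing is proved for $r=6$.

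Two smaller points. First, your automorphism bookkeeping does not transfer from Lemma \ref{3632} the way you state: on $Z_{36}$ an initial GDD invariant only under $j\mapsto j+18$ has $36/2=18$ distinct translates under full development, not nine; to get exactly the nine parts required by the definition you need the initial GDD invariant under an index-nine subgroup (the paper achieves this on $Z_{12}\times Z_{3}$ by making the initial GDD invariant under adding $3$ to the first coordinate and then developing over both coordinates), or else you must additionally argue that a suitable choice of nine of your translates, three per long group, is pairwise disjoint. Second, the symmetry $r\leftrightarrow 12-r$ you invoke for $g$-PCS$^{r}_{2}$ is not established in the paper (only the analogous statement for PICS$^{r}_{2}((2g)^{3}:0)$, with empty stem, is noted there); this is harmless here only because \cite[Lemma 6.12]{wbt} is quoted for both $r=0$ and $r=12$ directly, but it should not be presented as the justification.
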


\proof For $r = 0, 12$, a $3$-PCS$^{r}_{2}(12^{3} : 2)$ exists by \cite[Lemma 6.12]{wbt}. For $r = 6$,
we construct a $3$-PCS$^{6}_{2}(12^{3} : 2)$ on $X = (Z_{12} \times Z_{3}) \cup\{a, b\}$ with stem $S = \{a, b\}$ and groups $G_{j} = \{(0, j), (1, j), \ldots, (11, j)\},~j = 0, 1, 2$. We first construct an initial GDD$_{2}(2, 3, 38)$ of type $1^{24}14^{1}$ with long group $G_0\cup S$. The blocks are generated from the base blocks in the following by adding $3$ (mod 12) to the first coordinate; the underlined base blocks and their developments form the blocks of an NGDD$(2, 3, 38)$ of type $2^{(6,6)}14^{1}$, where $\lambda_{(0,1),(6,1)}=\lambda_{(1,1),(7,1)}=\lambda_{(2,1),(8,1)}=\lambda_{(3,1),(9,1)}=\lambda_{(4,1),(10,1)}=\lambda_{(5,1),(11,1)}=2$ and $\lambda_{(0,2),(6,2)}=\lambda_{(1,2),(7,2)}=\lambda_{(2,2),(8,2)}=\lambda_{(3,2),(9,2)}=\lambda_{(4,2),(10,2)}=\lambda_{(5,2),(11,2)}=0$.

{\small\begin{longtable}{llllll}
$\underline{\{a, (0, 1), (1, 2)\}}$ & $\underline{\{a, (1, 1), (3, 2)\}}$ & $\underline{\{a, (2, 1), (2, 2)\}}$ & $\underline{\{b, (0, 1), (2, 2)\}}$\\
$\underline{\{b, (1, 1), (1, 2)\}}$ & $\underline{\{b, (2, 1), (9, 2)\}}$ & $\underline{\{(0, 1), (6, 1), (0, 2)\}}$ & $\underline{\{(5, 0), (1, 1), (7, 1)\}}$\\
$\underline{\{(5, 0), (2, 1), (8, 1)\}}$ & $\underline{\{(0, 0), (0, 1), (2, 1)\}}$ & $\underline{\{(1, 0), (1, 1), (4, 1)\}}$ & $\underline{\{(10, 0), (0, 1), (5, 1)\}}$\\
$\underline{\{(2, 0), (1, 1), (2, 1)\}}$ & $\underline{\{(2, 0), (9, 1), (1, 2)\}}$ & $\underline{\{(11, 0), (0, 1), (3, 1)\}}$ & $\underline{\{(0, 0), (1, 1), (0, 2)\}}$\\
$\underline{\{(4, 1), (0, 2), (5, 2)\}}$ & $\underline{\{(1, 0), (2, 1), (7, 1)\}}$ & $\underline{\{(4, 0), (0, 1), (1, 1)\}}$ & $\underline{\{(6, 0), (1, 1), (3, 1)\}}$\\
$\underline{\{(3, 0), (2, 1), (6, 1)\}}$ & $\underline{\{(0, 0), (6, 1), (5, 2)\}}$ & $\underline{\{(6, 0), (1, 2), (2, 2)\}}$ & $\underline{\{(3, 0], (0, 2), (2, 2)\}}$\\
$\underline{\{(5, 0), (2, 2), (7, 2)\}}$ & $\underline{\{(1, 0), (1, 2), (3, 2)\}}$ & $\underline{\{(2, 0), (7, 1), (2, 2)\}}$ & $\underline{\{(1, 1), (6, 1), (11, 2)\}}$\\
$\underline{\{(3, 0), (7, 1), (1, 2)\}}$ & $\underline{\{(3, 0), (1, 1), (5, 2)\}}$ & $\underline{\{(6, 0), (2, 1), (0, 2)\}}$ & $\underline{\{(0, 0), (5, 1), (1, 2)\}}$\\
$\underline{\{(9, 0), (0, 2), (1, 2)\}}$ & $\underline{\{(8, 0), (1, 2), (4, 2)\}}$ & $\underline{\{(2, 0), (8, 1), (0, 2)\}}$ & $\underline{\{(11, 0), (0, 2), (3, 2)\}}$\\
$\underline{\{(1, 1), (5, 1), (6, 2)\}}$ & $\underline{\{(2, 1), (4, 1), (1, 2)\}}$ & $\underline{\{(0, 1), (4, 1), (7, 2)\}}$ & $\underline{\{(10, 0), (2, 2), (3, 2)\}}$\\
$\underline{\{(7, 0), (1, 2), (5, 2)\}}$ & $\underline{\{(1, 0), (0, 2), (4, 2)\}}$ & $\underline{\{(4, 0), (3, 1), (0, 2)\}}$ & $\underline{\{(7, 0), (2, 2), (4, 2)\}}$\\
$\underline{\{(10, 0), (2, 1), (3, 1)\}}$ & $\underline{\{(4, 0), (2, 1), (5, 2)\}}$ & $\underline{\{(8, 0), (6, 1), (2, 2)\}}$ & $\underline{\{(8, 1), (2, 2), (5, 2)\}}$\\
$\underline{\{(11, 0), (2, 2), (6, 2)\}}$ & $\underline{\{(3, 1), (1, 2), (6, 2)\}}$ & $\underline{\{(2, 1), (5, 1), (7, 2)\}}$ & $\{a, (0, 1), (4, 2)\}$\\
$\{a, (2, 1), (9, 2)\}$ &$\{a, (1, 1), (11, 2)\}$ &$\{(2, 1), (0, 2), (6, 2)\}$ & $\{b, (0, 1), (10, 2)\}$\\
$\{b, (1, 1), (9, 2)\}$ & $\{b, (2, 1), (5, 2)\}$ & $\{(3, 0), (1, 2), (7, 2)\}$ & $\{(5, 0), (2, 2), (8, 2)\}$\\
$\{(4, 0), (0, 2), (1, 2)\}$ & $\{(2, 1), (7, 1), (10, 2)\}$ & $\{(9, 0), (0, 2), (5, 2)\}$ & $\{(0, 0), (1, 2), (2, 2)\}$\\
$\{(1, 1), (3, 1), (5, 2)\}$ & $\{(5, 0), (1, 1), (1, 2)\}$ & $\{(8, 0), (1, 2), (3, 2)\}$ & $\{(7, 0), (3, 1), (0, 2)\}$\\
$\{(9, 0), (0, 1), (5, 1)\}$ & $\{(4, 0), (2, 2), (5, 2)\}$ & $\{(6, 1), (2, 2), (6, 2)\}$ & $\{(3, 0), (1, 1), (8, 2)\}$\\
$\{(1, 0), (11, 1), (1, 2)\}$ & $\{(6, 0), (0, 2), (3, 2)\}$ & $\{(2, 0), (4, 1), (3, 2)\}$ & $\{(8, 0), (0, 2), (2, 2)\}$\\
$\{(1, 0), (0, 1), (3, 2)\}$ & $\{(5, 0), (2, 1), (3, 2)\}$ & $\{(2, 0), (1, 2), (4, 2)\}$ & $\{(2, 0), (2, 1), (6, 1)\}$\\
$\{(9, 0), (1, 1), (6, 1)\}$ & $\{(4, 0), (1, 1), (4, 1)\}$ & $\{(0, 0), (0, 1), (7, 2)\}$ & $\{(7, 0), (1, 2), (6, 2)\}$\\
$\{(3, 0), (2, 2), (3, 2)\}$ & $\{(0, 1), (1, 1), (6, 2)\}$ & $\{(9, 0), (2, 1), (3, 1)\}$ & $\{(6, 0), (1, 1), (5, 1)\}$\\
$\{(0, 0), (1, 1), (2, 1)\}$ & $\{(1, 0), (2, 1), (4, 1)\}$ & $\{(2, 0), (1, 1), (2, 2)\}$ & $\{(1, 0), (3, 1), (8, 2)\}$\\
$\{(10, 0), (2, 1), (2, 2)\}$ & $\{(3, 1), (2, 2), (4, 2)\}$ & $\{(7, 0), (0, 1), (2, 1)\}$ & $\{(1, 0), (7, 1), (4, 2)\}$\\
$\{(11, 0), (2, 1), (5, 1)\}$ & $\{(8, 1), (2, 2), (7, 2)\}$ & $\{(11, 0), (0, 1), (4, 1)\}$ & $\{(5, 0), (0, 1), (3, 1)\}$\\
$\{(10, 1), (0, 2), (4, 2)\}$ & $\{(8, 1), (1, 2), (5, 2)\}$\\
\end{longtable}}

\noindent Nine pairwise disjoint GDD$_{2}(2, 3, 38)$s result by developing over both coordinates.
The remaining blocks are forced by the definition of a $3$-PCS$_{2}^{6}(12^{3} : 2)$.\qed

\begin{lemma}\label{H12n2}
Let $n \equiv 1 \pmod{2}$ and $n \geq 3$. Then there exists a $3$-PCS$^{e}_{2}(12^{n} : 2)$ for any $0 \leq e \leq 6(n-1)$ and $e\equiv0\pmod{6}$.
\end{lemma}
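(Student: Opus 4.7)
The plan is to mirror the structure of Lemma \ref{6n21}, handling $n=3$ by direct construction, $n=5$ via an $\mathrm{S}(3,5,17)$-based fan design, and $n\ge 7$ via the $1$-fan design of type $1^n$ from Theorem \ref{s46}. The case $n=3$ is exactly Lemma \ref{H1232}, which supplies a $3$-PCS$^{r}_{2}(12^{3}:2)$ for each $r\in\{0,6,12\}$; this is all the admissible $r$'s since $6(n-1)=12$.

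For $n=5$, I would feed into Construction \ref{constrution} the $1$-FG$(3,(4,\{4,5\}),15)$ of type $3^{5}$ obtained by deleting two points from the known $\mathrm{S}(3,5,17)$ (the same fan design used in the $n=5$ step of Lemma \ref{6n21}), with parameters $h=1$, $a=2$, $m=1$, $g=3$, and $l=2$. The required divisibility conditions $mg(n-1)\equiv 0\pmod 3$ and $m(k-1)\equiv 0\pmod 3$ for $k\in K_{1}=\{4\}$ are trivially satisfied. The input designs needed are $1$-PCS$^{r}_{2}(4^{4}:2)$ for $r\in\{0,6\}$ (from $0\le r\le am(k-1)=6$ with $r\equiv 0\pmod{3a}=\pmod 6$) together with PGDD$_{2}(4^{j})$ for $j\in\{4,5\}$; the latter are guaranteed by Lemma \ref{PGDD}. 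The output is a $3$-PCS$^{e}_{2}(12^{5}:2)$ for every $e\in\{0,6,12,18,24\}$, the full admissible range.

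For $n\ge 7$ odd, I would apply Construction \ref{constrution} to the $1$-FG$(3,(\{3,5\},\{4,6\}),n)$ of type $1^{n}$ supplied by Theorem \ref{s46}, with parameters $h=1$, $a=2$, $m=3$, $g=1$, $l=2$. Again the congruence conditions hold. The input $3$-PCS$^{r}_{2}(12^{k}:2)$'s for $k\in K_{1}=\{3,5\}$ are precisely the two cases $n=3,5$ already established, and the auxiliary PGDD$_{2}(12^{j})$'s for $j\in K_{T}=\{4,6\}$ come from Lemma \ref{PGDD}. Construction \ref{constrution} then produces a $3$-PCS$^{e}_{2}(12^{n}:2)$ for every $0\le e\le amg(n-1)=6(n-1)$ with $e\equiv 0\pmod 6$, which is exactly what is claimed.

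The main obstacle is the $n=5$ step, specifically securing the small input designs $1$-PCS$^{r}_{2}(4^{4}:2)$ for $r=0,6$. The $r=0$ version amounts to a standard partitionable candelabra system and should be available from \cite{wbt} or an earlier result; the $r=6$ version, however, requires an explicit NGDD$(2,3,18)$ of type $2^{(6,0)}6^{1}$ sitting inside one of the resolution classes, and this partition datum will most likely need to be constructed by hand with a small tabulation of base blocks under a cyclic automorphism, as was done for Lemmas \ref{3632} and \ref{H1232}. Once these inputs are in hand, the two applications of Construction \ref{constrution} are routine, and the three cases together exhaust the claimed spectrum.
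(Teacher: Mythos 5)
Your $n=3$ and $n\ge 7$ steps coincide with the paper's proof (Lemma \ref{H1232} for $n=3$; Construction \ref{constrution} applied to the $1$-FG$(3,(\{3,5\},\{4,6\}),n)$ of type $1^{n}$ with $h=1$, $a=2$, $m=3$, $l=2$, using PGDD$_{2}(12^{k})$, $k\in\{4,6\}$, from Lemma \ref{PGDD}). The gap is in your $n=5$ step. There you apply Construction \ref{constrution} with $a=2$, $m=1$ to the $1$-FG$(3,(4,\{4,5\}),15)$ of type $3^{5}$, which forces you to supply $1$-PCS$^{r}_{2}(4^{4}:2)$ for $r\in\{0,6\}$ (and PGDD$_{2}(4^{j})$, $j=4,5$). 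Neither of those partitionable candelabra systems of type $(4^{4}:2)$ is established in this paper or in the cited literature: \cite{wbt} provides $1$-PCS$^{i}_{2}(2^{4}:2)$ for $i=0,3$ (type $2^{4}$, not $4^{4}$), and nothing gives the $r=6$ version, which requires a GDD$_{2}(2,3,18)$ of type $1^{12}6^{1}$ containing an NGDD$(2,3,18)$ of type $2^{(6,0)}6^{1}$ inside the prescribed partition of a CS$(4^{4}:2)$. You acknowledge this yourself, but "should be available" and "most likely need to be constructed by hand" are exactly the missing content; experience in this paper (the authors could not find a $3$-PCS$^{3}_{2}(6^{4}:2)$ and had to invent the weaker $1$-PCS$^{3}_{6}(6^{4}:2)$ of Lemma \ref{wp}) shows such small partitioned objects are not routine, so the $n=5$ case, and hence the whole recursion for $n\ge 7$, is not proved as written.

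The paper avoids this entirely by choosing a different master design for $n=5$: a $2$-FG$(3,(4,4,4),30)$ of type $6^{5}$ from \cite[Lemma 3.5]{45}, regarded as a $1$-FG$(3,(4,4),30)$ of the same type, fed into Construction \ref{constrution} with $h=a=m=1$ and $l=2$. The only small inputs then needed are the known $1$-PCS$^{i}_{2}(2^{4}:2)$s ($i=0,3$) of \cite[Lemma 6.1]{wbt} and a PGDD$_{2}(2^{4})$ from Lemma \ref{PGDD}; the output is a $6$-PCS$^{r}_{2}(12^{5}:2)$, which is in particular a $3$-PCS$^{r}_{2}(12^{5}:2)$, for $r\in\{0,6,12,18,24\}$. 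If you replace your $n=5$ step by this one (or else actually construct the two $1$-PCS$^{r}_{2}(4^{4}:2)$s you invoke), the rest of your argument goes through.
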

\proof For $n=3$, see Lemma \ref{H1232}. For $n=5$, there exists a 2-FG$(3, (4, 4, 4), 30)$ of type $6^{5}$ from \cite[Lemma 3.5]{45}. It forms a 1-FG$(3,(4,4),30)$ of the same type. Apply Construction \ref{constrution} with $h= a=m= 1$ and $l=2$; the input designs are $1$-PCS$^{i}_{2}(2^{4} : 2)$s $(i=0, 3)$ in \cite[Lemma 6.1]{wbt} and a PGDD$_{2}(2^{4})$ in Lemma \ref{PGDD}. Then we obtain a $6$-PCS$^{r}_{2}(12^{5}:2)$, also a $3$-PCS$^{r}_{2}(12^{5}:2)$, where $r \in\{ 0, 6, 12, 18, 24\}$.

For $n \equiv 1 \pmod{2}$ and $n\geq7$, start with a 1-FG$(3, (\{3, 5\},\{4, 6\}), n)$ of type $1^{n}$ which exists by Lemma \ref{s46} and use the above $3$-PCS$^{r}_{2}(12^{k} : 2)$s for $k\in\{3,5\}$ and $r\le 6(k-1)$ with $r\equiv 0\pmod{6}$. A PGDD$_{2}(12^{k})$ for $k\in\{4,6\}$ exists by Lemma \ref{PGDD}. Apply Construction \ref{constrution} with $h=1$, $a=l=2$, and $m = 3$ to obtain the required $3$-PCS$^{e}_{2}(12^{n} : 2)$ for any $0 \leq e \leq 6(n-1)$ and $e\equiv0\pmod{6}$.\qed

\begin{lemma}\label{v1424}
Let $v \equiv 14 \pmod{24}$ and $(v,  b)$ satisfy (C2). There exists an NWBTS$(v;b)$.
\end{lemma}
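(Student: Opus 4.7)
The plan is to mirror closely the proofs of Lemma \ref{v812} (the sister case $v\equiv 8\pmod{12}$ under (C2)) and Lemma \ref{1v241} (the same residue class under (C1)), specialising Construction \ref{tbbb} to $(a,g,s)=(2,3,2)$. With this choice one has $2agn+s=12n+2$, which covers precisely $v\equiv 14\pmod{24}$ when $n$ is odd.

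I would first dispatch the base case $v=14$ by citing Lemma \ref{14} in the Appendix. For $v\ge 38$, Lemma \ref{bb}(ii) reduces the problem to $b\le v(v-1)(v-2)/12$. Writing $v=12n+2$ with $n\ge 3$ odd and $b=qv(v-1)+b'$, the upper bound on $b$ forces $0\le q\le (v-2)/12-1=n-1$, while (C2) forces $b'$ to lie in the base window $\frac{\lambda v(v-1)}{6}-\frac{v}{6}<b'<\frac{\lambda v(v-1)}{6}+\frac{v}{6}$ for some odd $\lambda\in\{1,3,5\}$, so that $(v,b')$ associates with $(\lambda,\varepsilon)$ in the sense of (\ref{associate}) with $-v/2<\varepsilon<v/2$.

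Next I would verify the three hypotheses needed for Construction \ref{tbbb}. The master design, a $3$-PCS$_2^e(12^n:2)$ for $n$ odd and every $0\le e\le 6(n-1)$ with $e\equiv 0\pmod 6$, is exactly what Lemma \ref{H12n2} provides. A simple S$_6(2,3,14)$ whose blocks are not strictly contained in the prescribed $2$-subset exists because Theorem \ref{NF} yields a WBTS$(14;182)$, which is 2-balanced with every pair in exactly six blocks, and the containment condition is vacuous since blocks have size three. The NWBTS$(14;c)$ for every admissible $c$ under (C2) is supplied by Lemma \ref{14} in the Appendix. Feeding these inputs into Construction \ref{tbbb} produces an NWBTS$(v;qv(v-1)+b')$ for every admissible pair $(q,b')$; the coverage argument built into Construction \ref{tbbb} (the interval-chaining calculation with $(\tau_1,\tau_2)$ for $s\equiv 2\pmod 6$) guarantees that as $q\in\{0,\dots,n-1\}$ and $\lambda\in\{1,3,5\}$ vary, the effective indices exhaust all odd values from $1$ up to $6n-1$, while $\varepsilon$ sweeps every integer in $(-v/2,v/2)$. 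This is exactly the range of $(\lambda,\varepsilon)$ pairs one needs after the Lemma \ref{bb}(ii) reduction.

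The main obstacle is not the recursive step itself but the small-order verifications it rests on, especially the base-case NWBTS$(14;c)$s in Lemma \ref{14}, which must be built by ad hoc combinatorial construction for each residue class of $c$ and checked against the appropriate defect-graph prototype $H_{14,\varepsilon}^i$ from Figure \ref{f2}. Once those small cases and the master $3$-PCS$_2^e(12^n:2)$ of Lemma \ref{H12n2} are in hand, the remainder is routine bookkeeping: verifying $(a,g,s)=(2,3,2)$ meets $g\equiv 0\pmod 3$ and $s\equiv 2\pmod 6$, and checking that the intervals $I_e$ of Construction \ref{tbbb} patch together without gaps.
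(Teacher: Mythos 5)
Your proposal is correct and follows essentially the same route as the paper: base case $v=14$ from the appendix lemma, reduction via Lemma \ref{bb}, the master $3$-PCS$_2^e(12^n:2)$ from Lemma \ref{H12n2}, a simple S$_6(2,3,14)$ from Theorem \ref{NF}, the NWBTS$(14;c)$ inputs from Lemma \ref{14}, and Construction \ref{tbbb} with $(a,g,s)=(2,3,2)$. The only difference is that you spell out the parameter choice and the interval-chaining coverage of $(\lambda,\varepsilon)$ explicitly, which the paper leaves implicit in Construction \ref{tbbb}.
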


\proof For $v = 14$, see Lemma \ref{14} in Appendix. For $v \geq 38$, we only need to consider $b\leq \frac{v(v-1)(v-2)}{12}$ by Lemma \ref{bb}. Let $v = 12n +2$ where $n$ is odd.  Write $b = qv(v-1) + b'$ where $0\leq q\leq \frac{v-2}{12}-1 = n- 1$ and $\frac{\lambda v(v-1)}{6}-\frac{v}{6} < b' < \frac{\lambda v(v-1)}{6}+\frac{v}{6}$ with $\lambda=1,3,5$. Thus $(v,b')$ associates with $(\lambda,\varepsilon)$ where $\lambda=1,3,5$ and $-\frac{v}{2}< \varepsilon < \frac{v}{2}$.
There exists a $3$-PCS$^{e}_{2}(12^{n}: 2)$ where $0 \leq e \leq 6(n-1)$ and $e\equiv0\pmod{6}$ from Lemma \ref{H12n2}. An NWBTS$(14; c)$ exists for all $c$ with $(14,c)$ satisfying (C2) by Lemma \ref{14} in Appendix. A simple S$_{6}(2,3,14)$ exists by Theorem \ref{NF}. Then there exists an NWBTS$(v; b)$  by Construction \ref{tbbb}.\qed

It remains to treat the case when $v\equiv2\pmod{24}$. We define a generalization of $g$-PCS$^{r}_{2}((2g)^{n}:s)$.

Let $(X, S, \mathcal{G}, \mathcal{A})$ be a CS$((6g)^{n}:s)$  with $g$ being a positive integer. It is denoted by $g$-PCS$^{r}_{6}((6g)^{n} : s)$ if its block set $\mathcal{A}$ contains a subset $\mathcal{A}'$ and $\mathcal{A}'$ can be partitioned into  $gn+2$ pairwise disjoint parts $\mathcal{A}_{i}$, $1 \leq i \leq gn+2$, when it satisfies that

(i) for a fixed group $G_{0}\in \mathcal{G}$, there are exactly $g+2$ $\mathcal{A}_{i}$'s satisfying that (a) each $\mathcal{A}_{i}~(1\leq i \leq 3)$ is the block set of a GDD$_{2}(2, 3, 6gn + s)$ of type $1^{6g(n-1)}(6g + s)^{1}$ with long group $G_0\cup S$ and at least one of $\mathcal{A}_{i}$'s contains a subset $\mathcal{A}'$ such that $\mathcal{A}'$ is the block set of an NGDD$(2, 3, 6gn + s)$ of type $2^{(r,3g(n-1)-r)}(6g + s)^{1}$, and (b) each $\mathcal{A}_{i}~(4\leq i \leq g+2)$ is the block set of a GDD$_{6}(2, 3, 6gn + s)$ of type $1^{6g(n-1)}(6g + s)^{1}$ with long group $G_0\cup S$,  and

(ii) for each group $G\in \mathcal{G}\backslash\{G_{0}\}$, there are exactly $g$ $\mathcal{A}_{i}$'s such that each $\mathcal{A}_{i}$ is the block set of a GDD$_{6}(2, 3, 6gn + s)$ of type $1^{6g(n-1)}(6g + s)^{1}$ with long group $G\cup S$.

Clearly, a $3g$-PCS$^{r}_{2}((6g)^{n} : s)$ gives a $g$-PCS$^{r}_{6}((6g)^{n} : s)$. We will make use of the following two immediate
variants of Constructions \ref{constrution} and \ref{tbbb}.

\begin{construction}\label{constrution14}
Suppose that there exists a $1$-FG$(3,(K_{1}, K_{T}), gn)$ of type $g^{n}$.
If there exists an $m$-PCS$^{r}_{6}((6m)^{k} : l)$ for each $k \in K_{1}$ and each $0\leq r \leq 3m(k-1)$ with $r\equiv 0 \pmod{3}$, and
a PGDD$_{2}((6m)^{k})$ for each $k \in K_{T}$,
then there exists an $mg$-PCS$^{e}_{6}((6mg)^{n}:l)$ for any $0\leq e \leq 3mg(n-1)$ and $e \equiv 0 \pmod{3}$.
\end{construction}

\begin{construction}\label{tbbb1}
Let $s\equiv2,4\pmod 6$ and $\lambda\in\{1,3,5\}$. Suppose that there exists a $g$-PCS$^{e}_{6}((6g)^{n}:s)$ for any $0\leq e \leq 3g(n-1)$ and $e\equiv 0 \pmod{3}$. Further let $u=6g+s$ and suppose that there exists
\begin{enumerate}
\item[(i)] a simple S$_{6}(2,3,u)$ defined over a $u$-set $X$, each of whose blocks is not strictly contained in an $s$-subset $Y\subset X$; and

\item[(ii)] an NWBTS$(u;c)$  for any $c=(\lambda{u\choose 2}+\delta)/3$ where  $-{u\over 2} <\delta<{u\over 2}$. 
\end{enumerate}
 Let $v=6gn+s$.  Then there exists an NWBTS$(v;b)$ for any $b=qv(v-1)+b'$, where 
$0\leq q\leq n-1$ and $3b'=\lambda\binom{v}{2}+\varepsilon$ with  $-{v\over 2} <\varepsilon<{v\over 2}$.
\end{construction}

\begin{lemma}\label{wpcs}
Let $n \equiv 0 \pmod{4}$ and $n\neq8,12$. Then there exists a $1$-PCS$^{e}_{6}(6^{n} : 2)$ for any $0\leq e \leq 3(n-1)$ and $e \equiv 0 \pmod{3}$.
\end{lemma}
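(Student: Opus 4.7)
The plan is to handle the smallest admissible value $n = 4$ by a direct construction, and then invoke Construction \ref{constrution14} recursively to cover all $n \equiv 0 \pmod 4$ with $n \geq 16$ (both excluded values $n = 8, 12$ are thereby automatically avoided).

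For $n = 4$, I would build a $1$-PCS$^e_6(6^4:2)$ on $X = Z_{24} \cup \{a, b\}$ for each $e \in \{0, 3, 6, 9\}$, with stem $S = \{a, b\}$ and four groups of size $6$ (residue classes modulo $4$ in $Z_{24}$). Following the style of Lemmas \ref{3632} and \ref{H1232}, I would list base blocks for an initial GDD$_2(2, 3, 26)$ of type $1^{18} 14^1$ with long group $G_0 \cup S$ whose underlined base blocks generate the block set of a sub-NGDD$(2, 3, 26)$ of type $2^{(e, 9-e)} 14^1$; a suitable cyclic action on $Z_{24}$ fixing the group partition would then produce the two additional GDD$_2$s with long group $G_0 \cup S$ and the three GDD$_6$s with long groups $G_j \cup S$ for $j = 1, 2, 3$, yielding the six parts required by the definition of a $1$-PCS$^e_6(6^4:2)$. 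The extremal cases $e = 0$ and $e = 9$ essentially reduce to partitionable candelabra systems without the nearly-GDD defect, so only $e = 3$ and $e = 6$ genuinely require fresh computation.

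For $n \equiv 0 \pmod 4$ with $n \geq 16$, I would apply Construction \ref{constrution14} with $m = g = 1$ and $l = 2$. The required master design is a 1-FG$(3, (K_1, K_T), n)$ of type $1^n$, obtained by puncturing a Steiner 3-design S$(3, K, n+1)$ at a designated stem point. The inputs $1$-PCS$^r_6(6^k:2)$ for $k \in K_1$ come either from the $n = 4$ base case (when $k = 4$) or by induction from smaller admissible values of $n$, while the PGDD$_2(6^k)$s for $k \in K_T$ are supplied by Lemma \ref{PGDD}.

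The main obstacle is identifying such a master 1-FG: since $n + 1 \not\equiv 2, 4 \pmod 6$ for every $n \equiv 0 \pmod 4$, the natural candidate S$(3, \{4, 6\}, n+1)$ never exists, so one must resort to Steiner systems with enlarged block-size sets (for instance S$(3, \{4, 5, 6\}, n+1)$) or obtain a suitable 1-fan design by deleting several points from a larger 3-design, mirroring the technique used in Lemma \ref{6n21} for $n = 5$. A small number of additional direct constructions (perhaps for $n = 16, 20$) may be needed to initialise the general recursion before the product/deletion pattern applies uniformly.
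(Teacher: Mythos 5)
Your skeleton is the right one and is essentially the paper's: direct constructions for the order-$26$ ingredient designs (type $6^{4}$ with stem size $2$) plus an application of Construction \ref{constrution14} with a $1$-fan design of type $1^{n}$ obtained by deleting a point from an S$(3,\{4,5,6\},n+1)$, with the PGDD$_{2}(6^{k})$'s supplied by Lemma \ref{PGDD}. However, as written your plan has a concrete gap. Deleting one point from an S$(3,\{4,5,6\},n+1)$ yields a $1$-FG$(3,(\{3,4,5\},\{4,5,6\}),n)$ of type $1^{n}$, so Construction \ref{constrution14} (with $m=g=1$, $l=2$) requires ingredient designs $1$-PCS$^{r}_{6}(6^{k}:2)$ for \emph{every} $k\in\{3,4,5\}$ and every $r\equiv 0\pmod{3}$ with $0\le r\le 3(k-1)$, not just for $k=4$. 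Your proposal supplies only $k=4$ (from the $n=4$ base case) and asserts the remaining inputs come ``by induction from smaller admissible values of $n$''; since $3,5\not\equiv 0\pmod{4}$, that induction can never produce them, so the recursion as you set it up does not close. In the paper these inputs come from Lemma \ref{6n21}: a $3$-PCS$^{r}_{2}(6^{k}:2)$ exists for odd $k\ge 3$ and all such $r$, and is a fortiori a $1$-PCS$^{r}_{6}(6^{k}:2)$. Once these are added, no induction on $n$ is needed at all --- a single application of Construction \ref{constrution14} per $n$ suffices --- and your hedge about extra direct constructions for $n=16,20$ is unnecessary, because an S$(3,\{4,5,6\},n+1)$ exists for every $n\equiv 0\pmod{4}$ with $n\neq 8,12$; the two excluded values correspond exactly to the nonexistent orders $n+1=9,13$, which is where the lemma's exceptions come from.

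On the base case $n=4$: you correctly isolate $e=3,6$ as the cases needing fresh computation ($e=0,9$ follow from a $3$-PCS$^{r}_{2}(6^{4}:2)$, the paper's Lemma \ref{6420}). But your plan to generate all components from base blocks under a cyclic action on $Z_{24}$ is precisely what the authors report failing: they remark after Theorem \ref{v2} that a $3$-PCS$^{3}_{2}(6^{4}:2)$ was not found, and that the $1$-PCS$^{3}_{6}(6^{4}:2)$ of Lemma \ref{wp} could not be endowed with nice automorphism groups for all components despite a long computer search --- several of its component designs are listed block by block with no group action, and the relaxed notion $1$-PCS$^{r}_{6}$ (allowing some components to be single GDD$_{6}$'s rather than pairs of GDD$_{2}$'s) is exactly what makes the object attainable. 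So you should expect the hard part of the base case to be ad hoc rather than obtained by developing base blocks cyclically.
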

\proof
For $n \equiv 0 \pmod{4}$ and $n\neq8,12$, there exists a 1-FG$(3, (\{3, 4, 5\},\{4, 5, 6\}), n)$ of type $1^{n}$ which can be obtained by deleting a point from an S$(3,\{4,5,6\},n+1)$ in \cite{456}. A $3$-PCS$^{r}_{2}(6^{4}:2)$ for any $r\in\{0,9\}$ exists by Lemma \ref{6420} in Appendix. There exists a $1$-PCS$^{r}_{6}(6^{4}:2)$ for any $r\in\{3,6\}$ by Lemma \ref{wp} in Appendix.
Since a $3$-PCS$^{r}_{2}(6^{k}:2)$ exists for $k\in\{3,5\}$ and $r\leq3(k-1)$ with $r\equiv0\pmod{3}$ by Lemma \ref{6n21}, and a PGDD$_{2}(6^{k})~(k\in\{4,5,6\})$ also exists by Lemma \ref{PGDD}, we can obtain a $1$-PCS$_{6}^{e}(6^{n}:2)$ for any $0\leq e \leq 3(n-1)$ and $e\equiv0\pmod{3}$ by using Construction \ref{constrution14}.
\qed

\begin{lemma}\label{v224}
Let $v \equiv 2 \pmod{24}$, $v\neq50,74$, and $(v, b)$ satisfy (C2). There exists an NWBTS$(v;b)$.
\end{lemma}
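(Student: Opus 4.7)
My plan is to combine Lemma \ref{wpcs} with Construction \ref{tbbb1}, in direct analogy with Lemmas \ref{v812} and \ref{v1424}. Writing $v=6n+2$, the hypotheses $v\equiv 2\pmod{24}$ and $v\notin\{50,74\}$ translate exactly to $n\equiv 0\pmod{4}$ with $n\notin\{8,12\}$, which is precisely the hypothesis of Lemma \ref{wpcs}. I will apply Construction \ref{tbbb1} with $g=1$ and $s=2$, so that $u=6g+s=8$; the construction will output an NWBTS$(v;b)$ for every $b=qv(v-1)+b'$ with $0\le q\le n-1$ and $3b'=\lambda\binom{v}{2}+\varepsilon$, $\lambda\in\{1,3,5\}$, $-v/2<\varepsilon<v/2$.

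The three inputs required by Construction \ref{tbbb1} are all available. First, the $1$-PCS$^{e}_{6}(6^{n}:2)$ for every $0\le e\le 3(n-1)$ with $e\equiv 0\pmod 3$ is supplied by Lemma \ref{wpcs}; this is where the exclusions $n\in\{8,12\}$ enter. Second, an NWBTS$(8;c)$ for every $c$ with $(8,c)$ satisfying (C2) is supplied by Lemma \ref{81} in the Appendix. Third, the required simple $S_{6}(2,3,8)$ whose blocks avoid a fixed $2$-subset is trivial: since no triple can sit inside a $2$-set, the design consisting of all $\binom{8}{3}=56$ triples (which automatically has index $8-2=6$ on pairs) works. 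Before invoking the construction I would use Lemma \ref{bb}(i)--(ii) to reduce to $b\le \binom{v}{3}/2=v(v-1)(v-2)/12$.

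It remains to check that the output spectrum exhausts this reduced range. Setting $\lambda'=6q+\lambda$ with $q\in\{0,\dots,n-1\}$ and $\lambda\in\{1,3,5\}$, the index $\lambda'$ sweeps every odd integer in $[1,6n-1]=[1,v-3]$. Since $n$ is even, $(v-2)/2=3n$ is even, so every odd $\lambda'\le (v-2)/2$ is hit; the reflection $b\mapsto \binom{v}{3}-b$ from Lemma \ref{bb}(ii) then takes care of the upper half. The main obstacle is not anything intrinsic to this lemma but rather its feeder Lemma \ref{wpcs}, whose proof relies on a $1$-FG$(3,(\{3,4,5\},\{4,5,6\}),n)$ coming from an $S(3,\{4,5,6\},n+1)$, and that source design is unavailable for $n+1\in\{9,13\}$; this is precisely why the statement must carry the exceptions $v=50,74$, which would have to be handled by ad hoc direct constructions in the Appendix, in the spirit of Lemma \ref{t1v242}.
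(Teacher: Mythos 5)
Your proposal is correct and matches the paper's own proof in all essentials: both reduce $b$ via Lemma \ref{bb}, feed the $1$-PCS$^{e}_{6}(6^{n}:2)$ of Lemma \ref{wpcs} (with $n\equiv 0\pmod 4$, $n\neq 8,12$) into Construction \ref{tbbb1} with $s=2$, and use the NWBTS$(8;c)$ of Lemma \ref{81} together with the trivial simple S$_{6}(2,3,8)$. The only cosmetic difference is your reduction to $b\le\binom{v}{3}/2$ with $\lambda\in\{1,3,5\}$ versus the paper's reduction of the residual part $b'$ to $\le v(v-1)/2$ with $\lambda\in\{1,3\}$, which is an equivalent bookkeeping choice.
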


\proof
We can assume that $b\leq \frac{v(v-1)(v-2)}{6}$ by Lemma \ref{bb}. Write $v = 6n +2$ where $n \equiv 0 \pmod{4}$ and $n\neq8,12$; write $b = qv(v-1) + b'$ where $0\leq q\leq \frac{v-2}{6}-1 = n- 1$ and $0\leq b' \leq v(v-1)$. According to Lemma \ref{bb}, we can assume that $0 \leq b' \leq \frac{v(v-1)}{2}$ and $\frac{\lambda v(v-1)}{6}-\frac{v}{6} < b' < \frac{\lambda v(v-1)}{6}+\frac{v}{6}$ with $\lambda=1,3$. Thus $(v,b')$ associates with $(\lambda,\varepsilon)$ where $\lambda=1,3$ and $-\frac{v}{2}< \varepsilon < \frac{v}{2}$.  Apply Construction \ref{tbbb1} with $s=2$.
There exists a $1$-PCS$_{6}^{e}(6^{n}:2)$ for any $0\leq e \leq 3(n-1)$ and $e\equiv0\pmod{3}$ by Lemma \ref{wpcs}. An NWBTS$(8; c)$ exists for all $c$ with $(8,c)$ satisfying (C2) from Lemma \ref{81} in Appendix.  A simple S$_{6}(2,3,8)$ exists trivially. Then there exists an NWBTS$(v;b)$ for $v \equiv 2 \pmod{24}$ and $v\neq50,74$. \qed

Combining Lemmas \ref{v812}, \ref{v1424}, and \ref{v224} yields the main result of this subsection.

\begin{theorem}\label{v2}
Let $v \equiv 2\pmod{6}$, $v\neq50,74$, and $(v,  b)$ satisfy (C2). There exists an NWBTS$(v;b)$.
\end{theorem}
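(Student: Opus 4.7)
The plan is to prove Theorem~\ref{v2} by a straightforward case analysis modulo $24$, invoking the three lemmas proved earlier in this subsection. Since $v\equiv 2\pmod 6$ is equivalent to $v$ being even with $v\equiv 2\pmod 3$, the admissible residues modulo $24$ for $v$ are precisely $v\equiv 2,8,14,20\pmod{24}$. The congruence $v\equiv 8\pmod{12}$ merges the classes $v\equiv 8\pmod{24}$ and $v\equiv 20\pmod{24}$, so it suffices to treat three distinct cases.

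First I would dispose of the class $v\equiv 8\pmod{12}$, which is immediate from Lemma~\ref{v812}. Then I would apply Lemma~\ref{v1424} for the class $v\equiv 14\pmod{24}$, and finally Lemma~\ref{v224} for the class $v\equiv 2\pmod{24}$ with $v\notin\{50,74\}$. Since every $v\equiv 2\pmod 6$ with $v\neq 50,74$ falls into exactly one of these three cases, the union of the three lemmas already produces an NWBTS$(v;b)$ for all admissible $(v,b)$ satisfying (C2). Thus the theorem follows without any additional argument.

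There is essentially no obstacle at this stage, because all the genuine work has been absorbed into Lemmas~\ref{v812}, \ref{v1424}, and~\ref{v224}. Those lemmas in turn rely on Construction~\ref{tbbb} (and the variant Construction~\ref{tbbb1}) fed by candelabra systems with the appropriate partitions: a $3$-PCS$_2^e(6^n:2)$ via Lemma~\ref{6n21}, a $3$-PCS$_2^e(12^n:2)$ via Lemma~\ref{H12n2}, and a $1$-PCS$_6^e(6^n:2)$ via Lemma~\ref{wpcs}, together with the small NWBTSs on $8$ and $14$ points from the Appendix. The genuine technical difficulty, and the reason $v\in\{50,74\}$ must be excluded, is that the $1$-fan design coming from an S$(3,\{4,5,6\},n+1)$ fails to exist when $n\in\{8,12\}$, which is why Lemma~\ref{wpcs} (and hence Lemma~\ref{v224}) carries the same exclusions. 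In short, the present theorem is a synthesis statement, and its proof is a one-line case-union that I would record simply as: \emph{combine Lemmas~\ref{v812}, \ref{v1424}, and~\ref{v224}}.
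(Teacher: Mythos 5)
Your proposal matches the paper's proof exactly: the paper also establishes Theorem~\ref{v2} by simply combining Lemmas~\ref{v812}, \ref{v1424}, and~\ref{v224}, which together cover the residue classes $v\equiv 8,20\pmod{24}$, $v\equiv 14\pmod{24}$, and $v\equiv 2\pmod{24}$ (with $v\neq 50,74$). Your additional remarks on where the exclusions originate are consistent with the paper's Lemma~\ref{wpcs} and its discussion after Theorem~\ref{v2}, so nothing further is needed.
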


Analogous to the proofs of Lemmas \ref{wpcs} and \ref{v224}, if there exists a $3$-PCS$^{3}_{2}(6^{4}:2)$, then we could also prove Theorem \ref{v2} by applying Constructions \ref{constrution} and \ref{tbbb}. However, we did not find out a $3$-PCS$^{3}_{2}(6^{4}:2)$ and had to turn to its generalization $1$-PCS$^{3}_{6}(6^{4}:2)$. This design we constructed (Lemma \ref{wp} in Appendix) does not impose nice automorphism groups for all the component designs, because otherwise no positive result appeared after a long time of computer search.

\subsection{$v \equiv 4\pmod  6$}
In this subsection, we show the existence of an NWBTS$(v;b)$ if $v \equiv 10,16,22 \pmod{24}$ and $(v,  b)$ satisfies (C2).

\begin{lemma}\label{634}
There is a $3$-PCS$^{r}_{2}(6^{3}:4)$ for any $r=0, 3, 6$.
\end{lemma}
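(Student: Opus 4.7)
The plan mirrors the proof of Lemma \ref{3632}, enlarging the stem from size $2$ to size $4$. Take the point set $X = Z_{18} \cup S$, stem $S = \{a,b,c,d\}$, and groups $G_i = \{j \in Z_{18} : j \equiv i \pmod{3}\}$, $i = 0,1,2$. For each $r \in \{0,3,6\}$ the goal is to exhibit one GDD$_2(2,3,22)$ of type $1^{12}10^{1}$ with long group $G_0 \cup S$, specified by a list of base blocks that generates the GDD under the translation $x \mapsto x+9 \pmod{18}$; translating this GDD by $+0,+1,\ldots,+8 \pmod{18}$ then produces nine pairwise disjoint GDDs, three with each $G_i \cup S$ as long group. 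The remaining blocks of the candelabra system, namely those lying in some $G_i \cup S$ but not in $S$, are completely forced and are appended as in Lemma \ref{3632}. Note that the $\langle 9 \rangle$-orbits within $Z_{18}$ are the pairs $\{i,i+9\}$, three of which lie in each $G_i$; these orbit-pairs will play the role of the size-$2$ groups of the sub-NGDD.

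For $r = 0$ (respectively $r = 6$) the sub-NGDD$(2,3,22)$ of type $2^{(0,6)}10^{1}$ (respectively $2^{(6,0)}10^{1}$) has every orbit-pair in $G_1 \cup G_2$ covered zero times (respectively twice). One would either invoke the analogue \cite[Lemma 6.6]{wbt} after adjoining two extra stem points and rebalancing the new pair coverages, or write down a short base-block list by hand; neither is delicate, since once the NGDD portion is fixed, the rest of the initial GDD is determined by completing the remaining pair coverages between groups and between points and the stem.

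The main obstacle is $r = 3$. Here one must designate three orbit-pairs in $G_1$, say $\{1,10\},\{4,13\},\{7,16\}$, and three in $G_2$, say $\{2,11\},\{5,14\},\{8,17\}$, then produce an underlined sublist of base blocks whose $\langle 9\rangle$-translates form an NGDD$(2,3,22)$ of type $2^{(3,3)}10^{1}$ in which the first three pairs are covered twice, the latter three zero times, and every pair inside $G_0$ or inside $S$ zero times; simultaneously the entire base-block list must realize the full GDD$_2$ of type $1^{12}10^{1}$ with long group $G_0 \cup S$. Following the strategy used in Lemmas \ref{3632} and \ref{H1232}, such a list would be located by computer search: first select an underlined block list meeting the NGDD pair-coverage constraints and the orbit-disjointness under the $\langle 9\rangle$-action, then extend by backtracking to fill in the remaining base blocks. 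Verification of the final list is a mechanical tally of pair coverages across orbits of size $2$, exactly as in the proofs of the preceding PCS lemmas.
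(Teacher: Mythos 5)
Your overall strategy is the same as the paper's: take $X=Z_{18}\cup S$ with $|S|=4$, exhibit for each $r$ an explicit GDD$_2(2,3,22)$ of type $1^{12}10^{1}$ with long group $G_0\cup S$ containing a distinguished sub-NGDD of type $2^{(r,6-r)}10^{1}$ on the pairs $\{i,i+9\}$, produce nine pairwise disjoint such GDDs by an order-nine automorphism action, and let the remaining admissible triples be forced. (The paper's actual development group is not the translation $x\mapsto x+1$ with a $+9$-invariant starter as in Lemma \ref{3632}, but a group $\mathcal{H}=\langle\alpha,\beta\rangle$ of order nine with $\alpha$ fixing the groups and $\beta$ permuting them, and it lists the full $124$-block initial GDDs; also one GDD suffices for both $r=0$ and $r=6$, since the complement of the type-$2^{(0,6)}10^{1}$ NGDD inside the GDD$_2$ is an NGDD of type $2^{(6,0)}10^{1}$. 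These are differences of detail, not of method.)

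The genuine gap is that you never actually produce the objects, and for a lemma of this kind the explicit designs \emph{are} the proof. For $r=3$ you say a suitable base-block list "would be located by computer search"; that is a plan for finding a proof, not a proof, and there is no a priori guarantee that a $\langle 9\rangle$-invariant starter with the required sub-NGDD exists (the paper itself remarks, in the discussion after Theorem \ref{v2}, that an analogous small design resisted searches under nice automorphism groups and had to be built without imposing symmetry on all components). For $r=0,6$ your proposed shortcut of invoking \cite[Lemma 6.6]{wbt} "after adjoining two extra stem points and rebalancing" does not go through as stated: enlarging the stem changes every GDD in the partition from type $1^{12}8^{1}$ to $1^{12}10^{1}$, and the $16$ new blocks per GDD through the added stem points necessarily re-cover cross pairs that are already covered twice, so the stem-2 GDDs cannot simply be extended block-by-block; the whole partition must be rebuilt, which is exactly why the paper gives fresh explicit $124$-block constructions (with the additional constraint, which you do not address, that no block may lie inside any $S\cup G_i$). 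Until concrete block lists are exhibited and their pair coverages, the NGDD property, the disjointness of the nine developed GDDs, and the consistency of the forced completion are verified, the existence claim for all three values of $r$ remains unproven.
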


\proof We construct the desired design on $X = Z_{18} \cup \{a, b, c, d\}$ with stem $S = \{a, b, c, d\}$ and groups
$G_{i} = \{i, i + 3, i + 6,\ldots,i + 15\}, i = 0, 1, 2$. Let $\mathcal{H}$ be an automorphism group generated by the following two permutations:
$$\alpha = (0,6,12)(1,7,13)(2,8,14)(3,9,15)(4,10,16)(5,11,17)(a)(b)(c)(d)(e);$$
$$\beta = (0,2,4)(1,15,17)(6,8,10)(3,5,7)(12,14,16)(9,11,13)(a)(b)(c)(d)(e).$$

For $r=0, 6$, we first construct a GDD$_{2}(2, 3, 22)$ of type $1^{12}10^{1}$ with long group $G_{0} \cup S$ in the following; the underlined blocks form the blocks of an NGDD$(2,3,22)$ of type $2^{(0,6)}10^{1}$, where $\lambda_{2,11}=\lambda_{4,13}=\lambda_{8,17}=\lambda_{1,10}=\lambda_{5,14}=\lambda_{7,16}=0$.

{\small\begin{longtable}{lllllll}
$\{\underline{a, 1, 2}\}$ & $\{\underline{a, 7, 11}\}$ & $\{\underline{a, 5, 13}\}$ & $\{\underline{a, 4, 14}\}$ & $\{\underline{a, 8, 10}\}$ & $\{\underline{a, 16, 17}\}$ & $\{\underline{b, 1, 5}\}$\\
$\{\underline{b, 10, 17}\}$ & $\{\underline{b, 4, 8}\}$ & $\{\underline{b, 2, 7}\}$ & $\{\underline{b, 11, 13}\}$ & $\{\underline{b, 14, 16}\}$ & $\{\underline{c, 1, 8}\}$ & $\{\underline{c, 7, 17}\}$\\
$\{\underline{c, 13, 14}\}$ & $\{\underline{c, 2, 16}\}$ & $\{\underline{c, 4, 11}\}$ & $\{\underline{c, 5, 10}\}$ & $\{\underline{d, 1, 11}\}$ & $\{\underline{d, 10, 14}\}$ & $\{\underline{d, 2, 4}\}$\\
$\{\underline{d, 13, 17}\}$ & $\{\underline{d, 5, 16}\}$ & $\{\underline{d, 7, 8}\}$ & $\{\underline{0, 2, 5}\}$ & $\{\underline{1, 12, 14}\}$ & $\{\underline{6, 8, 16}\}$ & $\{\underline{6, 11, 17}\}$\\
$\{\underline{5, 11, 12}\}$ & $\{\underline{0, 8, 11}\}$ & $\{\underline{0, 14, 17}\}$ & $\{\underline{4, 12, 17}\}$ & $\{\underline{5, 15, 17}\}$ & $\{\underline{2, 8, 12}\}$ & $\{\underline{2, 6, 10}\}$\\
$\{\underline{2, 13, 15}\}$ & $\{\underline{3, 11, 14}\}$ & $\{\underline{2, 3, 17}\}$ & $\{\underline{1, 9, 17}\}$ & $\{\underline{2, 9, 14}\}$ & $\{\underline{3, 5, 8}\}$ & $\{\underline{6, 7, 14}\}$\\
$\{\underline{4, 5, 6}\}$ & $\{\underline{1, 6, 13}\}$ & $\{\underline{5, 7, 9}\}$ & $\{\underline{8, 9, 13}\}$ & $\{\underline{8, 14, 15}\}$ & $\{\underline{9, 11, 16}\}$ & $\{\underline{4, 9, 10}\}$\\
$\{\underline{10, 11, 15}\}$ & $\{\underline{10, 12, 16}\}$ & $\{\underline{4, 15, 16}\}$ & $\{\underline{1, 7, 15}\}$ & $\{\underline{7, 12, 13}\}$ & $\{\underline{0, 1, 4}\}$ & $\{\underline{0, 7, 10}\}$\\
$\{\underline{0, 13, 16}\}$ & $\{\underline{1, 3, 16}\}$ & $\{\underline{3, 4, 7}\}$ & $\{\underline{3, 10, 13}\}$ &
$\{a, 1, 8\}$ & $\{a, 11, 13\}$ & $\{10, 12, 13\}$\\
$\{a, 4, 17\}$ & $\{a, 5, 16\}$ & $\{a, 2, 7\}$ & $\{a, 10, 14\}$ & $\{0, 1, 10\}$ & $\{0, 5, 14\}$ & $\{1, 10, 15\}$\\
$\{8, 15, 17\}$ & $\{5, 14, 15\}$ & $\{2, 11, 15\}$ & $\{4, 13, 15\}$ & $\{7, 15, 16\}$ & $\{0, 2, 11\}$ & $\{0, 4, 13\}$\\
$\{0, 7, 16\}$ & $\{0, 8, 17\}$ & $\{b, 1, 11\}$ & $\{b, 16, 17\}$ & $\{b, 4, 14\}$ & $\{b, 2, 13\}$ & $\{b, 5, 10\}$\\
$\{b, 7, 8\}$ & $\{c, 5, 7\}$ & $\{c, 1, 14\}$ & $\{c, 13, 17\}$ & $\{c, 2, 4\}$ & $\{c, 8, 16\}$ & $\{c, 10, 11\}$\\
$\{d, 4, 5\}$ & $\{d, 7, 14\}$ & $\{d, 8, 13\}$ & $\{d, 11, 16\}$ & $\{d, 1, 17\}$ & $\{d, 2, 10\}$ & $\{4, 8, 10\}$\\
$\{2, 3, 8\}$ & $\{6, 8, 14\}$ & $\{2, 14, 16\}$ & $\{5, 8, 12\}$ & $\{8, 9, 11\}$ & $\{1, 2, 6\}$ & $\{2, 5, 9\}$\\
$\{2, 12, 17\}$ & $\{3, 4, 11\}$ & $\{3, 13, 14\}$ & $\{5, 6, 11\}$ & $\{1, 5, 13\}$ & $\{3, 5, 17\}$ & $\{7, 11, 17\}$\\
$\{6, 10, 17\}$ & $\{9, 14, 17\}$ & $\{11, 12, 14\}$ & $\{1, 3, 7\}$ & $\{3, 10, 16\}$ & $\{4, 6, 16\}$ & $\{6, 7, 13\}$\\
$\{1, 4, 9\}$ & $\{1, 12, 16\}$ & $\{4, 7, 12\}$ & $\{7, 9, 10\}$ & $\{9, 13, 16\}$\\
\end{longtable}}
\noindent Nine GDD$_{2}(2, 3, 20)$s are obtained by developing the above 124 base blocks under the action of the automorphism group $\mathcal{H}$ and the remaining blocks are forced by the definition of the $3$-PCS$^{0}_{2}(6^{3}:4)$.

For a $3$-PCS$^{3}_{2}(6^{3}:4)$, we first construct a GDD$_{2}(2, 3, 22)$ of type $1^{12}10^{1}$ with long group $G_{0} \cup S$ in the following; the underlined blocks form the blocks of an NGDD$(2,3,22)$ of type $2^{(3,3)}10^{1}$, where $\lambda_{2,11}=\lambda_{4,13}=\lambda_{8,17}=2$ and $\lambda_{1,10}=\lambda_{5,14}=\lambda_{7,16}=0$.

{\small\begin{longtable}{lllllll}
$\{\underline{a, 1, 2}\}$ & $\{\underline{a, 7, 14}\}$ & $\{\underline{a, 4, 8}\}$ & $\{\underline{a, 5, 13}\}$ & $\{\underline{a, 10, 17}\}$ & $\{\underline{a, 11, 16}\}$ & $\{\underline{7, 15, 16}\}$\\
$\{\underline{1, 10, 15}\}$ & $\{\underline{4, 11, 15}\}$ & $\{\underline{8, 13, 15}\}$ & $\{\underline{2, 15, 17}\}$ & $\{\underline{5, 14, 15}\}$ & $\{\underline{b, 2, 4}\}$ & $\{\underline{b, 7, 8}\}$\\
$\{\underline{b, 13, 17}\}$ & $\{\underline{b, 5, 16}\}$ & $\{\underline{b, 1, 14}\}$ & $\{\underline{b, 10, 11}\}$ & $\{\underline{c, 4, 5}\}$ & $\{\underline{c, 1, 17}\}$ & $\{\underline{c, 7, 11}\}$\\
$\{\underline{c, 8, 10}\}$ & $\{\underline{c, 13, 14}\}$ & $\{\underline{c, 2, 16}\}$ & $\{\underline{d, 4, 14}\}$ & $\{\underline{d, 1, 8}\}$ & $\{\underline{d, 2, 7}\}$ & $\{\underline{d, 5, 10}\}$\\
$\{\underline{d, 11, 13}\}$ & $\{\underline{d, 16, 17}\}$ & $\{\underline{0, 14, 17}\}$ & $\{\underline{0, 8, 11}\}$ & $\{\underline{0, 2, 5}\}$ & $\{\underline{1, 9, 11}\}$ & $\{\underline{2, 8, 9}\}$\\
$\{\underline{2, 3, 13}\}$ & $\{\underline{2, 12, 14}\}$ & $\{\underline{2, 6, 10}\}$ & $\{\underline{5, 8, 12}\}$ & $\{\underline{6, 8, 16}\}$ & $\{\underline{3, 8, 14}\}$ & $\{\underline{11, 12, 17}\}$\\
$\{\underline{3, 5, 11}\}$ & $\{\underline{1, 5, 7}\}$ & $\{\underline{3, 7, 17}\}$ & $\{\underline{6, 11, 14}\}$ & $\{\underline{5, 9, 14}\}$ & $\{\underline{5, 6, 17}\}$ & $\{\underline{4, 9, 17}\}$\\
$\{\underline{10, 14, 16}\}$ & $\{\underline{1, 4, 6}\}$ & $\{\underline{6, 7, 13}\}$ & $\{\underline{0, 1, 13}\}$ & $\{\underline{1, 10, 12}\}$ & $\{\underline{0, 4, 16}\}$ & $\{\underline{0, 7, 10}\}$\\
$\{\underline{1, 3, 16}\}$ & $\{\underline{3, 4, 10}\}$ & $\{\underline{4, 7, 12}\}$ & $\{\underline{7, 9, 16}\}$ & $\{\underline{9, 10, 13}\}$ & $\{\underline{12, 13, 16}\}$ & $\{a, 1, 17\}$\\
$\{a, 4, 5\}$ & $\{a, 7, 11\}$ & $\{a, 2, 10\}$ & $\{a, 8, 13\}$ & $\{a, 14, 16\}$ & $\{b, 1, 11\}$ & $\{b, 2, 13\}$\\
$\{b, 5, 7\}$ & $\{b, 10, 14\}$ & $\{b, 4, 17\}$ & $\{b, 8, 16\}$ & $\{c, 2, 7\}$ & $\{c, 1, 8\}$ & $\{c, 5, 13\}$\\
$\{c, 10, 17\}$ & $\{c, 4, 14\}$ & $\{c, 11, 16\}$ & $\{d, 8, 10\}$ & $\{d, 4, 11\}$ & $\{d, 1, 5\}$ & $\{d, 2, 16\}$\\
$\{d, 7, 17\}$ & $\{d, 13, 14\}$ & $\{8, 12, 17\}$ & $\{4, 12, 13\}$ & $\{0, 4, 13\}$ & $\{0, 7, 8\}$ & $\{0, 1, 14\}$\\
$\{0, 5, 11\}$ & $\{0, 2, 17\}$ & $\{0, 10, 16\}$ & $\{2, 11, 12\}$ & $\{2, 9, 11\}$ & $\{6, 8, 17\}$ & $\{4, 6, 7\}$\\
$\{5, 6, 16\}$ & $\{1, 2, 15\}$ & $\{1, 6, 13\}$ & $\{1, 12, 16\}$ & $\{2, 3, 5\}$ & $\{2, 4, 8\}$ & $\{2, 6, 14\}$\\
$\{3, 8, 11\}$ & $\{3, 14, 17\}$ & $\{4, 10, 15\}$ & $\{1, 4, 9\}$ & $\{1, 3, 7\}$ & $\{3, 4, 16\}$ & $\{3, 10, 13\}$\\
$\{5, 10, 12\}$ & $\{5, 8, 15\}$ & $\{5, 9, 17\}$ & $\{6, 10, 11\}$ & $\{7, 9, 10\}$ & $\{7, 12, 14\}$ & $\{7, 13, 15\}$\\
$\{8, 9, 14\}$ & $\{9, 13, 16\}$ & $\{11, 13, 17\}$ & $\{11, 14, 15\}$ & $\{15, 16, 17\}$\\
\end{longtable}}

\noindent Nine GDD$_{2}(2, 3, 20)$s are obtained by developing the above 124 base blocks under the action of the automorphism group $\mathcal{H}$; the remaining blocks are forced by the definition of the $3$-PCS$^{3}_{2}(6^{3}:4)$.\qed

\begin{lemma}\label{6n41}
For $n \equiv 1 \pmod{2}$ and $n\geq 3$. There exists a $3$-PCS$_{2}^{e}(6^{n}:4)$ for any $0\leq e \leq 3(n-1)$ and $e\equiv0\pmod{3}$.
\end{lemma}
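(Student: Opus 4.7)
The plan is to follow the same three‐step template already used in Lemmas \ref{6n21} and \ref{H12n2}: handle $n=3$ directly, treat $n=5$ as a short intermediate case, and then bootstrap from the odd $s$-fan design of Lemma \ref{s46} for all larger odd $n$.

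For $n=3$, Lemma \ref{634} already gives a $3$-PCS$_2^e(6^3:4)$ for every admissible $e\in\{0,3,6\}$, so there is nothing to do.

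For $n=5$, I would apply Construction \ref{constrution} to the $1$-FG$(3,(4,\{4,5\}),15)$ of type $3^5$ (obtained by deleting two points from the classical S$(3,5,17)$, exactly as invoked in the proof of Lemma \ref{6n21}), with parameters $h=a=m=1$, $g=3$, $l=4$. This outputs $mg$-PCS$_2^e((2amg)^n:2(h-1)am+l)=3$-PCS$_2^e(6^5:4)$ for every $0\le e\le 12$ with $e\equiv 0\pmod 3$. The required ingredients are PGDD$_2(2^j)$ for $j=4,5$, available from Lemma \ref{PGDD}, together with $1$-PCS$_2^i(2^4:4)$ for $i=0,3$; since these stem‑$4$ designs are tiny (four groups of size $2$ and a stem of size $4$), I would construct them directly by hand or by a short computer search, mirroring the way $1$-PCS$_2^i(2^4:2)$ was produced in \cite[Lemma 6.1]{wbt}. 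If a direct construction proves troublesome, the alternative is to switch to a $2$‑fan version with $h=2$, $l=2$, which replaces one input by a PGDD$_2(2^k)$ (already available) at the cost of needing a $2$-FG$(3,(K_1,K_2,K_T),15)$ of type $3^5$.

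For $n\equiv 1\pmod 2$ with $n\ge 7$, I would start from the $1$-FG$(3,(\{3,5\},\{4,6\}),n)$ of type $1^n$ produced in Lemma \ref{s46}. Applying Construction \ref{constrution} with $h=a=1$, $m=3$, $g=1$, $l=4$ yields an $mg$-PCS$_2^e((2amg)^n:2(h-1)am+l)=3$-PCS$_2^e(6^n:4)$ for every $0\le e\le 3(n-1)$ with $e\equiv 0\pmod 3$. The divisibility hypothesis $m(k-1)\equiv 0\pmod 3$ is vacuous since $m=3$. The inputs required from $K_1=\{3,5\}$ are $3$-PCS$_2^r(6^k:4)$ for all admissible $r$, which are supplied by Lemma \ref{634} for $k=3$ and by the $n=5$ case established in the previous paragraph for $k=5$. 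The inputs required from $K_T=\{4,6\}$ are PGDD$_2(6^k)$, which exist for $k\ge 3$ by Lemma \ref{PGDD}.

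The main obstacle is the $n=5$ case: the recursive step for $n\ge 7$ reuses $n=3,5$ as atoms, so the whole argument hinges on securing all five values $e\in\{0,3,6,9,12\}$ at the $n=5$ layer. Concretely, this reduces to producing the stem‑$4$ small input $1$-PCS$_2^i(2^4:4)$ for $i=0,3$; once those are in hand, the $3$‑parameter sliding that the candelabra partition affords on the inputs propagates through both Construction \ref{constrution} steps and delivers every admissible $e$ in the final $3$-PCS$_2^e(6^n:4)$.
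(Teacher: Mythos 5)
Your treatment of $n=3$ (Lemma \ref{634}) and of $n\equiv1\pmod2$, $n\ge7$ (the $1$-FG$(3,(\{3,5\},\{4,6\}),n)$ of type $1^n$ from Lemma \ref{s46}, fed into Construction \ref{constrution} with $h=a=1$, $m=3$, $l=4$, with PGDD$_2(6^k)$, $k\in\{4,6\}$, from Lemma \ref{PGDD}) coincides with the paper's proof. The gap is exactly where you locate it yourself: the $n=5$ case. Your primary route applies Construction \ref{constrution} to the $1$-FG$(3,(4,\{4,5\}),15)$ with $l=4$, which forces the small inputs to be $1$-PCS$_2^i(2^4:4)$ for $i=0,3$, i.e.\ candelabra systems with stem of size $4$ carrying the required partition into four GDD$_2(2,3,12)$s of type $1^6 6^1$ together with a sub-NGDD of type $2^{(i,3-i)}6^1$. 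These objects are not constructed anywhere (not in this paper and not in \cite[Lemma 6.1]{wbt}, which only gives the stem-$2$ designs $1$-PCS$_2^i(2^4:2)$), and such small partitioned designs cannot simply be presumed to exist: the paper itself records (after Theorem \ref{v2}) that an analogous small design, a $3$-PCS$_2^3(6^4:2)$, resisted a long computer search and had to be replaced by a weaker generalization. So "construct them directly by hand or by a short computer search" is an unproven existence claim on which the whole $n=5$ layer — and hence, through the recursion, every $n\ge7$ — rests.

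Your fallback is in fact the paper's actual argument, but you leave its key ingredient unverified. The paper takes a $2$-FG$(3,(4,3,4),15)$ of type $3^5$, obtained by deleting one point from an S$(3,4,16)$ containing a subdesign S$(2,4,16)$ \cite{35}, and applies Construction \ref{constrution} with $a=m=1$ and $h=l=2$; the output stem is then $2(h-1)am+l=4$ while the $K_1$-inputs remain the known stem-$2$ designs $1$-PCS$_2^i(2^4:2)$ ($i=0,3$), and the second fan and transverse blocks only require PGDD$_2(2^4)$, available from Lemma \ref{PGDD}. To close your argument you must either exhibit the $1$-PCS$_2^i(2^4:4)$s, or supply the existence of a suitable $2$-fan design of type $3^5$ as above — the latter is the missing step that turns your fallback into the complete proof.
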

\proof For $n=3$, see Lemma \ref{634}. For $n=5$, there exists a 2-FG$(3,(4,3,4),15)$ of type $3^{5}$, which can be obtained by deleting one point from an S$(3,4,16)$ containing a subdesign S$(2,4,16)$ (see \cite{35}). Apply Construction \ref{constrution} with $a=m=1$ and $h=l=2$. The input designs are $1$-PCS$^{i}_{2}(2^{4}:2)$s $(i=0,3)$ in \cite[Lemma 6.1]{wbt} and a PGDD$_{2}(2^{4})$ in Lemma \ref{PGDD}. Then we get a $3$-PCS$^{r}_{2}(6^{5}:4)$ for $r\in\{0,3,6,9,12\}$.

For $n\equiv 1$ (mod 2) and $n\geq7$, start with a 1-FG$(3, (\{3, 5\},\{4, 6\}), n)$ of type $1^{n}$ which exists by Lemma \ref{s46}. A PGDD$_{2}(6^{k})$ for $k\in\{4,6\}$ exists by Lemma \ref{PGDD} and use the above $3$-PCS$^{r}_{2}(6^{k}:4)$s for $k\in\{3,5\}$ and $r\le 3(k-1)$ with $r\equiv 0\pmod{3}$. Then we obtain the desired design by applying Construction \ref{constrution} with $h=a = 1$, $m = 3$, and $l = 4$.\qed

\begin{lemma}\label{v1012}
Let $v \equiv 10 \pmod{12}$ and $(v,  b)$ satisfy (C2). There exists an NWBTS$(v;b)$.
\end{lemma}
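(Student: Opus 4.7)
The plan is to mirror the templates of Lemmas \ref{v812} and \ref{v1424}, feeding the newly constructed master design from Lemma \ref{6n41} into Construction \ref{tbbb}. Specifically, I would take $a=1$, $g=3$, $s=4$ in Construction \ref{tbbb}, so that $u = 2ag + s = 10$ and $v = 2agn + s = 6n+4$. The condition $v \equiv 10 \pmod{12}$ forces $n$ to be odd, which is exactly the range in which Lemma \ref{6n41} supplies a $3$-PCS$^e_2(6^n:4)$ for every admissible $e$. Admissible block numbers $c$ for the small NWBTS$(10;c)$ ingredient will be covered by a direct construction in the Appendix, and a simple $S_6(2,3,10)$ whose blocks avoid being contained in any fixed $4$-subset exists (via Theorem \ref{NF} combined with an elementary adjustment on the $30$-triple complement).

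I would first dispose of the small base case $v = 10$ by citing the Appendix, which lists an NWBTS$(10;b)$ for every $b$ such that $(10,b)$ satisfies (C2).

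For $v \geq 22$ with $v \equiv 10 \pmod{12}$, I would invoke Lemma \ref{bb} to reduce to $b \leq v(v-1)(v-2)/6$, then write $v = 6n+4$ with $n$ odd, $n \geq 3$, and decompose $b = qv(v-1) + b'$ with $0 \leq q \leq n-1$ and $3b' = \lambda\binom{v}{2}+\varepsilon$ for some $\lambda \in \{1,3,5\}$ and $-\tfrac{v}{2} < \varepsilon < \tfrac{v}{2}$. Feeding the $3$-PCS$^e_2(6^n:4)$, the NWBTS$(10;c)$s and the simple $S_6(2,3,10)$ into Construction \ref{tbbb} then directly produces an NWBTS$(v;b)$ for every admissible $(v,b)$.

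The main obstacle one might worry about is whether the triples $(q,\lambda,\varepsilon)$ produced by Construction \ref{tbbb} with $s=4$ cover the entire target range for $b$ after the reductions of Lemma \ref{bb}. However, this coverage was already proved in full generality inside Construction \ref{tbbb} (the interval-covering argument for $s \equiv 4 \pmod 6$, where $(\tau_1,\tau_2)=(2,-3)$), so no additional case analysis is needed here; the lemma then follows by direct assembly of the pieces.
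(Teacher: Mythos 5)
Your overall strategy is the same as the paper's: feed the $3$-PCS$^{e}_{2}(6^{n}:4)$ of Lemma \ref{6n41} (with $a=1$, $g=3$, $s=4$, so $u=10$, $v=6n+4$, $n$ odd) together with the Appendix NWBTS$(10;c)$s and a suitable simple S$_{6}(2,3,10)$ into Construction \ref{tbbb}, after disposing of $v=10$ directly. However, as written your reduction does not cover the whole range of $b$. If you only reduce modulo $\binom{v}{3}$ (part (i) of Lemma \ref{bb}), you must handle every odd associated index $\lambda'$ up to $v-3=6n+1$; but in your decomposition $b=qv(v-1)+b'$ with $3b'=\lambda\binom{v}{2}+\varepsilon$ one has $3b=(6q+\lambda)\binom{v}{2}+\varepsilon$, and with $\lambda\in\{1,3,5\}$ and $0\le q\le n-1$ (the cap imposed by Construction \ref{tbbb}) the largest reachable index is $6(n-1)+5=6n-1$. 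Hence the block counts with $\lambda'=6n+1$, i.e. $3b$ near $(v-3)\binom{v}{2}$, are not reached: they would require $q=n$, which the construction does not permit. The paper avoids this by also invoking the complementation part, Lemma \ref{bb}(ii), to reduce to $b\le \frac{v(v-1)(v-2)}{12}$, i.e. $\lambda'\le 3n$, after which $q\le\frac{n-1}{2}\le n-1$ suffices; you need that extra reduction, and your claim that ``no additional case analysis is needed'' is exactly where the argument currently fails.

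A second, smaller issue is ingredient (i) of Construction \ref{tbbb}: a simple S$_{6}(2,3,10)$ none of whose blocks lies inside the fixed $4$-subset $S$. Theorem \ref{NF} only provides some WBTS$(10;90)$ and gives no control over which $30$ triples are unused, so it does not by itself guarantee that all four triples of a prescribed $4$-subset are avoided; equivalently, one must exhibit a twofold triple system of order $10$ containing all four triples of a $4$-set. Your ``elementary adjustment on the $30$-triple complement'' has to be made explicit; the paper simply constructs the required design directly (Lemma \ref{S610} in the Appendix). With these two repairs your argument coincides with the paper's proof.
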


\proof For $v = 10$, see Lemma \ref{10} in Appendix. For $v \geq 22$, we only need to consider $b\leq \frac{v(v-1)(v-2)}{12}$ by Lemma \ref{bb}. Write $v = 6n +4$ where $n$ is odd, $b = qv(v-1) + b'$ where $0\leq q\leq \frac{v-2}{12}-1 < n-1$ and $\frac{\lambda v(v-1)}{6}-\frac{v}{6} < b' < \frac{\lambda v(v-1)}{6}+\frac{v}{6}$ with $\lambda=1,3,5$. Thus $(v,b')$ associates with $(\lambda,\varepsilon)$ where $\lambda=1,3,5$ and $-\frac{v}{2}< \varepsilon < \frac{v}{2}$.
  There exists a $3$-PCS$^{e}_{2}(6^{n}: 4)$ for any $0\leq e \leq 3(n-1)$ and $e\equiv0\pmod{3}$ from Lemma \ref{6n41}. An NWBTS$(10; c)$ exists for all $c$ with $(10,c)$ satisfying (C2) by Lemma \ref{10} in Appendix. There exists a simple S$_{6}(2,3,10)$ defined over a 10-set with each block not contained in a fixed 4-subset from Lemma \ref{S610} in Appendix. Then there exists an NWBTS$(v; b=qv(v-1)+b')$  by applying Construction \ref{tbbb}.\qed

\begin{lemma}\label{H12n4}
Let $n \equiv 1 \pmod{2}$ and $n \geq 3$. Then there is a $3^{+}$-PCS$^{e}_{2}(12^{n} : 4)$ for any $0 \leq e \leq 6(n-1)$ and $e\equiv0\pmod{6}$.
\end{lemma}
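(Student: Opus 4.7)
The strategy parallels the proof of Lemma \ref{H12n2}, replacing Construction \ref{constrution} by Construction \ref{constrution1} throughout so that the plus-variant of partitionability is produced and the stem size $l=4$ is handled. Induction proceeds in three stages: $n=3$ by hand, $n=5$ from a fan design of type $3^5$, and odd $n\ge 7$ from a fan design of type $1^n$.

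For $n=3$, I would build a $3^{+}$-PCS$^{e}_{2}(12^{3}:4)$ directly for each $e\in\{0,6,12\}$, following the template of Lemmas \ref{H1232} and \ref{634}. The natural setup is to take the point set $(Z_{12}\times Z_{3})\cup S$ with groups $G_{j}=Z_{12}\times\{j\}$ and stem $S$ of size four, and to search for base blocks of an initial GDD$_{2}(2,3,40)$ of type $1^{24}16^{1}$ with long group $G_{0}\cup S$ whose development modulo $12$ on the first coordinate produces $g+1=2$ disjoint GDD$_{2}$s with that same long group and one GDD$_{2}$ with long group $G_{j}\cup S$ for each $j=1,2$, where one of the two distinguished GDD$_{2}$s contains a sub-NGDD$(2,3,40)$ of type $2^{(e,12-e)}16^{1}$ realizing the prescribed pair-multiplicity pattern on $G_{0}$. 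This base case is the principal obstacle and would most likely be delegated to a targeted computer search with a small stabilizer of $S$, exactly as in the stem-$2$ analogue and placed in the appendix.

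For $n=5$, I would apply Construction \ref{constrution1} with $m=1$, $g=3$, $l=4$, starting from a $1$-FG$(3,(K_{1},K_{T}),15)$ of type $3^{5}$ derived from the $S(3,4,16)$ with a sub-$S(2,4,16)$ already used in Lemma \ref{6n41} (by merging two of its block classes). The required ingredients are $1^{+}$-PCS$^{r}_{2}(4^{k}:4)$ for each $k\in K_{1}$ and all admissible $r\equiv 0\pmod 6$, together with PGDD$_{2}(4^{k})$ for $k\in K_{T}$; the latter are supplied by Lemma \ref{PGDD}, while the former are very small-order designs that can be written out explicitly in an appendix lemma in the same spirit as the $1$-PCS$^{i}_{2}(2^{4}:2)$ ingredients invoked in the proof of Lemma \ref{H12n2}.

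For odd $n\ge 7$, I would start from the $1$-FG$(3,(\{3,5\},\{4,6\}),n)$ of type $1^{n}$ guaranteed by Theorem \ref{s46} and apply Construction \ref{constrution1} with $m=3$, $g=1$, $l=4$. Both divisibility hypotheses $mg(n-1)\equiv 0\pmod 3$ and $m(k-1)\equiv 0\pmod 3$ hold automatically since $m=3$. The ingredients are the $3^{+}$-PCS$^{r}_{2}(12^{k}:4)$s from the previous two stages for $k\in\{3,5\}$ and all admissible $r\equiv 0\pmod 6$, together with the PGDD$_{2}(12^{k})$s for $k\in\{4,6\}$ from Lemma \ref{PGDD}. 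Construction \ref{constrution1} then outputs a $3^{+}$-PCS$^{e}_{2}(12^{n}:4)$ for every $0\le e\le 6(n-1)$ with $e\equiv 0\pmod 6$, completing the induction. The only genuinely delicate ingredient is the explicit $n=3$ construction; everything downstream is a routine application of the recursive machinery already assembled.
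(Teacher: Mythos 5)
Your skeleton (explicit base case at $n=3$, a separate treatment of $n=5$, then Construction \ref{constrution1} with $m=3$, $l=4$ over the $1$-FG$(3,(\{3,5\},\{4,6\}),n)$ of type $1^{n}$ for odd $n\ge 7$) agrees with the paper for the first and third stages: the paper's $n=3$ case is exactly an explicit appendix construction (Lemma \ref{H12340}, giving $e=0,6,12$), and its $n\ge 7$ step is verbatim what you describe. The problem is your $n=5$ stage, which is a genuinely different route and does not go through as stated. A $1$-FG$(3,(K_1,K_T),15)$ of type $3^{5}$ obtained by merging classes of the $2$-FG$(3,(4,3,4),15)$ necessarily places the size-$3$ blocks either in $K_1$ or in $K_T$, and both options break the hypotheses of Construction \ref{constrution1} with $m=1$, $l=4$. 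If $3\in K_1$, the condition $m(k-1)\equiv 0\pmod 3$ fails, and this is not a removable technicality: the would-be ingredient $1^{+}$-PCS$^{r}_{2}(4^{3}:4)$ cannot exist, because its components would be GDD$_{2}(2,3,16)$s of type $1^{8}8^{1}$, and $2\bigl(\binom{16}{2}-\binom{8}{2}\bigr)=184$ is not divisible by $3$. If instead $3\in K_T$, you need a PGDD$_{2}(4^{3})$, which does not exist (Lemma \ref{PGDD} requires at least four groups; with three groups the component GDD$(2,3,8)$ of type $4^{2}$ is impossible, since a triple must meet one of the two groups twice). In addition, even the $1^{+}$-PCS$^{r}_{2}(4^{4}:4)$ ingredients you invoke are established nowhere in the paper and would themselves require new direct constructions.

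The paper sidesteps all of this at $n=5$ by not insisting on stem-$4$ ingredients: it starts from the $2$-FG$(3,(4,4,4),30)$ of type $6^{5}$ of \cite[Lemma 3.5]{45} and applies Construction \ref{constrution} with $h=2$, $a=m=1$, $l=2$, so that the stem grows to $2(h-1)am+l=4$ while the inputs are the already known stem-$2$ designs: $1$-PCS$^{i}_{2}(2^{4}:2)$ for $i=0,3$ and PGDD$_{2}(2^{4})$, PGDD$_{2}(2^{5})$. This yields a $6$-PCS$^{r}_{2}(12^{5}:4)$, which in particular contains a $3^{+}$-PCS$^{r}_{2}(12^{5}:4)$ for $r\in\{0,6,12,18,24\}$, exactly what the $n\ge 7$ recursion needs at $k=5$. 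To repair your argument you should replace your $n=5$ stage by this (or some equivalent stem-increasing) step; as written, that stage rests on input designs that either do not exist or are unproven.
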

\proof For $n=3$, see Lemma \ref{H12340} in Appendix. For $n=5$, there exists a 2-FG$(3, (4, 4, 4), 30)$ of type $6^{5}$ from \cite[Lemma 3.5]{45}. Apply Construction \ref{constrution} with $a=m = 1$ and $h=l=2$; the input designs are $1$-PCS$^{i}_{2}(2^{4} : 2)$s $(i = 0, 3)$ in \cite[Lemma 6.1]{wbt} and PGDD$_{2}(2^{j})$s $(j = 4, 5)$ in Lemma \ref{PGDD}. Then we obtain a $6$-PCS$^{r}_{2}(12^{5}:4)$, also a $3^{+}$-PCS$^{r}_{2}(12^{5}:4)$ for $r \in\{0, 6, 12, 18, 24\}$.

For $n \equiv 1 \pmod{2}$ and $n \geq 7$, start with a 1-FG$(3, (\{3, 5\},\{4, 6\}), n)$ of type $1^{n}$ which exists by Lemma \ref{s46}. A PGDD$_{2}(12^{k})$ for $k\in\{4,6\}$ exists by Lemma \ref{PGDD} and use the above $3^{+}$-PCS$^{r}_{2}(12^{k} : 4)$ for $k\in\{3,5\}$ and $r\le 6(k-1)$ with $r\equiv 0\pmod{6}$. Apply Construction \ref{constrution1} with $m=3$ and $l=4$. So we obtain a $3^{+}$-PCS$^{e}_{2}(12^{n} : 4)$ where $0 \leq e \leq 6(n-1)$ and $e\equiv0\pmod{6}$.\qed

\begin{lemma}\label{S616}
Let $X$ be a set of $16$ points, $Y$ be a $4$-subset of $X$, and let $b\in[38,42]$. Then there exist two disjoint  designs defined over $X$, an S$_{6}(2,3,16)$ and an NWBTS$(16,b)$, whose blocks all belong to $\binom{X}{3}\backslash \binom{Y}{3}$.
\end{lemma}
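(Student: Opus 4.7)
The proof will be by direct construction, producing for each $b\in\{38,39,40,41,42\}$ a pair of disjoint block sets $(\mathcal{S}_b,\mathcal{B}_b)$ on $X$, where $\mathcal{S}_b$ is the block set of an $S_6(2,3,16)$, $\mathcal{B}_b$ is the block set of an NWBTS$(16,b)$, and $(\mathcal{S}_b\cup\mathcal{B}_b)\cap\binom{Y}{3}=\emptyset$. To keep the search tractable I would fix $X=Z_{12}\cup Y$ with $Y=\{\infty_0,\infty_1,\infty_2,\infty_3\}$ and use the cyclic group $Z_{12}$ acting by translation on the first component while fixing $Y$ pointwise, so that each design is specified by a short list of base triples, as in the other appendix constructions of this paper.

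By Lemma \ref{c2balanced}, the defect graph of $\mathcal{B}_b$ is determined by $\varepsilon=3b-120$: for $b\in\{38,40,42\}$ the integer $\varepsilon\in\{-6,0,6\}$ is even and the defect is a perfect matching $H_{16,\varepsilon}^0$ on all of $X$, with $(16+2\varepsilon)/4$ solid and $(16-2\varepsilon)/4$ dotted edges; for $b\in\{39,41\}$, $\varepsilon\in\{-3,3\}$ is odd and the defect is isomorphic to $H_{16,\varepsilon}^1$ or $H_{16,\varepsilon}^2$, each having nine edges in the rigid configuration of Figure \ref{f2}. My plan is to first produce $\mathcal{B}_b$ by searching over $Z_{12}$-orbits of base triples in $\binom{X}{3}\setminus\binom{Y}{3}$; since $|\mathcal{B}_b|\le 42$, a constrained brute-force search guided by the prescribed defect graph terminates rapidly. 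Then I would construct $\mathcal{S}_b$ on the residual triples $\binom{X}{3}\setminus(\mathcal{B}_b\cup\binom{Y}{3})$ by exploiting the known existence of an $S_3(2,3,16)$ and realizing $S_6(2,3,16)$ as the union of two suitable disjoint copies of $S_3(2,3,16)$, performing a few local Pasch-type trades if necessary to secure disjointness from $\mathcal{B}_b$ and avoidance of $\binom{Y}{3}$.

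The main obstacle is realizing the non-matching defect graphs $H_{16,\pm3}^i$ simultaneously with disjointness from a full $S_6(2,3,16)$: the nine edges of these graphs impose fixed local structure on a small set of vertices, while the complementary triples must still admit a $\lambda=6$ triple system avoiding $\binom{Y}{3}$. I would overcome this by fixing $\mathcal{B}_b$ first and then proving completability of the residual $556-|\mathcal{B}_b|$ triples to an $S_6(2,3,16)$, either by explicit trading from a known $S_6(2,3,16)$ or by listing base triples directly, in the style of the case-specific constructions already used in this paper (cf.\ Lemmas \ref{11}, \ref{14}, \ref{81}, \ref{10}). The final verification—that $\mathcal{B}_b$ is 1-balanced, 3-balanced (automatic from simplicity since $b<\binom{16}{3}$), and has the prescribed defect graph, and that $\mathcal{S}_b$ is an $S_6(2,3,16)$ disjoint from $\mathcal{B}_b$—then reduces to a finite check that can be done by hand or certified by computer.
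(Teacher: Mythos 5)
Your overall strategy---explicit small-case construction, with a group action to shorten the block lists, of an S$_{6}(2,3,16)$ and an NWBTS$(16;b)$ that are disjoint and avoid $\binom{Y}{3}$---is the same in spirit as the paper's (Lemma \ref{ddf}: the paper takes $X=Z_{16}$, $Y=Z_{4}$, builds one cyclic S$_{6}(2,3,16)$ from base blocks developed mod $16$, and then lists the five NWBTS block sets in its complement, the odd values $b=39,41$ obtained by adjoining a single block to the $b=38,40$ systems), and your reading of Lemma \ref{c2balanced} for the required defect graphs ($H^{0}_{16,\varepsilon}$ for $\varepsilon\in\{-6,0,6\}$, $H^{1}/H^{2}$ for $\varepsilon=\pm3$) is correct. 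But as written the proposal has two genuine problems. First, a concrete mathematical error: an S$_{3}(2,3,16)$ does not exist, since the replication number $3\cdot 15/2$ is not an integer (for $v\equiv 4\pmod 6$ one needs $\lambda$ even), so your plan to realize the S$_{6}(2,3,16)$ as a union of two disjoint S$_{3}(2,3,16)$s cannot work; the natural decomposition is into three pairwise disjoint S$_{2}(2,3,16)$s (this is in effect what the paper exploits in Lemma \ref{N16}), or one constructs the S$_{6}$ directly, as in Lemma \ref{ddf}.

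Second, and more fundamentally, the lemma is a finite existence statement whose only available proof is an exhibited (or certified) construction, and your argument stops at announcing a search: you never show that an NWBTS$(16;b)$ with the prescribed defect graph exists inside $\binom{X}{3}\setminus\binom{Y}{3}$, nor that the residual $556-|\mathcal{B}_b|$ triples can be completed to an S$_{6}(2,3,16)$ avoiding $\binom{Y}{3}$; ``a few Pasch-type trades if necessary'' is an assertion, not an argument. There are also unaddressed feasibility constraints in your specific setup: if $\mathcal{B}_b$ is generated by full $Z_{12}$-orbits with $Y$ fixed pointwise, then its defect graph and the solid/dotted labelling must be $Z_{12}$-invariant, which severely restricts where the single solid edge (for $b=38$) or the nine-edge configuration $H^{1}/H^{2}$ (for $b=39,41$) can sit (essentially forcing the exceptional edges into $\binom{Y}{2}$ or onto the difference-$6$ pairs), and the orbit sizes $12,6,4$ must sum to $b$. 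None of this is fatal to the plan, but until the actual block sets (or a verified computation) are produced, the existence claim is not established; the paper closes exactly this gap by listing the designs explicitly.
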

\proof See Lemma \ref{ddf} in Appendix.\qed

\begin{lemma}\label{v1624}
Let $v \equiv 16 \pmod{24}$ and $(v,  b)$ satisfy (C2). There exists an NWBTS$(v;b)$.
\end{lemma}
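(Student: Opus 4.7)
The plan is to mirror Lemmas \ref{v812}, \ref{v1424} and \ref{v1012} by combining Construction \ref{tbbb} with Lemma \ref{bb} to reduce to a bounded base case, while patching a single missing middle index via the strengthened partition supplied by Lemma \ref{H12n4}. For $v = 16$ I would appeal to a small-case lemma in the Appendix that produces an NWBTS$(16; c)$ for every admissible $c$; the $\lambda = 1$ instances, together with a disjoint simple S$_6(2, 3, 16)$ whose blocks avoid a designated $4$-subset, are already furnished by Lemma \ref{S616}.

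For $v \geq 40$ I write $v = 12n + 4$ with $n \geq 3$ odd. By Lemma \ref{bb} it is enough to construct NWBTS$(v; b)$ for $b \leq \frac{v(v-1)(v-2)}{12}$, and each such $b$ decomposes as $b = qv(v-1) + b'$ with $0 \leq q \leq n - 1$ and $b'$ in the (C2) range for some odd $\lambda$. The main recursive step is Construction \ref{tbbb} applied with $a = 2$, $g = 3$, $s = 4$ (so $2ag = 12$ and $2ag + s = 16$), using (i) the $3^{+}$-PCS$^{e}_{2}(12^{n} : 4)$ of Lemma \ref{H12n4}, (ii) the NWBTS$(16; c)$s of the Appendix, and (iii) the simple S$_6(2, 3, 16)$ of Lemma \ref{S616}. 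Lemma \ref{tee11} then certifies that the resulting family is nearly $2$-balanced with the defect graph prescribed by Lemma \ref{c2balanced}, and layering $q$ disjoint copies of S$_6(2,3,v)$ on the remaining short groups via Lemma \ref{tee} completes the count.

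The subtle point — and the reason Lemma \ref{H12n4} produces a $3^{+}$- rather than a plain $3$-PCS — is coverage of the middle index. Running Construction \ref{tbbb} with $\lambda \in \{1, 3, 5\}$ and $q \in \{0, \ldots, n-1\}$ yields effective index $\lambda_{\text{new}} = 6q + \lambda \in \{1, 3, \ldots, 6n - 1\}$, and the reflection $\lambda \mapsto 12n + 2 - \lambda$ from Lemma \ref{bb}(ii) supplies $\{6n + 3, \ldots, 12n + 1\}$; exactly the admissible odd value $\lambda = 6n + 1$ is left uncovered, in contrast with the $v = 12n+2$ case of Lemma \ref{v1424} where the two halves abut. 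The extra fourth GDD on $G_0 \cup S$ provided by the $3^{+}$-PCS lets Construction \ref{tbbb} also be run with $\lambda = 7$, using $\mathcal{A}'(0, 0)$ together with the three auxiliary sets $\mathcal{A}(0, 1), \mathcal{A}(0, 2), \mathcal{A}(0, 3)$ and a $\lambda = 7$ NWBTS$(16; c)$; the choice $(q, \lambda) = (n - 1, 7)$ then gives $\lambda_{\text{new}} = 6n + 1$ and closes the gap. The principal remaining obstacle is therefore the Appendix task of exhibiting an NWBTS$(16; c)$ for every admissible $c$, notably the four $\lambda = 7$ values near $\binom{16}{3}/2 = 280$; once those are in hand, the recursive assembly is routine.
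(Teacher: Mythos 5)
Your skeleton for $v\ge 40$ is the paper's: reduce to $b\le\frac{v(v-1)(v-2)}{12}$ via Lemma \ref{bb}, write $v=12n+4$ with $n$ odd, feed the $3^{+}$-PCS$^{e}_{2}(12^{n}:4)$ of Lemma \ref{H12n4} into Construction \ref{tbbb} with $\lambda\in\{1,3,5\}$ and $0\le q\le n-1$, using the appendix NWBTS$(16;c)$s and the S$_{6}(2,3,16)$ of Lemma \ref{S616}, and observe that only the extremal effective index $6n+1$ remains. Where you genuinely diverge is in closing that extremal case. The paper takes $q=n$ with $\lambda=1$: it uses all $n$ layers $\mathcal{F}(i)=\mathcal{A}(i,0)\cup\mathcal{A}(i,1)\cup\mathcal{A}(i,2)\cup\mathcal{B}(i)$ (so an S$_{6}(2,3,16)$ already sits on $G_0\cup S$), then adds the NGDD $\mathcal{A}'(0,3)$ from the extra fourth $G_0$-part together with an NWBTS$(16;c)$, $c\in[38,42]$, on $G_0\cup S$; this is precisely why Lemma \ref{S616} provides the S$_{6}(2,3,16)$ and that NWBTS as a \emph{disjoint} pair avoiding $\binom{S}{3}$, and why the recursion never needs any $16$-point ingredient of index larger than $5$.

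Your alternative $(q,\lambda)=(n-1,7)$ is workable in principle: taking the NGDD from one $G_0$-part and the union of the other three $G_0$-parts as a simple GDD$_{6}$, Lemma \ref{tee11} applies, and the $\varepsilon$-coverage does check out ($\varepsilon=-6(n-1)+2e+\delta$ with $e\in\{0,6,\ldots,6(n-1)\}$ and $\delta\in\{-6,-3,0,3,6\}$ sweeps all multiples of $3$ in $[-6n,6n]$, consecutive intervals abutting). But two things are missing. First, Construction \ref{tbbb} is stated only for $\lambda\in\{1,3,5\}$, so your $\lambda=7$ run is a variant you must actually argue (routine, but not free). Second, and more seriously, your route hinges on NWBTS$(16;c)$ for $c\in\{278,\ldots,282\}$ (five values, not four), which you explicitly leave as an ``Appendix task'': these cannot be produced from the constructed intervals $[38,42]\cup[118,122]\cup[198,202]$ by Lemma \ref{bb}, since adding $h\binom{16}{3}$ overshoots and complementation in $\binom{16}{3}=560$ maps $[278,282]$ to itself; one would need fresh direct constructions (or, say, a third simple S$_{2}(2,3,16)$ disjoint from the designs of Lemma \ref{ddf}). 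The paper's $q=n$ device is engineered exactly to avoid importing such index-$7$ ingredients into the recursion, at the cheaper cost of the disjoint-pair lemma, which it proves. So: same approach up to the top index, a legitimately different idea there, but as written your proof is incomplete until those $\lambda=7$ designs on $16$ points are exhibited.
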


\proof For $v = 16$, see Lemma \ref{N16} in Appendix. For $v \geq 40$, we only need to consider $b\leq \frac{v(v-1)(v-2)}{12}$ from Lemma \ref{bb}. Write $v = 12n +4$ where $n$ is odd. Further write $b = qv(v-1) + b'$ where $(v,b')$ associates with $(\lambda,\varepsilon)$, $-\frac{v}{2}< \varepsilon < \frac{v}{2}$, and if $0\leq q \leq \left \lfloor\frac{v-2}{12}\right \rfloor-1 = n-1$, then  $\lambda=1,3,5$, and that if  $q=n$, then $\lambda=1$.

Let $V = (Z_{n}\times Z_{3} \times Z_{4}) \cup S$, $|S|=4$, and $\mathcal{G} = \{G_{0}, G_{1},\ldots,G_{n-1}\}$ where $G_{i} = \{i\} \times Z_{3} \times Z_{4}$. Let $(V, S, \mathcal{G}, \mathcal{A})$ be a $3^{+}$-PCS$^{e}_{2}(12^{n}: 4)$ where $0 \leq e \leq 6(n-1)$ and $e\equiv0\pmod{6}$ from Lemma \ref{H12n4}. Then the block set $\mathcal{A}$ contains a subset $\mathcal{A}'$ and $\mathcal{A}'$ can be partitioned into $(3n+1)$ pairwise disjoint parts $\mathcal{A}(i, j)$, $0 \leq i \leq n-1$, $0\leq j\leq 2$, plus ${\cal A}(0,3)$, so that each $\mathcal{A}(i, j)$ is the block set of a GDD$_{2}(2, 3, 12n + 4)$ of type $1^{12(n-1)}16^{1}$ with long group $G_{i}\cup S$, and $\mathcal{A}(0, 3)$ contains a subset $\mathcal{A}'(0, 3)$ which is the block set of an NGDD$(2,3,12n+6)$ of type $2^{(e,6(n-1)-e)}16^{1}$. Applying Construction \ref{tbbb} with $\lambda =1,3,5$
yields the existence of an NWBTS$(v;qv(v-1) + b')$ for any $q\le n-1$, because a simple S$_{6}(2,3,16)$ exists by Lemma \ref{S616} with all blocks in $\binom{G_{i}\cup S}{3}\backslash \binom{S}{3}$ and an NWBTS$(16; c)$ exists for all $c$ with $(16,c)$ satisfying (C2).


Now we consider that $q=n$. Thus $b = nv(v-1) + b'$ and $(v,b')$ associates with $(1,\varepsilon)$. By Lemma \ref{S616}, we may construct on $G_{i}\cup S$ ($0 \leq i \leq n-1$)  two disjoint  designs, an S$_{6}(2,3,16)$ with block set $\mathcal{B}(i)$  and an NWBTS$(16,c)$ ($c\in[38,42]$) with block set ${\cal C}(i)$, whose blocks all belong to
 $\binom{G_{i}\cup S}{3}\backslash \binom{S}{3}$. Let $\mathcal{F}(i) = (\bigcup^{2}_{j=0} \mathcal{A}(i, j))\cup \mathcal{B}(i)$. Each $\mathcal{F}(i)$ is the block set of a simple S$_{6}(2,3,12n + 4)$ and all $\mathcal{F}(i)$s are pairwise disjoint. Construct a family $\mathcal{F}=\mathcal{A}'(0,3)\cup{\cal C}(0)\cup(\bigcup_{i=0}^{n-1}\mathcal{F}(i))$.  Then by Lemmas \ref{tee11} and \ref{tee}, $\mathcal{F}$ is an NWBTS$(v; nv(v-1) + b')$ where $3b' ={v\choose 2}+\varepsilon $.\qed

 Combining Lemmas \ref{v1012} and  \ref{v1624} yields the main result of this subsection.
 \begin{theorem}\label{v4}
Let $v \equiv 10,16,22 \pmod{24}$ and $(v,  b)$ satisfy (C2). There exists an NWBTS$(v;b)$.
\end{theorem}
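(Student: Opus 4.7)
The plan is very short, since Theorem \ref{v4} is simply a compilation of the two preceding lemmas. First I would observe that the residues $v\equiv 10,16,22\pmod{24}$ partition into two classes: the residues $v\equiv 10,22\pmod{24}$ are exactly those satisfying $v\equiv 10\pmod{12}$, while $v\equiv 16\pmod{24}$ stands alone. Thus the case analysis on $v\pmod{24}$ matches precisely the domains of Lemmas \ref{v1012} and \ref{v1624}.

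Consequently, I would apply Lemma \ref{v1012} to settle $v\equiv 10,22\pmod{24}$ and Lemma \ref{v1624} to settle $v\equiv 16\pmod{24}$. In both cases, for any $b$ such that $(v,b)$ satisfies (C2), the cited lemma directly produces an NWBTS$(v;b)$; no additional reduction or construction is required at this stage.

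The substantive work has already been absorbed by the two lemmas: Lemma \ref{v1012} rests on the $3$-PCS$^{e}_{2}(6^{n}:4)$'s from Lemma \ref{6n41} together with the small-order seed NWBTS$(10;c)$ and a suitable simple S$_{6}(2,3,10)$, fed into Construction \ref{tbbb}; Lemma \ref{v1624} rests on the $3^{+}$-PCS$^{e}_{2}(12^{n}:4)$'s from Lemma \ref{H12n4} combined with the seed NWBTS$(16;c)$ from Lemma \ref{N16} and the joint S$_{6}(2,3,16)$ / NWBTS$(16;c)$ of Lemma \ref{S616}, with a separate handling of the top value $q=n$ via Lemma \ref{tee11}. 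Hence the only thing to write for Theorem \ref{v4} is the one-sentence appeal: apply Lemma \ref{v1012} when $v\equiv 10\pmod{12}$ and Lemma \ref{v1624} when $v\equiv 16\pmod{24}$. There is no real obstacle here; the entire technical difficulty lies inside those two lemmas, and in particular inside verifying the extra case $q=n$ in Lemma \ref{v1624}, which is already dispatched there.
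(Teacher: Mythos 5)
Your proposal is correct and coincides with the paper's own (one-line) proof: the theorem is obtained exactly by combining Lemma \ref{v1012} (which covers $v\equiv 10\pmod{12}$, i.e.\ $v\equiv 10,22\pmod{24}$) with Lemma \ref{v1624} (which covers $v\equiv 16\pmod{24}$). Your residue bookkeeping and your identification of where the real work lives (inside those two lemmas) are both accurate.
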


\section{Conclusion}

In this section we summarize our main results  and propose a promising approach to resolving entirely the existence problem of NWBTSs and  optimal data placements for triple replication.

\begin{theorem}\label{final}
Let $v\geq3$ and $b\geq0$ be integers. An NWBTS$(v;b)$   exists for any $v,b$ satisfying (C1) or (C2),  possibly except that $v\equiv4 \pmod{24}$ or $v=50,74$, and $\frac{\lambda v(v-1)}{6}-\frac{v}{6} < b < \frac{\lambda v(v-1)}{6}+\frac{v}{6}$ for an odd integer $\lambda$.
\end{theorem}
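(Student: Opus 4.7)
The plan is a bookkeeping assembly of the case results proved in Sections 4--6, so the body of the proof is a verification that the residue cases exhaust all admissible $(v,b)$. For parameters $(v,b)$ satisfying (C1), I would just invoke Theorem \ref{v26}; its proof in Section 4 already splits into the residues $v \equiv 5 \pmod{6}$ (Lemma \ref{v56}), $v \equiv 2, 8, 20 \pmod{24}$ with $v \notin \{50, 74\}$ (Lemma \ref{1v24}), $v \equiv 14 \pmod{24}$ (Lemma \ref{1v241}), and the two sporadic values $v \in \{50, 74\}$ (Lemma \ref{1v242}), which together cover every $v \equiv 2 \pmod{3}$ with no exception.

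For $(v,b)$ satisfying (C2), I would partition on the residue of $v$ modulo $24$ (noting $v$ is even). The residue $v \equiv 0 \pmod{6}$ is handled by Theorem \ref{v06}; the residues $v \equiv 2 \pmod{6}$ with $v \notin \{50, 74\}$ by Theorem \ref{v2}; and the residues $v \equiv 10, 16, 22 \pmod{24}$ by Theorem \ref{v4}. The only admissible $(v,b)$ not in this list are those with $v \equiv 4 \pmod{24}$ or $v \in \{50, 74\}$, which are precisely the exceptions recorded in the statement. Taking the union of these citations closes the proof.

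The real obstacle is therefore not in the proof of Theorem \ref{final} itself but in closing the residue $v \equiv 4 \pmod{24}$. All recursive constructions in Sections 5 and 6 rely on producing a partitionable candelabra system of the form $g$-PCS$^{e}_{2}((2ag)^{n}:s)$ (or one of its $g^{+}$ or index-six variants) that carries a nearly balanced sub-NGDD of the correct defect type; the divisibility condition $mg(n-1) \equiv 0 \pmod{3}$ that appears throughout Construction \ref{constrution} forces $n$ to be odd in every natural product, and the PGDD$_{2}$ ingredients required are unavailable for the shapes needed when $v \equiv 4 \pmod{24}$. A resolution of this case would presumably require either a direct small example of a $3^{+}$-PCS$^{e}_{2}(12^{n}:4)$-type design with $n$ even, or a new partition scheme geared specifically to that residue; the same would apply to the sporadic orders $v = 50, 74$, where one simply lacks a small input NWBTS of a usable form.
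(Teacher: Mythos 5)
Your proposal is correct and matches the paper's proof, which likewise just combines Theorems \ref{v26}, \ref{v06}, \ref{v2} and \ref{v4} and observes that the uncovered cases under (C2) are exactly $v\equiv 4\pmod{24}$ and $v\in\{50,74\}$. Your closing speculation about why $v\equiv 4\pmod{24}$ resists the methods is inessential to the theorem and slightly off the paper's own diagnosis (the paper reduces that residue to the single missing ingredient, a $1$-PCS$^{3}_{6}(6^{4}:4)$, rather than to a parity obstruction in Construction \ref{constrution}), but this does not affect the validity of the proof.
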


\proof Combine Theorems \ref{v26}, \ref{v06}, \ref{v2} and \ref{v4} to prove that  an NWBTS$(v;b)$ exists if $(v,b)$ satisfies (C1) or (C2) possibly except that $v\equiv 4 \pmod{24}$ or $v=50,74$, and $(v,b)$ satisfies (C2). \qed

\begin{theorem}\label{odpTS}
 An optimal data placement TS$(v;b)$ exists for any positive integers $v,b$ with $v\ge 3$,  possibly except that $v\equiv4 \pmod{24}$ or $v=50,74$, and $\frac{\lambda v(v-1)}{6}-\frac{v}{6} < b < \frac{\lambda v(v-1)}{6}+\frac{v}{6}$ for an odd integer $\lambda$.
\end{theorem}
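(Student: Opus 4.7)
The plan is to derive Theorem \ref{odpTS} as an immediate corollary of the earlier structural results, with almost no new work required: the heavy lifting has already been done in Theorems \ref{NF}, \ref{kk}, and \ref{final}. First I would partition the pairs $(v,b)$ with $v\ge 3$ into three disjoint families: (A) those not satisfying either (C1) or (C2); (B) those satisfying (C1) or (C2) but falling outside the exceptional range stated in Theorem \ref{final}; and (C) the residual cases where $v\equiv 4\pmod{24}$ or $v\in\{50,74\}$ together with (C2).

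For family (A), Theorem \ref{NF} supplies a WBTS$(v;b)$, and since a well-balanced triple system is in particular $j$-balanced for $j=1,2,3$, Lemma \ref{balanced} combined with Lemma \ref{jian} shows that it minimizes $P(\mathcal{F},x)$ for every $x\ge 1$; this is precisely the content of Corollary \ref{opt} and handles all non-exceptional parameters at once. For family (B), Theorem \ref{final} guarantees the existence of an NWBTS$(v;b)$, and then Theorem \ref{kk} directly upgrades this combinatorial object to an optimal data placement TS$(v;b)$. Concatenating the two cases covers every $(v,b)$ not in family (C), which is exactly the set of parameters left open in the statement of Theorem \ref{odpTS}.

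Thus the proof reduces to the one-line argument: combine Corollary \ref{opt}, Theorem \ref{kk}, and Theorem \ref{final}. There is no genuine obstacle in proving Theorem \ref{odpTS} itself; all the difficulty has been absorbed into the construction of NWBTSs in Sections 3--6, and the only cases that remain truly open correspond to the exceptional parameters listed in Theorem \ref{final}, which is why they are repeated verbatim in the conclusion. The proof I would write is therefore a short paragraph recording this reduction and explicitly citing which earlier theorem handles each of families (A) and (B).
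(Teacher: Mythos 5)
Your proposal is correct and is essentially identical to the paper's own proof, which simply combines Corollary \ref{opt} (covering parameters outside (C1)/(C2)) with Theorems \ref{kk} and \ref{final} (covering (C1)/(C2) outside the exceptional range). Your explicit three-family partition is just a spelled-out version of that same one-line reduction.
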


\proof The conclusion follows by combining Corollary \ref{opt} with Theorems \ref{kk} and \ref{final}.\qed

For the possible exceptions in Theorems \ref{final} and \ref{odpTS}, we put forward a promising working approach.
Let $v \equiv 4 \pmod{24}$ and $(v,  b)$ satisfy (C2).
For $n\equiv 0$ (mod 4) and $n\neq8,12$,  there exists a 1-FG$(3, (\{3, 4, 5\},\{4, 5, 6\}), n)$ of type $1^{n}$.
A $3$-PCS$^{r}_{2}(6^{4}:4)$ (also a $1$-PCS$^{r}_{6}(6^{4}:4)$) for any $r\in\{0,9\}$ can be found by computer research.
Suppose that there exists a $1$-PCS$^{r}_{6}(6^{4}:4)$ for any $r\in\{3,6\}$.
Since a $3$-PCS$^{r}_{2}(6^{k}:4)$ exists for $k\in\{3,5\}$ and $r\leq3(k-1)$ with $r\equiv0\pmod{3}$ by Lemma \ref{6n41}, and a PGDD$_{2}(6^{k})~(k\in\{4,5,6\})$ also exists by Lemma \ref{PGDD}, we can obtain a $1$-PCS$_{6}^{e}(6^{n}:4)$ for any $0\leq e \leq 3(n-1)$ and $e\equiv0\pmod{3}$ by using Construction \ref{constrution14}. Then applying Construction \ref{tbbb1} we get an NWBTS$(v; b)$  ($v\not\in\{50,52,74,76\}$), also an optimal data placement for triple replication.

To sum up, we could reduce an almost solution of the existence of NWBTSs (and optimal data placements) to one small design $1$-PCS$^{3}_{6}(6^{4}:4)$. Certainly the final complete solution also depends on the small cases of $v\in\{50,52,74,76\}$. We conjecture that NWBTS$(v;b)$s always exist without no exceptions, although techniques of direct constructions and effective computational methods are two important concerns.

\bigskip\bigskip

\newpage \appendix
{\centering\title{\bf\Large{ Appendix}}}
\renewcommand{\appendixname}{~\Alph{section}}
\section{NWBTSs}

\begin{lemma}\label{t1v242}
Let $v \in\{50,74\}$ and   $(v,  b)$ satisfy (C1). There exists an NWBTS$(v; b)$.
\end{lemma}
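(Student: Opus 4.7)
\proof
The plan combines the reduction power of Lemma \ref{bb} with a direct-construction finish. For $v\equiv 2\pmod 6$ the admissible indices under (C1) are $\lambda\equiv 2,4\pmod 6$ together with $\varepsilon\in\{-2,-1,1,2\}$; since $v-2\equiv 0\pmod 6$ for $v\in\{50,74\}$, Lemma \ref{bb}(i) preserves the residue class of $\lambda$ while shifting $b$ by multiples of $\binom{v}{3}$. Consequently it suffices to realize an NWBTS$(v;b)$ for the four canonical block numbers with $\lambda\in\{2,4\}$ and each admissible $\varepsilon$, and after applying complementation via Lemma \ref{bb}(ii) only essentially two small block counts per order remain.

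First I would try to bypass the recursive pipeline of Lemma \ref{1v24}, which failed precisely because a $3$-PCS$_2(6^n:2)$ is unknown at $n\in\{8,12\}$. Alternative decompositions are available: for $v=50$ one may try $50=6\cdot 7+8$ (feeding Construction \ref{tb} with a PCS$(6^7:8)$ together with an NWBTS$(14;c)$ from Lemma \ref{1v241}), $50=3\cdot 16+2$ (a PCS$(3^{16}:2)$ with an NWBTS$(5;c)$ from Lemma \ref{5}), or $50=12\cdot 4+2$ (via a $3$-PCS$_2(12^4:2)$). The analogous decompositions $74=6\cdot 11+8$, $74=3\cdot 24+2$, and $74=12\cdot 6+2$ cover $v=74$. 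Each depends only on the existence of the requisite candelabra system together with pairs of disjoint simple $S_3(2,3,g+s)$s whose blocks avoid the stem, and feasibility would be verified using the auxiliary lemmas in Section 3.

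Any block counts that escape every recursive option are handled by explicit construction: I would exhibit an NWBTS$(v;b)$ on $\Z_v$ (or on $\Z_{v-s}\cup S$ for a suitable stem $S$) as a union of short base-block orbits under a chosen cyclic or Frobenius automorphism group, with the defect graph $G_\varepsilon$ of Figure \ref{f1} supported on fixed points of the action so that the required pair-replication pattern is enforced by symmetry. Lemma \ref{NBTS} then reduces verification to checking simplicity together with the values of $\lambda_{x,y}$ at a handful of orbit representatives. The principal difficulty will be producing the borderline small candelabra systems such as PCS$(6^7:8)$ or PCS$(6^{11}:8)$, whose existence is not documented in the cited literature; a moderate computer search with a prescribed short-orbit automorphism is the realistic route, and is presumably what underlies the explicit appendix construction.
\qed
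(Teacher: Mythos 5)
Your outline does not close the gap; it has three concrete problems. First, the reduction step is overstated: Lemma \ref{bb}(i) shifts $b$ by multiples of $\binom{v}{3}$, i.e.\ it increases $\lambda$ by multiples of $v-2$ ($48$ or $72$, both $\equiv 0\pmod 6$), so from the four base cases with $\lambda\in\{2,4\}$ you cannot reach the intermediate admissible indices $\lambda=8,10,14,16,\dots$; to climb in steps of $6$ you need a supply of pairwise disjoint simple S$_{6}(2,3,v)$s that are also disjoint from each base NWBTS (so that Lemma \ref{tee} applies), and producing that supply is itself part of the work. Second, the recursive routes you list rest on ingredients that are not available: no PCS$(6^{7}:8)$, PCS$(6^{11}:8)$ or PCS$(3^{16}:2)$ is documented, and the known $3$-PCS$_{2}(12^{n}:2)$ of \cite{wbt} requires $n$ odd, so $n=4,6$ (your $50=12\cdot4+2$, $74=12\cdot6+2$) are exactly as open as the missing $3$-PCS$_{2}(6^{n}:2)$ with $n=8,12$. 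Worse, even granting a PCS$(6^{7}:8)$ or PCS$(3^{16}:2)$, Construction \ref{tb} cannot deliver $\lambda\equiv 4\pmod 6$ for these even orders: its base ingredient must be an NWBTS$(g+s;c)$ with $(g+s,c)$ in (C1) and $\lambda\in\{1,2\}$, but for even $g+s$ the value $\lambda=1$ falls under (C2) (its defect graph is of $H$-type, not $G_{\varepsilon}$), so only $\lambda=2$ is usable and the construction produces indices $3q+2$, never $\equiv 4\pmod 6$. This is precisely why the paper switches to index-two partitions (Construction \ref{tbb}) for even $v$ in (C1), and why $v=50,74$ are stranded.

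Third, your fallback --- ``explicit construction, presumably by computer search'' --- is the actual content of the lemma and is not supplied. The paper's proof is a fully explicit hand construction: write $w=v/2\in\{25,37\}$, partition the $(w-1)^{2}/24$ inequivalent simple difference triples modulo $w$ into classes, and work on $Z_{w}\times\{0,1\}$, building from these classes many pairwise disjoint simple S$_{6}(2,3,v)$s ($P_{\alpha}$, $Q_{\alpha}$) together with explicitly assembled base families $\mathcal{F}'$ (using the sets $l_{0},l_{1}$ of ``mixed-parity'' triples, a parallel class $C$ and its translate $C'$, and a few adjusted triples) whose defect graphs are exactly $G_{-2},G_{1},G_{-1},G_{2}$ for the base block counts with $\lambda'=2,4$; Lemmas \ref{NBTS} and \ref{tee} then finish all admissible $b$. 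Without exhibiting such a construction (or the missing candelabra systems), your proposal remains a plan rather than a proof.
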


\proof By Lemma \ref{bb}, we assume that $b\leq \frac{v(v-1)(v-2)}{12}$. We first produce a nearly 2-balanced simple triple system.
Write $w = v/2$. For $w\in \{25, 37\}$, we partition $\frac{(w-1)^{2}}{24}$ inequivalent simple difference triples modulo $w$ into $\frac{w-1}{4}$ classes (the rows) each having $\frac{w-1}{6}$ difference triples $(a,b,c)$ with $a+b+c\equiv0\pmod{w}$; in each class, every nonzero element $(\pm)$ appears exactly once.

{\small\begin{longtable}{lllll}
$D_{00}$ & $d_{0}=(1,2,-3)$ & $f_{0}=(5,6,-11)$ & $(4,9,12)$ &$(7,8,10)$\\
$D_{01}$ & $(1,5,-6)$ & $(2,8,-10)$ & $(3,9,-12)$ &$(4,7,-11)$\\
$D_{02}$ & $(1,8,-9)$ & $(2,3,-5)$ & $(4,10,11)$ &$(6,7,12)$\\
\end{longtable}}

{\small\begin{longtable}{lllll}
$D_{10}$ & $d_{1}=(4,8,-12)$ & $(3,6,-9)$ & $(1,10,-11)$ &$(2,5,-7)$\\
$D_{11}$ & $(1,4,-5)$ & $(2,7,-9)$ & $(3,10,12)$ &$(6,8,11)$\\
$D_{12}$ & $(1,11,-12)$ & $(4,5,-9)$ & $(2,6,-8)$ &$(3,7,-10)$\\
\end{longtable}}

{\small\begin{longtable}{lllllll}
$D_{00}$ & $d_{0}=(2,8,-10)$ & $f_{0}=(7,11,-18)$ & $(1,12,-13)$ &$(3,14,-17)$ &$(4,5,-9)$ & $(6,15,16)$\\
$D_{01}$ & $(1,7,-8)$ & $(2,9,-11)$ & $(3,13,-16)$ &$(4,10,-14)$ &$(5,15,17)$ & $(6,12,-18)$\\
$D_{02}$ & $(1,8,-9)$ & $(2,13,-15)$ & $(3,7,-10)$ &$(4,12,-16)$ &$(5,14,18)$ & $(6,11,-17)$\\
$D_{10}$ & $d_{1}=(4,13,-17)$ & $(1,6,-7)$ & $(2,14,-16)$ &$(3,9,-12)$ &$(5,10,-15)$ & $(8,11,18)$\\
$D_{11}$ & $(1,2,-3)$ & $(4,15,18)$ & $(5,6,-11)$ &$(7,9,-16)$ &$(8,12,17)$ & $(10,13,14)$\\
$D_{12}$ & $(1,3,-4)$ & $(2,7,-9)$ & $(5,12,-17)$ &$(6,13,18)$ &$(8,14,15)$ & $(10,11,16)$\\
$D_{20}$ & $d_{2}=(3,6,-9)$ & $(1,10,-11)$ & $(2,12,-14)$ &$(4,16,17)$ &$(5,13,-18)$ & $(7,8,-15)$\\
$D_{21}$ & $(1,4,-5)$ & $(2,15,-17)$ & $(3,11,-14)$ &$(6,7,-13)$ &$(8,10,-18)$ & $(9,12,16)$\\
$D_{22}$ & $(1,5,-6)$ & $(2,17,18)$ & $(3,8,-11)$ &$(4,9,-13)$ &$(7,14,16)$ & $(10,12,15)$\\
\end{longtable}}

We treat difference triples as \emph{ordered}, using the ordering given. For each $0 \leq \alpha < \frac{w-1}{12}$, $0\leq \beta<3$, let $S_{\alpha,\beta} = \bigcup_{(a,b,c)\in D_{\alpha,\beta}}\bigcup_{i\in Z_{w}}\{\{i, a + i, a + b + i\}\}$, and $T_{\alpha,\beta}=\bigcup_{(a,b,c)\in D_{\alpha,\beta}}\bigcup_{i\in Z_{w}}\{\{i, b + i, a + b + i\}\}$, arithmetic in $Z_{w}$.
Then $|S_{\alpha,\beta}|=|T_{\alpha,\beta}|=\frac{w(w-1)}{6}$.
For each $\alpha$, let $R_{\alpha,0}=\bigcup_{(a,b,c)\in D_{\alpha,0}\setminus\{d_{\alpha}\}}$ $\bigcup_{i\in Z_{w}}\{\{i, a + i, a + b + i\}\}$.
Then $S_{\alpha,\beta}$ and $T_{\alpha,\beta}$ are S$(2,3,w)$s on $Z_{w}$. They share no triples with $S_{\alpha',\beta'}$ or $T_{\alpha',\beta'}$ unless $(\alpha, \beta) = (\alpha', \beta')$, because no (unordered) difference triple of $D_{\alpha, \beta}$ is equivalent to one in $D_{\alpha', \beta'}$. Moreover, $S_{\alpha,\beta}$ and $T_{\alpha,\beta}$ are disjoint, because $\{0, a, a + b\}\neq\{j, b + j, a + b + j \}$ when the difference triple is simple.

For $w = 25$, let $E = \{(0, p, q)\}=\{(0, 6, 20)\}$ and $C$ be a parallel class on $Z_{w}\backslash \{0\}$:
$\{(1, 7, 21)$, $(2, 8, 22)$, $(3, 4, 6), (5, 19, 24), (9, 14, 20), (10, 11, 13), (12, 17, 23), (15, 16, 18)\}$.

For $w = 37$, let $E = \{(0, p, q)\}=\{(0, 11, 30)\}$ and $C$ be a parallel class on $Z_{w}\backslash \{0\}$:
$\{(1, 3, 11)$, $(2, 13, 32)$, $(4, 23, 30), (5, 24, 31), (6, 17, 36), (7, 14, 25), (8, 27, 34), (9, 28, 35), (10, 12, 20), (15, 22, 33)$, $(16, 18, 26)$, $(19, 21, 29)\}$.

(All triples of $C$ are translates of $d_{0}$ or $f_{0}$ i.e., for given difference triple $(a,b,c)\in \{d_{0},f_{0}\}$, we have $\{i, a + i, a + b + i\}\in C$ for some $i\in Z_{w}$, and all triples in $E\cup C$ are ordered in the same way.) Let $C'$ be obtained from $C$ by adding $p$ to each entry (arithmetic in $Z_{w}$). Because $C$ contains all elements of $Z_{w}$ except for 0, $C'$ contains all elements except for $p$. Each triple in $C\cup C'$
is in $S_{0,0}$, and $C$ and $C'$ share no triples.

We form a nearly 2-balanced simple triple system on $Z_{w}\times \{0,1\}$, representing $(i, j)$ as $i_{j}$. For a 3-subset $B=\{a, b, c\}$ of $Z_{w}$ and $\sigma \in \{0, 1\}$, we set $l_{\sigma}(B) = \{\{a_{l}, b_{j} , c_{k}\} : l + j + k \equiv \sigma$ (mod 2)\}. When $d_{\alpha}=(a, b, c)$, let

$M_{\alpha} = \{\{i_{0}, i_{1}, (i + x)_{j} \} : i\in Z_{w}$, $x \in \{a, b, -(a + b)\},~j\in \{0, 1\}\}$,

$N_{\alpha} = \{\{i_{0}, i_{1}, (i + x)_{j} \} : i\in Z_{w}$, $x \in \{-a, -b, a + b\},~j\in \{0, 1\}\}$,

$P_{\alpha} = M_{\alpha}\cup l_{0}(R_{\alpha,0})\cup l_{1}(T_{\alpha,0})\cup l_{0}(S_{\alpha,1})\cup l_{1}(T_{\alpha,1})\cup l_{0}(S_{\alpha,2})\cup l_{1}(T_{\alpha,2})$,

$Q_{\alpha} = N_{\alpha}\cup l_{1}(R_{\alpha,0})\cup l_{0}(T_{\alpha,0})\cup l_{1}(S_{\alpha,1})\cup l_{0}(T_{\alpha,1})\cup l_{1}(S_{\alpha,2})\cup l_{0}(T_{\alpha,2})$.

\noindent For each $\alpha$, each of $P_{\alpha}$ and $Q_{\alpha}$ is an S$_{6}(2,3,v)$, the triple systems so produced are disjoint.

Write $b = (2q + q')v(v-1) + b'$ with $0\leq b'<v(v-1)$, $q'\in \{0, 1\}$, and $0 \leq q < \frac{w-1}{12}$. Place in $\mathcal{F}$ all triples in $P_{\alpha}$ and $Q_{\alpha}$ for $1\leq \alpha\leq q$, and also the triples of $Q_{0}$ when $q' = 1$. When $2q + q'=0$, $b=b'$. We treat cases for $b'$. Let $(v,b')$ associate with $(\lambda',\varepsilon)$ where $\lambda'=2,4$.

For $b'\in\{\frac{1}{6}(8w^{2}-4w-4), \frac{1}{6}(8w^{2}-4w+2)\}$. We have that $\lambda' = 2$ and $\varepsilon = -2, 1$. $\mathcal{F}'$ include
(1) all triples in $l_{0}(S_{0,0} \backslash C)$;
(2) $\{\{x_{0}, x_{1}, y_{j}\},\{y_{0}, y_{1}, z_{j}\},\{z_{0}, z_{1}, x_{j}\} : j \in\{0, 1\},(x, y,z)\in C\}$ (none is in $N_{0}$);
(3) $\{0_{0}, 0_{1}, p_{0}\}$;
(4) all triples in $l_{1}(T_{0,0})$ except for those that contain $\{0_{0}, p_{0}\}$ or $\{0_{1}, p_{0}\}$, denoted by $U = \{\{0_{0}, p_{0}, f_{1}\}, \{0_{1}, p_{0}, f_{0}\}\}$; and
(5) any $b'-\frac{1}{6}(8w^{2}-4w-10)$ triples from $U$.
The defect graph of $(Z_{w}\times \{0,1\},\mathcal{F}')$ is isomorphic to the graph $G_{-2}$ and $G_{1}$ in Figure 1, respectively.
So $\mathcal{F}'$ is a nearly 2-balanced simple triple system, also an NWBTS$(v;b')$ by Lemma \ref{NBTS}.

For $b'\in\{\frac{1}{6}(16w^{2}-8w-2), \frac{1}{6}(16w^{2}-8w+4)\}$. We have that $\lambda' = 4$ and $\varepsilon = -1, 2$. $\mathcal{F}'$ include
(1) all triples in $l_{0}(S_{0,0} \backslash (C\cup C'))$ except for two triples $\{0_{0}, p_{0}, q_{0}\}$ and $\{0_{1}, p_{1}, q_{0}\}$ in $l_{0}(E)$;
(2) all triples in $l_{1}(T_{0,0})$, $l_{0}(S_{0,1})$, and $l_{1}(T_{0,1})$;
(3) $\{\{x_{0}, x_{1}, y_{j}\},\{y_{0}, y_{1}, z_{j}\},\{z_{0}, z_{1}, x_{j}\} : j \in\{0, 1\},(x, y,z)\in C\cup C'\}$;
(4) $\{0_{0}, 0_{1}, q_{0}\}$ and $\{p_{0}, p_{_{1}}, q_{0}\}$; and
(5) any $b'-\frac{1}{6}(16w^{2}-8w-8)$ triples from $\{\{0_{0}, 0_{1}, p_{0}\}, \{0_{1}, p_{1}, f_{0}\}\}$, where $\{0_{1}, p_{1}, f_{0}\}$ which is contained in $l_{0}(S_{0,2})$, respectively.
The defect graph of $(Z_{w}\times \{0,1\},\mathcal{F}')$ is isomorphic to the graph $G_{-1}$ and $G_{2}$ in Figure 1, respectively.
So $\mathcal{F}'$ is a nearly 2-balanced simple triple system, also an NWBTS$(v;b')$ by Lemma \ref{NBTS}.

When $2q + q'\geq1$, $\mathcal{F} \cup \mathcal{F}'$ is an NWBTS$(v;b)$ by Lemma \ref{tee}.
\qed

\begin{lemma}\label{11}
There exists an NWBTS$(11; b)$, where $b\equiv18,19,36,37\pmod{55}$.
\end{lemma}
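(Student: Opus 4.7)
The plan is to apply Lemma~\ref{bb} to reduce to a finite list of direct constructions. Since $\binom{11}{3}=165=3\cdot 55$, Lemma~\ref{bb}(i) preserves the residue class of $b$ modulo $55$, so it suffices to exhibit NWBTS$(11;b)$ for $b$ in a set of representatives for $\{18,19,36,37\}\pmod{55}$ lying in $[0,165]$. Each residue contributes the three representatives $r,\,r+55,\,r+110$, and the complementation map $b\mapsto 165-b$ of Lemma~\ref{bb}(ii) interchanges residues $18\leftrightarrow 37$ and $19\leftrightarrow 36$, pairing the representatives as $\{18,147\},\{73,92\},\{37,128\}$ and $\{19,146\},\{74,91\},\{36,129\}$. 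Taking one from each pair, it remains to construct NWBTS$(11;b)$ explicitly for the six values
\[
b\in\{18,19,36,37,73,74\}.
\]

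For each of these I would build a simple family $\mathcal{F}$ of triples on $V=Z_{11}$ and verify, against Lemma~\ref{c1balanced}(i), that its defect graph $D=D_{1}\cup D_{-1}$ is isomorphic to the graph $G_{\varepsilon}$ of Figure~\ref{f1}, where $(\lambda,\varepsilon)$ takes the values $(1,-1),(1,2),(2,-2),(2,1),(4,-1),(4,2)$ respectively. Once $\mathcal{F}$ is simple and nearly $2$-balanced, Lemma~\ref{NBTS} immediately upgrades it to an NWBTS. A natural seed for the $\lambda=1$ cases is the maximum packing of $K_{11}$ by triangles, which has $17$ blocks with a $4$-cycle as its leave (the parity constraint $10-2r_v$ even forces the leave to be even-regular, ruling out a single-edge leave). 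Adding one carefully chosen triple --- whose three pairs consist of two edges of the leave $4$-cycle sharing a vertex together with one already-covered pair --- turns the defect pattern into the prescribed $G_{-1}$ and yields $b=18$; an analogous addition of two triples handles $b=19$. The $\lambda=2$ cases are obtained by adjoining to such a near-packing a second disjoint copy with a small controlled overlap, and the $\lambda=4$ cases by two further copies with local swaps. The cyclic action $z\mapsto z+1$ on $Z_{11}$ organises each construction into a small number of base orbits.

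The main obstacle is to hit \emph{exactly} the defect graph $G_{\varepsilon}$, not only in the edge counts $(|D_1|,|D_{-1}|)$ but also in how the edges of $D_1$ and $D_{-1}$ meet at common vertices (e.g.\ the parity relation $d_{D_1}(x)\equiv d_{D_{-1}}(x)\pmod{2}$), while simultaneously keeping $\mathcal{F}$ simple. This is most delicate for $b=73,74$, since the block count is already close to $\binom{11}{3}/2$ and the freedom to absorb extra pair-repetitions is limited; a targeted computer search over $Z_{11}$-invariant (or near-invariant) configurations, or a short by-hand adjustment starting from a $4$-fold covering on eleven points, is the most reliable route. After an explicit block list is recorded for each of the six base cases, verifying the pair-multiplicity profile is a purely mechanical check, and no further reasoning beyond Lemma~\ref{bb} is needed.
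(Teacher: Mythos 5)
Your reduction is exactly the paper's: since $\binom{11}{3}=165=3\cdot 55$, Lemma~\ref{bb}(i) and (ii) bring everything down to the six base values $b\in\{18,19,36,37,73,74\}$, and your bookkeeping of residues and the complementation pairing is correct. The problem is that for a lemma of this kind the entire content lies in actually producing those six designs, and your proposal does not do that. What you offer is a strategy (maximum packing of $K_{11}$ with a $C_4$ leave plus one or two added triples for $\lambda=1$; ``a second disjoint copy with a small controlled overlap'' for $\lambda=2$; ``two further copies with local swaps'' for $\lambda=4$), and you yourself flag that hitting the exact defect graph $G_{\varepsilon}$ while keeping the family simple is the delicate point, proposing ``a targeted computer search'' for the hardest cases. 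Until explicit block sets (or a complete existence argument) are exhibited and their pair-multiplicity profiles checked against $G_{-1},G_{2},G_{-2},G_{1}$, the lemma is not proved; the paper closes this gap by listing the blocks of an NWBTS$(11;18)$ and an NWBTS$(11;36)$ and obtaining $b=19,37$ by adding one block each.

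You also make the cases $b=73,74$ harder than they are. Since $73=18+55$ and $74=19+55$, it suffices to adjoin to the $18$- and $19$-block designs a simple S$_3(2,3,11)$ disjoint from them: by Lemma~\ref{tee} (with $p=3$, $q=1$) this preserves near $2$-balance and $3$-balance, and such a disjoint S$_3(2,3,11)$ is easy to exhibit from five base blocks under $Z_{11}$, which is exactly what the paper does. So these are the easiest, not the most delicate, of the six cases, and no $4$-fold covering or computer search is needed. With that observation, your plan would reduce to constructing four small designs ($b=18,19,36,37$), but those constructions must still be carried out and verified before the proof is complete.
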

\proof We only need to consider that $b\in\{18,19,36,37,73,74\}$ by Lemma \ref{bb}. An NWBTS$(11; 18)$ can be constructed on $Z_{11}$, where $\lambda_{0,1}=2$ and $\lambda_{0,10}=\lambda_{1,10}=0$.
{\small\begin{longtable}{lllllllll}
$\{2, 6, 10\}$ & $\{3, 4, 10\}$ & $\{7, 9, 10\}$ & $\{5, 8, 10\}$ & $\{0, 2, 4\}$ & $\{0, 1, 7\}$ & $\{1, 3, 5\}$ & $\{1, 4, 9\}$ & $\{3, 6, 7\}$\\
$\{0, 3, 8\}$ & $\{0, 1, 6\}$ & $\{0, 5, 9\}$ & $\{2, 5, 7\}$ & $\{1, 2, 8\}$ & $\{2, 3, 9\}$ & $\{4, 5, 6\}$ & $\{4, 7, 8\}$ & $\{6, 8, 9\}$\\

\end{longtable}}

\noindent We can get an NWBTS$(11; 19)$ by adding a block $\{1,2,10\}$ on the above NWBTS$(11; 18)$, where $\lambda_{0,10}=0$ and $\lambda_{0,1}=\lambda_{2,10}=\lambda_{1,2}=2$.

We get a simple S$_{3}(2,3,11)$, whose blocks are generated by the base blocks under the action of the group $Z_{11}$, which are different from all blocks of the above NWBTS$(11; 19)$.

{\small\begin{longtable}{llllllll}
\{0, 1, 3\} & \{0, 1, 4\} & \{0, 1, 5\} & \{0, 2, 5\} & \{0, 2, 6\}\\
\end{longtable}}
\noindent Then we can obtain an NWBTS$(11; l)$ $(l = 73,74)$ by combining all blocks of the S$_{3}(2,3,11)$ and the NWBTS$(11; r)$ $(r=18,19)$, respectively.

An NWBTS$(11; 36)$ can be constructed on $Z_{11}$, where $\lambda_{0,1}=3$ and $\lambda_{0,9}=\lambda_{1,10}=\lambda_{9,10}=1$.

{\small\begin{longtable}{lllllllll}
$\{0, 1, 2\}$ & $\{0, 1, 3\}$ & $\{0, 1, 4\}$ & $\{0, 2, 9\}$ & $\{0, 3, 4\}$ & $\{0, 5, 6\}$ & $\{0, 5, 7\}$ & $\{0, 6, 8\}$ & $\{2, 6, 7\}$\\
$\{0, 7, 10\}$ & $\{0, 8, 10\}$ & $\{1, 2, 10\}$ & $\{1, 3, 4\}$ & $\{1, 5, 6\}$ & $\{1, 5, 7\}$ & $\{1, 6, 8\}$ & $\{1, 7, 9\}$ & $\{2, 6, 9\}$\\
$\{1, 8, 9\}$ & $\{2, 3, 5\}$ & $\{3, 5, 8\}$ & $\{2, 4, 5\}$ & $\{4, 5, 9\}$ & $\{5, 8, 10\}$ & $\{5, 9, 10\}$ & $\{3, 8, 9\}$ & $\{3, 6, 7\}$\\
$\{2, 4, 8\}$ & $\{2, 7, 8\}$ & $\{4, 7, 8\}$ & $\{4, 6, 9\}$ & $\{4, 6, 10\}$ & $\{4, 7, 10\}$ & $\{2, 3, 10\}$ & $\{3, 6, 10\}$ & $\{3, 7, 9\}$\\

\end{longtable}}

\noindent We can get an NWBTS$(11; 37)$ by adding a block $\{0,9,10\}$ on the NWBTS$(11; 36)$, where $\lambda_{1,10}=1$ and $\lambda_{0,1}=\lambda_{0,10}=3$.
\qed

\begin{lemma}\label{81}
There exists an NWBTS$(8; b)$, where $b\equiv 9,10,18,19,27,28,29,37,38,46,47\pmod{56}$.
\end{lemma}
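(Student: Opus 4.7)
The plan is to reduce the full family of parameters to finitely many direct constructions via Lemma \ref{bb}. Since $\binom{8}{3}=56$, Lemma \ref{bb}(i) reduces the problem to exhibiting an NWBTS$(8;b)$ for each base value $b\in\{9,10,18,19,27,28,29,37,38,46,47\}$. This list is closed under the involution $b\leftrightarrow 56-b$ (indeed $9\leftrightarrow 47$, $10\leftrightarrow 46$, $18\leftrightarrow 38$, $19\leftrightarrow 37$, $27\leftrightarrow 29$, $28\leftrightarrow 28$), so Lemma \ref{bb}(ii) cuts the task down further to the seven base cases $b\in\{9,10,18,19,27,28,29\}$.

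For each such $b$, I first compute the associated pair $(\lambda,\varepsilon)$ from $3b=28\lambda+\varepsilon$, $-4<\varepsilon<4$: the values $b=18,19$ satisfy (C1) with $\lambda=2$ and $\varepsilon\in\{-2,1\}$; the values $b\in\{9,10\}$ satisfy (C2) with $\lambda=1$ and $\varepsilon\in\{-1,2\}$; and $b\in\{27,28,29\}$ satisfy (C2) with $\lambda=3$ and $\varepsilon\in\{-3,0,3\}$. Lemmas \ref{c1balanced} and \ref{c2balanced} then prescribe the exact isomorphism type of the required defect graph: $G_{-2}$ or $G_{1}$ for the (C1) cases; $H_{8,\varepsilon}^{0}$ (a perfect matching with coloured edges counted by $(2\pm\varepsilon)$) for the even cases $\varepsilon\in\{2,0\}$; and one of $H_{8,\varepsilon}^{1},H_{8,\varepsilon}^{2}$ (a matching on four vertices plus a three-edge ``near-triangle'' on the remaining four) for the odd cases $\varepsilon\in\{-1,-3,3\}$.

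For each of the seven base cases I would exhibit an explicit triple family $\mathcal{F}\subset\binom{Z_{8}}{3}$ of the correct size $b$. Since $b<56$, any such family is automatically simple and therefore $3$-balanced. It then remains to check by inspection that every point has the replication $\lambda_{x}^{\mathcal{F}}=\tfrac{1}{2}\bigl(\lambda(v-1)+d_{D_{1}}(x)-d_{D_{-1}}(x)\bigr)$ predicted by $1$-balance, and that the pair-replication pattern realises exactly the prescribed defect graph; by Lemma \ref{NBTS} this certifies $\mathcal{F}$ as an NWBTS$(8;b)$. A convenient framework is to prescribe a cyclic automorphism on $(Z_{7}\cup\{\infty\})$ and develop a small number of base blocks modulo $7$, adjusting one or two triples by hand to install the desired irregular edges of the defect graph; alternatively, because the search space is tiny, a brief computer enumeration settles every case instantly.

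The only real subtlety will be the odd-$\varepsilon$ (C2) cases $b\in\{9,27,29\}$: there the defect graph must be of type $H_{8,\varepsilon}^{1}$ or $H_{8,\varepsilon}^{2}$ rather than $H_{8,\varepsilon}^{3}$ or $H_{8,\varepsilon}^{4}$, since the latter two violate $1$-balance. This forces the ``extra'' edge of the matching to be joined to an edge of the opposite colour in a prescribed local pattern, which is easy to arrange but must be verified. Once the seven base designs are in hand, Lemma \ref{bb}(i) together with the symmetry $b\mapsto 56-b$ supplied by Lemma \ref{bb}(ii) delivers an NWBTS$(8;b)$ for every $b$ in the stated residue classes modulo $56$, completing the proof.
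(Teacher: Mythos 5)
Your reduction is sound and is exactly the route the paper takes: by Lemma \ref{bb}(i) one may assume $b<\binom{8}{3}=56$, and by Lemma \ref{bb}(ii) the complementation $b\mapsto 56-b$ cuts the list to the base cases (the paper uses the six cases $b\in\{9,10,18,19,27,28\}$; your seventh case $b=29$ is redundant, since it is the complement of $27$). Your identification of the associated pairs $(\lambda,\varepsilon)$ and of the prescribed defect graphs ($G_{-2}$, $G_{1}$ for $b=18,19$; $H_{8,\varepsilon}^{0}$ for $\varepsilon$ even; one of $H_{8,\varepsilon}^{1},H_{8,\varepsilon}^{2}$ for $\varepsilon$ odd), together with the observation that simplicity gives $3$-balance so that Lemma \ref{NBTS} applies, is also correct. (A small slip: for $H_{8,\varepsilon}^{0}$ the two edge-colour counts are $\frac{8\pm 2\varepsilon}{4}=2\pm\frac{\varepsilon}{2}$, not $2\pm\varepsilon$.)

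The genuine gap is that you never actually produce the base designs. The entire content of this lemma is the explicit exhibition of six small triple systems on $Z_{8}$ realising the prescribed defect graphs (the paper lists the blocks of an NWBTS$(8;9)$, $(8;18)$ and $(8;27)$, and obtains $b=10,19,28$ by adjoining one further block in each case, verifying the resulting pair-frequencies). Existence here does not follow from any cited result --- indeed for $v=8$, $\lambda=2$ these parameters lie precisely in the exceptional range of Theorem \ref{NF} where no WBTS exists, so an NWBTS with the exact defect graph must be witnessed. Saying that you ``would exhibit'' suitable families, or that a brief computer enumeration ``settles every case instantly,'' is a plan rather than a proof: without the six block lists (or at least a verifiable construction such as explicit base blocks developed under a stated automorphism, with the pair-frequency check carried out), the statement remains unproved. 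Everything else in your write-up is scaffolding around this missing step.
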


\proof We only need to consider that $b\in\{9,10,18,19,27,28\}$ by Lemma \ref{bb}.
We first construct an NWBTS$(8; 9)$ on $Z_{8}$, where $\lambda_{0,1}=\lambda_{2,3}=2$ and $\lambda_{3,5}=\lambda_{3,6}=\lambda_{4,7}=0$.
{\small\begin{longtable}{lllllllll}
$\{0, 1, 2\}$ & $\{0, 1, 3\}$ & $\{0, 4, 5\}$ & $\{0, 6, 7\}$ & $\{1, 4, 6\}$ & $\{2, 3, 4\}$ & $\{2, 3, 7\}$ & $\{2, 5, 6\}$ & $\{1, 5, 7\}$\\
\end{longtable}}
\noindent We can get an NWBTS$(8; 10)$ by adding a block $\{3,5,6\}$ on the above NWBTS$(8; 9)$, where $\lambda_{4,7}=0$ and $\lambda_{0,1}=\lambda_{2,3}=\lambda_{5,6}=2$.

An NWBTS$(8; 18)$ can be constructed on $Z_{8}$, where $\lambda_{0,1}=3$ and $\lambda_{0,7}=\lambda_{1,6}=\lambda_{6,7}=1$.
{\small\begin{longtable}{llllllllll}
$\{0, 1, 2\}$ & $\{0, 1, 3\}$ & $\{0, 1, 4\}$ & $\{0, 2, 6\}$ & $\{0, 3, 7\}$ & $\{0, 4, 5\}$ & $\{2, 3, 7\}$ & $\{2, 4, 6\}$ & $\{2, 4, 7\}$\\
$\{0, 5, 6\}$ & $\{1, 2, 5\}$ & $\{1, 3, 6\}$ & $\{1, 4, 7\}$ & $\{1, 5, 7\}$ & $\{2, 3, 5\}$ & $\{3, 4, 5\}$ & $\{3, 4, 6\}$ & $\{5, 6, 7\}$\\
\end{longtable}}
\noindent We can get an NWBTS$(8; 19)$ by adding a block $\{0,6,7\}$ on the above  NWBTS$(8; 18)$, where $\lambda_{1,6}=1$ and $\lambda_{0,1}=\lambda_{0,6}=3$.

Next we construct an NWBTS$(8; 27)$ on $Z_{8}$, where $\lambda_{2,3}=4$ and $\lambda_{4,5}=\lambda_{6,7}=\lambda_{1,2}=\lambda_{0,2}=2$.
{\small\begin{longtable}{lllllllll}
$\{3, 5, 7\}$ & $\{0, 1, 3\}$ & $\{0, 1, 4\}$ & $\{0, 1, 5\}$ & $\{0, 2, 3\}$ & $\{0, 2, 4\}$ & $\{0, 3, 6\}$ & $\{1, 4, 6\}$ & $\{3, 4, 6\}$\\
$\{0, 4, 7\}$ & $\{0, 5, 6\}$ & $\{0, 5, 7\}$ & $\{0, 6, 7\}$ & $\{1, 2, 3\}$ & $\{2, 3, 5\}$ & $\{2, 3, 6\}$ & $\{3, 4, 7\}$ & $\{1, 6, 7\}$\\
$\{1, 2, 7\}$ & $\{2, 4, 6\}$ & $\{2, 4, 7\}$ & $\{2, 5, 6\}$ & $\{2, 5, 7\}$ & $\{1, 4, 5\}$ & $\{3, 4, 5\}$ & $\{1, 5, 6\}$ & $\{1, 3, 7\}$\\
\end{longtable}}
\noindent We can get an NWBTS$(8; 28)$ by adding a block $\{0,1,2\}$ on the above  NWBTS$(8; 27)$, where $\lambda_{0,1}=\lambda_{2,3}=4$ and $\lambda_{4,5}=\lambda_{6,7}=2$.
\qed

\begin{lemma}\label{14}
There exists an NWBTS$(14; b)$, where $b\equiv 60,61,121,122 \pmod{182}$ for (C1) and $b\equiv29,30,31,32,89,90,91,92,93,150,151,152,153\pmod{182}$ for (C2).
\end{lemma}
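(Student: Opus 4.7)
My plan is to reduce, via Lemma \ref{bb}, the claim to a finite list of base values of $b$ and to construct each of them by explicit listing, following the template already used in Lemmas \ref{11} and \ref{81} of the appendix. Since $\binom{14}{3}=364=2\cdot 182$, Lemma \ref{bb}(i) only gives periodicity modulo $364$, so each claimed congruence class modulo $182$ splits into two congruence classes modulo $364$; to obtain the stronger period-$182$ statement one must therefore cover one representative from each of the two mod-$364$ subclasses. Complementation via Lemma \ref{bb}(ii) pairs each mod-$364$ class $c$ with $364-c$, and a short check shows that these pairings link the lower-$\lambda$ and upper-$\lambda$ strips; concretely it will suffice to exhibit explicit NWBTS$(14;b)$ for $b$ in the seventeen-value list
$$
L=\{29,30,31,32,\ 60,61,\ 89,90,91,92,93,\ 121,122,\ 150,151,152,153\}.
$$
Once the designs indexed by $L$ are in hand, Lemma \ref{bb}(ii) supplies NWBTS's with complementary indices $364-b$, and then Lemma \ref{bb}(i) extends everything to all $b$ in the asserted residue classes modulo $182$.

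To organize the seventeen base constructions I would proceed strip by strip, where a \emph{strip} means a maximal run of consecutive admissible $b$'s sharing the same value of $\lambda$: the strips are $\{29,30,31,32\}$ at $\lambda=1$, $\{60,61\}$ at $\lambda=2$, $\{89,\ldots,93\}$ at $\lambda=3$, $\{121,122\}$ at $\lambda=4$, and $\{150,\ldots,153\}$ at $\lambda=5$. For each strip I would first exhibit a single ``seed'' NWBTS at the smallest admissible $b$ in the strip (namely $b=29,\ 60,\ 89,\ 121,\ 150$), with defect graph isomorphic to the prescribed $G_{\varepsilon}$ of Figure \ref{f1} for (C1) or $H^{i}_{14,\varepsilon}$ of Figure \ref{f2} for (C2); each seed would be presented as a block list on $V=Z_{13}\cup\{\infty\}$ or $V=Z_{14}$, exploiting a cyclic automorphism whenever possible to compress the listing. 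I would then walk up through the strip by appending, at each step, a single triple whose three pairs consist of some $D_{-1}$-edges (whose multiplicities rise to $\lambda$) together with at most one neutral pair (whose multiplicity rises to $\lambda+1$), chosen so that the resulting defect graph is exactly the next admissible shape in Figure \ref{f1} or \ref{f2}. Each such step raises $b$ by one and shifts $\varepsilon$ by $+3$, producing in turn NWBTS$(14;b+1),\,\mathrm{NWBTS}(14;b+2),\ldots$ up to the top of the strip, mirroring the ``NWBTS$(8;9)\to\mathrm{NWBTS}(8;10)$'' step of Lemma \ref{81}.

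The hardest part will be the seeds themselves: each must simultaneously be simple, $1$-balanced, $3$-balanced, and realize a \emph{precisely prescribed} defect matching on the $14$ points. The sparse seeds at $b=29,30,31,32$ and the dense seeds at $b=150,151,152,153$ are comparatively easy, involving either few triples or few missing triples; the serious obstacle is the middle strip, especially $b=89$ (with an eight-edge defect matching of signature $(1,7)$) and $b=91$ (with the balanced $4+4$ matching of $H^{i}_{14,0}$), where the joint demands of $1$-balance and an exact matching structure are delicate. I expect that these cases will require a targeted computer search, or else a cyclic difference-triple construction in $Z_{13}$ analogous to the one used for $v\in\{50,74\}$ in Lemma \ref{t1v242}; once the five seeds are in place, however, the add-one-block chaining step is mechanical and the remainder of the proof is bookkeeping through Lemma \ref{bb}.
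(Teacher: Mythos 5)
Your reduction is sound and matches the paper's: Lemma \ref{bb} brings everything down to the base values $b\in[29,32]\cup[60,61]\cup[89,93]\cup[121,122]\cup[150,153]$, and your explicit check that the period-$364$ classes from Lemma \ref{bb}(i) together with the complements from Lemma \ref{bb}(ii) recombine into the claimed period-$182$ classes is correct (the paper leaves this bookkeeping implicit). The paper's proof is exactly of the shape you describe: explicit block lists on $Z_{14}$ for certain seed values, with neighbouring values obtained by appending one or two well-chosen triples, in the style of Lemma \ref{81}.

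The gap is that you never actually produce the seeds, and the seeds are the entire content of this lemma. A nearly well-balanced TS$(14;b)$ must be simple, $1$-balanced, $3$-balanced, and realize a prescribed defect graph, and no general argument in the paper guarantees such objects exist at these sporadic parameters -- they must be exhibited, which is what the appendix does with full block lists for $b=29,32,60,89,91,121,150,152$ (plus the add-a-block steps). Deferring them to ``a targeted computer search'' leaves the existence claim unproved. Moreover, your stronger structural claim -- one seed per $\lambda$-strip, with the rest of the strip reached by ``mechanical'' single-block chaining -- is not verified and is doubtful as stated: each chaining step flips the parity of $\varepsilon$ and hence the defect-graph type (between the perfect matching $H^{0}_{14,\varepsilon}$ and the $8$-edge graphs with one degree-$3$ vertex), and one must check that a suitable triple, not already present, with exactly the required pair multiplicities and incidence pattern exists in the current design; the paper evidently did not rely on this, since it introduces fresh seeds at $b=32$, $91$ and $152$ instead of chaining from $31$, $90$ and $151$. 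So the plan is the right plan, but as a proof it is incomplete until the seventeen base designs (or at least enough seeds plus verified chaining steps) are explicitly constructed and their defect graphs checked against Figures \ref{f1} and \ref{f2}.
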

\proof We only need to consider that $b\in[60,61]\cup[121,122]$ for (C1) and $b\in[29,32]\cup[89,93]\cup[150,153]$ for (C2) by Lemma \ref{bb}. We first construct an NWBTS$(14; 60)$ on $Z_{14}$, where $\lambda_{0,1}=3$ and $\lambda_{0,12}=\lambda_{1,13}=\lambda_{12,13}=1$.
{\small\begin{longtable}{llllllll}
$\{0, 1, 2\}$ & $\{0, 1, 3\}$ & $\{0, 1, 4\}$ & $\{0, 2, 12\}$ & $\{0, 3, 4\}$ & $\{0, 5, 6\}$ & $\{6, 8, 12\}$ & $\{4, 6, 9\}$\\
$\{0, 5, 7\}$ & $\{0, 6, 7\}$ & $\{0, 8, 9\}$ & $\{0, 8, 10\}$ & $\{0, 9, 11\}$ & $\{0, 10, 13\}$ & $\{4, 6, 13\}$ & $\{6, 9, 13\}$\\
$\{0, 11, 13\}$ & $\{1, 2, 13\}$ & $\{1, 3, 4\}$ & $\{1, 5, 6\}$ & $\{1, 5, 7\}$ & $\{1, 6, 7\}$ & $\{2, 7, 9\}$ & $\{7, 9, 12\}$\\
$\{1, 8, 9\}$ & $\{1, 8, 10\}$ & $\{1, 9, 11\}$ & $\{1, 10, 12\}$ & $\{1, 11, 12\}$ & $\{2, 5, 8\}$ & $\{2, 4, 12\}$, & $\{4, 8, 12\}$\\
$\{3, 5, 8\}$ & $\{2, 3, 5\}$ & $\{4, 5, 9\}$ & $\{5, 9, 10\}$ & $\{4, 5, 10\}$ & $\{5, 11, 12\}$ & $\{2, 4, 7\}$ & $\{2, 8, 13\}$\\
$\{5, 11, 13\}$ & $\{5, 12, 13\}$ & $\{2, 3, 11\}$ & $\{3, 6, 11\}$ & $\{3, 6, 8\}$ & $\{3, 7, 12\}$ & $\{4, 7, 13\}$ & $\{7, 8, 13\}$\\
$\{3, 9, 12\}$ & $\{3, 7, 10\}$ & $\{3, 9, 13\}$ & $\{3, 10, 13\}$ & $\{2, 6, 11\}$ & $\{4, 8, 11\}$ & $\{2, 9, 10\}$ & $\{6, 10, 12\}$\\
$\{7, 8, 11\}$ & $\{4, 10, 11\}$ & $\{7, 10, 11\}$ & $\{2, 6, 10\}$\\
\end{longtable}}

\noindent We can get an NWBTS$(14; 61)$ by adding a block $\{1,12,13\}$ on the above NWBTS$(14; 60)$, where $\lambda_{0,12}=1$ and $\lambda_{0,1}=\lambda_{1,12}=3$.

We construct an NWBTS$(14; 121)$ on $Z_{14}$, where $\lambda_{0,1}=5$ and $\lambda_{0,13}=\lambda_{1,13}=3$.
{\small\begin{longtable}{llllllll}
$\{0, 1, 2\}$ & $\{0, 1, 3\}$ & $\{0, 1, 4\}$ & $\{0, 1, 5\}$ & $\{0, 1, 6\}$ & $\{0, 2, 13\}$ & $\{0, 3, 13\}$ & $\{0, 4, 13\}$\\
$\{0, 2, 3\}$ & $\{0, 2, 4\}$ & $\{0, 3, 4\}$ & $\{0, 5, 6\}$ & $\{0, 5, 7\}$ & $\{0, 5, 8\}$ & $\{0, 6, 7\}$ & $\{0, 6, 8\}$\\
$\{0, 7, 9\}$ & $\{0, 7, 10\}$ & $\{0, 8, 11\}$ & $\{0, 8, 12\}$ & $\{0, 9, 10\}$ & $\{0, 9, 11\}$ & $\{0, 9, 12\}$ & $\{0, 10, 11\}$\\
$\{0, 10, 12\}$ & $\{0, 11, 12\}$ & $\{1, 2, 13\}$ & $\{1, 3, 13\}$ & $\{1, 4, 13\}$ & $\{1, 2, 3\}$ & $\{1, 2, 4\}$ & $\{1, 3, 4\}$\\
$\{1, 5, 6\}$ & $\{1, 5, 7\}$ & $\{1, 5, 8\}$ & $\{1, 6, 7\}$ & $\{1, 6, 8\}$ & $\{1, 7, 9\}$ & $\{1, 7, 10\}$ & $\{1, 8, 11\}$\\
$\{1, 8, 12\}$ & $\{1, 9, 10\}$ & $\{1, 9, 11\}$ & $\{1, 9, 12\}$ & $\{1, 10, 11\}$ & $\{1, 10, 12\}$ & $\{1, 11, 12\}$ & $\{2, 3, 4\}$\\
$\{2, 3, 5\}$ & $\{2, 4, 5\}$ & $\{2, 5, 6\}$ & $\{2, 5, 7\}$ & $\{2, 6, 7\}$ & $\{2, 6, 8\}$ & $\{2, 6, 9\}$ & $\{2, 7, 8\}$\\
$\{2, 7, 9\}$ & $\{2, 8, 10\}$ & $\{2, 8, 11\}$ & $\{2, 9, 10\}$ & $\{2, 9, 12\}$ & $\{2, 10, 11\}$ & $\{2, 10, 12\}$ & $\{2, 11, 12\}$\\
$\{2, 11, 13\}$ & $\{2, 12, 13\}$ & $\{3, 4, 5\}$ & $\{3, 5, 7\}$ & $\{3, 5, 9\}$ & $\{3, 6, 9\}$ & $\{3, 6, 10\}$ & $\{3, 6, 11\}$\\
$\{3, 6, 12\}$ & $\{3, 7, 10\}$ & $\{3, 7, 11\}$ & $\{3, 7, 12\}$ & $\{3, 8, 9\}$ & $\{3, 8, 10\}$ & $\{3, 8, 11\}$ & $\{3, 8, 12\}$\\
$\{3, 9, 11\}$ & $\{3, 10, 13\}$ & $\{3, 12, 13\}$ & $\{5, 8, 9\}$ & $\{5, 8, 10\}$ & $\{4, 5, 11\}$ & $\{4, 5, 12\}$ & $\{5, 6, 12\}$\\
$\{5, 9, 10\}$ & $\{5, 9, 13\}$ & $\{5, 10, 11\}$ & $\{5, 10, 13\}$ & $\{5, 11, 12\}$ & $\{5, 11, 13\}$ & $\{5, 12, 13\}$ & $\{4, 6, 11\}$\\
$\{4, 7, 11\}$ & $\{4, 9, 11\}$ & $\{6, 7, 11\}$ & $\{6, 11, 13\}$ & $\{7, 11, 13\}$ & $\{4, 6, 9\}$ & $\{6, 9, 13\}$ & $\{4, 6, 10\}$\\
$\{4, 6, 12\}$ & $\{6, 8, 13\}$ & $\{6, 10, 12\}$ & $\{6, 10, 13\}$ & $\{4, 7, 10\}$ & $\{4, 8, 10\}$ & $\{4, 10, 13\}$ & $\{4, 7, 8\}$\\
$\{4, 7, 12\}$ & $\{4, 8, 9\}$ & $\{4, 8, 13\}$ & $\{4, 9, 12\}$ & $\{7, 9, 13\}$ & $\{8, 9, 13\}$ & $\{7, 8, 12\}$ & $\{7, 8, 13\}$\\
$\{7, 12, 13\}$\\
\end{longtable}}

\noindent We can obtain an NWBTS$(14; 122)$ by adding a block $\{0,5,13\}$ on the above NWBTS$(14; 121)$, where $\lambda_{1,13}=3$ and $\lambda_{0,1}=\lambda_{5,13}=\lambda_{0,5}=5$.

Next we construct an NWBTS$(14; 29)$ on $Z_{14}$, where $\lambda_{2,3}=\lambda_{4,5}=2$ and $\lambda_{0,5}=\lambda_{1,5}=\lambda_{6,7}=\lambda_{8,9}=\lambda_{10,11}=\lambda_{12,13}=0$.
{\small\begin{longtable}{llllllll}
$\{0, 6, 8\}$ & $\{1, 6, 10\}$ & $\{2, 6, 12\}$ & $\{3, 4, 6\}$ & $\{5, 6, 9\}$ & $\{6, 11, 13\}$ & $\{1, 7, 9\}$ & $\{5, 7, 11\}$\\
$\{2, 8, 10\}$ & $\{1, 8, 13\}$ & $\{3, 5, 8\}$ & $\{4, 7, 8\}$ & $\{8, 11, 12\}$ & $\{0, 3, 10\}$ & $\{4, 9, 11\}$ & $\{9, 10, 12\}$\\
$\{1, 3, 12\}$ & $\{2, 3, 11\}$ & $\{2, 3, 7\}$ & $\{3, 9, 13\}$ & $\{0, 2, 9\}$ & $\{1, 2, 4\}$ & $\{4, 5, 10\}$ & $\{0, 1, 11\}$\\
$\{2, 5, 13\}$ & $\{0, 4, 13\}$ & $\{7, 10, 13\}$ & $\{0, 7, 12\}$ & $\{4, 5, 12\}$\\

\end{longtable}}

\noindent We can get an NWBTS$(14; 30)$ by adding a block $\{0, 1, 5\}$ on the above NWBTS$(14; 29)$, where $\lambda_{0,1}=\lambda_{2,3}=\lambda_{4,5}=2$ and $\lambda_{6,7}=\lambda_{8,9}=\lambda_{10,11}=\lambda_{12,13}=0$.
We can obtain an NWBTS$(14; 31)$ by adding two blocks $\{0, 1, 5\}$ and $\{6, 7, 8\}$ on the above NWBTS$(14; 29)$, where $\lambda_{0,1}=\lambda_{2,3}=\lambda_{4,5}=\lambda_{6,8}=\lambda_{7,8}=2$ and $\lambda_{8,9}=\lambda_{10,11}=\lambda_{12,13}=0$.

We construct an NWBTS$(14; 32)$ on $Z_{14}$, where $\lambda_{0,1}=0$ and $\lambda_{2,3}=\lambda_{4,5}=\lambda_{6,7}=\lambda_{8,9}=\lambda_{10,11}=\lambda_{12,13}=2$.

{\small\begin{longtable}{llllllll}
$\{0, 2, 3\}$ & $\{0, 4, 5\}$ & $\{0, 6, 7\}$ & $\{0, 8, 9\}$ & $\{0, 10, 11\}$ & $\{0, 12, 13\}$ & $\{7, 8, 11\}$ & $\{3, 7, 12\}$\\
$\{1, 2, 3\}$ & $\{1, 4, 5\}$ & $\{1, 6, 7\}$ & $\{1, 8, 9\}$ & $\{1, 10, 11\}$ & $\{1, 12, 13\}$ & $\{6, 8, 12\}$ & $\{3, 8, 13\}$\\
$\{2, 4, 6\}$ & $\{2, 5, 8\}$ & $\{2, 7, 10\}$ & $\{2, 9, 12\}$ & $\{2, 11, 13\}$ & $\{3, 4, 9\}$ & $\{3, 6, 10\}$ & $\{5, 6, 13\}$\\
$\{4, 8, 10\}$ & $\{4, 7, 13\}$ & $\{4, 11, 12\}$ & $\{5, 7, 9\}$ & $\{6, 9, 11\}$ & $\{3, 5, 11\}$ & $\{5, 10, 12\}$ & $\{9, 10, 13\}$\\
\end{longtable}}

An NWBTS$(14; 89)$ can be constructed on $Z_{14}$, where $\lambda_{2,3}=4$ and $\lambda_{0,2}=\lambda_{1,2}=\lambda_{4,5}=\lambda_{6,7}=\lambda_{8,9}=\lambda_{10,11}=\lambda_{12,13}=2$.

{\small\begin{longtable}{llllllll}
$\{4, 5, 11\}$ & $\{0, 1, 3\}$ & $\{0, 1, 4\}$ & $\{0, 1, 5\}$ & $\{0, 2, 3\}$ & $\{0, 2, 4\}$ & $\{0, 3, 4\}$& $\{5, 8, 10\}$\\
$\{0, 5, 6\}$ & $\{0, 5, 7\}$ & $\{0, 6, 7\}$ & $\{0, 6, 8\}$ & $\{0, 7, 8\}$ & $\{0, 8, 9\}$ & $\{0, 9, 10\}$& $\{4, 5, 10\}$\\
$\{0, 9, 11\}$ & $\{0, 10, 12\}$ & $\{0, 10, 13\}$ & $\{0, 11, 12\}$ & $\{0, 11, 13\}$ & $\{0, 12, 13\}$ & $\{4, 8, 11\}$& $\{3, 8, 11\}$\\
$\{1, 2, 3\}$ & $\{2, 3, 4\}$ & $\{2, 3, 5\}$ & $\{1, 2, 4\}$ & $\{2, 5, 6\}$ & $\{2, 5, 7\}$ & $\{2, 6, 7\}$& $\{3, 5, 11\}$\\
$\{2, 6, 8\}$ & $\{2, 7, 8\}$ & $\{2, 8, 9\}$ & $\{2, 9, 10\}$ & $\{2, 9, 11\}$ & $\{2, 10, 12\}$ & $\{2, 10, 13\}$&$\{3, 8, 10\}$\\
$\{2, 11, 12\}$ & $\{2, 11, 13\}$ & $\{2, 12, 13\}$ & $\{1, 6, 8\}$ & $\{1, 6, 12\}$ & $\{3, 6, 12\}$ & $\{4, 6, 12\}$& $\{3, 5, 10\}$\\
$\{1, 3, 6\}$ & $\{3, 6, 9\}$ & $\{4, 6, 9\}$ & $\{4, 6, 10\}$ & $\{5, 6, 11\}$ & $\{6, 9, 13\}$ & $\{6, 10, 11\}$& $\{7, 10, 11\}$\\
$\{6, 10, 13\}$ & $\{6, 11, 13\}$ & $\{1, 4, 13\}$ & $\{1, 5, 13\}$ & $\{1, 7, 13\}$ & $\{3, 8, 13\}$ & $\{4, 8, 13\}$& $\{4, 7, 11\}$\\
$\{5, 8, 13\}$ & $\{3, 4, 13\}$ & $\{3, 7, 13\}$ & $\{5, 9, 13\}$ & $\{7, 9, 13\}$ & $\{1, 8, 10\}$ & $\{1, 8, 11\}$& $\{4, 7, 10\}$\\
$\{1, 5, 9\}$ & $\{1, 7, 9\}$ & $\{1, 7, 10\}$ & $\{1, 9, 11\}$ & $\{1, 10, 12\}$ &$\{1, 11, 12\}$ & $\{3, 9, 10\}$& $\{3, 7, 11\}$\\
$\{3, 9, 12\}$ & $\{3, 7, 12\}$ & $\{4, 7, 9\}$ &$\{4, 9, 12\}$ & $\{5, 9, 12\}$ & $\{4, 8, 12\}$ & $\{5, 7, 12\}$& $\{7, 8, 12\}$\\
$\{5, 8, 12\}$\\
\end{longtable}}

\noindent We can obtain an NWBTS$(14; 90)$ by adding a block $\{0, 1, 2\}$ on the above NWBTS$(14; 89)$, where $\lambda_{0,1}=\lambda_{2,3}=4$ and $\lambda_{4,5}=\lambda_{6,7}=\lambda_{8,9}=\lambda_{10,11}=\lambda_{12,13}=2$.

We construct an NWBTS$(14; 91)$ on $Z_{14}$, where $\lambda_{0,1}=\lambda_{4,5}=\lambda_{6,7}=\lambda_{8,9}=4$ and $\lambda_{0,2}=\lambda_{0,3}=\lambda_{10,11}=\lambda_{12,13}=2$.

{\small\begin{longtable}{llllllll}
$\{0, 1, 2\}$ & $\{0, 2, 4\}$ & $\{0, 1, 3\}$ & $\{0, 1, 4\}$ & $\{0, 1, 5\}$ & $\{0, 3, 4\}$ & $\{0, 5, 6\}$ & $\{3, 5, 9\}$\\
$\{0, 5, 7\}$ & $\{0, 6, 7\}$ & $\{0, 6, 8\}$ & $\{0, 7, 8\}$ & $\{0, 8, 9\}$ & $\{0, 9, 10\}$ & $\{0, 9, 11\}$ & $\{4, 5, 9\}$\\
$\{0, 10, 12\}$ & $\{0, 10, 13\}$ & $\{0, 11, 12\}$ & $\{0, 11, 13\}$ & $\{0, 12, 13\}$ & $\{1, 10, 11\}$ & $\{7, 9, 12\}$ & $\{4, 7, 9\}$\\
$\{2, 10, 11\}$ & $\{1, 2, 10\}$ & $\{1, 3, 10\}$ & $\{2, 3, 10\}$ & $\{3, 4, 10\}$ & $\{4, 5, 10\}$ & $\{4, 6, 10\}$ & $\{4, 9, 12\}$\\
$\{5, 6, 10\}$ & $\{5, 7, 10\}$ & $\{6, 7, 10\}$ & $\{7, 8, 10\}$ & $\{8, 9, 10\}$ & $\{8, 10, 12\}$ & $\{9, 10, 13\}$ & $\{4, 7, 11\}$\\
$\{10, 12, 13\}$ & $\{1, 2, 12\}$ & $\{1, 3, 12\}$ & $\{1, 4, 12\}$ & $\{1, 4, 5\}$ & $\{1, 5, 6\}$ & $\{1, 6, 7\}$ & $\{5, 7, 12\}$\\
$\{1, 6, 8\}$ & $\{1, 7, 8\}$ & $\{1, 7, 9\}$ & $\{1, 8, 13\}$ & $\{1, 9, 11\}$ & $\{1, 9, 13\}$ & $\{1, 11, 13\}$ & $\{5, 11, 12\}$\\
$\{2, 6, 7\}$ & $\{2, 6, 12\}$ & $\{3, 6, 12\}$ & $\{4, 6, 12\}$ & $\{2, 3, 6\}$ & $\{3, 6, 9\}$ & $\{4, 6, 11\}$ & $\{5, 8, 12\}$\\
$\{6, 8, 13\}$ & $\{6, 9, 11\}$ & $\{6, 9, 13\}$ & $\{6, 11, 13\}$ & $\{2, 7, 13\}$ & $\{3, 7, 13\}$ & $\{4, 7, 13\}$ & $\{4, 8, 11\}$\\
$\{2, 3, 13\}$ & $\{2, 4, 13\}$ & $\{3, 5, 13\}$ & $\{4, 5, 13\}$ & $\{5, 8, 13\}$ & $\{2, 8, 9\}$ & $\{3, 8, 9\}$ & $\{3, 7, 12\}$\\
$\{2, 4, 8\}$ & $\{2, 5, 8\}$ & $\{2, 5, 9\}$ & $\{2, 9, 12\}$ & $\{2, 5, 11\}$ & $\{2, 7, 11\}$ & $\{3, 4, 8\}$ & $\{3, 7, 11\}$\\
$\{3, 8, 11\}$ & $\{8, 11, 12\}$ & $\{3, 5, 11\}$\\

      \\
\end{longtable}}
\noindent We can get an NWBTS$(14; 92)$ by adding a block $\{0, 2, 3\}$ on the above NWBTS$(14; 91)$, where $\lambda_{10,11}=\lambda_{12,13}=2$ and $\lambda_{0,1}=\lambda_{2,3}=\lambda_{4,5}=\lambda_{6,7}=\lambda_{8,9}=4$.
We can obtain an NWBTS$(14; 93)$ by adding two blocks $\{0, 2, 3\}$ and $\{10, 11, 12\}$ on the above NWBTS$(14; 91)$, where $\lambda_{0,1}=\lambda_{2,3}=\lambda_{4,5}=\lambda_{6,7}=\lambda_{8,9}=\lambda_{10,12}=\lambda_{11,12}=4$ and $\lambda_{12,13}=2$.

We construct an NWBTS$(14; 150)$ on $Z_{14}$, whose blocks are generated by the following 75 base blocks modulo 7,
where $\lambda_{0,7}=6$ and $\lambda_{1,8}=\lambda_{2,9}=\lambda_{3,10}=\lambda_{4,11}=\lambda_{5,12}=\lambda_{6,13}=4$.

{\small\begin{longtable}{llllllll}
$\{0, 1, 7\}$ & $\{0, 2, 7\}$ & $\{0, 3, 7\}$ & $\{0, 1, 8\}$ & $\{1, 2, 8\}$ & $\{0, 2, 9\}$ & $\{1, 2, 9\}$ & $\{4, 12, 13\}$\\
$\{0, 3, 10\}$ & $\{1, 3, 10\}$ & $\{0, 4, 11\}$ & $\{1, 4, 11\}$ & $\{0, 5, 12\}$ & $\{1, 5, 12\}$ & $\{0, 6, 13\}$ & $\{4, 6, 12\}$\\
$\{1, 6, 13\}$ & $\{0, 1, 2\}$ & $\{0, 1, 3\}$ & $\{0, 1, 4\}$ & $\{0, 2, 3\}$ & $\{0, 2, 4\}$ & $\{0, 3, 4\}$ & $\{3, 12, 13\}$\\
$\{0, 4, 5\}$ & $\{0, 5, 6\}$ & $\{0, 5, 8\}$ & $\{0, 5, 9\}$ & $\{0, 6, 8\}$ & $\{0, 6, 9\}$ & $\{0, 6, 11\}$ & $\{3, 11, 13\}$\\
$\{0, 8, 12\}$ & $\{0, 9, 13\}$ & $\{0, 10, 11\}$ & $\{0, 10, 12\}$ & $\{0, 10, 13\}$ & $\{0, 11, 12\}$ & $\{0, 11, 13\}$ & $\{3, 11, 12\}$\\
$\{0, 12, 13\}$ & $\{1, 2, 3\}$ & $\{1, 2, 4\}$ & $\{1, 3, 4\}$ & $\{1, 3, 5\}$ & $\{1, 4, 5\}$ & $\{1, 5, 6\}$ & $\{3, 6, 12\}$\\
$\{1, 6, 10\}$ & $\{1, 6, 11\}$ & $\{1, 6, 12\}$ & $\{1, 9, 11\}$ & $\{1, 9, 12\}$ & $\{1, 9, 13\}$ & $\{1, 10, 11\}$ & $\{3, 6, 11\}$\\
$\{1, 10, 12\}$ & $\{1, 10, 13\}$ & $\{1, 11, 13\}$ & $\{2, 3, 5\}$ & $\{2, 3, 11\}$ & $\{2, 3, 13\}$ & $\{2, 4, 10\}$ & $\{3, 4, 12\}$\\
$\{2, 4, 12\}$ & $\{2, 5, 10\}$ & $\{2, 5, 11\}$ & $\{2, 5, 13\}$ & $\{2, 6, 10\}$ & $\{2, 6, 11\}$ & $\{2, 6, 12\}$ & $\{2, 11, 13\}$\\
$\{2, 10, 12\}$ & $\{2, 10, 13\}$ & $\{2, 11, 12\}$\\
\end{longtable}}
\noindent We can obtain an NWBTS$(14; 151)$ by adding a block $\{1, 3, 8\}$ on the above NWBTS$(14; 150)$, where $\lambda_{0,7}=\lambda_{1,3}=\lambda_{3,8}=6$ and $\lambda_{2,9}=\lambda_{3,10}=\lambda_{4,11}=\lambda_{5,12}=\lambda_{6,13}=4$.

We construct an NWBTS$(14; 152)$ on $Z_{14}$,
where $\lambda_{0,1}=\lambda_{2,3}=\lambda_{4,5}=\lambda_{6,7}=6$ and $\lambda_{8,9}=\lambda_{10,11}=\lambda_{12,13}=4$.

{\small\begin{longtable}{lllllll}
$\{0, 1, 8\}$ & $\{0, 1, 9\}$ & $\{0, 1, 10\}$ & $\{0, 1, 11\}$ & $\{0, 1, 12\}$ & $\{0, 1, 13\}$ & $\{0, 2, 10\}$\\
$\{0, 2, 11\}$ & $\{0, 2, 12\}$ & $\{0, 2, 13\}$ & $\{0, 3, 10\}$ & $\{0, 3, 11\}$ & $\{0, 3, 12\}$ & $\{0, 3, 13\}$\\
$\{0, 4, 9\}$ & $\{0, 4, 10\}$ & $\{0, 4, 11\}$ & $\{0, 4, 12\}$ & $\{0, 4, 13\}$ & $\{0, 5, 6\}$ & $\{0, 5, 7\}$\\
$\{0, 5, 8\}$ & $\{0, 5, 12\}$ & $\{0, 5, 13\}$ & $\{0, 6, 7\}$ & $\{0, 6, 8\}$ & $\{0, 6, 9\}$ & $\{0, 6, 11\}$\\
$\{0, 7, 8\}$ & $\{0, 7, 9\}$ & $\{0, 7, 10\}$ & $\{0, 8, 9\}$ & $\{1, 2, 3\}$ & $\{1, 2, 4\}$ & $\{1, 2, 5\}$\\
$\{1, 2, 6\}$ & $\{1, 2, 7\}$ & $\{1, 3, 4\}$ & $\{1, 3, 5\}$ & $\{1, 3, 6\}$ & $\{1, 3, 7\}$ & $\{1, 4, 7\}$\\
$\{1, 4, 9\}$ & $\{ 1, 4, 13\}$ & $\{1, 5, 6\}$ & $\{1, 5, 7\}$ & $\{1, 5, 11\}$ & $\{1, 6, 11\}$ & $\{1, 6, 13\}$\\
$\{1, 7, 9\}$ & $\{1, 8, 10\}$ & $\{1, 8, 11\}$ & $\{1, 8, 12\}$ & $\{1, 8, 13\}$ & $\{1, 9, 10\}$ & $\{1, 9, 12\}$\\
$\{1, 10, 12\}$ & $\{1, 10, 13\}$ & $\{1, 11, 12\}$ & $\{2, 3, 4\}$ & $\{2, 3, 5\}$ & $\{2, 3, 6\}$ & $\{2, 3, 7\}$\\
$\{2, 4, 6\}$ & $\{2, 4, 7\}$ & $\{2, 4, 9\}$ & $\{2, 5, 9\}$ & $\{2, 5, 11\}$ & $\{2, 5, 13\}$ & $\{2, 6, 9\}$\\
$\{2, 6, 11\}$ & $\{2, 7, 8\}$ & $\{2, 7, 13\}$ & $\{2, 8, 10\}$ & $\{2, 8, 11\}$ & $\{2, 8, 12\}$ & $\{2, 8, 13\}$\\
$\{2, 9, 10\}$ & $\{2, 9, 12\}$ & $\{2, 10, 12\}$ & $\{2, 10, 13\}$ & $\{2, 11, 12\}$ & $\{3, 4, 6\}$ & $\{3, 4, 9\}$\\
$\{3, 4, 11\}$ & $\{3, 5, 9\}$ & $\{3, 5, 11\}$ & $\{3, 5, 13\}$ & $\{3, 6, 9\}$ & $\{3, 6, 13\}$ & $\{3, 7, 8\}$\\
$\{3, 7, 9\}$ & $\{3, 7, 12\}$ & $\{3, 8, 10\}$ & $\{3, 8, 11\}$ & $\{3, 8, 12\}$ & $\{3, 8, 13\}$ & $\{3, 9, 10\}$\\
$\{3, 10, 12\}$ & $\{3, 10, 13\}$ & $\{3, 11, 12\}$ & $\{4, 5, 8\}$ & $\{4, 5, 9\}$ & $\{4, 5, 10\}$ & $\{4, 5, 11\}$\\
$\{4, 5, 12\}$ & $\{4, 5, 13\}$ & $\{4, 6, 8\}$ & $\{4, 6, 10\}$ & $\{4, 6, 12\}$ & $\{4, 7, 10\}$ & $\{4, 7, 11\}$\\
$\{4, 7, 12\}$ & $\{4, 8, 11\}$ & $\{4, 8, 12\}$ & $\{4, 8, 13\}$ & $\{4, 10, 13\}$ & $\{5, 6, 8\}$ & $\{5, 6, 10\}$\\
$\{5, 6, 12\}$ & $\{5, 7, 8\}$ & $\{5, 7, 10\}$ & $\{5, 7, 12\}$ & $\{5, 8, 10\}$ & $\{5, 9, 11\}$ & $\{5, 9, 13\}$\\
$\{5, 10, 12\}$ & $\{6, 7, 8\}$ & $\{6, 7, 10\}$ & $\{6, 7, 11\}$ & $\{6, 7, 12\}$ & $\{6, 7, 13\}$ & $\{6, 8, 10\}$\\
$\{6, 9, 10\}$ & $\{6, 9, 12\}$ & $\{6, 11, 13\}$ & $\{6, 12, 13\}$ & $\{7, 9, 11\}$ & $\{7, 9, 13\}$ & $\{7, 10, 11\}$\\
$\{7, 11, 13\}$ & $\{7, 12, 13\}$ & $\{8, 9, 11\}$ & $\{8, 9, 12\}$ & $\{8, 9, 13\}$ & $\{9, 10, 11\}$ & $\{9, 11, 13\}$\\
$\{9, 12, 13\}$ & $\{10, 11, 12\}$ & $\{10, 11, 13\}$ & $\{0, 2, 3\}$ & $\{11, 12, 13\}$\\
\end{longtable}}
\noindent We can get an NWBTS$(14; 153)$ by adding a block $\{8, 9, 10\}$ on the above NWBTS$(14; 152)$, where $\lambda_{0,1}=\lambda_{2,3}=\lambda_{4,5}=\lambda_{6,7}=\lambda_{8,10}=\lambda_{9,10}=6$ and $\lambda_{10,11}=\lambda_{12,13}=4$.
\qed

\begin{lemma}\label{10}
There exists an NWBTS$(10; b)$, where $b\equiv14,15,16\pmod{30}$.
\end{lemma}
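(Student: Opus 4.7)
\proof (Proof plan.)
By Lemma \ref{bb}(i), any NWBTS$(10;b)$ with $b<\binom{10}{3}=120$ gives an NWBTS$(10;b+120h)$ for every $h\ge 0$, and by Lemma \ref{bb}(ii) it also gives an NWBTS$(10;120-b)$. Since $120=4\cdot 30$, the residues $b\equiv 14,15,16\pmod{30}$ with $b<120$ are exactly $\{14,15,16,44,45,46,74,75,76,104,105,106\}$, and complementation pairs $14\!\leftrightarrow\!106$, $15\!\leftrightarrow\!105$, $16\!\leftrightarrow\!104$, $44\!\leftrightarrow\!76$, $45\!\leftrightarrow\!75$, $46\!\leftrightarrow\!74$. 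Hence it suffices to exhibit an NWBTS$(10;b)$ for the six seed values $b\in\{14,15,16,44,45,46\}$.

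For each seed $b$, write $3b=\lambda\binom{10}{2}+\varepsilon$. One finds $(\lambda,\varepsilon)=(1,-3),(1,0),(1,3),(3,-3),(3,0),(3,3)$ respectively. Since $v/2=5$ is odd, Lemma~\ref{c2balanced} and Figure~\ref{f2} prescribe the defect graph as follows: when $\varepsilon=\pm3$, the defect graph is the perfect matching $H_{10,\varepsilon}^0$ on the $10$ points, containing $(10+2\varepsilon)/4$ solid edges and $(10-2\varepsilon)/4$ dotted ones; when $\varepsilon=0$, the defect graph must be isomorphic to $H_{10,0}^1$ or $H_{10,0}^2$, which has six edges of the appropriate colour distribution and is not a matching.

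I would build the three light seeds on the point set $Z_{10}$, in the style of Lemmas \ref{81} and \ref{14}. First fix an NWBTS$(10;14)$ by listing its fourteen triples so that the five pairs forming the defect perfect matching (one in $D_1$, four in $D_{-1}$) appear with the required multiplicities; a short search over base blocks of a $Z_5$-action produces one. Then obtain NWBTS$(10;15)$ and NWBTS$(10;16)$ by successively adjoining one carefully chosen new triple each, arranged so that the defect graph first becomes the non-matching type $H_{10,0}^1$ (or $H_{10,0}^2$) and then becomes the perfect matching $H_{10,3}^0$. For the heavy seeds $b\in\{44,45,46\}$, I would adjoin to each of the simple NWBTS$(10;14),(10;15),(10;16)$ a simple $S_2(2,3,10)$ whose $30$ blocks are disjoint from the triples already used; by Lemma~\ref{tee} applied with $(p,q)=(2,1)$, the union is an NWBTS$(10;b+30)$ with the same defect graph as the seed.

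The main obstacle is producing the $\varepsilon=0$ cases $b=15$ and, after adjoining the $S_2(2,3,10)$, $b=45$: here the defect graph is forced into the asymmetric small shape $H^1$ or $H^2$ rather than a matching, so the adjustment triple must simultaneously fix three pairs with prescribed replication numbers while keeping the whole family simple and $3$-balanced. A second minor concern is producing a simple $S_2(2,3,10)$ block-disjoint from each light seed; this is resolved by choosing the seeds' base blocks from one $Z_5$-orbit structure and the $S_2(2,3,10)$ from an essentially different orbit structure, so disjointness is automatic. Once these small direct constructions are in hand, Lemma \ref{bb} assembles them into an NWBTS$(10;b)$ for every $b\equiv 14,15,16\pmod{30}$. \qed
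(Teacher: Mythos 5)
Your overall reduction is exactly the paper's: Lemma \ref{bb} cuts the problem down to the six seed values $b\in\{14,15,16,44,45,46\}$, the three light seeds are handled by small direct constructions on $10$ points, and the heavy seeds follow by adjoining a block-disjoint simple S$_2(2,3,10)$ via Lemma \ref{tee}. Your bookkeeping of $(\lambda,\varepsilon)$ and of the prescribed defect graphs ($H^0_{10,\pm3}$ a perfect matching with $4$ solid $+\,1$ dotted or $1$ solid $+\,4$ dotted edges; $H^1/H^2$ with six edges for $\varepsilon=0$) is also correct.

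However, the specific route you propose for the seed $b=16$ cannot work. Your plan is to fix an NWBTS$(10;14)$ and then reach $b=15$ and $b=16$ by adjoining one new triple at each step. The $14$-design is forced to have defect graph $H^0_{10,-3}$: one $D_1$-edge and four pairwise disjoint $D_{-1}$-edges. Adding triples can only move pair-counts upward ($\lambda-1\to\lambda\to\lambda+1$), so it never creates new $D_{-1}$-edges; hence at every later stage $D_{-1}$ is a subset of those four disjoint edges and is always a matching. To pass from a $15$-design ($|D_{-1}|=3$) to a $16$-design ($|D_{-1}|=1$) with a single triple, that triple would have to contain two $D_{-1}$-pairs, i.e.\ two $D_{-1}$-edges sharing a vertex --- impossible. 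In fact no NWBTS$(10;16)$ can contain your $14$-design at all, since each added triple lowers $|D_{-1}|$ by at most one, and two additions leave $|D_{-1}|\ge 2>1$. Concretely, the only admissible intermediate you can reach from the $14$-design is of type $H^1$ (degree-$3$ vertex incident to two $D_1$-edges and one $D_{-1}$-edge), which is precisely the type from which the step to $H^0_{10,3}$ is blocked; the $H^2$ alternative you leave open cannot arise in this chain. So the $b=16$ seed must be built independently of the $14$- and $15$-designs, which is what the paper does (its explicit $16$-block system is not an extension of its $15$-block one). A secondary point: since the entire content of this lemma is the explicit small designs, deferring them to an unperformed ``short search'' leaves the proof incomplete even where the strategy is sound --- the paper supplies the block lists for $b=14,15,16$ and an explicit disjoint S$_2(2,3,10)$ for $b=44,45,46$.
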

\proof  We only need to consider that $b\in[14,16]\cup[44,46]$ by Lemma \ref{bb}.
We first construct an NWBTS$(10; 14)$ on $Z_{10}$, where $\lambda_{0,1}=2$ and $\lambda_{2,8}=\lambda_{3,6}=\lambda_{4,9}=\lambda_{5,7}=0$.

{\small\begin{longtable}{lllllllll}
$\{0, 1, 2\}$ & $\{0, 1, 3\}$ & $\{0, 4, 5\}$ & $\{0, 6, 7\}$ & $\{0, 8, 9\}$ & $\{1, 4, 7\}$ & $\{1, 5, 8\}$\\
$\{1, 6, 9\}$ & $\{2, 3, 4\}$ & $\{2, 5, 6\}$ & $\{2, 7, 9\}$ & $\{3, 5, 9\}$ & $\{3, 7, 8\}$ & $\{4, 6, 8\}$\\
\end{longtable}}

\noindent We can get an NWBTS$(10; 15)$ by adding a block $\{2,8,9\}$ on the above NWBTS$(10; 14)$,  where $\lambda_{0,1}=\lambda_{2,9}=\lambda_{8,9}=2$ and $\lambda_{3,6}=\lambda_{4,9}=\lambda_{5,7}=0$.

We construct an NWBTS$(10; 16)$ on $Z_{10}$, where $\lambda_{4,8}=0$ and $\lambda_{0,1}=\lambda_{2,3}=\lambda_{5,7}=\lambda_{6,9}=2$.

{\small\begin{longtable}{lllllllll}
$\{0, 1, 2\}$ & $\{0, 1, 3\}$ & $\{0, 4, 5\}$ & $\{0, 6, 7\}$ & $\{0, 8, 9\}$ & $\{1, 4, 7\}$ & $\{3, 5, 7\}$ & $\{3, 6, 8\}$\\
$\{1, 5, 8\}$ & $\{1, 6, 9\}$ & $\{2, 3, 4\}$ & $\{2, 3, 9\}$ & $\{2, 5, 6\}$ & $\{2, 7, 8\}$ & $\{4, 6, 9\}$ & $\{5, 7, 9\}$\\
\\
\end{longtable}}

We get a simple S$_{2}(2,3,10)$, whose blocks are different from all blocks of the above NWBTS$(10;$ $r)(r\in[14,16])$.

{\small\begin{longtable}{llllllllll}
$\{0, 2, 9\}$ & $\{1, 2, 9\}$ & $\{0, 5, 9\}$ & $\{1, 5, 9\}$ & $\{3, 4, 9\}$ & $\{4, 7, 9\}$ & $\{3, 6, 9\}$ & $\{6, 8, 9\}$ & $\{7, 8, 9\}$ &$\{0, 1, 4\}$\\
$\{0, 1, 5\}$ & $\{0, 2, 6\}$ & $\{0, 3, 6\}$ & $\{0, 3, 7\}$ & $\{0, 4, 8\}$ & $\{0, 7, 8\}$ & $\{1, 2, 7\}$ & $\{1, 6, 7\}$ & $\{1, 3, 4\}$ & $\{1, 3, 8\}$\\
$\{1, 6, 8\}$ & $\{2, 3, 7\}$ & $\{2, 4, 6\}$ & $\{2, 3, 5\}$ & $\{3, 5, 8\}$ & $\{2, 4, 8\}$ & $\{2, 5, 8\}$ & $\{4, 5, 6\}$ & $\{4, 5, 7\}$ & $\{5, 6, 7\}$\\
\end{longtable}}
\noindent Then we can obtain an NWBTS$(10; b)$ $(b \in[44,46])$ by combining all blocks of the S$_{2}(2,3,10)$ and the NWBTS$(10$; $r)$ $(r\in[14,16])$, respectively.\qed

\begin{lemma}\label{ddf}
Let $X$ be a set of $16$ points, $Y$ be a $4$-subset of $X$, and let $b\in[38,42]$. Then there exist two disjoint  designs defined over $X$, an S$_{6}(2,3,16)$ and an NWBTS$(16,b)$, whose blocks all belong to $\binom{X}{3}\backslash \binom{Y}{3}$.
\end{lemma}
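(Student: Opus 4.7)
The plan is to exhibit the six designs $\mathcal{S},\mathcal{F}_{38},\ldots,\mathcal{F}_{42}$ directly, in the spirit of the other appendix lemmas; explicit block lists on $X=Z_{16}$ with $Y=\{12,13,14,15\}$ fixed can be produced by a modest computer search. A cardinality check shows feasibility: the $240$ triples of an S$_6(2,3,16)$, together with at most $42$ triples of an NWBTS$(16;b)$ and the $\binom{4}{3}=4$ triples of $\binom{Y}{3}$, sum to at most $286$, while $\binom{16}{3}=560$, so nothing is ruled out by counting.

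I would build a nested chain $\mathcal{F}_{38}\subseteq\mathcal{F}_{39}\subseteq\cdots\subseteq\mathcal{F}_{42}$ by producing $\mathcal{F}_{38}$ first and adjoining one triple per step thereafter. Since $(16,b)$ associates with $(\lambda,\varepsilon)=(1,3b-120)$, Lemma~\ref{c2balanced} together with the definition of nearly $2$-balance forces the defect graph to be $H_{16,-6}^{0}$ for $b=38$ (a perfect matching with one solid edge of pair-replication~$2$ and seven dotted edges of pair-replication~$0$), one of $H_{16,-3}^{1},H_{16,-3}^{2}$ for $b=39$, $H_{16,0}^{0}$ for $b=40$, one of $H_{16,3}^{1},H_{16,3}^{2}$ for $b=41$, and $H_{16,6}^{0}$ for $b=42$. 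At each step the added triple $\{x,y,z\}$ must have its three pairs currently lying in $D_{-1}\cup D_0$ and must, after the update, leave the defect graph in the prescribed pattern; moreover $\{x,y,z\}\not\subseteq Y$ and $\{x,y,z\}$ must be new. Arranging the dotted matching of $\mathcal{F}_{38}$ so that two of its edges cover the four vertices of $Y$ automatically excludes $\binom{Y}{3}$ from $\mathcal{F}_{38}$, and the added-triple exclusion at each subsequent step extends the property to the whole chain.

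Second, I would build one S$_6(2,3,16)$ $\mathcal{S}$ inside $\binom{X}{3}\setminus(\mathcal{F}_{42}\cup\binom{Y}{3})$. Since $\mathcal{F}_{b}\subseteq\mathcal{F}_{42}$ for every $b$, disjointness from $\mathcal{F}_{42}$ implies disjointness from every $\mathcal{F}_b$, so a single $\mathcal{S}$ suffices. A natural seed is any S$_6(2,3,16)$ (for instance, a cyclic one modulo $16$), after which a sequence of Pasch trades is used to excise the at most $46$ forbidden triples while preserving the $6$-fold pair coverage.

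The principal obstacle is this final trade step: although counting and the flexibility of triple systems at order $16$ make existence unsurprising, one must in fact verify that an S$_6(2,3,16)$ can be realized away from the $46$ forbidden triples, which is naturally handled by a computer backtracking search. The proof would be completed by tabulating the six resulting block lists and verifying the pair-replication multisets directly.
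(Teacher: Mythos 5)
Your identification of the admissible defect graphs (via Lemma~\ref{c2balanced} and the definition of nearly $2$-balance) is correct, and the overall strategy of explicit lists plus a disjoint S$_6(2,3,16)$ is the same in spirit as the paper's. But the nested chain $\mathcal{F}_{38}\subseteq\mathcal{F}_{39}\subseteq\cdots\subseteq\mathcal{F}_{42}$ is combinatorially impossible, so the core of your plan cannot be carried out. Note first that adding triples can never destroy a solid edge: a pair already at replication $\lambda+1=2$ would be pushed to $3$, giving defect $2$. Now pass from $b=38$ to $b=39$: the only admissible pattern for the added triple is one pair at level $0$ and two pairs at level $1$, and since the $38$-design's defect graph is a perfect matching, the resulting degree-$3$ vertex of the $39$-design is the third vertex $t$ of the triple, carrying the two new solid edges plus its old matching edge; to stay nearly $2$-balanced that old edge must be dotted, so the $39$-design is forced to be of type $H^{1}_{16,-3}$, with its center incident to two solid edges. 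But the $40$-design must have defect graph $H^{0}_{16,0}$, a perfect matching, and since the center's two solid edges persist under any further addition, its degree stays at least $2$: no single added triple can take an $H^{1}$-type $39$-design to a $40$-design. The same obstruction recurs at $41\to 42$. This is precisely why the paper does \emph{not} nest: it gives fresh base lists $\mathcal{B}_1$, $\mathcal{B}_2$, and a third list for $b=38,40,42$, and extends by one triple only to reach $b=39$ and $b=41$. Your claim that a single S$_6(2,3,16)$ disjoint from $\mathcal{F}_{42}$ suffices also collapses with the nesting; once the five block sets are no longer nested you must either demand disjointness from their union or, as the paper does, construct the S$_6(2,3,16)$ first (cyclically on $Z_{16}$, avoiding $\binom{Y}{3}$) and then build each NWBTS inside its complement. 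Your observation that two level-$0$ matching edges covering $Y$ automatically exclude $\binom{Y}{3}$ is correct and matches what the paper's $\mathcal{B}_1$ implicitly does, but it does not repair the structural flaw above.
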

\proof Let $X=Z_{16}$ and $Y=Z_{4}$. The block set $\mathcal{A}$ of an S$_{6}(2,3,16)$ is generated by the following base blocks modulo 16 from the triples of $\binom{X}{3}\backslash \binom{Y}{3}$.
{\small\begin{longtable}{llllllll}
$\{0, 2, 8\}$& $\{0, 2, 10\}$& $\{0, 3, 11\}$& $\{0, 2, 6\}$& $\{0, 2, 7\}$& $\{0, 2, 11\}$& $\{0, 2, 13\}$& $\{0, 1, 4\}$\\
$\{0, 1, 13\}$& $\{0, 3, 10\}$& $\{0, 3, 12\}$& $\{0, 1, 5\}$& $\{0, 1, 12\}$& $\{0, 1, 7\}$& $\{0, 1, 10\}$\\
\end{longtable}}

Next we construct an NWBTS$(16;38)$ with the block set $\mathcal{B}_{1}$ contained in $\binom{X}{3}\backslash \big(\binom{Y}{3}\cup \mathcal{A}\big)$, where $\lambda_{14,15}=2$ and $\lambda_{0,1}=\lambda_{2,3}=\lambda_{4,5}=\lambda_{6,7}=\lambda_{8,9}=\lambda_{10,11}=\lambda_{12,13}=0$.
{\small\begin{longtable}{llllllll}
$\{0, 2, 4\}$ &$\{0, 3, 9\}$ &$\{0, 7, 8\}$ &$\{0, 5, 11\}$ &$\{0, 6, 12\}$ &$\{0, 10, 15\}$ &$\{0, 13, 14\}$ &$\{3, 10, 13\}$\\
$\{5, 9, 15\}$ &$\{2, 5, 10\}$ &$\{3, 5, 7\}$ &$\{1, 3, 6\}$ &$\{6, 8, 10\}$ &$\{1, 4, 10\}$ &$\{4, 6, 9\}$ &$\{5, 6, 13\}$\\
$\{1, 5, 14\}$ &$\{5, 8, 12\}$ &$\{3, 8, 15\}$ &$\{1, 8, 11\}$ &$\{1, 2, 15\}$ &$\{11, 13, 15\}$ &$\{2, 6, 11\}$ &$\{6, 14, 15\}$\\
$\{7, 14, 15\}$ &$\{4, 12, 15\}$ &$\{3, 11, 12\}$ &$\{3, 4, 14\}$ &$\{4, 8, 13\}$ &$\{2, 8, 14\}$ &$\{4, 7, 11\}$ &$\{7, 9, 10\}$\\
$\{9, 11, 14\}$ &$\{10, 12, 14\}$ &$\{1, 7, 13\}$ &$\{1, 9, 12\}$ &$\{2, 7, 12\}$ &$\{2, 9, 13\}$\\
\end{longtable}}

\noindent We can get an NWBTS$(16; 39)$ with the block set $\mathcal{B}_{1}\cup\{\{2, 3, 4\}\}$ contained in $\binom{X}{3}\backslash \big(\binom{Y}{3}\cup \mathcal{A}\big)$, where $\lambda_{0,1}=\lambda_{2,4}=\lambda_{3,4}=2$ and $\lambda_{4,5}=\lambda_{6,7}=\lambda_{8,9}=\lambda_{10,11}=\lambda_{12,13}=\lambda_{14,15}=0$.

We construct an NWBTS$(16; 40)$ with the block set $\mathcal{B}_{2}$ contained in $\binom{X}{3}\backslash \big(\binom{Y}{3}\cup \mathcal{A}\big)$, where $\lambda_{0,1}=\lambda_{2,3}=\lambda_{4,5}=\lambda_{6,7}=2$ and $\lambda_{8,9}=\lambda_{10,11}=\lambda_{12,13}=\lambda_{14,15}=0$.
{\small\begin{longtable}{llllllll}
$\{0, 2, 4\}$& $\{2, 3, 5\}$& $\{2, 3, 8\}$& $\{1, 3, 4\}$& $\{0, 1, 15\}$& $\{2, 13, 15\}$& $\{0, 1, 8\}$& $\{8, 10, 12\}$\\
$\{0, 3, 9\}$& $\{0, 12, 14\}$& $\{0, 7, 11\}$& $\{0, 5, 10\}$& $\{0, 6, 13\}$& $\{2, 7, 12\}$& $\{1, 2, 9\}$& $\{1, 10, 13\}$\\
$\{1, 7, 14\}$& $\{1, 5, 11\}$& $\{1, 6, 12\}$& $\{2, 10, 14\}$& $\{2, 6, 11\}$& $\{3, 6, 10\}$& $\{3, 7, 13\}$& $\{3, 11, 14\}$\\
$\{3, 12, 15\}$& $\{4, 5, 12\}$& $\{8, 11, 15\}$& $\{4, 11, 13\}$& $\{9, 11, 12\}$& $\{5, 9, 13\}$& $\{4, 5, 15\}$& $\{8, 13, 14\}$\\
$\{4, 6, 8\}$& $\{4, 7, 10\}$& $\{4, 9, 14\}$& $\{5, 7, 8\}$& $\{5, 6, 14\}$& $\{6, 7, 9\}$& $\{6, 7, 15\}$& $\{9, 10, 15\}$\\
\end{longtable}}

\noindent We can get an NWBTS$(16; 41)$ with the block set $\mathcal{B}_{2}\cup\{\{10, 11, 12\}\}$  contained in $\binom{X}{3}\backslash \big(\binom{Y}{3}\cup \mathcal{A}\big)$, where $\lambda_{0,1}=\lambda_{2,3}=\lambda_{4,5}=\lambda_{6,7}=\lambda_{8,10}=\lambda_{9,10}=2$ and $\lambda_{10,11}=\lambda_{12,13}=\lambda_{14,15}=0$.

We construct an NWBTS$(16; 42)$ from the triples of $\binom{X}{3}\backslash \big(\binom{Y}{3}\cup \mathcal{A}\big)$, where $\lambda_{0,1}=\lambda_{2,3}=\lambda_{4,5}=\lambda_{6,7}=\lambda_{8,9}=\lambda_{10,11}=\lambda_{12,13}=2$ and $\lambda_{14,15}=0$.
{\small\begin{longtable}{lllllll}
$\{0, 2, 4\}$& $\{2, 3, 5\}$& $\{2, 3, 8\}$ & $\{1, 3, 4\}$ & $\{0, 1, 15\}$& $\{1, 2, 7\}$& $\{2, 6, 15\}$\\
$\{2, 12, 13\}$& $\{12, 13, 15\}$& $\{2, 9, 11\}$ & $\{2, 10, 14\}$ & $\{4, 9, 14\}$& $\{4, 5, 10\}$& $\{4, 8, 15\}$\\
$\{5, 9, 15\}$& $\{3, 10, 15\}$& $\{7, 11, 15\}$ & $\{4, 11, 13\}$ & $\{4, 6, 7\}$& $\{4, 5, 12\}$& $\{0, 10, 11\}$\\
$\{10, 11, 12\}$& $\{0, 5, 13\}$& $\{1, 5, 11\}$ & $\{3, 11, 14\}$ & $\{6, 8, 11\}$& $\{1, 6, 10\}$& $\{0, 3, 6\}$\\
$\{3, 9, 13\}$& $\{1, 8, 13\}$& $\{3, 7, 12\}$ & $\{6, 9, 12\}$ & $\{0, 1, 9\}$& $\{1, 12, 14\}$& $\{0, 8, 12\}$\\
$\{0, 7, 14\}$& $\{5, 6, 7\}$& $\{5, 8, 14\}$ & $\{6, 13, 14\}$ & $\{7, 10, 13\}$& $\{7, 8, 9\}$& $\{8, 9, 10\}$\\
\end{longtable}}\qed

\begin{lemma}\label{N16}
There exists an NWBTS$(16 ; b)$ for $b\equiv38,39,40,41,42\pmod{80}$.
\end{lemma}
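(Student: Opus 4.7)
The plan is to reduce to a finite list of small values of $b$ via Lemma \ref{bb}, invoke Lemma \ref{ddf} for the base cases, and extend to the remaining values by adjoining disjoint simple S$_{p}(2,3,16)$s through Lemma \ref{tee}. Since $\binom{16}{3}=560$ and condition (C2) at $v=16$ forces $b\equiv r\pmod{80}$ for some $r\in\{38,39,40,41,42\}$, Lemma \ref{bb}(i) reduces to $b<560$ and Lemma \ref{bb}(ii) further to $b\le 280$, so it suffices to construct an NWBTS$(16;b)$ for each
\[
b\in L:=\{38,\ldots,42\}\cup\{118,\ldots,122\}\cup\{198,\ldots,202\}\cup\{278,279,280\};
\]
the residual values $b\in\{281,282\}$ then follow by complementation from $b\in\{279,278\}$.

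For $b\in\{38,\ldots,42\}$, Lemma \ref{ddf} supplies an NWBTS$(16;b)$ together with a disjoint simple S$_{6}(2,3,16)$ on $X=Z_{16}$, both with all blocks in $\binom{X}{3}\setminus\binom{Y}{3}$ for a fixed $4$-subset $Y\subset X$. Applying Lemma \ref{tee} with $(p,q)=(6,1)$ to this pair yields NWBTS$(16;b+240)$ for $b+240\in\{278,\ldots,282\}$, settling the top range of $L$ and the two complementary values.

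For the two intermediate ranges I would decompose the $Z_{16}$-cyclic S$_{6}(2,3,16)$ of Lemma \ref{ddf} (generated by $15$ base blocks whose difference multiset over $(1,\ldots,7,8)$ is $(6,6,6,6,6,6,6,3)$) into three pairwise disjoint simple S$_{2}(2,3,16)$s, each generated by a quintuple of base blocks whose difference multiset is $(2,2,2,2,2,2,2,1)$. One such tripartition of the base blocks is
\[
\begin{aligned}
A&=\{\{0,2,8\},\{0,1,7\},\{0,1,5\},\{0,2,13\},\{0,3,12\}\},\\
B&=\{\{0,2,10\},\{0,2,7\},\{0,1,4\},\{0,1,12\},\{0,3,10\}\},\\
C&=\{\{0,3,11\},\{0,2,6\},\{0,2,11\},\{0,1,13\},\{0,1,10\}\}.
\end{aligned}
\]
Adjoining $q\in\{1,2\}$ of the resulting S$_{2}(2,3,16)$s to NWBTS$(16;b)$ via Lemma \ref{tee} with $p=2$ then gives NWBTS$(16;b+80q)$, covering both $\{118,\ldots,122\}$ and $\{198,\ldots,202\}$ and exhausting $L$.

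The chief technical point is verifying that each quintuple above indeed generates a \emph{simple} S$_{2}(2,3,16)$ (the difference counts force the combinatorial design, but one must check the absence of nontrivial base-block stabilizers giving repeated blocks) and that these three S$_{2}$'s remain disjoint from the NWBTS$(16;b)$ of Lemma \ref{ddf}; the latter is automatic since the S$_{2}$'s are subfamilies of the S$_{6}$ that Lemma \ref{ddf} already exhibits as disjoint from the NWBTS. Should the concrete partition above fail for the specific base blocks used in Lemma \ref{ddf}, the fallback consistent with the style of the rest of the appendix is to supply explicit block-by-block NWBTS$(16;b)$ constructions for the ten residual values in $\{118,\ldots,122\}\cup\{198,\ldots,202\}$, verifying directly that each defect graph matches the prescribed form $H_{16,\varepsilon}^{0}$, $H_{16,\varepsilon}^{1}$, or $H_{16,\varepsilon}^{2}$ of Figure \ref{f2}.
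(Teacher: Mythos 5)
Your proposal is correct, and its overall shape matches the paper's: reduce via Lemma \ref{bb}, take the base cases $b\in[38,42]$ from Lemma \ref{ddf}, and extend by adjoining disjoint cyclic designs through Lemma \ref{tee}. The differences lie in the extension step and in the range covered. The paper constructs two new disjoint simple S$_2(2,3,16)$s over $Z_{16}$ (not subfamilies of the S$_6$ of Lemma \ref{ddf}; e.g.\ the base block $\{0,2,12\}$ is not a translate of any S$_6$ base block) and asserts their disjointness from the NWBTS$(16;b)$s, whereas you split the S$_6(2,3,16)$ of Lemma \ref{ddf} itself into three S$_2$s. Your tripartition does work: each quintuple has difference multiset $(2,2,2,2,2,2,2,1)$ over the differences $(1,\ldots,7,8)$; every orbit under $Z_{16}$ has full length $16$ (no $3$-subset of $Z_{16}$ admits a nontrivial translation stabilizer, since such a translation would have to act on the triple with orbit sizes dividing both $3$ and a power of $2$); no two base blocks within a quintuple are translates of one another (their gap multisets are already pairwise distinct); and disjointness from the NWBTSs is automatic because the NWBTSs of Lemma \ref{ddf} avoid all blocks of the S$_6$, so the fallback you mention is not needed. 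Beyond economy, your route buys completeness: the paper's reduction claim that only $b\in[38,42]\cup[118,122]\cup[198,202]$ need be treated is not justified by Lemma \ref{bb} alone, since the admissible range $[278,282]$ (corresponding to $\lambda=7$) is mapped to itself under complementation in $\binom{16}{3}=560$ (indeed $560-280=280$) and hence cannot be derived from the lower ranges; your step adjoining the whole S$_6$ via Lemma \ref{tee} with $p=6$, $q=1$ supplies exactly these values, so your argument closes a small gap left by the paper's written proof.
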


\proof We only need to consider that $b\in [38,42]\cup[118,122]\cup[198,202]$ by Lemma \ref{bb}.
We first construct two disjoint simple S$_{2}(2,3,16)$s. Their blocks are generated by two sets of base blocks (each set in a line) under the action of the group $Z_{16}$, which are different from all blocks of the above NWBTS$(16;b)$ $(b\in [38,42])$ in Lemma \ref{ddf}.

{\small\begin{longtable}{llllllllll}
$\{0, 3, 11\}$ & $\{0, 2, 11\}$ & $\{0, 2, 12\}$ & $\{0, 1, 10\}$ & $\{0, 1, 13\}$\\
$\{0, 2, 10\}$ & $\{0, 2, 13\}$ & $\{0, 3, 12\}$ & $\{0, 1, 12\}$ & $\{0, 1, 7\}$\\
\end{longtable}}
\noindent We can obtain an NWBTS$(16; b)$ $(b \in [118,122])$ by combining all blocks of the S$_{2}(2,3,16)$ and the above NWBTS$(16; r)$ $(r\in[38,42])$, respectively. We can get an NWBTS$(16; b)$ $(b \in [198,202])$ by combining all blocks of the two disjoint S$_{2}(2,3,16)$s and the above NWBTS$(16; r)$ $(r\in[38,42])$, respectively.
\qed

\section{$3^{+}$-PCS$^{u}_{2}(12^{3} : 4)$, $3$-PCS$^{0}_{2}(6^{4} : 2)$, $1$-PCS$^{3}_{6}(6^{4} : 2)$}
\begin{lemma}\label{H12340}
There is a $3^{+}$-PCS$^{u}_{2}(12^{3} : 4)$ for $u = 0, 6, 12$.
\end{lemma}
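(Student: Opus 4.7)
The plan is to establish Lemma \ref{H12340} by explicit direct construction, following the template used for Lemmas \ref{H1232} and \ref{634}. I would work on the point set $X = (Z_{12}\times Z_{3})\cup S$ with stem $S$ of size $4$ and groups $G_{j} = Z_{12}\times\{j\}$ for $j=0,1,2$, developing all blocks under the cyclic group $Z_{12}$ acting additively on the first coordinate and fixing $S$ pointwise.

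For each $u\in\{0,6,12\}$, the construction must produce $10$ pairwise disjoint block classes as prescribed by the definition of a $3^{+}$-PCS$^{u}_{2}(12^{3}:4)$: four GDD$_{2}(2,3,40)$s of type $1^{24}16^{1}$ with long group $G_{0}\cup S$, and three GDD$_{2}(2,3,40)$s of type $1^{24}16^{1}$ with long group $G_{j}\cup S$ for each $j=1,2$. The essential task is to write down, for each $u$, a base block list for an \emph{initial} GDD$_{2}(2,3,40)$ with long group $G_{0}\cup S$, distinguishing a subset of underlined base triples whose $Z_{12}$-translates form the block set of an NGDD$(2,3,40)$ of type $2^{(u,12-u)}16^{1}$. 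Base block lists for the nine remaining GDDs are then written down analogously, and the leftover triples in $\binom{X}{3}\setminus(\binom{S}{3}\cup\bigcup_{j}\binom{G_{j}\cup S}{3})$ that are not covered by the $10$ classes are forced so as to complete the CS$((12)^{3}:4)$.

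Verification at each $u$ is routine pair counting: one checks that the base blocks of the initial class avoid the long group $G_{0}\cup S$; that the six $Z_{12}$-orbits of short pairs in $G_{0}$, the twelve cross orbits between $G_{0}$ and $S$, and the $\binom{4}{2}$ stem pairs are unused; that every remaining pair orbit is covered exactly twice by the $Z_{12}$-shifts of the base blocks; and that the underlined subset realizes exactly $u$ doubled short $G_{0}$-pairs and $12-u$ empty ones. The symmetric checks for the nine other GDDs are analogous, and the $10$ classes must be mutually disjoint.

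I expect the main obstacle to lie in the middle case $u=6$, where precisely three of the six short-pair orbits inside $G_{0}$ must be doubled and the other three empty while the ambient GDD$_{2}$ structure of the initial class is preserved; as in the corresponding step of Lemma \ref{H1232}, I would locate such a list by a targeted computer search over $Z_{12}$-invariant candidates and then present it verbatim. The extremes $u=0$ and $u=12$ are more rigid but also more symmetric, and I would attempt them by an analogous search, possibly seeded by modifying a known $3$-PCS$^{r}_{2}(12^{3}:2)$ from Lemma \ref{H1232} to accommodate the enlarged stem of size $4$.
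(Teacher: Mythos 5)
Your outline correctly identifies the target structure (ten pairwise disjoint classes: four GDD$_{2}(2,3,40)$s of type $1^{24}16^{1}$ with long group $G_{0}\cup S$, three for each of $G_{1}\cup S$ and $G_{2}\cup S$, with one class containing a sub-NGDD of type $2^{(u,12-u)}16^{1}$), and in spirit it follows the same route as the paper: an explicit base-block construction verified by pair counting. But as written it has a genuine gap: the lemma is an existence statement whose only content is the explicit designs themselves, and your proposal never produces them -- it defers the entire burden to an unperformed ``targeted computer search'' and offers no argument that such a search must succeed. A plan to ``present the lists verbatim'' is not a proof until the lists exist and have been checked; the paper's proof consists precisely of those lists (two special GDDs with distinguished underlined sub-NGDDs of types $2^{(0,12)}16^{1}$ and $2^{(6,6)}16^{1}$, plus an initial GDD developed over both coordinates into nine classes, with the remaining candelabra blocks forced).

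Two further points about the plan itself. First, restricting the search to fully $Z_{12}$-invariant candidates is riskier than what the paper actually does: the special GDDs there are only invariant under adding $3\pmod{12}$ to the first coordinate, and the authors' own experience with the analogous $1$-PCS$^{3}_{6}(6^{4}:2)$ (see the remark following Theorem \ref{v2}) shows that imposing a nice automorphism group on all component designs can make such searches come up empty; so you should be prepared to relax the symmetry, as the paper does. Second, you plan three independent searches for $u=0,6,12$, missing a simplification the paper exploits: inside a GDD$_{2}(2,3,40)$ of type $1^{24}16^{1}$, the complement of a sub-NGDD of type $2^{(0,12)}16^{1}$ is a sub-NGDD of type $2^{(12,0)}16^{1}$, so a single special class settles both $u=0$ and $u=12$, and only the $u=6$ case needs a second one. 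Finally, your verification checklist should also include that no block of any class lies inside $S\cup G_{1}$ or $S\cup G_{2}$ (and that blocks meet the long group in at most one point rather than ``avoid'' it), since every class must consist of legitimate blocks of the ambient CS$(12^{3}:4)$.
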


\proof The design can be constructed on $X = (Z_{12} \times Z_{3}) \cup S$ with stem $S = \{a, b, c, d\}$ and groups $G_{j} = \{(0, j), (1, j), \ldots, (11, j)\},~j = 0, 1, 2$. We first construct a GDD$_{2}(2, 3, 40)$ of type $1^{24}16^{1}$. The blocks are generated from the base blocks in the following by adding $3$ (mod 12) to the first coordinate; the underlined base blocks and their developments form the blocks of an NGDD$(2, 3, 40)$ of type $2^{(0,12)}16^{1}$ where $\lambda_{(0,1),(6,1)}=\lambda_{(1,1),(7,1)}=\lambda_{(2,1),(8,1)}=\lambda_{(3,1),(9,1)}=\lambda_{(4,1),(10,1)}=\lambda_{(5,1),(11,1)}= \lambda_{(0,2),(6,2)}=\lambda_{(1,2),(7,2)}=\lambda_{(2,2),(8,2)}=\lambda_{(3,2),(9,2)}=\lambda_{(4,2),(10,2)}=\lambda_{(5,2),(11,2)}=0$.
{\small\begin{longtable}{llll}
$\underline{\{a, (0, 1), (6, 2)\}}$ & $\underline{\{a, (1, 1), (7, 2)\}}$ & $\underline{\{a, (2, 1), (8, 2)\}}$ & $\underline{\{b, (0, 1), (7, 2)\}}$\\
$\underline{\{b, (1, 1), (8, 2)\}}$ & $\underline{\{b, (2, 1), (9, 2)\}}$ & $\underline{\{c, (0, 1), (8, 2)\}}$ & $\underline{\{c, (1, 1), (9, 2)\}}$\\
$\underline{\{c, (2, 1), (10, 2)\}}$ & $\underline{\{d, (0, 1), (9, 2)\}}$ & $\underline{\{d, (1, 1), (10, 2)\}}$ & $\underline{\{d, (2, 1), (11, 2)\}}$\\
$\underline{\{(6, 0), (2, 1), (3, 1)\}}$ & $\underline{\{(2, 1), (6, 1), (5, 2)\}}$ & $\underline{\{(3, 0), (2, 1), (4, 1)\}}$ & $\underline{\{(9, 0), (0, 1), (2, 1)\}}$\\
$\underline{\{(9, 0), (0, 2), (1, 2)\}}$ & $\underline{\{(6, 0), (1, 1), (6, 1)\}}$ & $\underline{\{(6, 0), (0, 2), (4, 2)\}}$ & $\underline{\{(3, 0), (1, 1), (5, 1)\}}$\\
$\underline{\{(0, 0), (1, 2), (2, 2)\}}$ & $\underline{\{(3, 0), (0, 2), (2, 2)\}}$ & $\underline{\{(6, 0), (2, 2), (6, 2)\}}$ & $\underline{\{(9, 0), (2, 2), (4, 2)\}}$\\
$\underline{\{(9, 0), (1, 1), (3, 1)\}}$ & $\underline{\{(10, 0), (2, 1), (2, 2)\}}$ & $\underline{\{(1, 1), (2, 1), (1, 2)\}}$ & $\underline{\{(2, 0), (11, 1), (1, 2)\}}$\\
$\underline{\{(10, 1), (1, 2), (3, 2)\}}$ & $\underline{\{(2, 0), (1, 1), (3, 2)\}}$ & $\underline{\{(1, 0), (2, 1), (7, 1)\}}$ & $\underline{\{(5, 0), (0, 1), (5, 1)\}}$,\\
$\underline{\{(4, 0), (2, 1), (0, 2)\}}$ & $\underline{\{(1, 0), (8, 1), (1, 2)\}}$ & $\underline{\{(11, 0), (2, 1), (5, 1)\}}$ & $\underline{\{(11, 1), (0, 2), (3, 2)\}}$\\
$\underline{\{(1, 0), (0, 1), (0, 2)\}}$ & $\underline{\{(10, 0), (0, 1), (1, 1)\}}$ & $\underline{\{(7, 0), (0, 2), (5, 2)\}}$ & $\underline{\{(8, 0), (1, 2), (6, 2)\}}$\\
$\underline{\{(5, 0), (1, 1), (0, 2)\}}$ & $\underline{\{(1, 0), (2, 2), (3, 2)\}}$ & $\underline{\{(4, 0), (0, 1), (4, 1)\}}$ & $\underline{\{(7, 0), (1, 2), (4, 2)\}}$\\
$\underline{\{(1, 0), (6, 1), (4, 2)\}}$ & $\underline{\{(7, 0), (4, 1), (2, 2)\}}$ & $\underline{\{(2, 0), (3, 1), (6, 2)\}}$ & $\underline{\{(8, 0), (0, 1), (2, 2)\}}$\\
$\underline{\{(5, 0), (2, 2), (7, 2)\}}$ & $\underline{\{(0, 1), (3, 1), (4, 2)\}}$ & $\underline{\{(2, 0), (0, 1), (5, 2)\}}$ & $\underline{\{(1, 1), (2, 2), (5, 2)\}}$\\
$\underline{\{(11, 0), (1, 1), (4, 1)\}}$ & $\underline{\{(5, 0), (1, 2), (5, 2)\}}$ & $\{(11, 0), (1, 1), (2, 1)\}$ & $\{(5, 0), (2, 1), (4, 1)\}$\\
$\{a, (0, 1), (7, 2)\}$ & $\{a, (1, 1), (8, 2)\}$ & $\{a, (2, 1), (9, 2)\}$ & $\{c, (0, 1), (6, 2)\}$\\
$\{c, (1, 1), (7, 2)\}$ & $\{c, (2, 1), (8, 2)\}$ & $\{d, (0, 1), (8, 2)\}$ & $\{d, (2, 1), (10, 2)\}$\\
$\{d, (1, 1), (9, 2)\}$ & $\{b, (0, 1), (9, 2)\}$ & $\{b, (1, 1), (10, 2)\}$ & $\{b, (2, 1), (11, 2)\}$,\\
$\{(2, 0), (0, 1), (6, 1)\}$ & $\{(3, 0), (1, 1), (7, 1)\}$ & $\{(4, 0), (2, 1), (8, 1)\}$ & $\{(0, 0), (0, 2), (6, 2)\}$\\
$\{(1, 0), (1, 2), (7, 2)\}$ & $\{(2, 0), (2, 2), (8, 2)\}$ & $\{(0, 1), (3, 1), (1, 2)\}$ & $\{(2, 1), (6, 1), (6, 2)\}$\\
$\{(2, 1), (5, 1), (3, 2)\}$ & $\{(7, 0), (2, 2), (3, 2)\}$ & $\{(2, 0), (0, 2), (5, 2)\}$ & $\{(5, 0), (1, 1), (5, 1)\}$\\
$\{(3, 0), (0, 2), (4, 2)\}$ & $\{(3, 0), (1, 2), (6, 2)\}$ & $\{(5, 0), (0, 2), (1, 2)\}$ & $\{(6, 0), (2, 1), (1, 2)\}$\\
$\{(4, 1), (2, 2), (7, 2)\}$ & $\{(5, 0), (2, 2), (6, 2)\}$ & $\{(1, 1), (1, 2), (3, 2)\}$ & $\{(10, 0), (1, 2), (2, 2)\}$\\
$\{(4, 0), (1, 2), (5, 2)\}$ & $\{(9, 1), (0, 2), (2, 2)\}$ & $\{(1, 0), (7, 1), (0, 2)\}$ & $\{(10, 0), (0, 2), (3, 2)\}$\\
$\{(2, 0), (7, 1), (6, 2)\}$ & $\{(1, 1), (4, 1), (5, 2)\}$ & $\{(0, 0), (2, 1), (7, 1)\}$ & $\{(3, 0), (2, 1), (7, 2)\}$\\
$\{(3, 0), (0, 1), (2, 2)\}$ & $\{(9, 0), (2, 2), (5, 2)\}$ & $\{(0, 0), (3, 1), (2, 2)\}$ & $\{(2, 1), (2, 2), (4, 2)\}$\\
$\{(11, 0), (1, 2), (4, 2)\}$ & $\{(4, 0), (11, 1), (2, 2)\}$ & $\{(1, 0), (1, 1), (3, 1)\}$ & $\{(7, 0), (0, 1), (4, 1)\}$\\
$\{(0, 0), (0, 1), (1, 1)\}$ & $\{(9, 0), (2, 1), (3, 1)\}$ & $\{(11, 0), (0, 1), (5, 1)\}$ & $\{(1, 0), (0, 1), (2, 1)\}$\\
$\{(10, 0), (1, 1), (6, 1)\}$ & $\{(2, 0), (9, 1), (1, 2)\}$\\
\end{longtable}}

We construct a GDD$_{2}(2, 3, 40)$ of type $1^{24}16^{1}$. The blocks are generated from the base blocks in the following by adding $3$ (mod 12) to the first coordinate; the underlined base blocks and their developments form the blocks of an NGDD$(2, 3, 40)$ of type $2^{(6,6)}16^{1}$ where $\lambda_{(0,1),(6,1)}=\lambda_{(1,1),(7,1)}=\lambda_{(2,1),(8,1)}=\lambda_{(3,1),(9,1)}=\lambda_{(4,1),(10,1)}=\lambda_{(5,1),(11,1)}=2$ and
$\lambda_{(0,2),(6,2)}=\lambda_{(1,2),(7,2)}=\lambda_{(2,2),(8,2)}=\lambda_{(3,2),(9,2)}=\lambda_{(4,2),(10,2)}=\lambda_{(5,2),(11,2)}=0$.
{\small\begin{longtable}{llll}
$\underline{\{c, (2, 1), (8, 2)\}}$ & $\underline{\{a, (0, 1), (7, 2)\}}$ & $\underline{\{a, (2, 1), (9, 2)\}}$ & $\underline{\{a, (1, 1), (8, 2)\}}$\\
$\underline{\{b, (0, 1), (8, 2)\}}$ & $\underline{\{b, (1, 1), (9, 2)\}}$ & $\underline{\{b, (2, 1), (10, 2)\}}$ & $\underline{\{c, (0, 1), (9, 2)\}}$\\
$\underline{\{d, (0, 1), (10, 2)\}}$ & $\underline{\{d, (1, 1), (11, 2)\}}$ & $\underline{\{d, (2, 1), (0, 2)\}}$ & $\underline{\{c, (1, 1), (7, 2)\}}$\\
$\underline{\{(0, 1), (6, 1), (0, 2)\}}$ & $\underline{[\{(1, 1), (7, 1), (4, 2)\}}$ & $\underline{\{(2, 1), (8, 1), (5, 2)\}}$ & $\underline{\{(2, 0), (9, 1), (0, 2)\}}$\\
$\underline{\{(6, 0), (2, 1), (3, 1)\}}$ & $\underline{\{(2, 1), (6, 1), (7, 2)\}}$ & $\underline{\{(3, 0), (2, 1), (4, 1)\}}$ & $\underline{\{(6, 0), (0, 2), (4, 2)\}}$\\
$\underline{\{(3, 0), (0, 2), (2, 2)\}}$ & $\underline{\{(6, 0), (0, 1), (1, 1)\}}$ & $\underline{\{(9, 0), (1, 1), (1, 2)\}}$ & $\underline{\{(0, 0), (0, 1), (2, 1)\}}$\\
$\underline{\{(9, 0), (2, 1), (2, 2)\}}$ & $\underline{\{(3, 0), (1, 1), (6, 1)\}}$ & $\underline{\{(0, 0), (1, 2), (2, 2)\}}$ & $\underline{\{(6, 0), (1, 2), (6, 2)\}}$\\
$\underline{\{(9, 0), (0, 2), (5, 2)\}}$ & $\underline{\{(4, 0), (1, 2), (3, 2)\}}$ & $\underline{\{(5, 0), (0, 2), (1, 2)\}}$ & $\underline{\{(0, 1), (2, 2), (4, 2)\}}$\\
$\underline{\{(7, 0), (1, 2), (5, 2)\}}$ & $\underline{\{(7, 0), (2, 2), (7, 2)\}}$ & $\underline{\{(1, 0), (5, 1), (4, 2)\}}$ & $\underline{\{(2, 0), (1, 2), (4, 2)\}}$\\
$\underline{\{(11, 0), (2, 1), (4, 2)\}}$ & $\underline{\{(7, 0), (2, 1), (5, 1)\}}$ & $\underline{\{(11, 0), (0, 1), (5, 1)\}}$ & $\underline{\{(1, 0), (2, 1), (7, 1)\}}$\\
$\underline{\{(10, 0), (2, 2), (6, 2)\}}$ & $\underline{\{(4, 0), (1, 1), (3, 1)\}}$ & $\underline{\{(1, 0), (4, 1), (3, 2)\}}$ & $\underline{\{(2, 0), (2, 2), (3, 2)\}}$\\
$\underline{\{(8, 0), (2, 2), (5, 2)\}}$ & $\underline{\{(1, 1), (4, 1), (5, 2)\}}$ & $\underline{\{(8, 0), (1, 1), (5, 1)\}}$ & $\underline{\{(11, 1), (0, 2), (3, 2)\}}$\\
$\underline{\{(2, 0), (1, 1), (2, 1)\}}$ & $\underline{\{(2, 0), (4, 1), (6, 2)\}}$ & $\underline{\{(8, 0), (0, 1), (4, 1)\}}$ & $\underline{\{(2, 0), (0, 1), (5, 2)\}}$\\
$\underline{\{(1, 0), (1, 1), (6, 2)\}}$ & $\underline{\{(7, 0), (0, 1), (3, 1)\}}$ & $\underline{\{(1, 0), (3, 1), (2, 2)\}}$ & $\{(8, 0), (1, 2), (6, 2)\}$\\
$\{a, (0, 1), (6, 2)\}$ & $\{a, (2, 1), (8, 2)\}$ & $\{a, (1, 1), (7, 2)\}$ & $\{b, (0, 1), (7, 2)\}$\\
$\{b, (2, 1), (9, 2)\}$ & $\{b, (1, 1), (8, 2)\}$ & $\{c, (0, 1), (8, 2)\}$ & $\{d, (0, 1), (9, 2)\}$\\
$\{d, (2, 1), (11, 2)\}$ & $\{d, (1, 1), (10, 2)\}$ & $\{c, (1, 1), (9, 2)\}$ & $\{c, (2, 1), (10, 2)\}$\\
$\{(0, 0), (0, 2), (6, 2)\}$ & $\{(1, 0), (1, 2), (7, 2)\}$ & $\{(11, 0), (1, 1), (1, 2)\}$ & $\{(0, 1), (4, 1), (3, 2)\}$\\
$\{(2, 0), (2, 2), (8, 2)\}$ & $\{(9, 0), (0, 2), (1, 2)\}$ & $\{(6, 0), (1, 2), (2, 2)\}$ & $\{(3, 0), (2, 2), (5, 2)\}$\\
$\{(6, 0), (1, 1), (4, 1)\}$ & $\{(0, 0), (0, 1), (1, 1)\}$ & $\{(9, 0), (1, 1), (3, 1)\}$ & $\{(9, 0), (2, 2), (7, 2)\}$\\
$\{(0, 0), (2, 1), (3, 1)\}$ & $\{(9, 0), (2, 1), (5, 1)\}$ & $\{(3, 0), (0, 1), (2, 1)\}$ & $\{(3, 0), (0, 2), (4, 2)\}$\\
$\{(11, 0), (0, 1), (3, 1)\}$ & $\{(11, 0), (2, 2), (3, 2)\}$ & $\{(5, 0), (2, 2), (6, 2)\}$ & $\{(4, 0), (1, 2), (5, 2)\}$\\
$\{(10, 1), (0, 2), (2, 2)\}$ & $\{(1, 0), (3, 1), (5, 2)\}$ & $\{(4, 0), (3, 1), (2, 2)\}$ & $\{(10, 0), (2, 1), (6, 1)\}$\\
$\{(1, 0), (2, 1), (4, 1)\}$ & $\{(1, 0), (1, 1), (6, 1)\}$ & $\{(7, 0), (0, 2), (3, 2)\}$ & $\{(4, 0), (2, 1), (7, 2)\}$\\
$\{(4, 0), (1, 1), (6, 2)\}$ & $\{(2, 1), (7, 1), (5, 2)\}$ & $\{(0, 1), (0, 2), (5, 2)\}$ & $\{(1, 0), (8, 1), (0, 2)\}$\\
$\{(7, 0), (1, 1), (2, 2)\}$ & $\{(2, 1), (2, 2), (4, 2)\}$ & $\{(8, 0), (1, 1), (2, 1)\}$ & $\{(2, 1), (1, 2), (3, 2)\}$\\
$\{(5, 0), (2, 1), (0, 2)\}$ & $\{(5, 0), (0, 1), (5, 1)\}$ & $\{(2, 0), (10, 1), (1, 2)\}$ & $\{(0, 1), (1, 2), (4, 2)\}$\\
$\{(5, 0), (3, 1), (1, 2)\}$ & $\{(2, 0), (1, 1), (5, 1)\}$\\
\end{longtable}}

We construct an initial GDD$_{2}(2, 3, 40)$ of type $1^{24}16^{1}$. The blocks are generated from the base blocks in the following by adding $3$ (mod 12) to the first coordinate.
{\small\begin{longtable}{llll}
$\{a, (0, 1), (0, 2)\}$ & $\{a, (1, 1), (2, 2)\}$ & $\{a, (2, 1), (7, 2)\}$ & $\{b, (1, 1), (1, 2)\}$\\
$\{b, (2, 1), (3, 2)\}$ & $\{b, (0, 1), (2, 2)\}$ & $\{c, (0, 1), (1, 2)\}$ & $\{c, (1, 1), (3, 2)\}$\\
$\{c, (2, 1), (5, 2)\}$ & $\{d, (2, 1), (2, 2)\}$ & $\{d, (0, 1), (4, 2)\}$ & $\{d, (1, 1), (6, 2)\}$\\
$\{a, (0, 1), (2, 2)\}$ & $\{a, (1, 1), (4, 2)\}$ & $\{a, (2, 1), (6, 2)\}$ & $\{b, (0, 1), (3, 2)\}$\\
$\{b, (1, 1), (5, 2)\}$ & $\{b, (2, 1), (7, 2)\}$ & $\{c, (0, 1), (4, 2)\}$ & $\{c, (1, 1), (6, 2)\}$\\
$\{c, (2, 1), (2, 2)\}$ & $\{d, (2, 1), (5, 2)\}$ & $\{d, (1, 1), (3, 2)\}$ & $\{d, (0, 1), (1, 2)\}$\\
$\{(2, 0), (0, 2), (1, 2)\}$ & $\{(3, 0), (1, 2), (3, 2)\}$ & $\{(7, 0), (2, 2), (5, 2)\}$ & $\{(1, 1), (2, 1), (4, 2)\}$\\
$\{(2, 0), (2, 2), (4, 2)\}$ & $\{(1, 0), (1, 2), (5, 2)\}$ & $\{(6, 0), (1, 2), (7, 2)\}$ & $\{(2, 0), (2, 1), (6, 2)\}$\\
$\{(7, 0), (2, 1), (8, 1)\}$ & $\{(0, 0), (0, 1), (6, 1)\}$ & $\{(4, 0), (2, 1), (4, 1)\}$ & $\{(2, 0), (1, 1), (7, 1)\}$\\
$\{(1, 0), (0, 2), (6, 2)\}$ & $\{(6, 0), (2, 2), (8, 2)\}$ & $\{(3, 0), (2, 2), (6, 2)\}$ & $\{(6, 0), (2, 1), (0, 2)\}$\\
$\{(6, 1), (0, 2), (3, 2)\}$ & $\{(9, 0), (1, 2), (6, 2)\}$ & $\{(9, 0), (2, 1), (7, 1)\}$ & $\{(10, 0), (2, 1), (3, 1)\}$\\
$\{(3, 0), (2, 1), (5, 1)\}$ & $\{(0, 1), (5, 1), (11, 2)\}$ & $\{(10, 1), (2, 2), (7, 2)\}$ & $\{(4, 0), (1, 1), (6, 1)\}$\\
$\{(4, 0), (0, 2), (5, 2)\}$ & $\{(9, 0), (0, 1), (1, 1)\}$ & $\{(3, 0), (0, 1), (4, 1)\}$ & $\{(9, 0), (4, 1), (2, 2)\}$\\
$\{(1, 0), (0, 1), (3, 2)\}$ & $\{(4, 0), (7, 1), (1, 2)\}$ & $\{(7, 0), (1, 1), (3, 1)\}$ & $\{(10, 0), (1, 2), (4, 2)\}$\\
$\{(11, 0), (1, 1), (5, 1)\}$ & $\{(1, 1), (4, 1), (0, 2)\}$ & $\{(5, 0), (1, 1), (8, 2)\}$ & $\{(5, 1), (0, 2), (2, 2)\}$\\
$\{(11, 0), (0, 2), (4, 2)\}$ & $\{(11, 0), (2, 1), (6, 1)\}$ & $\{(0, 1), (2, 1), (10, 2)\}$ & $\{(5, 0), (2, 1), (1, 2)\}$\\
$\{(5, 0), (9, 1), (2, 2)\}$ & $\{(2, 0), (0, 1), (3, 1)\}$ & $\{(8, 0), (2, 2), (3, 2)\}$ & $\{(6, 1), (1, 2), (2, 2)\}$\\
$\{(0, 0), (3, 1), (8, 2)\}$ & $\{(9, 0), (2, 1), (4, 2)\}$ & $\{(4, 0), (0, 2), (1, 2)\}$ & $\{(5, 0), (1, 2), (3, 2)\}$\\
$\{(5, 0), (0, 2), (2, 2)\}$ & $\{(3, 0), (0, 2), (3, 2)\}$ & $\{(10, 0), (2, 2), (5, 2)\}$ & $\{(3, 0), (1, 2), (4, 2)\}$\\
$\{(4, 0), (2, 2), (6, 2)\}$ & $\{(10, 0), (1, 1), (1, 2)\}$ & $\{(5, 1), (0, 2), (4, 2)\}$ & $\{(11, 0), (1, 2), (5, 2)\}$\\
$\{(11, 0), (2, 2), (4, 2)\}$ & $\{(3, 0), (2, 2), (7, 2)\}$ & $\{(2, 0), (6, 1), (1, 2)\}$ & $\{(0, 1), (1, 1), (8, 2)\}$\\
$\{(3, 1), (1, 2), (2, 2)\}$ & $\{(2, 0), (4, 1), (2, 2)\}$ & $\{(1, 0), (1, 2), (6, 2)\}$ & $\{(7, 0), (2, 1), (8, 2)\}$\\
$\{(4, 1), (0, 2), (5, 2)\}$ & $\{(0, 0), (2, 2), (3, 2)\}$ & $\{(9, 0), (5, 1), (2, 2)\}$ & $\{(11, 0), (0, 1), (0, 2)\}$\\
$\{(2, 1), (4, 1), (10, 2)\}$ & $\{(7, 0), (4, 1), (1, 2)\}$ & $\{(3, 0), (0, 1), (3, 1)\}$ & $\{(6, 0), (1, 1), (0, 2)\}$\\
$\{(1, 0), (3, 1), (0, 2)\}$ & $\{(0, 1), (5, 1), (6, 2)\}$ & $\{(3, 0), (2, 1), (7, 1)\}$ & $\{(2, 0), (8, 1), (6, 2)\}$\\
$\{(8, 0), (1, 1), (3, 1)\}$ & $\{(5, 0), (1, 1), (4, 1)\}$ & $\{(2, 0), (2, 1), (5, 1)\}$ & $\{(5, 0), (2, 1), (3, 1)\}$\\
$\{(0, 0), (1, 1), (2, 1)\}$ & $\{(6, 0), (0, 1), (4, 1)\}$ & $\{(7, 0), (1, 1), (6, 1)\}$ & $\{(1, 0), (2, 1), (6, 1)\}$\\
$\{(1, 0), (1, 1), (5, 1)\}$ & $\{(4, 0), (0, 1), (2, 1)\}$\\
\end{longtable}}

Nine pairwise disjoint GDD$_{2}(2, 3, 40)$s result by developing over both coordinates. The nine GDD$_{2}(2, 3, 40)$s, the
above GDD$_{2}(2, 3, 40)$ of type $1^{24}16^{1}$ which has a sub-design NGDD$(2, 3$, $40)$ of type $2^{(0,12)}16^{1}$ $($or NGDD$(2, 3, 40)$ of type $2^{(6,6)}16^{1})$, and
the remaining blocks of a CS$(12^{3}:4)$, which are pairwise disjoint, form a $3^{+}$-PCS$^{u}_{2}(12^{3} : 4)$ for $u = 0, 12$ $($or $3^{+}$-PCS$^{6}_{2}(12^{3} : 4))$.\qed

\begin{lemma}\label{6420}
There is a $3$-PCS$^{0}_{2}(6^{4} : 2)$.
\end{lemma}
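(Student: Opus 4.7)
The plan is to construct a $3$-PCS$^{0}_{2}(6^{4}:2)$ explicitly, mirroring the direct constructions used in Lemmas \ref{3632} and \ref{H1232}. I would take the point set $X = Z_{24}\cup\{a,b\}$ with stem $S=\{a,b\}$ and four groups $G_{j}=\{i\in Z_{24}: i\equiv j\pmod{4}\}$ for $j=0,1,2,3$, each of size $6$. The target is a block set partitionable (after restriction to $\mathcal{A}'$) into $12$ parts, three per group, the first of which moreover contains a sub-NGDD$(2,3,26)$ of type $2^{(0,9)}8^{1}$; here the nine pairs of size $2$ will be chosen as the canonical ``antipodal'' pairs $\{i,i+12\}$ with $i\in G_{1}\cup G_{2}\cup G_{3}$.

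The first step is to exhibit an initial GDD$_{2}(2,3,26)$ of type $1^{18}8^{1}$ with long group $G_{0}\cup S$ whose block set contains a sub-NGDD$(2,3,26)$ of type $2^{(0,9)}8^{1}$ with the above nine forbidden pairs. Following the format of the paper, this will be presented as a list of base blocks (with underlines marking those belonging to the NGDD portion), and then developed by the shift $i\mapsto i+8\pmod{24}$ (which stabilizes each group $G_{j}$ and each of the nine antipodal pairs) to produce three pairwise disjoint GDD$_{2}$s sharing $G_{0}\cup S$ as long group.

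The second step is to construct, by an analogous (and for symmetry reasons essentially rotated) list of base blocks, three pairwise disjoint GDD$_{2}(2,3,26)$s of type $1^{18}8^{1}$ with long group $G_{j}\cup S$, for each of $j=1,2,3$. One must verify that all twelve GDDs so obtained are mutually disjoint, and that the remaining admissible triples of $X$ (those not lying in any $G_{j}\cup S$) form the completing blocks so that $(X,S,\mathcal{G},\mathcal{A})$ is a CS$(6^{4}:2)$, that is, every admissible triple is covered exactly once across the whole configuration.

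The main obstacle is the first step: finding an initial GDD$_{2}(2,3,26)$ of type $1^{18}8^{1}$ containing the prescribed NGDD$(2,3,26)$ of type $2^{(0,9)}8^{1}$, with enough automorphism under the shift of order $3$ so that its shifted copies stay disjoint while collectively double-covering every non-long-group cross pair other than the nine distinguished antipodal pairs (which must be covered zero times by the NGDD part and twice overall). This is a combinatorial search problem of the same flavor as (but smaller than) the searches carried out in Lemmas \ref{3632}, \ref{H1232}, \ref{634}, and \ref{H12340}; once a valid starter is produced by computer search, the remaining verifications -- disjointness of the twelve developed GDDs and the counting check for the completing triples -- are routine bookkeeping.
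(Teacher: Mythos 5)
Your setup is the same as the paper's: the point set $Z_{24}\cup\{a,b\}$ with stem $\{a,b\}$ and groups the residue classes mod $4$, the exponent $r=0$ realized as a sub\mbox{-}GDD$(2,3,26)$ of type $2^{9}8^{1}$ inside one of the GDD$_{2}(2,3,26)$s of type $1^{18}8^{1}$, twelve such GDDs (three per group) produced by cyclic development, and the observation that pairwise disjointness plus admissibility of the blocks does the rest. (One small correction: there are no ``remaining'' triples to complete with, since $12\times 198=2376=\binom{26}{3}-4\binom{8}{3}$; the union of the twelve GDDs must itself be the whole CS$(6^{4}:2)$, and the only checks are that no block lies inside any $G_{j}\cup S$ and that the twelve block sets are mutually disjoint.) However, the proposal has a genuine gap: it never produces the design. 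For a lemma of this type the explicit starter \emph{is} the content; reducing the statement to ``a computer search of the same flavor as Lemmas \ref{3632}, \ref{H1232}, \ref{634}'' and asserting it will succeed establishes nothing. The paper's proof consists precisely of two explicit lists of base blocks: one expanded under $+8$ (its $24$ translates give eight GDDs, two per group, and its underlined portion is the required sub\mbox{-}GDD of type $2^{9}8^{1}$), and one expanded under $+4$ (four translates, one per group).

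Beyond the missing object, your specific development scheme is more rigid than the paper's and is itself unverified. You ask for a single $198$-block GDD with long group $G_{0}\cup S$ whose translates by $+8$ and $+16$ are pairwise disjoint, with the nine uncovered pairs prescribed as the antipodal pairs $\{i,i+12\}$. But a starter that is $+8$-invariant (the natural reading of ``developed by the shift $i\mapsto i+8$,'' and the natural way to keep those pairs and the underlined sub-design symmetric) cannot yield three distinct copies with long group $G_{0}\cup S$: the shifts fixing $G_{0}$ setwise are the multiples of $4$, and modulo $+8$-invariance they give at most two distinct translates. So either your starter must have trivial stabilizer under $+8$ -- a different and harder search than the cited ones, with no evidence of success -- or the scheme must be changed, for instance to the paper's two-orbit scheme (a $+8$-invariant starter contributing two GDDs per group and a $+4$-invariant starter contributing the third). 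As written, the step producing three pairwise disjoint GDDs per group, and hence the lemma, is not justified.
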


\proof We construct the design on $X = Z_{24} \cup \{a, b\}$ with stem $S = \{a, b\}$ and groups $G_{i} = \{i, i + 4, i + 8,\ldots,i + 20\}, i = 0, 1, 2, 3$. We first construct a GDD$_{2}(2, 3, 26)$ of type $1^{18}8^{1}$ with long group $G_{0}\cup S$, whose blocks are generated from the following blocks by adding 8 (mod 24); the underlined blocks and their development form the blocks of a GDD$(2,3,26)$ of type $2^{9}8^{1}$.
{\small\begin{longtable}{lllllll}
$\{\underline{a, 1, 2}\}$ & $\{\underline{a, 5, 7}\}$ & $\{\underline{a, 6, 11}\}$ & $\{\underline{b, 2, 3}\}$ & $\{\underline{b, 7, 13}\}$ & $\{\underline{b, 1, 6}\}$ & $\{\underline{0, 2, 22}\}$ \\
$\{\underline{3, 7, 12}\}$ & $\{\underline{3, 5, 9}\}$ & $\{\underline{4, 10, 14}\}$ & $\{\underline{1, 7, 23}\}$ & $\{\underline{0, 5, 6}\}$ & $\{\underline{6, 7, 22}\}$ & $\{\underline{2, 7, 16}\}$ \\
$\{\underline{0, 11, 23}\}$ & $\{\underline{1, 3, 8}\}$ & $\{\underline{0, 7, 14}\}$ & $\{\underline{0, 1, 13}\}$ & $\{\underline{2, 8, 17}\}$ & $\{\underline{5, 8, 11}\}$ & $\{\underline{7, 10, 17}\}$ \\
$\{\underline{1, 4, 17}\}$ & $\{\underline{3, 6, 17}\}$ & $\{\underline{4, 6, 9}\}$ & $\{\underline{6, 12, 19}\}$ & $\{\underline{2, 11, 19}\}$ & $\{\underline{3, 4, 13}\}$ & $\{\underline{4, 5, 18}\}$ \\
$\{\underline{2, 5, 10}\}$ & $\{\underline{2, 4, 15}\}$ & $\{\underline{4, 7, 21}\}$ & $\{\underline{6, 13, 21}\}$ & $\{a, 1, 7\}$ & $\{a, 2, 11\}$ & $\{a, 5, 22\}$ \\
$\{b, 1, 10\}$ & $\{b, 6, 19\}$ & $\{b, 5, 7\}$ & $\{0, 6, 18\}$ & $\{1, 19, 23\}$ & $\{3, 17, 21\}$ & $\{4, 6, 22\}$ \\
$\{4, 5, 10\}$ & $\{3, 5, 13\}$ & $\{4, 7, 13\}$ & $\{0, 5, 15\}$ & $\{2, 4, 19\}$ & $\{4, 15, 23\}$ & $\{1, 3, 15\}$ \\
$\{3, 4, 11\}$ & $\{0, 2, 3\}$ & $\{0, 7, 11\}$ & $\{6, 7, 11\}$ & $\{3, 6, 8\}$ & $\{7, 8, 22\}$ & $\{5, 8, 18\}$ \\
$\{1, 4, 21\}$ & $\{0, 9, 13\}$ & $\{0, 1, 17\}$ & $\{1, 2, 12\}$ & $\{6, 9, 21\}$ & $\{1, 14, 18\}$ & $\{2, 7, 14\}$ \\
$\{4, 9, 14\}$ & $\{2, 5, 6\}$ & $\{7, 10, 18\}$ \\
\end{longtable}}
\noindent Eight GDD$_{2}(2, 3, 26)$s are obtained by developing modulo 24.

Then, we construct another GDD$_{2}(2, 3, 26)$ of type $1^{18}8^{1}$ with long group $G_{0}\cup S$, whose blocks are generated from the following blocks by adding 4 (mod 24):
{\small\begin{longtable}{lllllll}
$\{a, 2, 5\}$ & $\{a, 3, 14\}$ & $\{a, 3, 13\}$ & $\{b, 2, 5\}$ & $\{1, 6, 18\}$ & $\{3, 5, 15\}$ & $\{1, 4, 13\}$ \\
$\{b, 3, 10\}$ & $\{b, 3, 13\}$ & $\{2, 6, 16\}$ & $\{2, 8, 18\}$ & $\{2, 4, 18\}$ & $\{3, 9, 17\}$ & $\{0, 3, 19\}$ \\
$\{1, 4, 19\}$ & $\{0, 1, 9\}$ & $\{2, 7, 15\}$ & $\{1, 2, 21\}$ & $\{1, 5, 14\}$ & $\{3, 7, 22\}$ & $\{0, 1, 18\}$ \\
$\{1, 2, 3\}$ & $\{2, 3, 6\}$ & $\{0, 3, 7\}$ & $\{3, 4, 6\}$ & $\{3, 4, 10\}$ & $\{0, 2, 11\}$ & $\{0, 6, 17\}$ \\
$\{1, 3, 20\}$ & $\{0, 5, 11\}$ & $\{3, 5, 12\}$ & $\{3, 8, 21\}$ & $\{2, 4, 17\}$ \\
\end{longtable}}
\noindent Four GDD$_{2}(2, 3, 26)$s are obtained by developing modulo 24. The GDDs so produced form a CS$(6^{4}:2)$.
\qed

\begin{lemma}\label{wp}
There is a $1$-PCS$^{3}_{6}(6^{4} : 2)$.
\end{lemma}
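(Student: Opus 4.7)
The plan is a direct construction on $X = Z_{24} \cup \{a, b\}$ with stem $S = \{a, b\}$ and groups $G_i = \{i, i+4, i+8, \ldots, i+20\}$ for $i = 0, 1, 2, 3$, designating $G_0$ as the distinguished group. Unpacking the definition of $1$-PCS$^3_6(6^4:2)$ with $g=1$, $n=4$, $s=2$, $r=3$, what must be built is a CS$(6^4:2)$ whose block set contains a distinguished subset $\mathcal{A}'$ partitioned into six pairwise disjoint parts: three block sets of GDD$_2(2,3,26)$s of type $1^{18}8^1$ with long group $G_0 \cup S$ (at least one of which carries a sub-NGDD$(2,3,26)$ of type $2^{(3,6)}8^1$), together with three block sets of GDD$_6(2,3,26)$s of type $1^{18}8^1$ with long groups $G_i \cup S$ for $i = 1, 2, 3$ respectively. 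The remainder $\mathcal{A} \setminus \mathcal{A}'$ is then filled by the triples forced on the CS structure.

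First I would construct the three $G_0$-long GDD$_2$s following the pattern already used in Lemma \ref{6420}, namely by listing compact base block systems to be developed under the shift $x \mapsto x+8 \pmod{24}$ that fixes $G_0$ setwise while permuting $G_1, G_2, G_3$ cyclically. For the distinguished GDD$_2$ that must carry the NGDD$(2,3,26)$ of type $2^{(3,6)}8^1$, three pairs in $G_0$, for instance $\{0,12\}, \{4,16\}, \{8,20\}$, would be prescribed as the doubly covered pairs and the remaining six pairs of $G_0$ as uncovered within the sub-design; the base blocks containing two points of $G_0$ are then forced to hit only these three doubled pairs, each with multiplicity two, while the base blocks meeting three distinct short groups supply the remaining cross-group pairs exactly twice each.

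For each of the remaining groups $G_1, G_2, G_3$ I would exploit that a GDD$_6(2,3,26)$ of type $1^{18}8^1$ decomposes as three disjoint GDD$_2(2,3,26)$s of the same type, and build each constituent via short base block lists developed modulo an appropriate divisor of $24$ chosen so that its blocks are disjoint from every previously placed block and so that the three constituents together cover each needed cross-group pair with total multiplicity six. In contrast with Lemma \ref{6420}, no single group action simultaneously normalizes all twelve resulting GDD$_2$ summands, so the three GDD$_6$s will have to be assembled asymmetrically, some by developments modulo $8$, others by developments modulo $4$, or even block by block.

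The main obstacle will be computational rather than conceptual: enforcing pairwise disjointness of all six distinguished components, the correct joint multiplicities on every cross-group pair, and the prescribed NGDD sub-pattern on $G_0$ reduces to a sizeable constraint satisfaction problem. Because no global symmetry is available, this must be attacked by targeted backtracking with only local partial symmetries imposed, as the remark following Theorem \ref{v2} already signals; once a valid collection of base blocks has been located, verification is entirely mechanical, amounting to listing the explicit triples, confirming pairwise disjointness of the six parts, checking the claimed types and indices of each constituent, and observing that the remaining triples complete a CS$(6^4:2)$.
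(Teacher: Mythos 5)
Your unpacking of what a $1$-PCS$^{3}_{6}(6^{4}:2)$ requires (three GDD$_2(2,3,26)$s of type $1^{18}8^{1}$ with long group $G_0\cup S$, one carrying a sub-NGDD, plus one GDD$_6(2,3,26)$ of that type for each of $G_1,G_2,G_3$) is correct, and building each GDD$_6$ from smaller-index pieces is indeed how the paper proceeds (it uses GDD$_2\cup$GDD$_2\cup$GDD$_2$ for $G_0,G_1$ and GDD$_2\cup$GDD$_4$ for $G_2,G_3$). But your plan for the distinguished component contains a structural error. The required sub-design is an NGDD$(2,3,26)$ of type $2^{(3,6)}8^{1}$ sitting inside a GDD$_2$ of type $1^{18}8^{1}$ whose long group is $G_0\cup S$; hence the $8$-point group of the NGDD is $G_0\cup S$ itself, and no block of the ambient GDD$_2$ contains two points of $G_0\cup S$ at all. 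Consequently the nine groups of size $2$ — the three doubly covered pairs and the six uncovered pairs — must form a perfect matching of the $18$ points of $G_1\cup G_2\cup G_3$, never pairs inside $G_0$. Your prescription of $\{0,12\},\{4,16\},\{8,20\}\subset G_0$ as the doubled pairs (and ``the remaining six pairs of $G_0$'' as uncovered, which does not even parse, since $G_0$ admits only three disjoint pairs) is impossible: there are no ``base blocks containing two points of $G_0$'' to carry them. In the paper's design the doubled pairs are $\{2,14\},\{3,15\},\{5,17\}$ and the uncovered ones are six further pairs $\{i,i+12\}$ outside $G_0$. Incidentally, a block count ($3\cdot 198+3\cdot 594=2376=\binom{26}{3}-4\binom{8}{3}$) shows the six parts must exhaust the whole CS$(6^{4}:2)$, so there is no remainder to ``fill''; this also means every part must avoid triples lying inside any $S\cup G_i$, an extra constraint your sketch does not address.

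The second, more basic gap is that what you describe is a search strategy, not a proof. The lemma asserts the existence of one specific finite object, and the paper's proof consists precisely of exhibiting it: a base-block system developed by $+4\pmod{24}$ yielding $\mathcal{A}_0,\ldots,\mathcal{A}_3$, together with explicitly listed block sets $\mathcal{A}_4,\ldots,\mathcal{A}_9$ whose verification is then mechanical. You defer exactly this content to ``targeted backtracking,'' so under your proposal the statement remains unproved. Moreover, your hope of reusing the symmetric pattern of Lemma \ref{6420} (developments under $x\mapsto x+8$) is exactly what the paper reports to have failed: as remarked after Theorem \ref{v2}, no construction with nice automorphism groups for all components was found despite a long computer search, and indeed the published $\mathcal{A}_4$ (whose doubled pairs meet all three of $G_1,G_2,G_3$) is not invariant under $x\mapsto x+8$. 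So both the specific sub-NGDD placement and the symmetry assumptions in your plan would have to be corrected before any search could even be set up, and the explicit outcome of that search is the proof.
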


\proof The design can be constructed on $X = Z_{24} \cup \{a, b\}$ with stem $S = \{a, b\}$ and groups $G_{i} = \{i, i + 4, i + 8,\ldots,i + 20\}, i = 0, 1, 2, 3$. We first construct a GDD$_{2}(2, 3, 26)$ of type $1^{18}8^{1}$ with long group $G_{0}\cup S$, whose blocks are generated from the following blocks by adding 4 (mod 24):
{\small\begin{longtable}{lllllll}
$\{a, 3, 9\}$& $\{a, 1, 10\}$& $\{a, 3, 10\}$& $\{b, 2, 7\}$& $\{b, 2, 9\}$& $\{b, 3, 9\}$& $\{1, 3, 7\}$\\
$\{0, 2, 18\}$& $\{3, 6, 18\}$& $\{1, 3, 15\}$& $\{1, 2, 13\}$& $\{2, 4, 6\}$& $\{2, 4, 18\}$& $\{3, 5, 10\}$\\
$\{3, 5, 12\}$& $\{3, 4, 11\}$& $\{0, 1, 7\}$& $\{0, 3, 17\}$& $\{2, 6, 13\}$& $\{0, 1, 5 \}$&$\{2, 3, 8\}$ \\
$\{0, 3, 6\}$& $\{3, 8, 14\}$& $\{3, 4, 19\}$& $\{0, 5, 14\}$& $\{1, 5, 16\}$& $\{1, 2, 16\}$& $\{2, 5, 21\}$\\
$\{1, 4, 17\}$& $\{2, 3, 16\}$& $\{1, 4, 15\}$& $\{2, 5, 15\}$& $\{3, 7, 22\}$\\
\end{longtable}}
\noindent Four GDD$_{2}(2, 3, 26)$s $(\mathcal{A}_{i}$, $i=0,1,2,3)$ are obtained by developing modulo 24.

Next we construct block sets $\mathcal{A}_{4}, \mathcal{A}_{5}, \cdots, \mathcal{A}_{9}$. $\mathcal{A}_{4}$ and $\mathcal{A}_{5}$ are GDD$_{2}(2, 3, 26)$s of type $1^{18}8^{1}$ with long group $G_{0}\cup S$, the underlined base blocks in $\mathcal{A}_{4}$ form the blocks of an NGDD$(2, 3, 26)$ of type $2^{(3,6)}8^{1}$ where $\lambda_{2, 14}=\lambda_{3, 15}=\lambda_{5, 17}=2$ and  $\lambda_{1, 13}=\lambda_{6, 18}=\lambda_{7, 19}=\lambda_{9, 21}=\lambda_{10, 22}=\lambda_{11, 23}=0$. $\mathcal{A}_{6}$ and $\mathcal{A}_{7}$ are GDD$_{2}(2, 3, 26)$s of type $1^{18}8^{1}$ with long group $G_{1}\cup S$. $\mathcal{A}_{8}$ and $\mathcal{A}_{9}$ are GDD$_{4}(2, 3, 26)$s of type $1^{18}8^{1}$ with long group $G_{2}\cup S$ and $G_{3}\cup S$, respectively.

To sum up, $\mathcal{A}_{0}\cup\mathcal{A}_{4}\cup\mathcal{A}_{5}$, $\mathcal{A}_{1}\cup\mathcal{A}_{6}\cup\mathcal{A}_{7}$, $\mathcal{A}_{2}\cup\mathcal{A}_{8}$, and $\mathcal{A}_{3}\cup\mathcal{A}_{9}$ are GDD$_{6}(2, 3, 26)$s of type $1^{18}8^{1}$ with long group $G_{i}\cup S$ where $i=0,1,2,3$, respectively.
Then $\bigcup_{0\leq i \leq 9}\mathcal{A}_{i}$ forms a $1$-PCS$^{3}_{6}(6^{4} : 2)$.

{\small\begin{longtable}{llllllll}
$\mathcal{A}_{4}$: &$\{\underline{a, 1, 2}\}$& $\{\underline{a, 3, 5}\}$& $\{\underline{a, 6, 7}\}$& $\{\underline{a, 9, 10}\}$& $\{\underline{a, 11, 13}\}$& $\{\underline{a, 14, 15}\}$& $\{\underline{a, 17, 18}\}$\\
&$\{\underline{a, 22, 23}\}$& $\{\underline{b, 2, 3}\}$& $\{\underline{b, 5, 6}\}$& $\{\underline{b, 10, 11}\}$& $\{\underline{b, 13, 14}\}$& $\{\underline{b, 18, 19}\}$& $\{\underline{b, 15, 17}\}$\\
&$\{\underline{b, 7, 9}\}$& $\{\underline{b, 21, 22}\}$& $\{\underline{0, 1, 21}\}$& $\{\underline{1, 8, 9}\}$& $\{\underline{5, 8, 13}\}$& $\{\underline{1, 4, 5}\}$& $\{\underline{12, 13, 21}\}$\\
&$\{\underline{13, 16, 17}\}$& $\{\underline{1, 14, 17}\}$& $\{\underline{3, 6, 14}\}$& $\{\underline{3, 4, 7}\}$& $\{\underline{5, 7, 15}\}$& $\{\underline{0, 3, 11}\}$& $\{\underline{0, 10, 23}\}$\\
&$\{\underline{7, 8, 11}\}$& $\{\underline{7, 10, 18}\}$& $\{\underline{2, 15, 18}\}$& $\{\underline{6, 20, 22}\}$& $\{\underline{1, 6, 16}\}$& $\{\underline{1, 7, 12}\}$& $\{\underline{1, 10, 19}\}$\\
&$\{\underline{1, 3, 20}\}$& $\{\underline{1, 18, 22}\}$& $\{\underline{8, 10, 21}\}$& $\{\underline{6, 8, 23}\}$& $\{\underline{2, 6, 9}\}$& $\{\underline{6, 12, 15}\}$& $\{\underline{6, 10, 13}\}$\\
&$\{\underline{4, 6, 21}\}$& $\{\underline{0, 7, 14}\}$& $\{\underline{0, 9, 18}\}$& $\{\underline{0, 5, 19}\}$& $\{\underline{0, 2, 13}\}$& $\{\underline{0, 15, 22}\}$& $\{\underline{3, 8, 15}\}$\\
&$\{\underline{2, 8, 19}\}$& $\{\underline{8, 17, 22}\}$& $\{\underline{2, 12, 22}\}$& $\{\underline{7, 13, 22}\}$& $\{\underline{3, 19, 22}\}$& $\{\underline{12, 19, 23}\}$& $\{\underline{3, 10, 12}\}$\\
&$\{\underline{11, 12, 14}\}$& $\{\underline{3, 15, 21}\}$& $\{\underline{3, 17, 23}\}$& $\{\underline{3, 9, 13}\}$& $\{\underline{3, 16, 18}\}$& $\{\underline{7, 16, 23}\}$& $\{\underline{2, 4, 23}\}$\\
&$\{\underline{11, 18, 20}\}$& $\{\underline{9, 15, 20}\}$& $\{\underline{5, 9, 23}\}$& $\{\underline{13, 15, 23}\}$& $\{\underline{13, 19, 20}\}$& $\{\underline{4, 17, 19}\}$& $\{\underline{4, 10, 15}\}$\\
&$\{\underline{4, 14, 22}\}$& $\{\underline{4, 9, 11}\}$& $\{\underline{9, 16, 22}\}$& $\{\underline{5, 11, 22}\}$& $\{\underline{15, 16, 19}\}$& $\{\underline{18, 21, 23}\}$& $\{\underline{14, 20, 23}\}$\\
&$\{\underline{5, 16, 21}\}$& $\{\underline{2, 11, 16}\}$& $\{\underline{2, 5, 14}\}$& $\{\underline{2, 14, 21}\}$& $\{\underline{11, 17, 21}\}$& $\{\underline{2, 7, 17}\}$& $\{\underline{2, 10, 20}\}$\\
&$\{\underline{5, 17, 20}\}$& $\{\underline{7, 20, 21}\}$& $\{\underline{a, 19, 21}\}$ & $\{\underline{b, 1, 23}\}$ & $\{\underline{9, 12, 17}\}$& $\{\underline{6, 11, 19}\}$& $\{\underline{1, 11, 15}\}$\\
&$\{\underline{0, 6, 17}\}$& $\{\underline{8, 14, 18}\}$& $\{\underline{5, 12, 18}\}$& $\{\underline{4, 13, 18}\}$& $\{\underline{9, 14, 19}\}$& $\{\underline{10, 14, 16}\}$& $\{\underline{5, 10, 17}\}$\\
&$\{a, 2, 3\}$&  $\{a, 5, 6\}$&  $\{a, 7, 9\}$&  $\{a, 10, 11\}$&  $\{a, 13, 14\}$&  $\{a, 15, 17\}$&  $\{a, 1, 23\}$\\
&$\{a, 18, 19\}$&  $\{a, 21, 22\}$&  $\{b, 1, 2\}$&  $\{b, 6, 7\}$&  $\{b, 9, 10\}$&  $\{b, 14, 15\}$&  $\{b, 17, 18\}$ \\
&$\{b, 3, 5\}$&  $\{b, 19, 21\}$&  $\{b, 11, 13\}$&  $\{b, 22, 23\}$&  $\{0, 1, 9\}$&  $\{5, 8, 9\}$&  $\{4, 5, 13\}$\\
&$\{8, 13, 21\}$&  $\{9, 12, 13\}$&  $\{9, 16, 17\}$&  $\{1, 12, 17\}$&  $\{1, 6, 14\}$&  $\{3, 4, 14\}$&  $\{4, 7, 15\}$\\
&$\{3, 6, 11\}$&  $\{0, 3, 23\}$&  $\{2, 10, 23\}$&  $\{8, 11, 19\}$&  $\{6, 19, 22\}$&  $\{7, 8, 18\}$&  $\{3, 8, 22\}$ \\
&$\{1, 8, 10\}$&  $\{8, 14, 23\}$&  $\{2, 8, 15\}$&  $\{6, 8, 17\}$&  $\{2, 16, 18\}$&  $\{6, 10, 16\}$&  $\{2, 6, 20\}$\\
&$\{2, 13, 17\}$&  $\{4, 6, 9\}$&  $\{17, 20, 21\}$&  $\{3, 16, 19\}$&  $\{3, 7, 17\}$&  $\{3, 12, 18\}$&  $\{1, 3, 13\}$\\
&$\{3, 9, 21\}$&  $\{3, 10, 20\}$&  $\{6, 12, 23\}$&  $\{14, 17, 19\}$&  $\{0, 10, 14\}$&  $\{0, 17, 22\}$& $\{4, 10, 17\}$ \\
&$\{11, 17, 23\}$&  $\{5, 10, 18\}$&  $\{10, 12, 21\}$&  $\{7, 10, 19\}$&  $\{10, 15, 22\}$&  $\{10, 13, 22\}$&  $\{12, 15, 19\}$\\
&$\{2, 7, 12\}$&  $\{9, 19, 23\}$&  $\{5, 19, 20\}$&  $\{2, 9, 11\}$&  $\{9, 14, 21\}$&  $\{9, 15, 18\}$&  $\{9, 20, 22\}$ \\
&$\{5, 15, 23\}$&  $\{0, 6, 18\}$&  $\{6, 13, 18\}$&  $\{6, 15, 21\}$&  $\{0, 7, 11\}$&  $\{0, 13, 15\}$&  $\{0, 2, 19\}$\\
&$\{0, 5, 21\}$&  $\{2, 5, 22\}$&  $\{2, 4, 21\}$&  $\{1, 4, 19\}$&  $\{7, 13, 19\}$&  $\{1, 13, 20\}$&  $\{1, 15, 16\}$ \\
&$\{1, 7, 22\}$&  $\{1, 5, 11\}$&  $\{1, 18, 21\}$&  $\{5, 7, 16\}$&  $\{5, 12, 14\}$&  $\{7, 14, 20\}$&  $\{7, 21, 23\}$\\
&$\{11, 12, 22\}$&  $\{4, 18, 22\}$&  $\{4, 11, 23\}$&  $\{11, 15, 20\}$&  $\{11, 14, 18\}$&  $\{11, 16, 21\}$&  $\{13, 16, 23\}$ \\
&$\{14, 16, 22\}$&  $\{18, 20, 23\}$\\
\end{longtable}}

{\small\begin{longtable}{llllllll}
$\mathcal{A}_{5}$: &$\{a, 1, 3\}$& $\{a, 1, 6\}$& $\{a, 3, 22\}$& $\{a, 6, 11\}$& $\{a, 9, 11\}$& $\{a, 9, 14\}$& $\{a, 14, 19\}$\\
&$\{a, 17, 22\}$& $\{a, 17, 19\}$& $\{a, 5, 7\}$& $\{a, 5, 10\}$& $\{a, 2, 7\}$& $\{a, 2, 21\}$& $\{a, 13, 15\}$\\
&$\{a, 10, 15\}$& $\{a, 13, 18\}$& $\{a, 18, 23\}$& $\{a, 21, 23\}$& $\{b, 1, 3\}$& $\{b, 1, 11\}$& $\{b, 3, 13\}$\\
&$\{b, 11, 14\}$& $\{b, 14, 17\}$& $\{b, 17, 19\}$& $\{b, 2, 13\}$& $\{b, 2, 15\}$& $\{b, 5, 15\}$& $\{b, 5, 18\}$\\
&$\{b, 6, 19\}$& $\{b, 6, 9\}$& $\{b, 9, 22\}$& $\{b, 7, 22\}$& $\{b, 7, 18\}$& $\{b, 10, 21\}$& $\{b, 10, 23\}$\\
&$\{b, 21, 23\}$& $\{0, 2, 3\}$& $\{0, 3, 22\}$& $\{3, 4, 23\}$& $\{3, 4, 6\}$& $\{0, 13, 14\}$& $\{0, 6, 14\}$\\
&$\{0, 1, 13\}$& $\{0, 5, 6\}$& $\{0, 5, 10\}$& $\{0, 7, 15\}$& $\{0, 7, 9\}$& $\{0, 9, 17\}$& $\{0, 2, 15\}$\\
&$\{0, 21, 22\}$& $\{0, 19, 21\}$& $\{0, 10, 18\}$& $\{0, 17, 23\}$& $\{0, 1, 11\}$& $\{0, 11, 19\}$& $\{0, 18, 23\}$\\
&$\{4, 17, 18\}$& $\{4, 10, 18\}$& $\{9, 10, 20\}$& $\{6, 10, 20\}$& $\{1, 4, 9\}$& $\{1, 2, 4\}$& $\{1, 12, 13\}$\\
&$\{1, 2, 12\}$& $\{1, 9, 18\}$& $\{1, 7, 18\}$& $\{1, 17, 20\}$& $\{1, 15, 20\}$& $\{1, 16, 17\}$& $\{1, 6, 22\}$\\
&$\{1, 21, 22\}$& $\{1, 10, 21\}$& $\{1, 10, 23\}$& $\{1, 16, 23\}$& $\{1, 14, 15\}$& $\{1, 5, 14\}$& $\{1, 5, 8\}$\\
&$\{1, 7, 19\}$& $\{1, 8, 19\}$& $\{2, 9, 10\}$& $\{2, 10, 16\}$& $\{10, 12, 22\}$& $\{10, 17, 22\}$& $\{4, 10, 11\}$\\
&$\{7, 8, 10\}$& $\{8, 10, 11\}$& $\{10, 12, 13\}$& $\{6, 7, 10\}$& $\{10, 13, 14\}$& $\{3, 10, 19\}$& $\{10, 14, 19\}$\\
&$\{3, 10, 16\}$& $\{10, 15, 17\}$& $\{16, 18, 19\}$& $\{16, 19, 21\}$& $\{6, 7, 19\}$& $\{8, 9, 19\}$& $\{19, 20, 23\}$\\
&$\{15, 19, 20\}$& $\{7, 14, 15\}$& $\{14, 17, 18\}$& $\{3, 14, 18\}$& $\{13, 17, 20\}$& $\{3, 13, 17\}$& $\{3, 18, 19\}$\\
&$\{9, 12, 19\}$& $\{5, 12, 19\}$& $\{5, 15, 19\}$& $\{3, 15, 17\}$& $\{3, 6, 15\}$& $\{11, 13, 19\}$& $\{7, 12, 17\}$ \\
&$\{6, 12, 17\}$& $\{5, 16, 17\}$& $\{7, 17, 21\}$& $\{6, 17, 21\}$& $\{2, 4, 19\}$& $\{2, 13, 19\}$& $\{4, 19, 22\}$ \\
&$\{19, 22, 23\}$& $\{2, 9, 17\}$& $\{2, 8, 17\}$& $\{4, 11, 17\}$& $\{8, 11, 17\}$& $\{5, 17, 23\}$& $\{8, 13, 18\}$ \\
&$\{8, 13, 15\}$& $\{4, 13, 21\}$& $\{4, 7, 13\}$& $\{8, 15, 23\}$& $\{15, 16, 18\}$& $\{9, 15, 16\}$& $\{15, 18, 22\}$\\
&$\{11, 12, 15\}$& $\{11, 15, 21\}$& $\{12, 15, 23\}$& $\{9, 15, 21\}$& $\{4, 15, 22\}$& $\{4, 6, 15\}$& $\{4, 7, 9\}$ \\
&$\{4, 5, 21\}$& $\{4, 5, 14\}$& $\{4, 14, 23\}$& $\{3, 7, 8\}$& $\{5, 8, 23\}$& $\{3, 8, 21\}$& $\{5, 13, 22\}$ \\
&$\{5, 13, 20\}$& $\{2, 5, 7\}$& $\{2, 14, 16\}$& $\{2, 14, 20\}$& $\{2, 6, 8\}$& $\{2, 22, 23\}$& $\{2, 11, 22\}$ \\
&$\{8, 14, 22\}$& $\{8, 18, 22\}$& $\{8, 9, 21\}$& $\{6, 8, 14\}$& $\{13, 16, 21\}$& $\{11, 20, 22\}$& $\{7, 20, 22\}$\\
&$\{12, 14, 22\}$& $\{5, 9, 22\}$& $\{6, 16, 22\}$& $\{13, 16, 22\}$& $\{7, 11, 13\}$& $\{6, 9, 13\}$& $\{6, 13, 23\}$\\
&$\{9, 13, 23\}$& $\{3, 9, 23\}$& $\{2, 6, 23\}$& $\{11, 16, 23\}$& $\{11, 14, 23\}$& $\{7, 12, 23\}$& $\{7, 20, 23\}$\\
&$\{3, 7, 16\}$& $\{9, 14, 16\}$& $\{7, 14, 21\}$& $\{7, 11, 16\}$& $\{5, 6, 16\}$& $\{3, 12, 14\}$& $\{14, 20, 21\}$\\
&$\{3, 9, 12\}$& $\{3, 11, 21\}$& $\{2, 12, 21\}$& $\{2, 18, 20\}$& $\{2, 11, 18\}$& $\{2, 3, 5\}$& $\{3, 5, 20\}$ \\
&$\{3, 11, 20\}$& $\{5, 9, 11\}$& $\{9, 18, 20\}$& $\{6, 20, 21\}$& $\{5, 11, 12\}$& $\{5, 18, 21\}$& $\{6, 11, 18\}$\\
&$\{6, 12, 18\}$& $\{12, 18, 21\}$\\
\end{longtable}}

{\small\begin{longtable}{llllllll}
$\mathcal{A}_{6}$: &$\{b, 7, 8\}$& $\{b, 7, 20\}$& $\{b, 8, 10\}$& $\{b, 10, 12\}$& $\{b, 19, 20\}$& $\{b, 16, 19\}$& $\{b, 0, 22\}$\\
&$\{b, 12, 22\}$& $\{b, 0, 11\}$& $\{b, 2, 11\}$& $\{b, 2, 23\}$& $\{b, 14, 23\}$& $\{b, 14, 16\}$& $\{b, 3, 4\}$\\
&$\{b, 3, 6\}$& $\{b, 6, 15\}$& $\{b, 4, 18\}$& $\{b, 15, 18\}$& $\{1, 7, 15\}$& $\{1, 10, 15\}$& $\{5, 18, 19\}$\\
&$\{5, 11, 19\}$& $\{a, 0, 2\}$& $\{a, 2, 4\}$& $\{a, 15, 16\}$& $\{a, 15, 20\}$& $\{a, 16, 18\}$& $\{a, 18, 20\}$\\
&$\{a, 4, 6\}$& $\{a, 6, 8\}$& $\{a, 3, 8\}$& $\{a, 3, 14\}$& $\{a, 7, 12\}$& $\{a, 7, 10\}$& $\{a, 0, 19\}$\\
&$\{a, 19, 22\}$& $\{a, 10, 23\}$& $\{a, 12, 23\}$& $\{a, 11, 14\}$& $\{a, 11, 22\}$& $\{6, 7, 17\}$& $\{7, 17, 23\}$\\
&$\{6, 17, 18\}$& $\{10, 11, 21\}$& $\{7, 11, 21\}$& $\{7, 10, 11\}$& $\{1, 7, 23\}$& $\{6, 7, 15\}$& $\{2, 5, 6\}$\\
&$\{2, 3, 6\}$& $\{6, 9, 10\}$& $\{6, 10, 19\}$& $\{1, 6, 11\}$& $\{1, 6, 20\}$& $\{1, 11, 19\}$& $\{1, 2, 19\}$\\
&$\{1, 4, 12\}$& $\{0, 1, 4\}$& $\{1, 8, 16\}$& $\{1, 8, 14\}$& $\{1, 2, 10\}$& $\{0, 1, 18\}$& $\{1, 3, 18\}$ \\
&$\{1, 3, 16\}$& $\{1, 22, 23\}$& $\{1, 20, 22\}$& $\{1, 12, 14\}$& $\{2, 5, 10\}$& $\{3, 7, 20\}$& $\{3, 7, 12\}$\\
&$\{7, 13, 14\}$& $\{0, 7, 13\}$& $\{4, 5, 7\}$& $\{5, 7, 8\}$& $\{7, 9, 22\}$& $\{7, 9, 18\}$& $\{0, 7, 16\}$\\
&$\{2, 4, 7\}$& $\{7, 16, 18\}$& $\{7, 14, 22\}$& $\{2, 7, 19\}$& $\{7, 19, 21\}$& $\{2, 13, 14\}$& $\{2, 9, 14\}$\\
&$\{2, 3, 22\}$& $\{2, 15, 22\}$& $\{3, 9, 19\}$& $\{9, 10, 19\}$& $\{2, 12, 20\}$& $\{2, 12, 18\}$& $\{2, 13, 18\}$\\
&$\{2, 15, 16\}$& $\{2, 16, 21\}$& $\{2, 21, 23\}$& $\{2, 8, 9\}$& $\{0, 2, 8\}$& $\{2, 11, 17\}$& $\{2, 17, 20\}$\\
&$\{12, 13, 15\}$& $\{13, 15, 18\}$& $\{8, 10, 13\}$& $\{10, 13, 20\}$& $\{10, 17, 18\}$& $\{10, 18, 23\}$& $\{10, 12, 20\}$\\
&$\{5, 10, 15\}$& $\{3, 10, 14\}$& $\{10, 14, 17\}$& $\{3, 10, 22\}$& $\{0, 10, 16\}$& $\{0, 10, 22\}$& $\{4, 10, 21\}$ \\
&$\{4, 10, 16\}$& $\{3, 15, 20\}$& $\{3, 5, 15\}$& $\{12, 13, 16\}$& $\{11, 13, 16\}$& $\{0, 3, 9\}$& $\{3, 11, 16\}$\\
&$\{3, 11, 17\}$& $\{3, 17, 19\}$& $\{3, 13, 23\}$& $\{3, 4, 13\}$& $\{3, 12, 21\}$& $\{3, 18, 21\}$& $\{0, 3, 8\}$ \\
&$\{3, 5, 23\}$& $\{11, 13, 23\}$& $\{8, 13, 20\}$& $\{0, 6, 13\}$& $\{4, 13, 19\}$& $\{6, 13, 22\}$& $\{13, 19, 22\}$\\
&$\{5, 14, 16\}$& $\{16, 17, 20\}$& $\{16, 17, 19\}$& $\{0, 12, 17\}$& $\{0, 4, 17\}$& $\{0, 5, 18\}$& $\{0, 5, 20\}$\\
&$\{5, 11, 20\}$& $\{11, 18, 23\}$& $\{9, 12, 18\}$& $\{18, 21, 22\}$& $\{11, 18, 22\}$& $\{6, 8, 18\}$& $\{4, 8, 18\}$\\
&$\{14, 18, 19\}$& $\{14, 18, 20\}$& $\{6, 11, 12\}$& $\{0, 11, 15\}$& $\{6, 9, 16\}$& $\{0, 6, 21\}$& $\{0, 20, 21\}$ \\
&$\{0, 9, 14\}$& $\{0, 12, 23\}$& $\{0, 19, 23\}$& $\{0, 14, 15\}$& $\{9, 15, 23\}$& $\{9, 15, 22\}$& $\{4, 9, 23\}$\\
&$\{11, 14, 15\}$& $\{8, 12, 14\}$& $\{14, 17, 22\}$& $\{5, 14, 23\}$& $\{5, 6, 22\}$& $\{4, 5, 22\}$& $\{5, 8, 12\}$\\
&$\{5, 12, 16\}$& $\{9, 11, 12\}$& $\{4, 11, 20\}$& $\{4, 8, 11\}$& $\{8, 9, 11\}$& $\{4, 9, 20\}$& $\{9, 16, 20\}$ \\
&$\{4, 6, 14\}$& $\{4, 14, 21\}$& $\{6, 16, 23\}$& $\{6, 20, 23\}$& $\{14, 19, 20\}$& $\{6, 14, 21\}$& $\{6, 12, 19\}$\\
&$\{4, 16, 22\}$& $\{12, 17, 22\}$& $\{4, 12, 19\}$& $\{12, 15, 21\}$& $\{8, 19, 23\}$& $\{4, 15, 23\}$& $\{4, 15, 17\}$\\
&$\{8, 15, 17\}$& $\{8, 17, 23\}$& $\{8, 20, 22\}$& $\{20, 21, 23\}$& $\{16, 22, 23\}$& $\{8, 16, 21\}$& $\{8, 21, 22\}$\\
&$\{8, 15, 19\}$& $\{15, 19, 21\}$\\
\end{longtable}}

{\small\begin{longtable}{llllllll}
$\mathcal{A}_{7}$: &$\{b, 4, 7\}$& $\{b, 7, 10\}$& $\{b, 10, 19\}$& $\{b, 19, 22\}$& $\{b, 11, 22\}$& $\{b, 11, 12\}$& $\{b, 6, 8\}$\\
&$\{b, 8, 18\}$& $\{b, 6, 20\}$& $\{b, 20, 23\}$& $\{b, 3, 18\}$& $\{b, 0, 23\}$& $\{b, 2, 4\}$& $\{b, 2, 16\}$\\
&$\{b, 15, 16\}$& $\{b, 12, 15\}$& $\{b, 0, 14\}$& $\{b, 3, 14\}$& $\{1, 15, 18\}$& $\{1, 15, 19\}$& $\{5, 14, 19\}$\\
&$\{5, 19, 22\}$& $\{a, 2, 15\}$& $\{a, 2, 23\}$& $\{a, 15, 18\}$& $\{a, 7, 18\}$& $\{a, 7, 8\}$& $\{a, 3, 4\}$\\
&$\{a, 4, 23\}$& $\{a, 3, 6\}$& $\{a, 6, 19\}$& $\{a, 12, 14\}$& $\{a, 14, 16\}$& $\{a, 11, 16\}$& $\{a, 19, 20\}$\\
&$\{a, 20, 22\}$& $\{a, 0, 22\}$& $\{a, 0, 11\}$& $\{a, 8, 10\}$& $\{a, 10, 12\}$& $\{4, 15, 19\}$& $\{7, 8, 17\}$\\
&$\{7, 10, 17\}$& $\{1, 6, 7\}$& $\{1, 7, 14\}$& $\{7, 11, 12\}$& $\{7, 11, 20\}$& $\{6, 11, 21\}$& $\{11, 14, 21\}$ \\
&$\{9, 11, 19\}$& $\{7, 9, 19\}$& $\{7, 18, 19\}$& $\{5, 6, 7\}$& $\{5, 7, 20\}$& $\{0, 7, 12\}$& $\{0, 4, 7\}$\\
&$\{3, 7, 9\}$& $\{3, 7, 21\}$& $\{7, 13, 23\}$& $\{7, 13, 16\}$& $\{7, 14, 23\}$& $\{7, 16, 21\}$& $\{2, 7, 15\}$\\
&$\{2, 7, 22\}$& $\{7, 15, 22\}$& $\{2, 6, 16\}$& $\{2, 6, 19\}$& $\{2, 9, 18\}$& $\{2, 9, 20\}$& $\{1, 2, 22\}$\\
&$\{1, 3, 6\}$& $\{0, 1, 19\}$& $\{2, 17, 18\}$& $\{2, 10, 21\}$& $\{2, 3, 21\}$& $\{0, 8, 21\}$& $\{0, 21, 23\}$\\
&$\{9, 22, 23\}$& $\{9, 18, 23\}$& $\{0, 2, 10\}$& $\{2, 4, 17\}$& $\{1, 2, 14\}$& $\{2, 14, 19\}$& $\{2, 3, 23\}$\\
&$\{0, 2, 12\}$& $\{2, 5, 12\}$& $\{2, 5, 11\}$& $\{2, 8, 13\}$& $\{2, 8, 11\}$& $\{2, 13, 20\}$& $\{3, 8, 19\}$\\
&$\{3, 13, 19\}$& $\{3, 8, 13\}$& $\{3, 12, 23\}$& $\{10, 11, 19\}$& $\{0, 8, 19\}$& $\{13, 19, 23\}$& $\{16, 19, 23\}$\\
&$\{4, 16, 19\}$& $\{8, 12, 17\}$& $\{10, 21, 22\}$& $\{12, 13, 22\}$& $\{6, 12, 13\}$& $\{10, 13, 18\}$& $\{13, 18, 20\}$\\
&$\{6, 13, 14\}$& $\{10, 13, 15\}$& $\{13, 15, 16\}$& $\{11, 13, 14\}$& $\{0, 4, 13\}$& $\{0, 11, 13\}$& $\{4, 13, 22\}$\\
&$\{0, 3, 5\}$& $\{0, 5, 15\}$& $\{0, 6, 15\}$& $\{0, 14, 17\}$& $\{0, 17, 20\}$& $\{14, 16, 17\}$& $\{0, 16, 18\}$\\
&$\{0, 9, 16\}$& $\{0, 18, 22\}$& $\{0, 6, 9\}$& $\{0, 1, 10\}$& $\{0, 3, 20\}$& $\{6, 9, 15\}$& $\{3, 9, 15\}$\\
&$\{9, 12, 16\}$& $\{9, 11, 20\}$& $\{3, 10, 15\}$& $\{3, 10, 17\}$& $\{3, 14, 22\}$& $\{3, 12, 20\}$& $\{1, 3, 4\}$\\
&$\{1, 11, 23\}$& $\{1, 4, 11\}$& $\{1, 8, 22\}$& $\{1, 8, 20\}$& $\{1, 10, 18\}$& $\{1, 12, 16\}$& $\{1, 16, 20\}$\\
&$\{1, 12, 23\}$& $\{8, 12, 18\}$& $\{4, 8, 9\}$& $\{8, 9, 10\}$& $\{9, 12, 14\}$& $\{4, 9, 10\}$& $\{9, 14, 22\}$\\
&$\{5, 10, 12\}$& $\{6, 10, 11\}$& $\{6, 10, 23\}$& $\{8, 11, 15\}$& $\{5, 11, 23\}$& $\{3, 11, 22\}$& $\{3, 11, 18\}$\\
&$\{3, 5, 16\}$& $\{3, 16, 17\}$& $\{11, 15, 17\}$& $\{11, 17, 18\}$& $\{4, 11, 16\}$& $\{6, 8, 16\}$& $\{14, 15, 23\}$ \\
&$\{8, 14, 15\}$& $\{5, 15, 22\}$& $\{15, 17, 23\}$& $\{12, 15, 20\}$& $\{4, 15, 21\}$& $\{15, 20, 21\}$& $\{5, 10, 20\}$\\
&$\{10, 16, 23\}$& $\{5, 8, 14\}$& $\{6, 14, 20\}$& $\{8, 20, 23\}$& $\{14, 18, 21\}$& $\{4, 14, 18\}$& $\{4, 10, 14\}$\\
&$\{10, 14, 20\}$& $\{10, 16, 22\}$& $\{4, 5, 8\}$& $\{5, 16, 18\}$& $\{5, 6, 18\}$& $\{4, 5, 23\}$& $\{8, 16, 22\}$\\
&$\{8, 21, 23\}$& $\{16, 20, 21\}$& $\{6, 21, 22\}$& $\{6, 18, 23\}$& $\{17, 22, 23\}$& $\{12, 18, 22\}$& $\{4, 20, 22\}$ \\
&$\{4, 6, 12\}$& $\{4, 6, 17\}$& $\{4, 12, 21\}$& $\{4, 18, 20\}$& $\{6, 17, 22\}$& $\{12, 17, 19\}$& $\{12, 19, 21\}$\\
&$\{17, 19, 20\}$& $\{18, 19, 21\}$\\
\end{longtable}}

{\small\begin{longtable}{llllllll}
$\mathcal{A}_{8}$: &$\{a, 3, 16\}$& $\{a, 3, 13\}$& $\{a, 3, 17\}$& $\{a, 0, 3\}$& $\{a, 7, 20\}$& $\{a, 7, 17\}$& $\{a, 7, 21\}$\\
&$\{a, 4, 7\}$& $\{a, 11, 12\}$& $\{a, 1, 11\}$& $\{a, 11, 21\}$& $\{a, 8, 11\}$& $\{a, 4, 15\}$& $\{a, 5, 15\}$\\
&$\{a, 1, 15\}$& $\{a, 12, 15\}$& $\{a, 0, 1\}$& $\{a, 1, 20\}$& $\{a, 4, 5\}$& $\{a, 4, 9\}$& $\{a, 12, 13\}$\\
&$\{a, 12, 17\}$& $\{a, 16, 17\}$& $\{a, 20, 21\}$& $\{a, 20, 23\}$& $\{a, 8, 9\}$& $\{a, 9, 19\}$& $\{a, 9, 23\}$\\
&$\{a, 0, 23\}$& $\{a, 13, 23\}$& $\{a, 0, 5\}$& $\{a, 8, 13\}$& $\{a, 16, 21\}$& $\{a, 8, 19\}$& $\{a, 5, 19\}$\\
&$\{a, 16, 19\}$& $\{b, 3, 17\}$& $\{b, 3, 16\}$& $\{b, 0, 3\}$& $\{b, 3, 12\}$& $\{b, 5, 7\}$& $\{b, 7, 17\}$\\
&$\{b, 7, 21\}$& $\{b, 7, 16\}$& $\{b, 9, 11\}$& $\{b, 11, 21\}$& $\{b, 8, 11\}$& $\{b, 11, 20\}$& $\{b, 13, 15\}$\\
&$\{b, 1, 15\}$& $\{b, 4, 15\}$& $\{b, 0, 15\}$& $\{b, 0, 1\}$& $\{b, 0, 13\}$& $\{b, 1, 12\}$& $\{b, 1, 4\}$\\
&$\{b, 16, 17\}$& $\{b, 5, 16\}$& $\{b, 12, 13\}$& $\{b, 13, 23\}$& $\{b, 4, 5\}$& $\{b, 4, 19\}$& $\{b, 17, 20\}$\\
&$\{b, 5, 19\}$& $\{b, 9, 19\}$& $\{b, 8, 19\}$& $\{b, 20, 21\}$& $\{b, 9, 20\}$& $\{b, 8, 21\}$& $\{b, 8, 23\}$\\
&$\{b, 9, 23\}$& $\{b, 12, 23\}$& $\{3, 6, 7\}$& $\{3, 7, 10\}$& $\{0, 3, 7\}$& $\{3, 7, 13\}$& $\{2, 3, 11\}$\\
&$\{3, 10, 11\}$& $\{1, 3, 11\}$& $\{3, 5, 11\}$& $\{3, 9, 10\}$& $\{3, 10, 21\}$& $\{0, 3, 4\}$& $\{0, 10, 20\}$\\
&$\{0, 10, 19\}$& $\{0, 4, 10\}$& $\{0, 10, 17\}$& $\{2, 3, 15\}$& $\{3, 14, 15\}$& $\{3, 15, 22\}$& $\{3, 15, 18\}$\\
&$\{3, 17, 18\}$& $\{3, 5, 18\}$& $\{3, 8, 18\}$& $\{2, 3, 19\}$& $\{3, 14, 19\}$& $\{3, 19, 21\}$& $\{3, 12, 19\}$\\
&$\{3, 16, 23\}$& $\{3, 8, 23\}$& $\{3, 6, 23\}$& $\{3, 20, 23\}$& $\{7, 11, 14\}$& $\{1, 7, 11\}$& $\{4, 7, 11\}$\\
&$\{7, 11, 17\}$& $\{7, 14, 16\}$& $\{7, 12, 14\}$& $\{7, 14, 19\}$& $\{7, 12, 19\}$& $\{0, 7, 19\}$& $\{7, 19, 22\}$\\
&$\{4, 14, 20\}$& $\{0, 4, 14\}$& $\{4, 9, 14\}$& $\{4, 8, 14\}$& $\{7, 10, 15\}$& $\{7, 15, 20\}$& $\{7, 15, 21\}$\\
&$\{7, 9, 15\}$& $\{7, 22, 23\}$& $\{6, 7, 23\}$& $\{7, 18, 23\}$& $\{2, 7, 23\}$& $\{2, 7, 8\}$& $\{2, 7, 21\}$\\
&$\{2, 7, 20\}$& $\{2, 8, 16\}$& $\{2, 8, 23\}$& $\{2, 8, 12\}$& $\{2, 4, 12\}$& $\{2, 12, 16\}$& $\{2, 3, 12\}$\\
&$\{2, 4, 5\}$& $\{2, 4, 15\}$& $\{2, 4, 13\}$& $\{2, 9, 15\}$& $\{2, 15, 17\}$& $\{2, 5, 13\}$& $\{2, 13, 21\}$\\
&$\{2, 9, 13\}$& $\{1, 2, 5\}$& $\{0, 2, 5\}$& $\{2, 9, 21\}$& $\{2, 9, 19\}$& $\{2, 16, 17\}$& $\{2, 16, 20\}$\\
&$\{2, 11, 19\}$& $\{2, 19, 23\}$& $\{0, 2, 17\}$& $\{2, 17, 21\}$& $\{1, 2, 20\}$& $\{0, 2, 20\}$& $\{0, 1, 2\}$\\
&$\{1, 2, 11\}$& $\{2, 11, 23\}$& $\{11, 18, 19\}$& $\{11, 14, 19\}$& $\{4, 11, 19\}$& $\{10, 11, 23\}$& $\{11, 22, 23\}$\\
&$\{6, 11, 23\}$& $\{4, 19, 23\}$& $\{1, 19, 23\}$& $\{5, 19, 23\}$& $\{0, 1, 17\}$& $\{0, 17, 21\}$& $\{0, 9, 21\}$\\
&$\{0, 15, 21\}$& $\{0, 4, 21\}$& $\{0, 5, 13\}$& $\{0, 5, 16\}$& $\{0, 7, 8\}$& $\{0, 7, 18\}$& $\{5, 7, 18\}$\\
&$\{7, 18, 20\}$& $\{0, 6, 16\}$& $\{0, 6, 11\}$& $\{0, 6, 20\}$& $\{0, 6, 12\}$& $\{0, 8, 18\}$& $\{0, 11, 18\}$\\
&$\{0, 12, 18\}$& $\{8, 9, 18\}$& $\{8, 11, 18\}$& $\{9, 16, 18\}$& $\{5, 9, 18\}$& $\{9, 13, 18\}$& $\{0, 11, 12\}$\\
&$\{0, 11, 16\}$& $\{0, 12, 14\}$& $\{0, 9, 23\}$& $\{0, 9, 22\}$& $\{0, 9, 15\}$& $\{0, 15, 19\}$& $\{0, 20, 23\}$\\
&$\{0, 14, 16\}$& $\{0, 8, 14\}$& $\{0, 13, 19\}$& $\{0, 13, 22\}$& $\{0, 8, 22\}$& $\{0, 22, 23\}$& $\{14, 16, 19\}$ \\
&$\{15, 18, 19\}$& $\{15, 19, 22\}$& $\{9, 15, 19\}$& $\{19, 20, 22\}$& $\{19, 21, 22\}$& $\{1, 13, 18\}$& $\{1, 5, 18\}$\\
&$\{1, 12, 18\}$& $\{1, 18, 23\}$& $\{11, 15, 18\}$& $\{11, 15, 16\}$& $\{4, 11, 15\}$& $\{5, 11, 15\}$& $\{4, 5, 17\}$\\
&$\{15, 18, 23\}$& $\{15, 22, 23\}$& $\{10, 15, 23\}$& $\{6, 15, 23\}$& $\{8, 12, 22\}$& $\{4, 8, 22\}$& $\{8, 15, 22\}$\\
&$\{8, 16, 23\}$& $\{10, 15, 16\}$& $\{10, 15, 20\}$& $\{15, 17, 20\}$& $\{6, 15, 20\}$& $\{5, 6, 15\}$& $\{6, 15, 17\}$\\
&$\{1, 5, 15\}$& $\{13, 15, 21\}$& $\{13, 14, 15\}$& $\{1, 13, 15\}$& $\{8, 15, 21\}$& $\{8, 15, 16\}$& $\{8, 12, 15\}$\\
&$\{12, 15, 16\}$& $\{12, 14, 15\}$& $\{14, 15, 17\}$& $\{17, 18, 20\}$& $\{18, 20, 21\}$& $\{18, 19, 20\}$& $\{17, 18, 21\}$\\
&$\{16, 18, 21\}$& $\{4, 18, 21\}$& $\{4, 18, 23\}$& $\{4, 12, 18\}$& $\{4, 16, 18\}$& $\{4, 6, 19\}$& $\{4, 6, 7\}$\\
&$\{1, 4, 6\}$& $\{4, 6, 16\}$& $\{6, 7, 9\}$& $\{6, 11, 16\}$& $\{6, 12, 16\}$& $\{6, 11, 13\}$& $\{6, 9, 17\}$\\
&$\{5, 6, 9\}$& $\{6, 8, 9\}$& $\{5, 6, 8\}$& $\{3, 5, 6\}$& $\{5, 9, 10\}$& $\{5, 9, 14\}$& $\{6, 12, 20\}$\\
&$\{6, 12, 21\}$& $\{6, 13, 20\}$& $\{3, 6, 13\}$& $\{3, 12, 17\}$& $\{6, 8, 21\}$& $\{1, 6, 8\}$& $\{1, 6, 17\}$\\
&$\{6, 17, 19\}$& $\{1, 6, 21\}$& $\{6, 13, 19\}$& $\{6, 19, 21\}$& $\{1, 14, 21\}$& $\{1, 3, 21\}$& $\{1, 4, 21\}$\\
&$\{1, 3, 9\}$& $\{1, 3, 22\}$& $\{1, 9, 22\}$& $\{1, 16, 22\}$& $\{1, 5, 22\}$& $\{3, 14, 21\}$& $\{3, 4, 16\}$\\
&$\{3, 4, 20\}$& $\{3, 4, 22\}$& $\{3, 13, 22\}$& $\{3, 5, 8\}$& $\{3, 8, 20\}$& $\{3, 9, 20\}$& $\{3, 9, 14\}$\\
&$\{5, 8, 10\}$& $\{5, 8, 17\}$& $\{5, 14, 21\}$& $\{5, 14, 20\}$& $\{5, 11, 14\}$& $\{5, 17, 19\}$& $\{5, 13, 23\}$\\
&$\{5, 7, 13\}$& $\{5, 7, 22\}$& $\{5, 10, 21\}$& $\{5, 10, 11\}$& $\{5, 12, 17\}$& $\{5, 16, 22\}$& $\{5, 12, 22\}$\\
&$\{5, 16, 20\}$& $\{5, 12, 20\}$& $\{5, 12, 21\}$& $\{5, 20, 23\}$& $\{5, 21, 23\}$& $\{1, 9, 20\}$& $\{1, 14, 20\}$\\
&$\{9, 12, 20\}$& $\{1, 9, 14\}$& $\{1, 14, 23\}$& $\{1, 4, 23\}$& $\{11, 14, 20\}$& $\{13, 14, 16\}$& $\{11, 16, 17\}$\\
&$\{8, 10, 16\}$& $\{11, 12, 20\}$& $\{13, 16, 18\}$& $\{12, 18, 19\}$& $\{13, 17, 18\}$& $\{4, 12, 17\}$& $\{12, 14, 23\}$\\
&$\{1, 12, 19\}$& $\{1, 10, 12\}$& $\{12, 16, 22\}$& $\{4, 12, 22\}$& $\{4, 17, 23\}$& $\{4, 7, 22\}$& $\{4, 11, 13\}$\\
&$\{4, 9, 16\}$& $\{4, 9, 17\}$& $\{4, 8, 21\}$& $\{4, 8, 10\}$& $\{4, 10, 20\}$& $\{4, 10, 13\}$& $\{4, 13, 20\}$\\
&$\{1, 7, 8\}$& $\{7, 8, 9\}$& $\{1, 8, 17\}$& $\{1, 8, 13\}$& $\{1, 7, 16\}$& $\{7, 10, 16\}$& $\{8, 10, 20\}$\\
&$\{8, 12, 13\}$& $\{8, 11, 20\}$& $\{8, 13, 14\}$& $\{8, 14, 17\}$& $\{8, 17, 19\}$& $\{8, 19, 20\}$& $\{17, 20, 22\}$\\
&$\{9, 11, 17\}$& $\{11, 17, 22\}$& $\{9, 11, 22\}$& $\{11, 13, 22\}$& $\{9, 11, 21\}$& $\{9, 17, 22\}$& $\{7, 9, 12\}$\\
&$\{1, 7, 10\}$& $\{7, 12, 13\}$& $\{7, 13, 17\}$& $\{1, 10, 17\}$& $\{1, 10, 16\}$& $\{1, 13, 19\}$& $\{1, 16, 19\}$\\
&$\{16, 19, 20\}$& $\{10, 19, 21\}$& $\{9, 10, 13\}$& $\{9, 10, 12\}$& $\{9, 12, 21\}$& $\{9, 13, 16\}$& $\{9, 16, 23\}$\\
&$\{13, 16, 20\}$& $\{16, 21, 22\}$& $\{16, 21, 23\}$& $\{10, 11, 12\}$& $\{10, 12, 23\}$& $\{11, 13, 21\}$& $\{12, 21, 23\}$ \\
&$\{10, 17, 19\}$& $\{10, 13, 19\}$& $\{10, 13, 21\}$& $\{10, 17, 23\}$& $\{13, 20, 22\}$& $\{17, 21, 22\}$& $\{14, 21, 23\}$\\
&$\{20, 21, 22\}$& $\{13, 14, 17\}$& $\{13, 17, 23\}$& $\{14, 17, 23\}$\\
\end{longtable}}

{\small\begin{longtable}{llllllll}
$\mathcal{A}_{9}$: &$\{a, 0, 13\}$& $\{a, 0, 10\}$& $\{a, 0, 14\}$& $\{a, 0, 21\}$& $\{a, 10, 21\}$& $\{a, 10, 20\}$& $\{a, 10, 13\}$\\
&$\{a, 2, 13\}$& $\{a, 13, 16\}$& $\{a, 8, 21\}$& $\{a, 18, 21\}$& $\{a, 2, 12\}$& $\{a, 2, 16\}$& $\{a, 2, 5\}$\\
&$\{a, 5, 16\}$& $\{a, 6, 16\}$& $\{a, 5, 18\}$& $\{a, 5, 8\}$& $\{a, 8, 18\}$& $\{a, 8, 22\}$& $\{a, 4, 18\}$\\
&$\{a, 4, 17\}$& $\{a, 4, 14\}$& $\{a, 1, 4\}$& $\{a, 1, 14\}$& $\{a, 14, 17\}$& $\{a, 1, 12\}$& $\{a, 1, 22\}$\\
&$\{a, 12, 22\}$& $\{a, 9, 12\}$& $\{a, 9, 22\}$& $\{a, 9, 20\}$& $\{a, 6, 9\}$& $\{a, 6, 17\}$& $\{a, 6, 20\}$\\
&$\{a, 17, 20\}$& $\{b, 0, 10\}$& $\{0, 3, 10\}$& $\{0, 6, 10\}$& $\{b, 0, 2\}$& $\{b, 0, 21\}$& $\{b, 0, 9\}$\\
&$\{0, 2, 21\}$& $\{0, 7, 21\}$& $\{0, 2, 23\}$& $\{0, 2, 11\}$& $\{b, 2, 12\}$& $\{b, 2, 5\}$& $\{b, 2, 17\}$\\
&$\{2, 5, 20\}$& $\{2, 5, 9\}$& $\{2, 6, 12\}$& $\{2, 12, 19\}$& $\{b, 5, 8\}$& $\{b, 5, 14\}$& $\{b, 5, 20\}$\\
&$\{5, 8, 16\}$& $\{5, 8, 15\}$& $\{5, 16, 23\}$& $\{5, 11, 16\}$& $\{5, 20, 21\}$& $\{5, 18, 20\}$& $\{5, 13, 18\}$\\
&$\{5, 11, 18\}$& $\{5, 11, 21\}$& $\{5, 11, 17\}$& $\{5, 19, 21\}$& $\{5, 15, 21\}$& $\{4, 5, 15\}$& $\{5, 9, 15\}$\\
&$\{5, 9, 19\}$& $\{5, 9, 12\}$& $\{5, 13, 19\}$& $\{1, 5, 19\}$& $\{5, 12, 13\}$& $\{3, 5, 13\}$& $\{0, 5, 12\}$\\
&$\{5, 12, 23\}$& $\{5, 6, 23\}$& $\{5, 10, 23\}$& $\{b, 9, 12\}$& $\{8, 9, 12\}$& $\{b, 8, 9\}$& $\{b, 9, 18\}$\\
&$\{b, 8, 22\}$& $\{b, 8, 17\}$& $\{b, 12, 14\}$& $\{b, 12, 21\}$& $\{b, 4, 14\}$& $\{b, 1, 14\}$& $\{b, 6, 17\}$\\
&$\{b, 4, 17\}$& $\{b, 4, 6\}$& $\{b, 4, 13\}$& $\{b, 6, 16\}$& $\{b, 6, 21\}$& $\{b, 18, 21\}$& $\{b, 16, 18\}$\\
&$\{b, 18, 20\}$& $\{b, 13, 16\}$& $\{b, 1, 16\}$& $\{b, 1, 22\}$& $\{b, 1, 10\}$& $\{b, 10, 20\}$& $\{b, 10, 13\}$ \\
&$\{b, 13, 22\}$& $\{b, 20, 22\}$& $\{1, 3, 14\}$& $\{1, 14, 16\}$& $\{1, 9, 16\}$& $\{1, 13, 16\}$& $\{0, 13, 16\}$\\
&$\{0, 8, 13\}$& $\{0, 13, 20\}$& $\{0, 1, 22\}$& $\{1, 10, 22\}$& $\{1, 5, 10\}$& $\{1, 4, 10\}$& $\{1, 4, 13\}$\\
&$\{1, 4, 8\}$& $\{1, 5, 6\}$& $\{1, 5, 7\}$& $\{1, 6, 13\}$& $\{1, 7, 13\}$& $\{1, 6, 19\}$& $\{1, 6, 18\}$\\
&$\{1, 7, 17\}$& $\{1, 7, 21\}$& $\{1, 9, 19\}$& $\{1, 17, 19\}$& $\{1, 9, 23\}$& $\{1, 9, 15\}$& $\{1, 15, 17\}$ \\
&$\{1, 11, 17\}$& $\{1, 15, 21\}$& $\{1, 8, 15\}$& $\{1, 8, 12\}$& $\{1, 8, 18\}$& $\{1, 12, 20\}$& $\{1, 3, 12\}$\\
&$\{0, 1, 3\}$& $\{1, 2, 3\}$& $\{0, 1, 20\}$& $\{0, 1, 23\}$& $\{0, 3, 12\}$& $\{0, 3, 18\}$& $\{0, 12, 19\}$\\
&$\{0, 12, 15\}$& $\{0, 9, 20\}$& $\{0, 14, 20\}$& $\{0, 9, 11\}$& $\{0, 4, 9\}$& $\{0, 11, 22\}$& $\{0, 11, 17\}$\\
&$\{11, 17, 20\}$& $\{0, 14, 19\}$& $\{0, 14, 18\}$& $\{0, 15, 18\}$& $\{0, 4, 18\}$& $\{0, 4, 5\}$& $\{0, 4, 6\}$\\
&$\{0, 5, 7\}$& $\{0, 5, 17\}$& $\{0, 6, 22\}$& $\{0, 6, 7\}$& $\{0, 7, 17\}$& $\{0, 8, 17\}$& $\{0, 19, 22\}$ \\
&$\{0, 16, 19\}$& $\{0, 8, 23\}$& $\{0, 8, 15\}$& $\{0, 15, 16\}$& $\{0, 16, 23\}$& $\{8, 15, 20\}$& $\{1, 2, 18\}$ \\
&$\{1, 11, 18\}$& $\{1, 2, 21\}$& $\{1, 2, 23\}$& $\{1, 11, 21\}$& $\{1, 11, 20\}$& $\{1, 20, 23\}$& $\{2, 18, 21\}$\\
&$\{2, 15, 21\}$& $\{15, 21, 22\}$& $\{6, 18, 21\}$& $\{2, 7, 18\}$& $\{2, 8, 18\}$& $\{8, 10, 18\}$& $\{2, 9, 23\}$ \\
&$\{2, 17, 23\}$& $\{2, 7, 9\}$& $\{2, 9, 16\}$& $\{2, 6, 7\}$& $\{2, 7, 14\}$& $\{2, 6, 15\}$& $\{2, 6, 11\}$\\
&$\{2, 11, 13\}$& $\{2, 11, 20\}$& $\{10, 11, 20\}$& $\{10, 16, 20\}$& $\{2, 3, 13\}$& $\{2, 13, 15\}$& $\{2, 10, 15\}$\\
&$\{2, 3, 20\}$& $\{2, 3, 4\}$& $\{2, 8, 20\}$& $\{2, 8, 22\}$& $\{2, 8, 14\}$& $\{8, 22, 23\}$& $\{2, 4, 14\}$\\
&$\{2, 14, 17\}$& $\{2, 10, 17\}$& $\{4, 7, 14\}$& $\{2, 4, 10\}$& $\{2, 4, 22\}$& $\{2, 10, 19\}$& $\{2, 16, 19\}$\\
&$\{2, 16, 22\}$& $\{2, 19, 22\}$& $\{3, 4, 12\}$& $\{3, 6, 12\}$& $\{3, 4, 21\}$& $\{3, 4, 5\}$& $\{4, 5, 6\}$\\
&$\{4, 6, 23\}$& $\{5, 6, 14\}$& $\{3, 5, 14\}$& $\{3, 5, 22\}$& $\{5, 14, 22\}$& $\{3, 13, 21\}$& $\{3, 13, 20\}$\\
&$\{3, 14, 16\}$& $\{3, 14, 20\}$& $\{3, 17, 20\}$& $\{4, 17, 20\}$& $\{4, 8, 17\}$& $\{8, 9, 17\}$& $\{3, 8, 9\}$\\
&$\{3, 6, 21\}$& $\{3, 17, 21\}$& $\{6, 13, 21\}$& $\{3, 6, 8\}$& $\{3, 6, 10\}$& $\{3, 8, 16\}$& $\{3, 8, 10\}$\\
&$\{3, 10, 18\}$& $\{3, 9, 16\}$& $\{3, 16, 22\}$& $\{9, 16, 21\}$& $\{3, 9, 17\}$& $\{3, 9, 18\}$& $\{3, 17, 22\}$\\
&$\{3, 18, 22\}$& $\{4, 7, 8\}$& $\{4, 8, 13\}$& $\{4, 13, 15\}$& $\{4, 7, 12\}$& $\{4, 7, 16\}$& $\{4, 10, 19\}$\\
&$\{4, 10, 22\}$& $\{4, 11, 12\}$& $\{4, 12, 23\}$& $\{4, 15, 16\}$& $\{4, 15, 20\}$& $\{4, 16, 21\}$& $\{4, 16, 23\}$\\
&$\{4, 20, 23\}$& $\{4, 19, 20\}$& $\{16, 20, 23\}$& $\{20, 22, 23\}$& $\{4, 18, 19\}$& $\{4, 11, 18\}$& $\{4, 9, 19\}$\\
&$\{9, 13, 19\}$& $\{11, 16, 18\}$& $\{4, 9, 22\}$& $\{4, 9, 21\}$& $\{4, 11, 21\}$& $\{4, 11, 22\}$& $\{11, 12, 21\}$\\
&$\{5, 7, 10\}$& $\{5, 7, 17\}$& $\{5, 10, 22\}$& $\{5, 17, 22\}$& $\{7, 9, 17\}$& $\{9, 17, 23\}$& $\{9, 21, 23\}$\\
&$\{9, 20, 21\}$& $\{7, 9, 20\}$& $\{7, 9, 10\}$& $\{9, 10, 22\}$& $\{9, 13, 22\}$& $\{6, 7, 16\}$& $\{6, 7, 8\}$\\
&$\{6, 16, 20\}$& $\{16, 20, 22\}$& $\{7, 16, 22\}$& $\{7, 8, 16\}$& $\{7, 8, 20\}$& $\{8, 11, 16\}$& $\{8, 14, 20\}$\\
&$\{11, 14, 16\}$& $\{14, 15, 16\}$& $\{12, 14, 20\}$& $\{15, 16, 17\}$& $\{15, 20, 22\}$& $\{15, 18, 20\}$& $\{6, 18, 20\}$\\
&$\{6, 9, 18\}$& $\{6, 19, 20\}$& $\{9, 10, 18\}$& $\{9, 10, 11\}$& $\{10, 12, 18\}$& $\{6, 8, 11\}$& $\{6, 8, 19\}$\\
&$\{6, 14, 19\}$& $\{6, 9, 14\}$& $\{6, 9, 11\}$& $\{6, 11, 22\}$& $\{6, 14, 23\}$& $\{6, 17, 23\}$& $\{6, 13, 17\}$\\
&$\{6, 13, 15\}$& $\{9, 11, 14\}$& $\{11, 14, 22\}$& $\{10, 11, 14\}$& $\{10, 11, 13\}$& $\{10, 13, 17\}$& $\{9, 13, 15\}$\\
&$\{9, 13, 14\}$& $\{9, 14, 15\}$& $\{17, 21, 23\}$& $\{6, 10, 15\}$& $\{6, 10, 12\}$& $\{6, 12, 22\}$& $\{6, 15, 22\}$\\
&$\{12, 15, 22\}$& $\{7, 12, 22\}$& $\{7, 10, 12\}$& $\{7, 10, 14\}$& $\{7, 12, 20\}$& $\{7, 14, 18\}$& $\{7, 13, 20\}$\\
&$\{10, 12, 15\}$& $\{10, 14, 15\}$& $\{10, 14, 23\}$& $\{12, 15, 17\}$& $\{12, 19, 20\}$& $\{12, 16, 19\}$& $\{13, 20, 21\}$\\
&$\{7, 13, 21\}$& $\{7, 13, 18\}$& $\{7, 18, 22\}$& $\{7, 21, 22\}$& $\{14, 15, 18\}$& $\{14, 18, 23\}$& $\{13, 14, 23\}$\\
&$\{15, 17, 18\}$& $\{10, 16, 19\}$& $\{8, 10, 19\}$& $\{8, 10, 23\}$& $\{8, 13, 23\}$& $\{8, 11, 13\}$& $\{8, 11, 12\}$\\
&$\{8, 12, 21\}$& $\{10, 21, 23\}$& $\{11, 12, 13\}$& $\{12, 16, 21\}$& $\{10, 16, 21\}$& $\{10, 16, 17\}$& $\{10, 17, 21\}$\\
&$\{14, 17, 21\}$& $\{12, 14, 17\}$& $\{12, 13, 14\}$& $\{12, 13, 23\}$& $\{12, 18, 23\}$& $\{13, 14, 22\}$& $\{13, 17, 22\}$\\
&$\{13, 17, 19\}$& $\{13, 18, 19\}$& $\{13, 18, 23\}$& $\{14, 21, 22\}$& $\{17, 19, 22\}$& $\{17, 18, 19\}$& $\{18, 19, 22\}$\\
&$\{18, 22, 23\}$& $\{19, 20, 21\}$& $\{21, 22, 23\}$& $\{8, 14, 21\}$& $\{8, 14, 19\}$& $\{8, 19, 21\}$& $\{14, 19, 21\}$\\
&$\{12, 16, 17\}$& $\{12, 16, 18\}$& $\{12, 17, 18\}$& $\{16, 17, 18\}$\\
\end{longtable}}\qed

\section{S$_{6}(2,3,10)$}
\begin{lemma}\label{S610}
Let $X$ be a set of $10$ points and $Y$ be a $4$-subset of $X$. Then there exists an S$_{6}(2,3,10)$ with all blocks from $\binom{X}{3}\backslash \binom{Y}{3}$.
\end{lemma}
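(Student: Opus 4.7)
The plan is to give a direct construction of a simple S$_6(2,3,10)$ on $X=Y\cup Z$ with $|Z|=6$, every block avoiding $\binom{Y}{3}$. First I classify the 116 candidate triples of $\binom{X}{3}\setminus\binom{Y}{3}$ by their intersection with $Y$: there are 36 \emph{type-$2$} triples $\{y,y',z\}$ ($y,y'\in Y$, $z\in Z$), 60 \emph{type-$1$} triples $\{y,z,z'\}$, and 20 \emph{type-$0$} triples $\{z,z',z''\}\subset Z$. Since every pair $\{y,y'\}\subset Y$ requires six blocks, and the only available blocks through such a pair are the six type-$2$ triples completing it by a point of $Z$, every one of the 36 type-$2$ triples must appear, each with multiplicity one. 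These 36 blocks already cover each $Y$-pair six times and each cross pair $\{y,z\}$ exactly three times.

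Thus the task reduces to choosing 54 further blocks (equivalently, removing 26 triples from the 80 remaining non-type-$2$ triples) so that each cross pair is in exactly 3 more blocks and each outside pair $\{z,z'\}$ is in exactly 6 blocks. An easy double-count shows the removed set must consist of 24 type-$1$ triples together with 2 type-$0$ triples. Moreover, for each $y\in Y$ the six removed type-$1$ triples through $y$ form a $2$-regular simple graph $H_y$ on $Z$, and the compatibility condition reads: for every pair $\{z,z'\}\in\binom{Z}{2}$, the number of graphs $H_{y}$ containing $\{z,z'\}$ plus the number of removed type-$0$ triples containing $\{z,z'\}$ equals $2$.

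The main step is therefore to exhibit four such graphs $H_{y_1},\dots,H_{y_4}$ together with two outside triples satisfying this balance. I propose to partition $Z=A\cup B$ with $|A|=|B|=3$, take the two removed type-$0$ triples to be $A$ and $B$, and let $H_{y_1}$ be the disjoint union of the triangles on $A$ and on $B$. Since $K_{3,3}$ on $(A,B)$ decomposes into three perfect matchings $M_1,M_2,M_3$, the three edge-disjoint Hamilton $6$-cycles $M_1\cup M_2$, $M_2\cup M_3$, $M_1\cup M_3$ together exhaust $2K_{3,3}$; assign these to $H_{y_2},H_{y_3},H_{y_4}$. A direct verification then shows that each within-$A$ or within-$B$ pair lies in $H_{y_1}$ once and in exactly one removed type-$0$ triple (totalling $2$), while each $A$-to-$B$ cross pair lies in exactly two of $H_{y_2},H_{y_3},H_{y_4}$ and in no removed type-$0$ triple (also totalling $2$). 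The surviving 90 blocks thus form the desired simple S$_6(2,3,10)$. The only real obstacle is producing the four graphs $H_{y}$ in a consistent way; the decomposition of $2K_{3,3}$ into three Hamilton cycles via sums of two matchings makes this transparent, so the construction reduces to a short explicit check.
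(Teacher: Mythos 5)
Your construction is correct, but it proceeds quite differently from the paper: the paper's proof of this lemma is simply an explicit list of the $90$ blocks on $Z_{10}$ with $Y=Z_4$, verified by inspection, whereas you derive the design structurally. Your counting is sound: all $36$ triples meeting $Y$ in two points are forced (for a simple design) and cover each $Y$-pair six times and each cross pair three times; removing $24$ type-$1$ and $2$ type-$0$ triples from the remaining $80$ is exactly what the pair conditions demand; and the reduction to four $2$-regular graphs $H_y$ on $Z$ plus two outside triples, balanced so that every $Z$-pair is "removed" exactly twice, is the right reformulation. Your explicit choice — $H_{y_1}$ the two triangles on $A$ and $B$, the removed type-$0$ triples $A$ and $B$, and $H_{y_2},H_{y_3},H_{y_4}$ the pairwise unions $M_i\cup M_j$ of the three perfect matchings decomposing $K_{3,3}$ — satisfies the balance: within-$A$ (or within-$B$) pairs get $1+1=2$ removals, and each cross edge lies in exactly one $M_i$, hence in exactly two of the three unions. (Incidentally, being Hamilton $6$-cycles is irrelevant; $2$-regularity and edge-disjointness of the matchings are all you use.) What each approach buys: the paper's listing requires no argument and is in the same spirit as the many other small designs it must exhibit explicitly in the Appendix, while your argument is verifiable by hand without checking $90$ triples, explains why the type-$2$ blocks are forced, and generalizes more readily to other values of the hole size and point set where a suitable family of $2$-regular graphs with the required pair balance exists.
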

\proof Let $X=Z_{10}$ and $Y=Z_{4}$. The blocks of desired design are listed blow:
{\small\begin{longtable}{lllllllll}
$\{0, 1, 4\}$ & $\{0, 1, 5\}$& $\{0, 1, 6\}$& $\{0, 1, 7\}$& $\{0, 1, 8\}$& $\{0, 1, 9\}$& $\{0, 2, 4\}$& $\{0, 2, 5\}$& $\{7, 8, 9\}$\\
$\{0, 2, 6\}$ & $\{0, 2, 7\}$& $\{0, 2, 8\}$& $\{0, 2, 9\}$& $\{0, 3, 4\}$& $\{0, 3, 5\}$& $\{0, 3, 6\}$& $\{0, 3, 7\}$& $\{6, 8, 9\}$ \\
$\{0, 3, 8\}$ & $\{0, 3, 9\}$& $\{0, 4, 5\}$& $\{0, 4, 6\}$& $\{0, 4, 7\}$& $\{0, 5, 6\}$& $\{0, 5, 8\}$& $\{0, 6, 9\}$& $\{6, 7, 9\}$\\
$\{0, 7, 8\}$ & $\{0, 7, 9\}$& $\{0, 8, 9\}$& $\{1, 2, 4\}$& $\{1, 2, 5\}$& $\{1, 2, 6\}$& $\{1, 2, 7\}$& $\{1, 2, 8\}$& $\{6, 7, 8\}$\\
$\{1, 2, 9\}$ & $\{1, 3, 4\}$& $\{1, 3, 5\}$& $\{1, 3, 6\}$& $\{1, 3, 7\}$& $\{1, 3, 8\}$& $\{1, 3, 9\}$& $\{1, 4, 5\}$& $\{5, 7, 9\}$\\
$\{1, 4, 6\}$ & $\{1, 4, 8\}$& $\{1, 5, 6\}$& $\{1, 5, 9\}$& $\{1, 6, 7\}$& $\{1, 7, 8\}$& $\{1, 7, 9\}$& $\{1, 8, 9\}$& $\{5, 7, 8\}$\\
$\{2, 3, 4\}$ & $\{2, 3, 5\}$& $\{2, 3, 6\}$& $\{2, 3, 7\}$& $\{2, 3, 8\}$& $\{2, 3, 9\}$& $\{2, 4, 6\}$& $\{2, 4, 7\}$& $\{5, 6, 9\}$\\
$\{2, 4, 9\}$ & $\{2, 5, 7\}$& $\{2, 5, 8\}$& $\{2, 5, 9\}$& $\{2, 6, 7\}$& $\{2, 6, 8\}$& $\{2, 8, 9\}$& $\{3, 4, 7\}$& $\{5, 6, 8\}$\\
$\{3, 4, 8\}$ & $\{3, 4, 9\}$& $\{3, 5, 7\}$& $\{3, 5, 8\}$& $\{3, 5, 9\}$& $\{3, 6, 7\}$& $\{3, 6, 8\}$& $\{3, 6, 9\}$& $\{5, 6, 7\}$\\
$\{4, 5, 6\}$ & $\{4, 5, 7\}$& $\{4, 5, 8\}$& $\{4, 5, 9\}$& $\{4, 6, 8\}$& $\{4, 6, 9\}$& $\{4, 7, 8\}$& $\{4, 7, 9\}$& $\{4, 8, 9\}$\\
\end{longtable}}\qed

\end{CJK}

\begin{thebibliography}{99}
\bibitem{1}  R. J. R. Abel, C. J. Colbourn, and J. H. Dinitz, Mutually orthogonal Latin squares (MOLS), in: C. J. Colbourn, J. H. Dinitz (eds.) The CRC Handbook of Combinatorial Designs, 2nd edn. CRC Press, Boca Raton, FL, (2007) 160-193.
\bibitem{3} A.-E. Baert, V. Boudet, and A. Jean-Marie, Performance analysis of data replication in grid delivery networks, in International Conference on Complex Intelligent and Software Intensive Systems, Barcelona, (2008) 369-374.
\bibitem{AJ} A.-E. Baert, V. Boudet, A. Jean-Marie, and X. Roche, Minimization of download times in a distributed VOD system, in ICPP08: The International Conference on Parallel Processing, IEEE, Los Alamitos, CA, (2008) 173-180.
\bibitem{2} A.-E. Baert, V. Boudet, A. Jean-Marie, and X. Roche, Minimization of download time variance in a distributed VOD system, Scalable Comput. Pract. Exp., West University of Timisoara, 10 (2009) 75-86.
\bibitem{35} R. D. Baker,  Partitioning the planes of AG$_{2m}$(2) into $2$-designs, Discrete Math. 15 (1976) 205-211.
\bibitem{wbd} J.-C. Bermond, A. Jean-Marie, D. Mazauric, and J. Yu, Well-balanced designs for data placement, J. Combin. Des. 24 (2016) 55-76.
\bibitem{PCS} H. Cao, L. Ji, and L. Zhu, Large sets of disjoint packings on $6k + 5$ points, J. Combin. Theory Ser. A 108 (2004) 169-183.
\bibitem{Chen} D. Chen, C. C. Lindner, and D. R. Stinson, Further results on large sets of disjoint group-divisible designs, Discrete Math. 110 (1992) 35-42.
\bibitem{Fu} C.-M. K. Fu, The intersection problem for pentagon systems, Ph. D. Thesis, Auburn University, 1987.
\bibitem{652} H. Hanani, A class of three-designs, J. Combin. Theory Ser. A 26 (1979) 1-19.
\bibitem{46} H. Hanani, On some tactical configurations, Can. J. Math. 15 (1963) 702-722.
\bibitem{fd} A. Hartman, The fundamental construction for $3$-designs, Discrete Math. 124 (1994) 107-132.
\bibitem{13} A. Jean-Marie, X. Roche, V. Boudet, and A.-E. Baert, Combinatorial designs and availability, Research Report RR-7119 INRIA (2009).
\bibitem{PGDD1} L. Ji, A complete solution to existence of $H$ designs, J. Combin. Des. 27 (2019) 75-81.
\bibitem{Lsts} L. Ji, A new existence proof for large sets of disjoint Steiner triple systems, J. Combin. Theory Ser. A 112 (2005) 308-327.
\bibitem{45} L. Ji, On the 3BD-closed set B$_{3}(\{4, 5\})$, Discrete Math. 287 (2004) 55-67.
\bibitem{456} L. Ji, On the 3BD-closed set B$_{3}(\{4, 5,6\})$, J. Combin. Des. 12 (2004) 92-102.
\bibitem{JI} L. Ji, Partition of triples of order $6k + 5$ into $6k + 3$ optimal packings and one packing of size $8k + 4$, Graph Combin. 22 (2006) 251-260.
\bibitem{CS} H. Moh¨¢csy and D. K. Ray-Chaudhuri, Candelabra systems and designs, J. Statist. Plann. Inference 106 (2002) 419-448.
\bibitem{wbt} H. Wei, G. Ge, and C. J. Colbourn, The existence of well-balanced triple systems, J. Combin. Des. 24 (2016) 77-100.

\end{thebibliography}
\end{document}